\titleformat{\subsection}[runin]
       {\normalfont\bfseries}
       {\thesubsection}
       {0.5em}
       {}
       [.]
\newcommand{\Z}{\mathbb{Z}}
\newcommand{\N}{\mathbb{N}}
\newcommand{\Q}{\mathbb{Q}}
\newcommand{\R}{\mathbb{R}}
\newcommand{\K}{\mathbb{K}}
\newcommand{\A}{\mathbb{A}}
\newcommand{\F}{\mathbb{F}}
\newcommand{\Qbar}{\overline{\mathbb{Q}}}
\newcommand{\Rp}{\mathbb{R}_{\geq 0}}
\newcommand{\Rpp}{\mathbb{R}_{>0}}
\newcommand{\Qpp}{\mathbb{Q}_{>0}}
\newcommand{\Zpp}{\mathbb{Z}_{>0}}
\newcommand{\GL}{\mathsf{GL}}
\newcommand{\SL}{\mathsf{SL}}
\newcommand{\T}{\mathsf{T}}
\newcommand{\UT}{\mathsf{UT}}
\newcommand{\gen}[1]{\left\langle{#1}\right\rangle}
\newcommand{\Rns}{\left(\R^n\right)^*}
\newcommand{\Rsns}{\left(\R^{s+n}\right)^*}
\newcommand{\bff}{\boldsymbol{f}}
\newcommand{\bg}{\boldsymbol{g}}
\newcommand{\bh}{\boldsymbol{h}}
\newcommand{\balpha}{\boldsymbol{\alpha}}
\newcommand{\mG}{\mathcal{G}}
\newcommand{\mM}{\mathcal{M}}
\newcommand{\mY}{\mathcal{Y}}
\newcommand{\mA}{\mathcal{A}}
\newcommand{\mH}{\mathcal{H}}
\newcommand{\mT}{\mathcal{T}}
\newcommand{\mP}{\mathcal{P}}
\newcommand{\mL}{\mathcal{L}}
\newcommand{\mB}{\mathcal{B}}
\newcommand{\mPC}{\mathcal{PC}}
\newcommand{\hG}{\widehat{\Gamma}}
\newcommand{\tn}{\widetilde{n}}
\newcommand{\ta}{\widetilde{a}}
\newcommand{\tc}{\widetilde{c}}
\newcommand{\tf}{\widetilde{f}}
\newcommand{\tbf}{\widetilde{\boldsymbol{f}}}
\newcommand{\tMZ}{\widetilde{\mM_{\Z}}}
\newcommand{\oX}{\mkern 1.5mu\overline{\mkern-1.5mu X \mkern-1.5mu}\mkern 1.5mu}
\newcommand{\oS}{\overline{S}}
\newcommand{\ApK}{\left(\A^+\right)^K}
\newcommand{\init}{\operatorname{in}}
\newcommand{\ev}{\operatorname{ev}}
\newcommand{\conv}{\operatorname{conv}}
\newcommand{\dist}{\operatorname{dist}}
\newcommand{\diam}{\operatorname{diam}}
\DeclareMathOperator{\rk}{rk}
\newcommand{\la}{\left\langle}
\newcommand{\ra}{\right\rangle}
\newcommand{\say}[1]{``#1"}
\newcommand{\grp}{\mathrm{grp}}
\newtheorem{theorem}{Theorem}[section]
\newtheorem{proposition}[theorem]{Proposition}
\newtheorem{lemma}[theorem]{Lemma}
\newtheorem{corollary}[theorem]{Corollary}
\theoremstyle{definition}
\newtheorem{definition}[theorem]{Definition}
\newtheorem{fact}[theorem]{Fact}
\newtheorem{remark}[theorem]{Remark}
\newtheorem{example}[theorem]{Example}
\newcounter{ProblemCounter}
\begin{document}

\title{The Identity Problem in virtually solvable matrix groups over algebraic numbers}

\author{
  Corentin Bodart\footnote{Mathematical Institute, University of Oxford, UK, Email: \url{cobodart123@gmail.com} \\ C.B. was partially supported by the Swiss NSF grant 200020-200400.}
  \and
  Ruiwen Dong\footnote{Department of Mathematics, Saarland University, Germany. Magdalen College, University of Oxford, UK. Email: \url{ruiwen.dong@magd.ox.ac.uk}. R.D. was partially supported by the ERC AdG grant 101097307.}
}

\date{}
\maketitle
\thispagestyle{empty}

\begin{abstract}
    The \emph{Tits alternative} states that a finitely generated matrix group either contains a nonabelian free subgroup $F_2$, or it is virtually solvable.
    This paper considers two decision problems in virtually solvable matrix groups: the \emph{Identity Problem} (does a given finitely generated subsemigroup contain the identity matrix?), and the \emph{Group Problem} (is a given finitely generated subsemigroup a group?).
    We show that both problems are decidable in virtually solvable matrix groups over the field of algebraic numbers $\overline{\mathbb{Q}}$.
    Our proof also extends the decidability result for nilpotent groups by Bodart, Ciobanu, Metcalfe and Shaffrir, and the decidability result for metabelian groups by Dong (STOC'24).
    Since the Identity Problem and the Group Problem are known to be undecidable in matrix groups containing $F_2 \times F_2$, our result significantly reduces the decidability gap for both decision problems.
\end{abstract}

\vspace{0.5cm}

\noindent
\textbf{Keywords:} matrix semigroups, virtually solvable groups, rational subsemigroups, computational group theory, Identity Problem

\newpage
\setcounter{page}{1}

\section{Introduction}
\subsection{Algorithmic problems in matrix semigroups}
The computational theory of matrix groups and semigroups is one of the oldest and most well-developed parts of computational algebra. 
In the seminal work of Markov from the 1940s~\cite{markov1947certain}, the \emph{Semigroup Membership} problem was shown to be undecidable for integer matrices of dimension six.
This marked one of the first undecidability results obtained outside of mathematical logic and the theory of computing.
The input of Semigroup Membership is a finite set of square matrices $\{A_1, \ldots, A_m\}$ and a matrix $T$, and output is whether or not there exist an integer $p \geq 1$ and a sequence $A_{i_1}, A_{i_2}, \cdots, A_{i_p}$, such that $A_{i_1} A_{i_2} \cdots A_{i_p} = T$.
In the 1950s, Mikhailova~\cite{mikhailova1958occurrence} similarly introduced the \emph{Group Membership} problem.
For both problems, we work in some fixed matrix group $G$.
For effectiveness reasons\footnote{Elements of the field of algebraic numbers $\Qbar$ can be effectively represented and computed~\cite{cohen2013course}.}, we often suppose $G$ to be a subgroup of $\GL(d, \Qbar)$ for some $d \geq 1$ (the group of $d \times d$ invertible matrices over algebraic numbers).
Given a set $X \subseteq G$, denote by $\gen{X}$ the semigroup generated by $X$, and by $\gen{X}_{\grp}$ the group generated by $X$.
Then the two above decision problems can be formulated as follows.
\begin{enumerate}[nosep, label = (\roman*)]
    \item \textit{(Semigroup Membership)} given $A_1, \ldots, A_m, T \in G$, decide whether $T \in \gen{A_1, \ldots, A_m}$.
    \item \textit{(Group Membership)} given $A_1, \ldots, A_m, T \in G$, decide whether $T \in \gen{A_1, \ldots, A_m}_{\grp}$.
    \setcounter{ProblemCounter}{\value{enumi}}
\end{enumerate}
In this paper, we consider two closely related problems introduced by Choffrut and Karhum\"{a}ki in the 2000s~\cite{choffrut2005some}.
Let $I$ denote the identity matrix.
\begin{enumerate}[nosep,label = (\roman*)]
    \setcounter{enumi}{\value{ProblemCounter}}
    \item \textit{(Identity Problem)} given $A_1, \ldots, A_m \in G$, decide whether $I \in \gen{A_1, \ldots, A_m}$.
    \item \textit{(Group Problem)} given $A_1, \ldots, A_m \in G$, decide whether $\gen{A_1, \ldots, A_m}$ is a group.
    \setcounter{ProblemCounter}{\value{enumi}}
\end{enumerate}
These decision problems concern the \emph{structure} of a semigroup rather than its \emph{membership.}
See for example~\cite{bell2010undecidability, bell2017identity, ko2017identity} for earlier developments on the Identity Problem and the Group Problem, and see~\cite{dong2023recent} for a survey on recent progress.
Note that decidability of the Group Problem subsumes decidability of the Identity Problem and the \emph{Inverse Problem} (decide whether $A_1^{-1} \in \gen{A_1, \ldots, A_m}$)~\cite{dong2024semigroup}: these are the essential special cases of Semigroup Membership.

All four algorithmic problems are undecidable when $G$ contains as a subgroup a direct product of two free groups $F_2 \times F_2$~\cite{mikhailova1958occurrence, bell2010undecidability}, for example when $G = \SL(4, \Z)$ (the group of $4 \times 4$ integer matrices with determinant one).
This motivates us to study groups on the other end of the spectrum: namely when $G$ does not contain $F_2$ as a subgroup.
By the celebrated \emph{Tits alternative}~\cite{TITS1972250}, a finitely generated subgroup of $\GL(d, \Qbar)$ either contains $F_2$ as a subgroup, or it is \emph{virtually solvable}.
This paper will focus on the latter case.

\subsection{Virtually solvable matrix groups over $\Qbar$}
We briefly recall the definitions of solvable, virtually solvable, metabelian, and nilpotent groups. 
Given a group $G$ and its subgroup $H$, denote by $[G, H]$ the group generated by the elements $ghg^{-1} h^{-1}, g \in G, h \in H$.
A group $G$ is \emph{solvable} if its \emph{derived series} $G = G^{(0)} \geq G^{(1)} \geq \cdots$ , defined by $G^{(i+1)} = [G^{(i)}, G^{(i)}]$, reaches the trivial group in a finite number of steps. 
A group is \emph{virtually solvable} if it admits a finite index subgroup that is solvable. In particular, solvable groups are also virtually solvable.
Metabelian groups and nilpotent groups are special cases of solvable groups.
A group $G$ is called \emph{metabelian} if $G^{(2)}$ is the trivial group, or equivalently, if $G$ admits an abelian normal subgroup $A$ such that the quotient $G/A$ is abelian.
A group $G$ is \emph{nilpotent} if its \emph{lower central series} $G = G_{0} \geq G_{1} \geq \cdots$, defined by $G_{i+1} = [G, G_i]$, reaches the trivial group in a finite number of steps.

A classic result of Kopytov~\cite{kopytov1968solvability} showed that Group Membership is decidable in virtually solvable matrix groups over $\Qbar$. 
Semigroup Membership is proven to be undecidable for some instances of solvable matrix groups, such as large direct powers of the Heisenberg group $\mathsf{H}_3(\Z)$~\cite{roman2022undecidability}.
Conversely, it has been shown to be decidable for other instances, such as commutative matrix groups over $\Qbar$~\cite{babai1996multiplicative}, the Heisenberg groups $\mathsf{H}_{2n+1}(\Q)$~\cite{colcombet2019reachability}, the Baumslag-Solitar groups $\mathsf{BS}(1, q)$~\cite{DBLP:conf/icalp/CadilhacCZ20}, and the lamplighter groups $(\Z/n\Z) \wr \Z$~\cite{lohrey2015rational}~\footnote{Unlike the other examples, the group $(\Z/n\Z) \wr \Z$ is a matrix group over $(\Z/n\Z)[X, X^{-1}]$ instead of $\Qbar$.}.
Decidability of the Identity Problem and the Group Problem was open for virtually solvable matrix groups over $\Qbar$.
Nevertheless, they were recently proven to be decidable in the case of nilpotent groups~\cite{bodart2024identity, shafrir2024saturation} and in the case of metabelian groups~\cite{dong2024semigroup}.
In this paper, we completely answer this open problem by proving decidability of the Identity Problem and the Group Problem in virtually solvable matrix groups over $\Qbar$.

Note that in the definition of both problems, the input is a set of matrices $\{A_1, \ldots, A_m\}$ in some virtually solvable group $G \leq \GL(d, \Qbar)$.
It is not important whether $G$ is given as a part of the input, as we can always suppose $G$ to be the group generated by $\{A_1, \ldots, A_m\}$.
This does not change the fact that $G$ is virtually solvable, since finitely generated subgroups of virtually solvable groups are still virtually solvable~\cite[Chapter~13]{dructu2018geometric}.

\begin{restatable}{theorem}{thmmain}\label{thm:main}
    The Identity Problem and the Group Problem are decidable in virtually solvable subgroups of $\GL(d, \Qbar)$. 
    That is, given matrices $A_1, \ldots, A_m \in \GL(d, \Qbar)$ that generate a virtually solvable group\footnote{Note that given $A_1, \ldots, A_m \in \GL(d, \Qbar)$, it is decidable whether $\gen{A_1, \ldots, A_m}_{\grp}$ is virtually solvable~\cite{beals1999algorithms}.}, it is decidable whether $I \in \gen{A_1, \ldots, A_m}$, and whether $\gen{A_1, \ldots, A_m}$ is a group.
\end{restatable}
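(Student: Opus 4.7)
The plan is to build on the existing decidability results for the nilpotent case~\cite{bodart2024identity, shafrir2024saturation} and the metabelian case~\cite{dong2024semigroup}, through two structural reductions for virtually solvable linear groups. First, to remove the ``virtually'' qualifier, I would pick a solvable normal subgroup $H \trianglelefteq G$ of finite index, label each generator $A_i$ by its coset in $G/H$, and reduce to a labeled Identity Problem in $H$ with a regular constraint on admissible label sequences (a standard wreath-product style encoding). Second, by Mal'cev's theorem, every finitely generated solvable subgroup of $\GL(d, \Qbar)$ has a finite-index subgroup that is simultaneously triangularizable over $\Qbar$; applying the first reduction once more, I may assume $G \leq \UT(d, \Qbar)$.

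For such $G$, I would exploit the canonical short exact sequence
\[
    1 \to N \to G \to A \to 1,
\]
where $N$ consists of the unipotent elements of $G$ (upper triangular matrices with $1$'s on the diagonal, hence $N$ is nilpotent) and $A \leq (\Qbar^*)^d$ is the image of $G$ in the diagonal torus (hence abelian). A product $A_{i_1}\cdots A_{i_p}$ equals $I$ if and only if (i) its image in $A$ is trivial, and (ii) the resulting element of $N$ is trivial. Condition (i) depends only on the multiplicity vector $(e_1, \ldots, e_m) \in \N^m$ of the generators, and the set $S \subseteq \N^m$ of valid multiplicity vectors is an effectively computable semilinear set by the commutative Identity Problem. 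For each $(e_1, \ldots, e_m) \in S$, it remains to decide whether some ordering of the generators with these multiplicities also gives the identity in the nilpotent normal subgroup $N$.

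The main obstacle is that the two conditions are coupled: in $N$ the order of factors matters, because when we push a diagonal past a unipotent factor we obtain a conjugate, so the $N$-contribution of $A_{i_k}$ depends on the product of diagonals appearing to its right. To deal with this I would either (a) encode the coupled problem as an Identity Problem in an auxiliary nilpotent group, with generators augmented by data tracking the semidirect-product twist coming from $A$, and invoke the nilpotent decidability results~\cite{bodart2024identity, shafrir2024saturation}; or (b) induct on the derived length of $G$ via $1 \to G^{(1)} \to G \to G^{\mathrm{ab}} \to 1$, using the metabelian case~\cite{dong2024semigroup} as a base and combining the semilinear constraints contributed at each level. Once the Identity Problem is settled, the Group Problem follows by the standard reduction that $\gen{A_1, \ldots, A_m}$ is a group iff each $A_i^{-1}$ lies in the semigroup, and each such witness test can itself be phrased as an Identity Problem instance on a suitably augmented generating set.
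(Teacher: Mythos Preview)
Your overall architecture---pass to a finite-index triangularizable subgroup via Mal'cev, then exploit the nilpotent-by-abelian structure $1 \to N \to T \to A \to 1$---matches the paper's. But both technical cores are missing, and neither of your routes (a), (b) closes the gap. After intersecting with a finite-index normal subgroup, what you call a ``labeled Identity Problem with a regular constraint'' is precisely asking whether a \emph{rational} subsemigroup of $T$ is a group. The cited results~\cite{bodart2024identity,shafrir2024saturation,dong2024semigroup} handle only \emph{finitely generated} subsemigroups; the paper's Theorem~\ref{thm:ratmeta} is a genuine extension of the metabelian result to rational subsemigroups of $\mY \rtimes \Z^n$ and occupies all of Section~\ref{sec:meta}. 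You cannot invoke the existing black boxes on the constrained problem.

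Your route (b), inducting on derived length via $1 \to G^{(1)} \to G \to G^{\mathrm{ab}} \to 1$, also stalls: $G^{(1)}$ is typically not finitely generated (already for $T \leq \T(2,\Qbar)$ one gets $N \cong \Z[1/2]$), and there is no mechanism for ``composing semilinear constraints'' across levels once the conjugation action is nontrivial. The paper avoids induction altogether: with $N = T \cap \UT(d,\K)$, the single quotient $T/[N,N]$ is already metabelian, and Corollary~\ref{cor:solvsat} shows $S$ is a group iff its image there is. That corollary needs Theorem~\ref{thm:nilpsat}, extending the nilpotent saturation theorem from finitely generated $N$ to arbitrary $N \leq \UT(d,\K)$ via finite Pr\"ufer rank---exactly the ingredient your route (a) gestures at but does not supply. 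Route (a) as written cannot work: any ``auxiliary nilpotent group'' encoding the semidirect twist from $A$ would have to absorb a non-nilpotent action, and $N \rtimes A$ is just $T$ again.

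Minor points: Mal'cev lands you in $\T(d,\Qbar)$, not $\UT(d,\Qbar)$ (the latter is already nilpotent), as your own short exact sequence shows. And the paper reduces Identity to Group (Lemma~\ref{lem:subsume}), not the reverse; your proposed reduction of Group to the Inverse Problem is not an Identity Problem instance on a fixed generating set.
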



\subsection{Related work and our contributions}
Two significant results on the Identity Problem are its decidability in nilpotent groups, due to Shaffrir~\cite{shafrir2024saturation} (and independently by Bodart, Ciobanu, Metcalfe~\cite{bodart2024identity}), as well as its decidability in metabelian groups, due to Dong~\cite{dong2024semigroup}.
Our paper generalizes these two results, and then uses their generalization as crucial lemmas in our solution for virtually solvable matrix groups over $\Qbar$.
In particular:

\begin{enumerate}[nosep, label = (\arabic*), wide]
    \item We extend a key theorem in~\cite{bodart2024identity, shafrir2024saturation} from \emph{finitely generated} nilpotent groups to \emph{infinitely generated} nilpotent groups of finite Pr\"{u}fer rank (Theorem~\ref{thm:nilpsat}, proof in Section~\ref{sec:nilp}).
    This applies to groups of \emph{unitriangular} matrices over an algebraic number field $\K$.
    \item We generalize the result in~\cite{dong2024semigroup} from finitely generated subsemigroups of metabelian groups to their \emph{rational subsemigroups} (Theorem~\ref{thm:ratmeta}, proof in Section~\ref{sec:meta}). To this end, we combine automata theory with the concepts developed in Dong's paper, and introduce new techniques such as \emph{$\mA$-graphs} and \emph{partial contractions}.
    We connect automata over metabelian groups to graphs over lattices, convex polytopes and algebraic geometry, and prove several theorems under this new context (Theorem~\ref{thm:acctocon}, Lemma~\ref{lem:acctoeq}, Theorem~\ref{thm:dec}).
    \item We then reduce the Group Problem in a virtually solvable matrix group $G$ to deciding whether a rational subsemigroup of a metabelian group is a group (Section~\ref{sec:overview}).
    This reduction is done by constructing a metabelian quotient $T/[N,N]$ of a suitable triangularizable subgroup $T$ of $G$. The normal subgroup $[N,N] \trianglelefteq T$ is the commutator of an \emph{infinitely} generated nilpotent group $N$, hence the extension in (1) is needed. By reducing from $G$ to $T/[N,N]$, we inevitably pass from finitely generated subsemigroups of $G$ to \emph{rational} subsemigroups of $T/[N,N]$. Hence the generalization in (2) is needed.
\end{enumerate}

\section{Prelimilaries}
\subsection{Rational subsemigroups and automata over groups}

Let $S$ be a set of elements in $G$. The semigroup $\gen{S}$ generated by $S$ is defined as the set of non-empty products of elements in $S$, that is, $\gen{S} \coloneqq \{g_1 g_2 \cdots g_p \mid p \geq 1, g_1, g_2, \ldots, g_p \in S\}$.

\begin{restatable}[{\cite[Lemma~2.2]{dong2024semigroup}}]{lemma}{lemsubsume}\label{lem:subsume}
Let $X$ be a finite set of elements in a group $G$.
The semigroup $\gen{X}$ contains the neutral element $I$ if and only if there is a non-empty subset $Y \subseteq X$ such that $\gen{Y}$ is a group.
Hence, if the Group Problem is decidable in $G$, then the Identity Problem is also decidable by testing the Group Problem on all possible subsets of the input.
\end{restatable}

In this paper, we will need the more general notion of \emph{rational semigroups}, which are recognized by \emph{automata over groups}.
See~\cite{lohrey2013rational} for a general reference on rational subsets of groups.
Given a group $G$, an \emph{automaton} over $G$ is defined by a set of $s$ \emph{states} $q_1, \ldots, q_s,$ and a set of $t$ \emph{transitions} $\delta_1, \ldots, \delta_t$.
For each $\ell = 1, \ldots, t,$ the transition $\delta_{\ell}$ has an \emph{origin} state which we denote by $q_{\Omega(\ell)}$, a \emph{destination} state which we denote by $q_{\Delta(\ell)}$, as well as an \emph{evaluation} in $G$ which we denote by $\ev(\delta_{\ell}) \in G$.
We call $q_1$ both the \emph{initial state} and the \emph{accepting state}, that is, we only consider automata whose initial state is the same as the accepting state.

A \emph{path} in $\mA$ is a non-empty sequence of transitions $w = \delta_{i_1} \delta_{i_2} \cdots \delta_{i_m}$, such that $\Delta(i_1) = \Omega(i_2), \Delta(i_2) = \Omega(i_3), \ldots, \Delta(i_{m-1}) = \Omega(i_m)$.
The \emph{evaluation} of such a path is defined as $\ev(w) \coloneqq \ev(\delta_{i_1}) \ev(\delta_{i_2}) \cdots \ev(\delta_{i_m}) \in G$.
A path is called an \emph{accepting run} if additionally $\Omega(i_1) = \Delta(i_m) = 1$.
Let $\ev(\mA) \coloneqq \{\ev(w) \mid w \text{ is an accepting run of } \mA\}$, this is the set of elements of $G$ recognized by accepting runs.
Then $\ev(\mA)$ is a subsemigroup of $G$ because, if $w$ and $w'$ are accepting runs of $\mA$, then the concatenation $ww'$ is also an accepting run, and $\ev(ww') = \ev(w)\ev(w')$.
However, $\ev(\mA)$ does not necessarily contain the neutral element of $G$, since the empty sequence is not considered as an accepting run.
We say that the automaton $\mA$ \emph{recognizes} the semigroup $\ev(\mA)$.
A subset $S \subseteq G$ is called a \emph{rational semigroup} if there is some automaton $\mA$ over $G$ that recognizes $S$.

An automaton $\mA$ is called \emph{trim} if every state is the origin of some transition and every transition appears in some accepting run. Every rational subsemigroup of $G$ is recognized by a trim automaton over $G$ by removing states unreachable from $q_1$ and states from which $q_1$ is unreachable.

If $\mA$ contains only one state $q_1$, then all its transitions are loops, so $\ev(\mA)$ is the semigroup generated by $\ev(\delta_1), \ldots, \ev(\delta_t)$, and we recover the definition of finitely generated semigroups.

\subsection{Triangular matrix groups}
Let $\K$ be any field.
Denote respectively by $\T(d, \K)$ and $\UT(d, \K)$ the group of $d \times d$ upper-triangular invertible matrices over $\K$ and the group of $d \times d$ upper-unitriangular matrices over $\K$:
\[
\T(d, \K) \coloneqq 
\left\{
\begin{pmatrix}
a_1 & * & \cdots & * \\
0 & a_2 & \cdots & * \\
\vdots & \vdots & \ddots & \vdots \\
0 & 0 & \cdots & a_d \\
\end{pmatrix}
,\; a_1 a_2 \cdots a_d \neq 0 \right\}
, \quad
\UT(d, \K) \coloneqq 
\left\{
\begin{pmatrix}
1 & * & \cdots & * \\
0 & 1 & \cdots & * \\
\vdots & \vdots & \ddots & \vdots \\
0 & 0 & \cdots & 1 \\
\end{pmatrix}
\right\},
\]
where $a_i$ and $*$ denote arbitrary entries in $\K$.
The group $\T(d, \K)$ and its subgroups are solvable; the group $\UT(d, \K)$ and its subgroups are nilpotent~\cite{dructu2018geometric}.
If $T$ is any subgroup of $\T(d, \K)$, then $N \coloneqq T \cap \UT(d, \K)$ is a normal subgroup of $T$, and the quotient $T/N$ is abelian.

Let $G$ be a finitely generated subgroup of $\GL(d, \Qbar)$.
By a classic result of Mal'cev~\cite{mal1951some}, $G$ is virtually solvable if and only if it contains a finite index normal subgroup $T$ that is conjugate to a subgroup of $\T(d, \Qbar)$.
This gave rise to algorithms that decide whether a given finitely generated matrix group over $\Qbar$ is virtually solvable:

\begin{theorem}[{\cite[Theorem~1.1]{beals1999algorithms}, \cite[Section~2.8]{ostheimer1999practical}}]\label{thm:effectiveTits}
    There is an algorithm that, given a finite number of generators for a group $G \leq \GL(d, \Qbar)$, decide whether $G$ is virtually solvable.
    Furthermore, when $G$ is virtually solvable, the algorithm computes the generators\footnote{Note that when $G$ is finitely generated, its finite index subgroups are also finitely generated~\cite[Lemma~7.86]{dructu2018geometric}.} for a finite index normal subgroup $T \trianglelefteq G$, as well as a matrix $g \in \GL(d, \Qbar)$, such that $g T g^{-1} \leq \T(n, \Qbar)$.
\end{theorem}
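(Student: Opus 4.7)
The plan is to invoke Mal'cev's theorem and reduce to effective computation of the Zariski closure of the finitely generated matrix group $G = \gen{A_1, \ldots, A_m}_{\grp}$ inside $\GL(d, \Qbar)$. Recall that Mal'cev's theorem says $G$ is virtually solvable if and only if the connected component $\overline{G}^0$ of its Zariski closure $\overline{G}$ is solvable as an algebraic group, in which case the Lie--Kolchin theorem produces $g \in \GL(d, \Qbar)$ with $g \overline{G}^0 g^{-1} \leq \T(d, \Qbar)$.

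The first step is to compute the Zariski closure $\overline{G} \leq \GL(d, \Qbar)$ from the generators $A_1, \ldots, A_m$. This can be done by computing, via Gr\"obner bases, the vanishing ideal of the successive sets $\{A_{i_1} \cdots A_{i_k} \mid 1 \leq i_j \leq m\}$ (together with their inverses) inside $\Qbar[x_{11}, \ldots, x_{dd}, y]/(y \det(x) - 1)$, and iterating until the chain of ideals stabilizes, which happens in finitely many steps by Noetherianity. From the defining ideal one then extracts, via primary decomposition, the identity component $\overline{G}^0$; it has finite index in $\overline{G}$, and coset representatives can be read off the irreducible decomposition. Next, I would test solvability of $\overline{G}^0$ by computing its derived series as a chain of algebraic subgroups: each term $[H, H]$ for an algebraic subgroup $H$ is again an algebraic subgroup whose defining ideal is effectively computable. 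The series is strictly decreasing in dimension or in the number of irreducible components of the top-dimensional part until it stabilizes, so termination is guaranteed, and $G$ is virtually solvable precisely when the series reaches the trivial group.

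If $\overline{G}^0$ is solvable, I would construct $g$ by an iterated eigenvector computation implementing the proof of Lie--Kolchin. Since $\overline{G}^0$ is connected and solvable, it admits a common eigenvector $v \in \Qbar^d$, and the set of common eigenvectors is defined by a computable system of polynomial equations (obtained from a finite generating set for the coordinate ring of $\overline{G}^0$, e.g.\ its given generators together with coset representatives for $\overline{G}/\overline{G}^0$ intersected with $\overline{G}^0$). Having computed $v$, one chooses a complementary subspace, writes $\overline{G}^0$ in block-triangular form, and recurses on the $(d-1)$-dimensional quotient representation to eventually obtain the triangularizing basis, which one assembles into the matrix~$g$. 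Finally, setting $T \coloneqq G \cap g^{-1} \T(d, \Qbar) g$, one obtains a finite index normal subgroup of $G$ whose generators can be computed by a Reidemeister--Schreier procedure applied to the finite quotient $G / T$, which embeds into the finite group $\overline{G}/\overline{G}^0$ and is effectively enumerable from the generators of $G$.

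The main obstacle is making each of these ingredients fully effective over $\Qbar$: exact arithmetic and minimal polynomial computations in $\Qbar$ are standard, but a careful implementation is required to compute Zariski closures, primary decompositions, and common eigenvectors while tracking a triangularizing change of basis. This is precisely what is accomplished in \cite{beals1999algorithms} and \cite{ostheimer1999practical}, which together furnish the algorithm claimed in the statement.
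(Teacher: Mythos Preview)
The paper does not prove this theorem; it is quoted as a black-box result from \cite{beals1999algorithms} and \cite{ostheimer1999practical}. Your sketch therefore cannot be compared to anything in the paper and must stand on its own.

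There is a genuine gap in your first step. You propose to compute the Zariski closure of $G$ by taking vanishing ideals of the finite sets of words of length $\le k$ and waiting for the chain to stabilize ``by Noetherianity''. But as $k$ grows these sets increase, so their vanishing ideals form a \emph{descending} chain, and Noetherianity only guarantees termination of \emph{ascending} chains. Termination of this naive procedure is not automatic; computing the Zariski closure of a finitely generated linear group is a nontrivial result in its own right (Derksen--Jeandel--Koiran, 2005) that postdates \cite{beals1999algorithms}, and the algorithms in the cited references work by more direct means (bounding the index of a triangularizable subgroup, testing solvability via the structure of the enveloping algebra, etc.) rather than by first computing $\overline{G}$.

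A smaller issue: your choice $T \coloneqq G \cap g^{-1}\T(d,\Qbar)g$ need not be normal in $G$, since $G$ has no reason to normalize $g^{-1}\T(d,\Qbar)g$. The natural fix is $T \coloneqq G \cap \overline{G}^{\,0}$, which is normal of finite index in $G$ because $\overline{G}^{\,0}\trianglelefteq \overline{G}$, and is carried into $\T(d,\Qbar)$ by the same conjugating matrix $g$.
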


\subsection{Polynomial rings, modules and semidirect products}

Let $R$ be a commutative ring or semiring (such as $\Z$, $\R$, $\N$ or $\Rp$).
Denote by $R[X_1^{\pm}, \ldots, X_n^{\pm}]$ the Laurent polynomial ring or semiring over $R$ with $n$ variables: this is the set of polynomials of variables $X_1, X_1^{-1}, \ldots, X_n, X_n^{-1},$ with coefficients in $R$.
We have $X_i X_i^{-1} = 1$.
When $n$ is fixed, we denote $R[\oX^{\pm}] \coloneqq R[X_1^{\pm}, \ldots, X_n^{\pm}]$.
For a vector $a = (a_1, \ldots, a_n) \in \Z^n$, denote by $\oX^a$ the monomial $X_1^{a_1} X_2^{a_2} \cdots X_n^{a_n}$.

When $R$ is a commutative ring, an $R[\oX^{\pm}]$-module is defined as an abelian group $(M, +)$ along with an operation $\cdot \;\colon R[\oX^{\pm}] \times M \rightarrow M$ satisfying $f \cdot (m+m') = f \cdot m + f \cdot m'$, $(f + g) \cdot m = f \cdot m + g \cdot m$, $fg \cdot m = f \cdot (g \cdot m)$ and $1 \cdot m = m$.
For example, for any $d \in \N$, $R[\oX^{\pm}]^d$ is an $R[\oX^{\pm}]$-module by $f \cdot (g_1, \ldots, g_d) = (fg_1, \ldots, fg_d)$.
Throughout this paper, we use the bold symbol $\bff$ to denote a vector $(f_1, \ldots, f_d) \in R[\oX^{\pm}]^d$.

Given vectors $\bh_1, \ldots, \bh_k \in R[\oX^{\pm}]^d$, we say that they \emph{generate} the $R[\oX^{\pm}]$-module
\[ \sum_{i=1}^k R[\oX^{\pm}] \cdot \bh_i \coloneqq \left\{\sum_{i=1}^k p_i \cdot \bh_i \;\middle|\; p_1, \ldots, p_k \in R[\oX^{\pm}] \right\} \subseteq R[\oX^{\pm}]^d. \]
Given submodules $M,N$ of $R[\oX^{\pm}]^d$ such that $M\supseteq N$, we define the quotient $M/N \coloneqq \{\overline{m} \mid m \in M\}$ where $\overline{m_1} = \overline{m_2}$ if and only if $m_1 - m_2 \in N$.
This quotient is also an $R[\oX^{\pm}]$-module.
We say that an $R[\oX^{\pm}]$-module $\mY$ is \emph{finitely presented} if it can be written as a quotient $M/N$ for two submodules $M, N$ of $R[\oX^{\pm}]^d$ for some $d \in \N$, where both $M$ and $N$ are generated by finitely many elements.
We call a \emph{finite presentation} of $\mY$ the respective generators of such $M, N$.
Given a finitely presented $\Z[X_1^{\pm}, \ldots, X_n^{\pm}]$-module $\mY$, we can define a metabelian group using \emph{semidirect product}:
\begin{equation}\label{eq:defsemi}
\mY \rtimes \Z^n \coloneqq \{(y, a) \mid y \in \mY, a \in \Z^n\};
\end{equation}
multiplication and inversion in this group are defined by
\begin{equation}\label{eq:defsemi2}
(y, a) \cdot (y', a') = \big(y + \oX^a \cdot y', a + a'\big), \quad (y, a)^{-1} = \big(\! - \oX^{-a} \cdot y, -a\big).
\end{equation}
The neutral element of $\mY \rtimes \Z^n$ is $(0, 0)$.
Intuitively, the element $(y, a)$ can be seen as a $2 \times 2$ matrix
$
\begin{pmatrix}
\oX^{a} & y \\
0 & 1 \\
\end{pmatrix}
$, where group multiplication is represented by matrix multiplication.
Note that $\mY \rtimes \Z^n$ naturally contains the subgroups $\Z^n \cong \{(0, a) \mid a \in \Z^n\}$ and $\mY \cong \{(y, 0^n) \mid y \in \mY\}$.

\section{Decidability in virtually solvable matrix groups: proof overview}\label{sec:overview}

Omitted proofs can be found in Appendix~\ref{app:omitted}.
In this section we prove our main result:

{\renewcommand\footnote[1]{}\thmmain*}

By Lemma~\ref{lem:subsume}, it suffices to show decidability of the Group Problem.
Given $A_1, \ldots, A_m \in \GL(d, \Qbar)$ such that $G \coloneqq \gen{A_1, \ldots, A_m}_{\grp}$ is virtually solvable, our goal is to decide whether $\gen{A_1, \ldots, A_m}$ is a group.

By Theorem~\ref{thm:effectiveTits}, $G$ admits a finite index normal subgroup $T$ such that $g T g^{-1} \leq \T(d, \Qbar)$ for an effectively computable element $g \in \GL(d, \Qbar)$.
We can replace $G$ with $g G g^{-1}$ (that is, replace the generators $A_1, \ldots, A_m$ by their conjugates $g A_1 g^{-1}, \ldots, g A_m g^{-1}$), and thus without loss of generality suppose $T \leq \T(d, \Qbar)$.
Furthermore, a finite set of generators for $T$ is also given by Theorem~\ref{thm:effectiveTits}.
After the replacement, let $\K$ denote the field generated by all the entries of $A_1, \ldots, A_m$.
Then, $G$ and $T$ are respectively subgroups of $\GL(d, \K)$ and $\T(d, \K)$.

Deciding whether $\gen{A_1, \ldots, A_m}$ is a group is done through a series of reductions.
First, we reduce it to a deciding whether the rational semigroup $S \coloneqq \gen{A_1, \ldots, A_m} \cap T$ is a group:

\begin{restatable}{lemma}{lemvirtorat}\label{lem:virtorat}
    Let $T \leq \T(d, \K)$ be a finite index normal subgroup of $G = \gen{A_1, \ldots, A_m}_{\grp}$, given by its finite set of generators.
    Then, $\gen{A_1, \ldots, A_m}$ is a group if and only if $S \coloneqq \gen{A_1, \ldots, A_m} \cap T$ is a group.    
    Furthermore, $S$ is a rational subsemigroup of $T$, whose automaton can be effectively computed from $A_1, \ldots, A_m$ and the generators of $T$.
\end{restatable}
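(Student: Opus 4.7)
The plan is to split the lemma into the equivalence and the effective automaton construction.

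For the equivalence, the forward direction is immediate: $S$ is the intersection of the two subgroups $H := \gen{A_1, \ldots, A_m}$ and $T$ of $G$, hence a group. For the converse I would exploit the fact that $G/T$ is a finite group of order $k := [G:T]$, so $w^k \in T$ for every $w \in G$. The case $k = 1$ yields $H = S$ directly. For $k \geq 2$ and any $w \in H$, the element $w^k$ lies in $H \cap T = S$, so $w^{-k} \in S \subseteq H$; combined with $w^{k-1} \in H$, this gives $w^{-1} = w^{k-1} \cdot w^{-k} \in H$. Hence $H$ is closed under inversion and is a group.

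For the rationality of $S$, the natural construction is the Schreier coset automaton. After choosing coset representatives $g_1 = I, g_2, \ldots, g_k$ for $T$ in $G$, I would take one state $q_i$ per coset, with $q_1$ both initial and accepting, and for each state $q_i$ and generator $A_l$ add a transition $q_i \to q_j$, where $g_j$ represents the coset of $g_i A_l$, labelled by the element $g_i A_l g_j^{-1}$, which lies in $T$ by construction. A short telescoping computation shows that the evaluation of any accepting run through $A_{i_1}, \ldots, A_{i_p}$ is precisely $A_{i_1} A_{i_2} \cdots A_{i_p}$, whence $\ev(\mA) = H \cap T = S$.

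To realize this effectively, I would enumerate cosets by breadth-first search starting from $g_1 = I$ and repeatedly right-multiplying by the generators $A_l$. The key subroutine — deciding whether two computed elements $u, v \in G$ lie in the same $T$-coset — reduces to testing $u^{-1} v \in T$, which is an instance of Group Membership in the solvable matrix group $T \leq \T(d, \K)$, and is therefore decidable by Kopytov's theorem given the finite generating set of $T$. The search terminates since $[G:T] < \infty$, after which the transition evaluations $g_i A_l g_j^{-1}$ are computed directly. The main subtlety I expect is simply ensuring that the automaton is genuinely \emph{over $T$}, not merely over $G$: this is precisely the purpose of the Schreier-style relabelling $A_l \mapsto g_i A_l g_j^{-1}$, since the raw generators $A_l$ need not themselves lie in $T$.
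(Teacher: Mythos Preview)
Your proposal is correct and follows essentially the same approach as the paper: the equivalence uses the identical argument that $w^{[G:T]}\in S$ forces $w^{-1}=w^{[G:T]-1}\cdot w^{-[G:T]}\in H$, and the automaton is the same Schreier coset construction with labels $g_i A_l g_j^{-1}\in T$, coset enumeration driven by Kopytov's decidable membership in the solvable group $T$, and the same telescoping verification that $\ev(\mA)=H\cap T$. The only cosmetic difference is that the paper phrases enumeration as a saturation procedure rather than BFS, and tests membership via $uv^{-1}\in T$ rather than $u^{-1}v\in T$; since $T$ is normal these are equivalent.
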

    

The following lemma further shows we can without loss of generality suppose $\gen{S}_{\grp} = T$:


\begin{restatable}{lemma}{lemprim}\label{lem:prim}
    Let $S$ be a rational subsemigroup of $T$, then $\gen{S}_{\grp}$ is finitely generated.
    Furthermore, given an automaton over $T$ that recognizes $S$, one can compute an automaton over $\gen{S}_{\grp}$ that recognizes $S$, as well as compute a set of generators for $\gen{S}_{\grp}$.
\end{restatable}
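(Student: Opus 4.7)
The plan is to effectively produce a finite generating set for $\gen{S}_{\grp}$ and simultaneously relabel the given automaton so that every transition carries an element of $\gen{S}_{\grp}$. I begin by replacing the given automaton over $T$ recognising $S$ by its trim version $\mA$, with states $q_1,\ldots,q_s$ and transitions $\delta_1,\ldots,\delta_t$. Trimness lets me fix, for every state $q_i$, a path $p_i$ from $q_1$ to $q_i$ and a path $p_i'$ from $q_i$ to $q_1$, with $p_1$ and $p_1'$ both empty so that $g_i := \ev(p_i)$ and $g_i' := \ev(p_i')$ satisfy $g_1 = g_1' = I$. Two finite families of elements of $S$ then drive the argument. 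For each transition $\delta_\ell$, the concatenation $p_{\Omega(\ell)}\,\delta_\ell\,p_{\Delta(\ell)}'$ is an accepting run of $\mA$, giving
\[
h_\ell \;:=\; g_{\Omega(\ell)}\,\ev(\delta_\ell)\,g_{\Delta(\ell)}' \;\in\; S;
\]
for each state $q_j$, the loop $p_j p_j'$ is an accepting run, giving $r_j := g_j g_j' \in S$ (with $r_1 = I$). All of these are directly computable from $\mA$.

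Finite generation follows from the identity $\gen{S}_{\grp} = \gen{h_1,\ldots,h_t,\,r_1,\ldots,r_s}_{\grp}$, which I would prove by a telescoping computation. Inverting the definition of $h_\ell$ gives $\ev(\delta_\ell) = g_{\Omega(\ell)}^{-1}\,h_\ell\,(g_{\Delta(\ell)}')^{-1}$. For any accepting run $w = \delta_{i_1}\cdots\delta_{i_m}$, substituting this into $\ev(w) = \prod_k \ev(\delta_{i_k})$ and using $\Delta(i_k)=\Omega(i_{k+1})$ together with $\Omega(i_1)=\Delta(i_m)=1$ collapses each consecutive ``gap'' factor $(g_{\Delta(i_k)}')^{-1} g_{\Delta(i_k)}^{-1}$ into $r_{\Delta(i_k)}^{-1}$, producing
\[
\ev(w) \;=\; h_{i_1}\,r_{\Delta(i_1)}^{-1}\,h_{i_2}\,r_{\Delta(i_2)}^{-1}\,\cdots\,r_{\Delta(i_{m-1})}^{-1}\,h_{i_m}.
\]
Hence every element of $S$ lies in the group generated by the finite set $\{h_\ell\}\cup\{r_j\}$, giving the required finite generating set for $\gen{S}_{\grp}$.

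For the automaton $\mA'$ over $\gen{S}_{\grp}$, I would keep the states and the transition skeleton of $\mA$ but relabel each transition $\delta_\ell$ with the new evaluation $\ev'(\delta_\ell) := h_\ell\,r_{\Delta(\ell)}^{-1}$, which already lies in $\gen{S}_{\grp}$ since $h_\ell, r_{\Delta(\ell)} \in S$. A direct calculation along any accepting run $w$, following the same telescoping, shows $\ev'(w) = \ev(w)$, where the trailing factor $r_{\Delta(i_m)}^{-1} = r_1^{-1} = I$ vanishes. Thus $\mA'$ recognises exactly $S$ and is effectively constructible. The main technical nuisance here is purely bookkeeping at the endpoints of the telescoping identity; taking $p_1, p_1'$ empty and setting $r_1 = I$ absorbs these boundary cases, after which every object in the proof is immediately computable from the trim automaton $\mA$.
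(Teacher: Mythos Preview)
Your proof is correct and follows essentially the same approach as the paper: both arguments relabel each transition $\delta_\ell$ by a conjugate $c_{\Omega(\ell)}\,\ev(\delta_\ell)\,c_{\Delta(\ell)}^{-1}$ for suitable path-elements $c_i$ (you use paths $q_1\to q_i$, the paper uses paths $q_i\to q_1$), verify via telescoping that accepting runs evaluate identically, and express each new label as a quotient of two elements of $S$ to land in $\gen{S}_{\grp}$. Two cosmetic remarks: your generators $r_j$ are redundant (each $r_j$ is already a product of the new labels along the loop $p_j p_j'$), and since the paper defines paths as nonempty you should simply set $g_1=g_1'=I$ by convention rather than invoke ``empty paths'' --- this does not affect the argument.
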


Lemma~\ref{lem:virtorat} and \ref{lem:prim} are proven using classic techniques from automata theory.
Lemma~\ref{lem:virtorat} shows that $\gen{A_1, \ldots, A_m}$ is a group if and only if $S$ is a group. Lemma~\ref{lem:prim} shows that we can suppose $S$ to be given by an automaton over $\gen{S}_{\grp}$ instead of $T$. In other words, we can replace $T$ by its subgroup $\gen{S}_{\grp}$, and suppose without loss of generality $\gen{S}_{\grp} = T$.
We proceed to decide whether $S$ is a group.

Let $N \coloneqq T \cap \UT(d, \K)$, then $N$ is a nilpotent normal subgroup of $T$.\footnote{Even though $T$ is finitely generated, $N$ is in general not finitely generated. For example, take $T = \gen{\begin{pmatrix}
    2 & 0 \\ 0 & 1
\end{pmatrix}, \begin{pmatrix}
    1 & 1 \\ 0 & 1
\end{pmatrix}}_{\grp}$, then $N = \left\{\begin{pmatrix}
    1 & \frac{a}{2^p} \\ 0 & 1
\end{pmatrix} \;\middle|\; a \in \Z, p \in \N \right\}$ is not finitely generated.}
Consider the normal subgroup $[N, N]$ of $N$, we obtain a descending sequence of subgroups
\begin{equation*}
T \trianglerighteq N \trianglerighteq [N, N],
\end{equation*}
where $T/N$ is finitely generated abelian, and $N/[N, N]$ is abelian.

Note that $[N, N]$ is also a normal subgroup of $T$ because, for all $t \in T, n, m \in N$, we have $t (nmn^{-1} m^{-1}) t^{-1} = (tnt^{-1}) (tmt^{-1}) (tnt^{-1})^{-1} (tmt^{-1})^{-1} \in [N, N]$.
Therefore, the quotient $T/[N, N]$ is a finitely generated metabelian group.
Our next step is to show that $S \subseteq T$ is a group if and only if its image $\oS$ under the quotient map $T \rightarrow T/[N, N]$ is a group.
In other words, we will simplify $T$ by ``modulo'' $[N, N]$, and show that this (remarkably) does not change whether $S$ is a group. 

Our first main technical theorem is a weak version of the above simplification:

\begin{restatable}{theorem}{thmnilp}\label{thm:nilpsat}
    Let $\K$ be an algebraic number field and $N$ be a subgroup of $\UT(d, \K)$.
    Let $M$ be a subsemigroup of $N$ and denote by $\overline{M}$ its image under the quotient map $N \rightarrow N/[N, N]$.
    If $\overline{M} = N/[N, N]$ (equivalently, if $M[N,N]=N$), then $M = N$.
    
    
\end{restatable}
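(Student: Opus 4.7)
The plan is to reduce Theorem~\ref{thm:nilpsat} to the analogous saturation theorem for \emph{finitely generated} nilpotent groups established in~\cite{bodart2024identity,shafrir2024saturation}: if $N'$ is a finitely generated nilpotent group and $M' \leq N'$ is a finitely generated subsemigroup whose image $\overline{M'}$ in $N'/[N',N']$ is a subgroup (in particular when $\overline{M'}=N'/[N',N']$), then $M'$ is a subgroup of $N'$. The key additional structure available here is that every subgroup of $\UT(d,\K)$ is nilpotent of class at most $d-1$ and has finite Pr\"{u}fer rank, so $N$ and $M$ admit good approximation by finitely generated pieces.

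Fix an arbitrary $n \in N$; it suffices to show $n \in M$. The central task is to construct a finitely generated subgroup $N_0 \leq N$ containing $n$ and a finitely generated subsemigroup $M_0 \subseteq M \cap N_0$ such that $M_0 \cdot [N_0, N_0] = N_0$. Given such $N_0, M_0$, the finitely generated saturation theorem applied to $M_0 \leq N_0$ would yield that $M_0$ is a subgroup of $N_0$; combined with $M_0 \cdot [N_0, N_0] = N_0$ and the standard nilpotent-group fact that a subgroup $H \leq N_0$ satisfying $H \cdot [N_0, N_0] = N_0$ must equal $N_0$, we would conclude $M_0 = N_0$, whence $n \in N_0 = M_0 \subseteq M$. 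To build $N_0$ and $M_0$, I would use an iterative closure procedure: starting with group-theoretic generators $G = \{n\}$ of $N_0$ and semigroup-theoretic generators $S = \emptyset$ of $M_0$, at each step, for every $g \in G$ and every $g^{-1}$, use the hypothesis $M[N,N]=N$ to pick $m \in M$ with $gm^{-1} \in [N,N]$, write a finite commutator decomposition $gm^{-1} = \prod_j [a_j, b_j]$ with $a_j, b_j \in N$, and then add $m$ to $S$ and the elements $a_j, b_j$ to $G$. One should choose the decompositions carefully so that the images of the $a_j, b_j$ in $N/[N,N]$ lie in the subgroup generated by the images of the current $G$.

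The main obstacle is to show that this iterative procedure terminates after finitely many steps, producing genuinely finitely generated $N_0$ and $M_0$. I expect termination to follow from induction on the nilpotency class of $N$ combined with the finite Pr\"{u}fer rank of each successive quotient $\gamma_k(N)/\gamma_{k+1}(N)$: at each iteration, the ``defect'' $(N_0 \cap [N,N])/[N_0,N_0]$, which measures how much of $N_0 \cap [N,N]$ fails to arise as a commutator product within $N_0$, should stabilize to zero after finitely many steps. Controlling this defect requires balancing the growth of $N_0$ against the growth of $[N_0,N_0]$ while keeping the image of $N_0$ in $N/[N,N]$ of bounded rank, and managing this balance within the finite Pr\"{u}fer rank constraints inherited from $\UT(d,\K)$ is the core technical challenge.
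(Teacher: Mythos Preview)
Your high-level strategy --- reduce to the finitely generated saturation result by building a finitely generated $N_0\le N$ and $M_0\subseteq M\cap N_0$ with $M_0[N_0,N_0]=N_0$ --- is the right idea, and is what the paper does. But the iterative closure procedure you describe has a genuine gap at exactly the point you flag: termination. Finite Pr\"ufer rank does \emph{not} mean finitely generated (think of $\Q$), so bounding the rank of the image of $G$ in $N/[N,N]$ does not prevent $G$ from growing forever. Your constraint ``choose the commutator decomposition so that the $a_j,b_j$ have images in the subgroup already generated by $G$'' is not obviously satisfiable, and without it each pass can introduce genuinely new elements. The sketch of an inductive argument on the lower central series does not close this gap, because the same issue recurs at every level.

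The paper avoids iteration entirely with two ingredients you are missing. First, it applies the finite Pr\"ufer rank hypothesis to $[N,N]$ (not to $N/[N,N]$) via an isolator result: there is a \emph{single} finite set $A\subset N$ such that the isolator of $\langle\{[a,a']:a,a'\in A\}\rangle_{\grp}$ already contains all of $[N,N]$. One then takes $\tilde N=\langle g,h,A,X\rangle_{\grp}$ (where $X\subset M$ shadows $A\cup\{g\}$ modulo $[N,N]$) once and for all, with no closure loop. Second, this does not yield $\tilde M[\tilde N,\tilde N]=\tilde N$ on the nose, only $\tilde M\cdot I_{\tilde N}([\tilde N,\tilde N])=\tilde N$; since $\tilde N$ is finitely generated nilpotent, the isolator of $[\tilde N,\tilde N]$ sits over $[\tilde N,\tilde N]$ with finite index, so $\tilde M[\tilde N,\tilde N]$ has finite index in $\tilde N$. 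The paper then invokes the \emph{finite-index} form of the finitely generated result (if $M[N,N]$ has finite index in $N$ then $M$ is a finite-index subgroup), which is stronger than the version you cite. Both the isolator trick and the finite-index upgrade are essential; with them the argument is one pass, with no termination issue to worry about.
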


Theorem~\ref{thm:nilpsat} is a deep generalization of~\cite[Corollary~1]{shafrir2024saturation} (see also~\cite[Proposition~19]{bodart2024identity}), which proved the case when $N$ is finitely generated.
Theorem~\ref{thm:nilpsat} relaxes this constraint, the key idea being that for an algebraic number field $\K$, subgroups of $\UT(d, \K)$ have finite \emph{Pr\"{u}fer rank}.
The proof of Theorem~\ref{thm:nilpsat} is given in Section~\ref{sec:nilp}.
We now strengthen Theorem~\ref{thm:nilpsat} from $N$ to $T$:

\begin{corollary}\label{cor:solvsat}
    Let $S$ be a subsemigroup of $T$ such that $\gen{S}_{\grp} = T$. Then $S$ is a group (i.e. $S = T$) if and only if its image $\oS$ under the quotient map $T \rightarrow T/[N, N]$ is a group (i.e. $\oS = T/[N, N]$).
\end{corollary}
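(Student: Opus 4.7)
}
The ``only if'' direction is immediate, since quotienting a group gives a group. For the ``if'' direction, the plan is to promote the information $\overline{S}=T/[N,N]$ (which a priori only says $S$ hits every $[N,N]$-coset in $T$) to the much stronger conclusion $N \subseteq S$, and then deduce $T \subseteq S$ essentially for free.

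First I would examine the intersection $M \coloneqq S \cap N$. It is a subsemigroup of $N \leq \UT(d,\K)$, and is non-empty because the hypothesis $\overline{S}=T/[N,N]$ provides some $s \in S$ mapping to the identity coset $[N,N]$, forcing $s \in [N,N] \subseteq N$. More importantly, for every $n \in N$ the hypothesis gives some $s \in S$ with $s[N,N]=n[N,N]$; since $n \in N$ and $[N,N] \subseteq N$, such an $s$ automatically lies in $N$, hence in $M$. This shows $M \cdot [N,N] = N$, which is precisely the hypothesis $\overline{M} = N/[N,N]$ of Theorem~\ref{thm:nilpsat}. (Note that $\K$, being generated by finitely many algebraic numbers, is an algebraic number field, so Theorem~\ref{thm:nilpsat} applies.) Invoking the theorem yields $M = N$, i.e.\ $N \subseteq S$.

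With $N \subseteq S$ in hand, the second step is short. Given any $t \in T$, the hypothesis $\overline{S}=T/[N,N]$ supplies some $s \in S$ with $t = s \cdot c$ for a $c \in [N,N]$. But $c \in [N,N] \subseteq N \subseteq S$, so $t = s \cdot c$ is a product of two elements of $S$ and therefore lies in $S$. Hence $T \subseteq S$, and combined with $S \subseteq T$ this gives $S = T$, which is the same as saying $S$ is a group.

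The whole corollary is therefore a direct, and essentially formal, consequence of Theorem~\ref{thm:nilpsat}; the only real subtlety is the observation in the first step that the surjection of $S$ onto $T/[N,N]$ automatically restricts to a surjection of $S \cap N$ onto $N/[N,N]$, which is what allows the nilpotent saturation theorem to be applied to $M = S \cap N$. All the substantive work has been absorbed into Theorem~\ref{thm:nilpsat}.
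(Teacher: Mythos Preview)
Your proof is correct and follows essentially the same approach as the paper's: verify $(S\cap N)[N,N]=N$, apply Theorem~\ref{thm:nilpsat} to obtain $N\subseteq S$, and then conclude $S=S[N,N]=T$. You spell out in slightly more detail why $S[N,N]=T$ forces $(S\cap N)[N,N]=N$, which the paper asserts in one line, but the argument is identical.
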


\begin{proof}
    If $S = T$ then obviously $\overline S=T/[N,N]$. In the other direction, if $\overline S=T/[N,N]$ (which can be rewritten as $S[N,N]=T$), then
    \[ (S\cap N)[N,N]=N. \]
    Using Theorem \ref{thm:nilpsat} we deduce that the semigroup $M=S\cap N$ is actually equal to $N$, and therefore $S\supseteq N\supseteq  [N,N]$. It follows that $S=S[N,N]=T$.
\end{proof}
By Corollary~\ref{cor:solvsat}, 
deciding whether $S \subseteq T$ is a group boils down to deciding whether $\overline{S} \subseteq T/[N, N]$ is a group: this will be our new task.
From an automaton $\mA$ over $T$ that recognizes $S$, we can construct an automaton $\overline{\mA}$ over $T/[N, N]$ that recognizes $\overline{S}$ by projecting the transition evaluations under $T \rightarrow T/[N, N]$.
However, different elements in $T$ can have the same image in $T/[N, N]$.
To decide whether $\overline{S} \subseteq T/[N, N]$ is a group, we need to switch from this ambiguous representation of elements in $T/[N, N]$ to the following more standard way of representing metabelian groups.

\begin{restatable}[{Composition of~\cite[Lemma~2]{kopytov1971solvability}~and~\cite[Lemma~B.3]{dong2024semigroup}}]{lemma}{lemeffective}\label{lem:effective}
    Let $\K$ be an algebraic number field. Suppose we are given a finitely generated subgroup $T$ of $\T(d, \K)$, let $N \coloneqq T \cap \UT(d, \K)$.
    One can compute an embedding $\varphi \colon T/[N, N] \hookrightarrow (\mY \rtimes \Z^n)/H$, where
    \begin{enumerate}[nosep, label ={\normalfont(\roman*)}]
        \item $n \in \N$ and $\mY$ is a finitely presented $\Z[X_1^{\pm}, \ldots, X_n^{\pm}]$-module.
        \item $H$ is a subgroup of $\Z^n \leq \mY \rtimes \Z^n$, and elements of $H$ commute with all elements in $\mY \rtimes \Z^n$.
    \end{enumerate}
    In particular, given any $g \in T$, one can compute $(y, z) \in \mY \rtimes \Z^n$ such that $\varphi(g[N, N]) = (y, z)H$.
\end{restatable}


Since $\varphi$ is an embedding, the semigroup $\overline S\subseteq T/[N,N]$ is a group if and only if its image $\varphi(\overline S)\subseteq (\mY\rtimes\Z^n)/H$ is a group. In turn $\varphi(\overline S)$ is a group if and only if $\varphi(\overline S)H\subseteq \mY\rtimes\Z^n$ is a group. We went from \say{abstract} semigroups inside $T/[N,N]$ to \say{concrete} semigroups inside $\mY\rtimes\Z^n$.

\begin{restatable}{lemma}{lemaddH}\label{lem:addH}
    Let $\varphi(\overline{S})$ be a rational subsemigroup of $(\mY \rtimes \Z^n)/H$ recognized by a given automaton.
    Then one can compute an automaton over $\mY \rtimes \Z^n$ that recognizes the subsemigroup $\varphi(\overline{S})H$.
\end{restatable}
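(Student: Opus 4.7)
The plan is to construct $\mA'$ in two steps: (i) lift each transition evaluation of $\mA$ back to $\mY\rtimes\Z^n$; (ii) insert auxiliary $H$-loops at a newly introduced state $q_\ast$ in such a way that every accepting run of $\mA'$ uses at least one ``real'' lifted transition. Step (i) puts the labels in the right group, and step (ii) realises the full preimage of $\varphi(\overline S)$ under the quotient $\mY\rtimes\Z^n\to(\mY\rtimes\Z^n)/H$, which is exactly $\varphi(\overline S)\cdot H$ since $H$ is central.

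As preparation, since $H\leq\Z^n$ is a subgroup of a finitely generated free abelian group, $H$ is itself free abelian of rank at most $n$, and a finite generating set $h_1,\ldots,h_k\in H$ can be computed (via Hermite normal form, say) from the presentation of $(\mY\rtimes\Z^n)/H$ supplied by Lemma~\ref{lem:effective}. For every transition $\delta$ of the given automaton $\mA$, pick any representative $\tilde\ev(\delta)\in\mY\rtimes\Z^n$ of $\ev(\delta)\in(\mY\rtimes\Z^n)/H$; this is trivial.

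Next I would build $\mA'$ as follows. Its state set consists of the states of $\mA$ together with one new auxiliary state $q_\ast$, and the initial/accepting state remains $q_1$. For each transition $\delta$ of $\mA$ with $\Delta(\delta)\neq q_1$, copy it to $\mA'$ with evaluation $\tilde\ev(\delta)$. For each transition $\delta$ with $\Delta(\delta)=q_1$, reroute it as $q_{\Omega(\delta)}\xrightarrow{\tilde\ev(\delta)}q_\ast$, and add a single shared transition $q_\ast\xrightarrow{1}q_1$. Finally, attach loops $q_\ast\xrightarrow{h_j^{\pm 1}}q_\ast$ for $j=1,\ldots,k$.

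Verification that $\ev(\mA')=\varphi(\overline S)H$ splits into two easy inclusions. For $(\subseteq)$, since $q_\ast\xrightarrow{1}q_1$ is the only transition of $\mA'$ entering $q_1$, every accepting run decomposes as a concatenation of blocks of the form $q_1\to\cdots\to q_\ast\xrightarrow{\text{loops}}q_\ast\xrightarrow{1}q_1$. Each block's evaluation is $\tilde g_i\cdot h_i$, where $\tilde g_i$ lifts some element $s_i\in\varphi(\overline S)$ (the projection of the block to $\mA$ is an accepting run of evaluation $s_i$) and $h_i\in H$; using centrality of $H$, the product of blocks collapses to $(\prod_i\tilde g_i)\cdot(\prod_i h_i)$, which is a lift of $\prod_i s_i\in\varphi(\overline S)$ times an element of $H$. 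For $(\supseteq)$, given $g\in\varphi(\overline S)H$ write $g=\tilde g\cdot h^\ast$, where $\tilde g$ is obtained by lifting some accepting run of $\mA$ whose evaluation equals $g\bmod H$, and $h^\ast\in H$; then inserting at $q_\ast$ a word in $h_1^{\pm1},\ldots,h_k^{\pm1}$ that evaluates to $h^\ast$ produces an accepting run of $\mA'$ of evaluation $g$.

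The main delicate point in this construction is avoiding spurious runs. Placing the $H$-loops directly at $q_1$ would be incorrect, because accepting runs consisting only of loops would inject all of $H$ into $\ev(\mA')$, and this is in general not contained in $\varphi(\overline S)H$. Rerouting all incoming transitions through $q_\ast$, and attaching the $H$-loops there, guarantees that reaching $q_\ast$ always requires first traversing a lifted transition carrying a genuine element of $\varphi(\overline S)$. Everything else is effective bookkeeping.
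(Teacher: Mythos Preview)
Your construction is correct. Interestingly, the paper takes exactly the shortcut you warn against: it lifts the transition labels and attaches loops labelled by semigroup generators of $H$ directly at $q_1$. Your objection applies verbatim---an accepting run consisting only of such loops produces an element of $H$, and $H\subseteq\varphi(\overline S)H$ holds iff the identity coset already lies in $\varphi(\overline S)$---so the paper's automaton actually recognises $\varphi(\overline S)H\cup H$, not $\varphi(\overline S)H$ as the lemma asserts.

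This slip is harmless for the sole downstream use of the lemma, namely testing whether $\varphi(\overline S)H$ is a group: assuming $\varphi(\overline S)\neq\emptyset$ (which holds in context), the semigroup $\varphi(\overline S)H$ is a group iff $\varphi(\overline S)H\cup H$ is. Indeed, if the union is a group, take any $g\in\varphi(\overline S)H$; whether $g^{-1}\in H$ or $g^{-1}\in\varphi(\overline S)H$, one deduces $0\in\varphi(\overline S)H$, hence $H\subseteq\varphi(\overline S)H$ and the two sets coincide. Your $q_\ast$-rerouting is the clean way to prove the lemma exactly as stated; the paper's simpler construction suffices only for the application. One small point worth making explicit in your $(\supseteq)$ direction: an accepting run of $\mA$ may pass through $q_1$ internally, so its lift must be broken at each such visit, with the identity edge $q_\ast\to q_1$ inserted between consecutive pieces before appending the $H$-loops at the final $q_\ast$.
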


The idea behind Lemma~\ref{lem:addH} is as follows: from an automaton that recognizes $\varphi(\overline{S})$, one can construct an automaton that recognizes $\varphi(\overline{S})H$ by attaching loops at $q_1$, whose evaluations generate $H$ as a semigroup.
Now, deciding whether a rational semigroup $\overline{S} \subseteq T/[N, N]$ is a group boils down to deciding whether the rational semigroup $\varphi(\overline{S}) H \subseteq \mY \rtimes \Z^n$ is a group.

\begin{restatable}{theorem}{thmratmeta}\label{thm:ratmeta}
    Given as input a finitely presented $\Z[X_1^{\pm}, \ldots, X_n^{\pm}]$-module $\mY$ and an automaton $\mA$ over $\mY \rtimes \Z^n$, it is decidable whether $\ev(\mA)$ is a group.
\end{restatable}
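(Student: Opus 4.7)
The plan is to reduce the question of whether $\ev(\mA)$ is a group to a decidable module-theoretic condition on the finitely presented $\Z[\oX^{\pm}]$-module $\mY$, through a chain of reductions that uses the automaton structure to tame the noncommutative twist coming from the action of $\Z^n$ on $\mY$. A crucial observation is that the evaluation of a run $w = \delta_{i_1}\cdots\delta_{i_m}$ with $\ev(\delta_{i_j}) = (y_{i_j}, a_{i_j})$ is $\bigl(\sum_{j} \oX^{a_{i_1}+\cdots+a_{i_{j-1}}}\cdot y_{i_j},\ \sum_{j} a_{i_j}\bigr)$: the $\Z^n$-component is an ordinary abelian sum, but the $\mY$-component is a twisted sum whose terms depend on the partial heights along the walk.

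First I would project the automaton $\mA$ to an automaton over $\Z^n$ via the canonical map $\pi\colon \mY \rtimes \Z^n \to \Z^n$. The image $\pi(\ev(\mA))$ is a rational subsemigroup of $\Z^n$, hence semilinear, and a necessary condition for $\ev(\mA)$ to be a group is that $\pi(\ev(\mA))$ coincide with a subgroup $L \leq \Z^n$. This check is decidable by elementary linear algebra on semilinear sets; assume henceforth that it passes and $\pi(\ev(\mA)) = L$.

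The core step would encode accepting runs by an $\mA$-graph: a decorated multigraph whose vertices are the states of $\mA$ and whose edges record both the transition evaluation $(y,a) \in \mY \rtimes \Z^n$ and a running $\Z^n$-height. I would then use partial contractions, operations that replace a subwalk by a shorter one while tracking the induced change in the twisted $\mY$-sum, to cut down the infinitely many accepting runs to a finite combinatorial skeleton. Concretely, I would aim to prove that for each $A \in L$ the fiber $F_A := \{Y \in \mY : (Y,A) \in \ev(\mA)\}$ can be written as a finite union of cosets of finitely generated $\Z[\oX^{\pm}]$-submodules of $\mY$, indexed by a polytope-theoretic structure coming from simple cycles in the $\mA$-graph. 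The group property $\ev(\mA) = \ev(\mA)^{-1}$ then unravels into the symmetry conditions $F_{-A} = -\oX^{-A}\cdot F_A$ for every $A \in L$, which, given the above fiber description, become a finite conjunction of submodule membership and inclusion questions in $\mY$: all decidable, because $\mY$ is finitely presented over the Noetherian ring $\Z[\oX^{\pm}]$ and therefore admits algorithmic Gr\"{o}bner-basis-style computations.

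The main obstacle will be obtaining the finite and uniform description of the fibers $F_A$. The difficulty is that the $\mY$-component of a run depends on the order in which transitions are traversed through the $\oX$-twist, not merely on the Parikh image of the accepting word: two accepting runs using the same multiset of transitions in different orders can produce different elements of $\mY$. Overcoming this combinatorial--algebraic gap is exactly what the $\mA$-graph and partial contractions are designed for, namely to rearrange cycles in controlled ways so that each rearrangement translates into an explicit module-theoretic identity in $\mY$, letting the infinitely many accepting runs collapse to a polytope-and-module description on which the symmetry condition becomes algorithmically checkable.
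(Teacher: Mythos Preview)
Your proposal has a genuine gap at its core step. You claim that each fiber $F_A = \{Y \in \mY : (Y,A) \in \ev(\mA)\}$ can be written as a finite union of cosets of finitely generated $\Z[\oX^{\pm}]$-submodules of $\mY$, but you offer no argument for this, and there is strong reason to doubt it in general. Rational subsets of metabelian groups need not decompose so tamely: Semigroup Membership is already undecidable in closely related solvable matrix groups, which signals that the fibers of a rational semigroup over its abelian quotient can encode information far more complex than finitely presented module data. Even granting your fiber description, you would still need to reduce the infinitely many symmetry checks $F_{-A} = -\oX^{-A}\cdot F_A$ (one per $A \in L$) to finitely many, and you do not indicate how.

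More fundamentally, your plan tries to \emph{describe} $\ev(\mA)$ and then test closure under inverses; the paper deliberately avoids this. Its route is: (i) $\ev(\mA)$ is a group iff $\mA$ admits an \emph{Identity Traversal}, an accepting run using every transition and evaluating to the identity; (ii) such a traversal exists iff there is a full-image, symmetric, \emph{face-accessible} $\mA$-graph representing $0$, where an $\mA$-graph has vertices in the lattice $\{b_1,\ldots,b_s\}\times\Z^n \subset \Z^{s+n}$ (not merely on the states of $\mA$), and face-accessibility is a convex-geometric condition on the polytope spanned by its vertices; (iii) this existence question is encoded by \emph{position polynomials} in $\N[\oX^{\pm}]^t$ and decided via a local-global principle over $\R[\oX^{\pm}]$. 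The \emph{partial contractions} in the paper are not subwalk-shortening moves as you describe: they are contractions of several state-lattices $\Lambda_i$ into a single $\Z^n$ along a spanning tree of a subautomaton, used solely to translate face-accessibility into weighted-degree conditions on the position polynomials. So both of the paper's named tools are doing something quite different from what you envision, and the argument never attempts any structural description of the fibers $F_A$.
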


Theorem~\ref{thm:ratmeta} is highly non-trivial and is our second main technical contribution.
Its proof is given in Section~\ref{sec:meta}.
Theorem~\ref{thm:ratmeta} is a generalization of~\cite[Theorem~1.1]{dong2024semigroup}, which proves the decidability result for \emph{finitely generated} subsemigroups of $\mY \rtimes \Z^n$.

Summarizing all the above results, we obtain a proof of Theorem~\ref{thm:main}:
\begin{proof}[Proof of Theorem~\ref{thm:main}]
    By Lemma~\ref{lem:subsume}, it suffices to decide the Group Problem.
    Conjugating $A_1, \ldots, A_m$ by the element $g$ computed in Theorem~\ref{thm:effectiveTits}, we can suppose $G \coloneqq \gen{A_1, \ldots, A_m}_{\grp}$ admits a finite index normal subgroup $T \leq \T(d, \K)$ for some algebraic number field $\K$.
    Then, Lemma~\ref{lem:virtorat} shows that $\gen{A_1, \ldots, A_m}$ is a group if and only if the rational semigroup $S = \gen{A_1, \ldots, A_m} \cap T$ is a group, and Lemma~\ref{lem:prim} shows that we can replace $T$ with its subgroup $\gen{S}_{\grp}$.
    Then, by Corollary~\ref{cor:solvsat}, Lemma~\ref{lem:effective} and Lemma~\ref{lem:addH}, 
    the rational semigroup $S$ is a group if and only if $\varphi(\oS)H$ is a group.
    Furthermore, one can construct an automaton $\mA$ over $\mY \rtimes \Z^n$ that recognizes $\varphi(\oS)H$.
    It is then decidable by Theorem~\ref{thm:ratmeta} whether $\varphi(\oS)H$ is a group.
\end{proof}

\section{Rational subsemigroups of metabelian groups}\label{sec:meta}
In this section we prove Theorem~\ref{thm:ratmeta}.
Suppose the automaton $\mA$ over $\mY \rtimes \Z^n$ has states $q_1, \ldots, q_s$ and transitions $\delta_1, \ldots, \delta_t$.
Without loss of generality we can suppose $\mA$ to be trim.
For each transition $\delta_{\ell}, \ell = 1, \ldots, t$, denote its evaluation by $(y_{\ell}, a_{\ell}) \in \mY \rtimes \Z^n$.
Our first step of deciding whether $\ev(\mA)$ is a group is to reduce it to finding an \emph{Identity Traversal} of a \emph{primitive automaton}.

Given an automaton $\mA$ over $\mY \rtimes \Z^n$, we define a new automaton $\mA^{\pm}$ as follows: the states of $\mA^{\pm}$ are the same as $\mA$, and the transitions of $\mA^{\pm}$ are $\delta_1, \ldots, \delta_t, \delta_1^{-}, \ldots, \delta_t^{-}$.
Here, $\delta_{\ell}^{-}$ is a transition from the destination of $\delta_{\ell}$ to the origin of $\delta_{\ell}$, with evaluation $\ev(\delta_{\ell}^{-}) \coloneqq (y_{\ell}, a_{\ell})^{-1}$.
We say that $\mA$ is \emph{primitive}, if the image of $\ev(\mA^{\pm})$ under the projection $\mY \rtimes \Z^n \rightarrow \Z^n$ is $\Z^n$.

\begin{restatable}{lemma}{lemprimitive}\label{lem:primitive}
    Suppose we are given a trim automaton $\mA$ over $\mY \rtimes \Z^n$.
    One can compute $\tn \in \N$, a finitely presented $\Z[\widetilde{X_1}^{\pm}, \ldots, \widetilde{X_{\tn}}^{\pm}]$-module $\widetilde{\mY}$, and a primitive trim automaton $\widetilde{\mA}$ over $\widetilde{\mY} \rtimes \Z^{\tn}$, such that $\ev(\mA)$ is a group if and only if $\ev(\widetilde{\mA})$ is a group.
\end{restatable}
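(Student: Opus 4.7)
The plan is to shrink the $\Z^n$-component of the automaton to match the image of $\ev(\mA^{\pm})$ under the projection $\mY \rtimes \Z^n \to \Z^n$. Observe first that $\ev(\mA^{\pm})$ is a subgroup of $\mY \rtimes \Z^n$: reversing an accepting run of $\mA^{\pm}$ (reading it backwards and swapping each $\delta_\ell^\epsilon$ for $\delta_\ell^{-\epsilon}$) gives another accepting run whose evaluation is the inverse. Hence the image $L' \subseteq \Z^n$ is a subgroup, of some rank $\tn$; the aim is to realize this $L'$ as the new $\Z^{\tn}$ after a suitable change of coordinates.

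Since $\mA$ is trim, the underlying graph of $\mA^{\pm}$ is strongly connected. I fix a spanning tree rooted at $q_1$ and, for each state $q_i$, let $v_i \in \Z^n$ be the sum of edge weights along the tree path from $q_1$ to $q_i$ (so $v_1 = 0$). For each $\ell$, set $a_\ell^{\mathrm{rel}} \coloneqq v_{\Omega(\ell)} + a_\ell - v_{\Delta(\ell)}$. These are precisely the fundamental-cycle totals of the weighted graph $\mA^{\pm}$ with respect to the chosen tree, so $L' = \gen{a_\ell^{\mathrm{rel}} : \ell = 1, \ldots, t}_{\grp}$. A direct calculation in the semidirect product shows that conjugating the evaluation of $\delta_\ell$ by $(0, -v_{\Omega(\ell)})$ on the left and $(0, -v_{\Delta(\ell)})$ on the right replaces $(y_\ell, a_\ell)$ by $(\oX^{v_{\Omega(\ell)}} y_\ell, a_\ell^{\mathrm{rel}})$; since $v_1 = 0$, this conjugation does not alter $\ev(\mA)$.

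Now I reduce: use Hermite normal form to compute a $\Z$-basis $b_1, \ldots, b_{\tn}$ of $L'$, and express $a_\ell^{\mathrm{rel}} = \sum_{i=1}^{\tn} c_{\ell,i} b_i$; set $\ta_\ell \coloneqq (c_{\ell,1}, \ldots, c_{\ell,\tn}) \in \Z^{\tn}$. By the choice of basis, $\{\ta_1, \ldots, \ta_t\}$ generates $\Z^{\tn}$ as a group. Set $R \coloneqq \Z[\widetilde{X_1}^{\pm}, \ldots, \widetilde{X_{\tn}}^{\pm}]$ and define the ring embedding $\psi \colon R \hookrightarrow \Z[\oX^{\pm}]$ by $\widetilde{X_i} \mapsto \oX^{b_i}$; via $\psi$ the module $\mY$ becomes an $R$-module by restriction of scalars, and let $\widetilde{\mY}$ be the $R$-submodule of $\mY$ generated by $\oX^{v_{\Omega(1)}} y_1, \ldots, \oX^{v_{\Omega(t)}} y_t$. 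Since $R$ is Noetherian and $\widetilde{\mY}$ is finitely $R$-generated, it is automatically finitely presented over $R$. Define $\widetilde{\mA}$ with the same states and underlying graph as $\mA$, giving the transition corresponding to $\delta_\ell$ the evaluation $(\oX^{v_{\Omega(\ell)}} y_\ell, \ta_\ell) \in \widetilde{\mY} \rtimes \Z^{\tn}$. This $\widetilde{\mA}$ is trim, and it is primitive since the image of $\ev(\widetilde{\mA}^{\pm}) \to \Z^{\tn}$ equals $\gen{\ta_\ell}_{\grp} = \Z^{\tn}$. The map $\Phi \colon \widetilde{\mY} \rtimes \Z^{\tn} \hookrightarrow \mY \rtimes \Z^n$, $\Phi(y, \ta) \coloneqq \bigl(y, \sum_i \ta_i b_i\bigr)$, is an injective group homomorphism (the $R$-action on $\widetilde{\mY}$ is, by construction, the restriction via $\psi$ of the $\Z[\oX^{\pm}]$-action on $\mY$) that sends each transition evaluation of $\widetilde{\mA}$ to the corresponding conjugated evaluation in $\mA$. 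Hence $\Phi$ restricts to a bijection $\ev(\widetilde{\mA}) \to \ev(\mA)$, and being a group transfers across this injection.

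The main obstacle is to make the construction of $\widetilde{\mY}$ \emph{effective}, since the reduction to primitivity and the choice of basis of $L'$ are elementary linear algebra over $\Z$. Writing $\mY = \Z[\oX^{\pm}]^d / N$ with the given finite presentation and fixing lifts $\mathbf{z}_1, \ldots, \mathbf{z}_t \in \Z[\oX^{\pm}]^d$ of $\oX^{v_{\Omega(\ell)}} y_\ell$, one has $\widetilde{\mY} \cong R^t / K$, where
\[ K = \left\{(p_1, \ldots, p_t) \in R^t \;\middle|\; \sum_{\ell=1}^t \psi(p_\ell)\, \mathbf{z}_\ell \in N \right\}. \]
Computing a finite generating set for $K$ amounts to first computing the $\Z[\oX^{\pm}]$-module of syzygies of $\mathbf{z}_1, \ldots, \mathbf{z}_t$ modulo $N$ inside $\Z[\oX^{\pm}]^t$, and then intersecting it with the subring $R^t \subseteq \Z[\oX^{\pm}]^t$; both are standard Gr\"obner basis computations in Laurent polynomial rings over $\Z$, and this elimination step is where essentially all the algorithmic weight of the lemma resides.
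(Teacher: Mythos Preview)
Your argument is essentially the paper's: re-base each transition by path elements to push all $\Z^n$-components into $L'=\pi(\ev(\mA^{\pm}))$, then restrict scalars along a basis of $L'$; the paper simply cites Baumslag--Cannonito--Miller for the effective presentation of $\widetilde{\mY}$ where you sketch the Gr\"obner/elimination route. One slip: multiplying by $(0,-v_{\Omega(\ell)})$ on the left and $(0,-v_{\Delta(\ell)})$ on the right does \emph{not} yield $(\oX^{v_{\Omega(\ell)}} y_\ell,\,a_\ell^{\mathrm{rel}})$ --- you need $(0,v_{\Omega(\ell)})$ on the left and $(0,-v_{\Delta(\ell)})$ on the right --- but the intended construction and the resulting formula are correct.
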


By Lemma~\ref{lem:primitive}, throughout this section we can without loss of generality suppose $\mA$ to be trim and primitive by replacing it with $\widetilde{\mA}$.
We define an \emph{accepting traversal} of $\mA$ to be an accepting run that uses every transition at least once.
We define an \emph{Identity Traversal} of $\mA$ to be an accepting traversal whose evaluation is the neutral element $(0, 0^n) \in \mY \rtimes \Z^n$.

\begin{restatable}{proposition}{proptraverse}\label{prop:grptotraverse}
    The semigroup $\ev(\mA)$ is a group if and only if $\mA$ admits an Identity Traversal.
\end{restatable}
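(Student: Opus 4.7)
\medskip

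The plan is to prove both directions by explicit path constructions, in each case building a single accepting run from given data.

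The easy direction is $(\Rightarrow)$: assume $\ev(\mA)$ is a group. Since $\mA$ is trim, every transition $\delta_\ell$ lies in some accepting run $r_\ell$. Because $\ev(\mA)$ is a group, $\ev(r_\ell)^{-1} \in \ev(\mA)$, so there exists an accepting run $s_\ell$ with $\ev(s_\ell) = \ev(r_\ell)^{-1}$. The concatenation $r_1 s_1 r_2 s_2 \cdots r_t s_t$ is then well-defined (each block begins and ends at $q_1$), it is an accepting run, its evaluation telescopes to the identity $(0,0^n)$, and it uses every transition $\delta_\ell$ (inside the block $r_\ell$). Hence it is an Identity Traversal.

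For the harder direction $(\Leftarrow)$, assume $\mA$ has an Identity Traversal $w$; since $\ev(w)=(0,0^n) \in \ev(\mA)$, it suffices to show $\ev(\mA)$ is closed under inverses. Let $v = \delta_{i_1} \delta_{i_2} \cdots \delta_{i_p}$ be an arbitrary accepting run. For each index $j \in \{1,\ldots,p\}$, fix one occurrence of $\delta_{i_j}$ inside $w$, yielding a factorisation $w = u_j\, \delta_{i_j}\, v_j$, where $u_j$ is a (possibly empty) path from $q_1$ to $\Omega(i_j)$ and $v_j$ is a (possibly empty) path from $\Delta(i_j)$ to $q_1$. Since $\ev(w)=(0,0^n)$, working inside the group $\mY \rtimes \Z^n$ gives
\[
\ev(\delta_{i_j})^{-1} \;=\; \ev(v_j)\,\ev(u_j).
\]
Now form the concatenation
\[
w' \;\coloneqq\; w \cdot (v_p u_p) \cdot (v_{p-1} u_{p-1}) \cdots (v_1 u_1).
\]
Prepending $w$ guarantees $w'$ is non-empty even in degenerate cases when all factors $v_j u_j$ are empty. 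The path $w'$ is valid: $w$ ends at $q_1 = \Delta(i_p)$ so it connects to $v_p u_p$; each block $v_j u_j$ goes from $\Delta(i_j)$ to $\Omega(i_j)$; the accepting-run property of $v$ gives $\Omega(i_j) = \Delta(i_{j-1})$, so consecutive blocks compose; and the last block ends at $\Omega(i_1) = q_1$. Thus $w'$ is an accepting run, and its evaluation is
\[
\ev(w') \;=\; \ev(w) \cdot \ev(\delta_{i_p})^{-1} \cdots \ev(\delta_{i_1})^{-1} \;=\; \ev(v)^{-1},
\]
so $\ev(v)^{-1} \in \ev(\mA)$, completing the proof.

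The main potential obstacle is purely bookkeeping: verifying that the states line up correctly across concatenations, especially when some $u_j$ or $v_j$ is empty (which happens when $\delta_{i_j}$ is the first or last transition of $w$). This is resolved by (i) observing that the empty sequence is still a legitimate path from a state to itself, and (ii) the prepended copy of $w$ in the $(\Leftarrow)$ direction ensures $w'$ is a genuine non-empty accepting run even in degenerate cases where $v$ consists entirely of self-loops at $q_1$.
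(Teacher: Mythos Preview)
Your proof is correct and follows essentially the same approach as the paper's: both directions use the same factorisation idea $w = u_j \delta_{i_j} v_j$ with $\ev(v_j u_j) = \ev(\delta_{i_j})^{-1}$ for $(\Leftarrow)$, and both build a traversal from per-transition accepting runs for $(\Rightarrow)$. Your version is in fact slightly more careful than the paper's, since prepending the copy of $w$ in the $(\Leftarrow)$ direction cleanly handles the degenerate case where every $v_j u_j$ is empty (which the paper glosses over).
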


\subsection{From Identity Traversal to Eulerian and face-accessible $\mA$-graphs}\label{subsec:ITtoAG}

Generalizing the notion of $\mG$-graphs from~\cite{dong2024semigroup}, we define the notion of an \emph{$\mA$-graph}.
For each state $q_i$ of $\mA$, we assign to it a lattice $\Lambda_i \cong \Z^n$.
We consider the disjoint union $\Lambda \coloneqq \Lambda_1 \sqcup \cdots \sqcup \Lambda_s$ as a subset of a larger lattice $\Z^{s+n}$ in the following way.
Let $(b_1, 0^n), \ldots, (b_s, 0^n), (0^s, d_1), \ldots, (0^s, d_n)$ be the natural basis of $\Z^{s+n}$.
Here, $b_i \in \Z^s$ is the vector with $1$ on the $i$-th coordinate and $0$ elsewhere, and $d_j \in \Z^n$ is the vector with $1$ on the $j$-th coordinate and $0$ elsewhere.
For $i = 1, \ldots, s$, we identify $\Lambda_i$ with $\{b_i\} \times \Z^n$ by the map $z \mapsto (b_i, z)$.
See Figure~\ref{fig:3D} for an illustration.
Thus, a vertex $v$ in $\Lambda \subset \Z^{s+n}$ is always denoted by a pair $(b_i, z) \in \{b_1, \ldots, b_s\} \times \Z^n$, where the index $i$ signifies that $v$ is in the lattice $\Lambda_i$, and $z$ is the coordinate of $v$ within $\Lambda_i \cong \Z^n$.

\begin{figure}[ht!]
    \centering
    \begin{minipage}[t]{.31\textwidth}
        \centering
        \includegraphics[width=\textwidth,height=1.0\textheight,keepaspectratio, trim={7cm 0cm 3cm 0cm},clip]{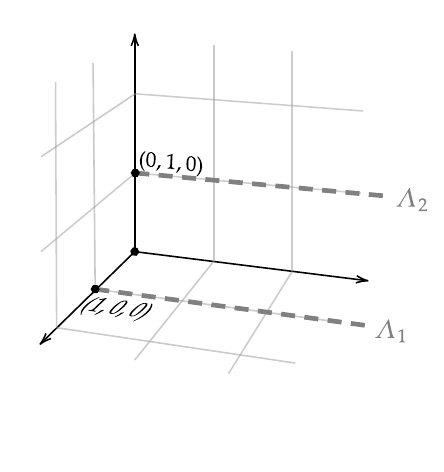}
        \caption{The disjoint union $\Lambda = \Lambda_1 \sqcup \Lambda_2$ as a subset of $\Z^{s+n}$, where $s = 2, n = 1$.}
        \label{fig:3D}
    \end{minipage}
    \hfill
    \begin{minipage}[t]{.32\textwidth}
        \centering
        \includegraphics[width=1\textwidth,height=1.0\textheight,keepaspectratio, trim={6.5cm 0.5cm 5cm 0.5cm},clip]{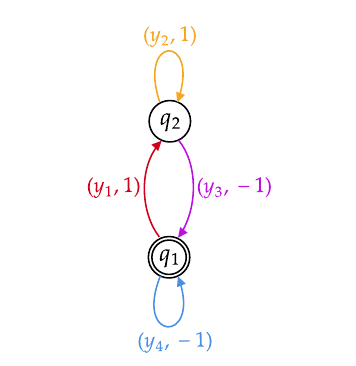}
        \caption{An automaton $\mA$, where 
        $a_1 = 1$, $a_2 = 1$, $a_3 = -1$, $a_4 = -1$.
        }
        \label{fig:A}
    \end{minipage}
    \hfill
    \begin{minipage}[t]{0.30\textwidth}
        \centering
        \includegraphics[width=1\textwidth,height=1.0\textheight,keepaspectratio, trim={7.4cm -1.5cm 3.7cm 0cm},clip]{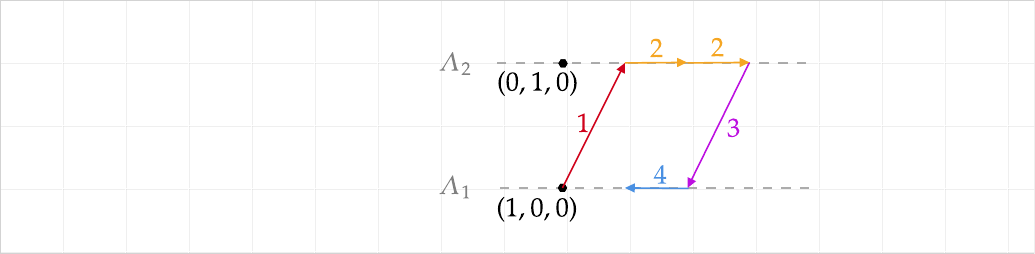}
        \caption{The $\mA$-graph $\Gamma(w)$ associated to the accepting run $w = \delta_1 \delta_2 \delta_2 \delta_3 \delta_4$.}
        \label{fig:Gw}
    \end{minipage}
\end{figure}

\begin{definition}[$\mA$-graphs]
An \emph{$\mA$-graph} is a directed multigraph $\Gamma$, whose set of vertices is a finite subset of $\Lambda$. The edges of $\Gamma$ are each labeled with an index in $\{1, \ldots, t\}$.
Furthermore, each edge with label $\ell$ connects from some vertex $(b_{\Omega(\ell)}, z), z \in \Z^n$ to the vertex $(b_{\Delta(\ell)}, z + a_{\ell})$.
Recall that $\Omega(\ell)$ and $\Delta(\ell)$ are the indices of the origin and destination states of the transition $\delta_{\ell}$, and $a_{\ell} \in \Z^n$ is such that $\ev(\delta_{\ell}) = (y_{\ell}, a_{\ell})$.
In other words, an edge with label $\ell$ connects from $\Lambda_{\Omega(\ell)}$ to $\Lambda_{\Delta(\ell)}$,
the $\Z^n$-coordinates of the target and source of the edge differ by $a_{\ell}$.\footnote{When $s = 1$, i.e.\ when $\ev(\mA)$ is finitely generated, $\mA$-graphs are equivalent to the $\mG$-graphs defined in~\cite{dong2024semigroup}.}
\end{definition}

For an $\mA$-graph $\Gamma$, denote by $V(\Gamma)$ its set of vertices, and by $E(\Gamma)$ its set of edges.
For an edge $e \in E(\Gamma)$, we denote its source vertex by $\sigma(e)$ and its target vertex by $\tau(e)$.

For an accepting run $w$ of $\mA$, we associate to it a unique $\mA$-graph $\Gamma(w)$, defined as follows.
Write $w = \delta_{\ell_1} \cdots \delta_{\ell_m}$. For each $j = 1, \ldots, m$, we add an edge starting at the vertex $(b_{\Omega(\ell_j)}, a_{\ell_1} + \cdots + a_{\ell_{j-1}})$, ending at the vertex $(b_{\Delta(\ell_j)}, a_{\ell_1} + \cdots + a_{\ell_{j}})$, with the label $\ell_j$. For $j = 1$, the edge starts at $(b_1, 0^n)$. See Figure~\ref{fig:A} and \ref{fig:Gw} for an illustration.

\begin{definition}[Element represented by an $\mA$-graph]
For an edge $e$ in an $\mA$-graph $\Gamma$, denote by $\lambda(e)$ its label.
Let $\pi_{\Z^n} \colon \Z^{s+n} \rightarrow \Z^n$ denote the projection $(b, z) \mapsto z$.
The element in $\mY$ \emph{represented} by an edge $e$ is defined as $\oX^{\pi_{\Z^n}(\sigma(e))} \cdot y_{\lambda(e)}$. (For example in Figure~\ref{fig:Gw}, the edge with label $4$ represents the element $X_1^2 \cdot y_4 \in \mY$.)
The element \emph{represented} by an $\mA$-graph is defined as $\sum_{e \in E(\Gamma)} \oX^{\pi_{\Z^n}(\sigma(e))} y_{\lambda(e)}$, the sum of all the elements represented by its edges.
By direct computation, if $w$ is an accepting run of $\mA$ with evaluation $(y, z) \in \mY \rtimes \Z^n$, then $y$ is the element represented by the associated graph $\Gamma(w)$, and $z$ is the sum $\sum_{e \in E(\Gamma)} a_{\lambda(e)}$.
\end{definition}

Given an $\mA$-graph $\Gamma$ and a vector $z \in \Z^n$, we define the translation $\Gamma + (0^s, z)$ to be a copy of $\Gamma$ where each vertex and edge is moved by the vector $(0^s, z)$.
Note that if $\Gamma$ represents $y \in \mY$, then $\Gamma + (0^s, z)$ represents $\oX^{z} \cdot y$.
We call an $\mA$-graph \emph{full-image} if it contains at least one edge of label $\ell$ for each $\ell \in \{1, \ldots, t\}$.
Let $\Gamma$ be a full-image Eulerian $\mA$-graph, then it has a translation $\Gamma + (0^s, z)$ that contains the vertex $(b_1, 0^n)$.
By reading the labels on its Eulerian circuit starting from $(b_1, 0^n)$, we obtain an accepting run $w$ of $\mA$ such that $\Gamma(w) = \Gamma + (0^s, z)$.
We can then show:

\begin{restatable}{lemma}{lemgrp}\label{lem:grptoeul}
There exists an Identity Traversal of $\mA$ if and only if there exists a full-image Eulerian $\mA$-graph that represents $0$.
\end{restatable}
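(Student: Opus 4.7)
The plan is to set up a bijection between Identity Traversals and full-image Eulerian $\mA$-graphs representing $0$ (together with a distinguished ``starting'' vertex $(b_1,0^n)$), exploiting the correspondence $w\mapsto\Gamma(w)$ already introduced in the text. For the ``only if'' direction, I would take an Identity Traversal $w$ and verify each of the three properties of $\Gamma(w)$ in turn: full-imageness is immediate because $w$ uses every transition at least once and each step contributes an edge of the same label to $\Gamma(w)$; Eulerianness follows because $w$ itself traverses each edge of $\Gamma(w)$ exactly once and, since $\ev(w)=(0,0^n)$, closes up at its starting vertex $(b_1,0^n)$, so $w$ is a closed Eulerian circuit of $\Gamma(w)$; representation of $0$ holds because, by the identity relating a run's evaluation to its graph recorded in the definition above, the element represented by $\Gamma(w)$ equals the $\mY$-component of $\ev(w)$, which is $0$. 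This direction is essentially bookkeeping.

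The ``if'' direction is the substantive one. Given a full-image Eulerian $\mA$-graph $\Gamma$ representing $0$, I would first produce a translation $\Gamma'\coloneqq\Gamma+(0^s,-w_0)$ that contains the mandatory starting vertex $(b_1,0^n)$. To find a suitable $w_0$, I would invoke trimness of $\mA$: state $q_1$ is the origin of some transition $\delta_\ell$, and by full-imageness $\Gamma$ contains an edge labeled $\ell$, whose source necessarily lies in $\{b_1\}\times\Z^n$, say at $(b_1,w_0)$. The translated graph $\Gamma'$ remains full-image and Eulerian, and still represents $0$ because the represented element scales by $\oX^{-w_0}$ under translation and $\oX^{-w_0}\cdot 0=0$. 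Next, I would invoke the standard graph-theoretic fact that a weakly connected directed multigraph with balanced in- and out-degrees admits a closed Eulerian circuit starting from any prescribed non-isolated vertex, applied at $(b_1,0^n)$. Reading off the labels of this circuit yields an accepting run $w$ of $\mA$ with $\Gamma(w)=\Gamma'$. Finally, $w$ is an Identity Traversal because it uses every transition at least once (by full-imageness of $\Gamma'$), and $\ev(w)=(0,0^n)$: the $\mY$-component equals the element represented by $\Gamma'$, which is $0$, and the $\Z^n$-component $\sum_{e\in E(\Gamma')}a_{\lambda(e)}$ vanishes because it is the total displacement along a closed walk.

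The main subtlety is the alignment step in the backward direction: one must shift an abstract $\mA$-graph so that the forced accepting vertex $(b_1,0^n)$ lies on it while simultaneously preserving the ``represents $0$'' property. This works precisely because the module action fixes $0$ under all monomials, and because trimness of $\mA$ guarantees the existence of a $b_1$-type vertex in any full-image graph (without trimness, one could have a full-image graph none of whose edges touches $\Lambda_1$). After this alignment, the argument reduces to a clean dictionary between closed Eulerian walks and their label sequences.
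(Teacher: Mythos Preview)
Your proposal is correct and follows essentially the same approach as the paper's proof: construct $\Gamma(w)$ from an Identity Traversal in one direction, and read off an Eulerian circuit from a translated $\mA$-graph in the other. You are in fact slightly more careful than the paper in the backward direction, explicitly invoking trimness and full-imageness to justify that $\Gamma$ has a vertex in $\Lambda_1$ (so that a translation landing on $(b_1,0^n)$ exists); the paper simply asserts that such a translation can be taken.
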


Our next step is to replace the Eulerian property in Lemma~\ref{lem:grptoeul} by a more ``geometric'' property.
For this, we generalize the notion of \emph{face-accessibility} introduced in~\cite{dong2024semigroup}.
Let $C$ be a (closed) convex polytope. A \emph{face} $F$ of $C$ is the intersection of $C$ with any closed halfspace whose boundary is disjoint from the interior of $C$.
A \emph{strict face} is a face of $C$ that is not the empty set or $C$ itself.
For example, if $C$ is of dimension two, then the strict faces of $C$ are its edges and its vertices.

\begin{definition}[face-accessibility of an $\mA$-graph]
    Let $\Gamma$ be an $\mA$-graph.
    Denote by $C \subsetneq \R^{s+n}$ the convex hull of $V(\Gamma)$.
    A strict face $F$ of $C$ is called \emph{accessible} if there is an edge $e \in E(\Gamma)$ such that $\sigma(e) \in F$ and $\tau(e) \in C \setminus F$.
    The $\mA$-graph $\Gamma$ is called \emph{face-accessible} if every strict face of $C$ is accessible.
    See Figures~\ref{fig:notaccessible}, \ref{fig:notaccessible2} and \ref{fig:accessible} for examples.\footnote{When $s=1$, face-accessibility of an $\mA$-graph is equivalent to face-accessibility of a $\mG$-graph defined in~\cite{dong2024semigroup}.}
\end{definition}

\begin{figure}[ht!]
    \centering
    \begin{minipage}[t]{.47\textwidth}
        \centering
        \includegraphics[width=0.55\textwidth,height=1.0\textheight,keepaspectratio, trim={7.4cm 0.05cm 4.7cm 0cm},clip]{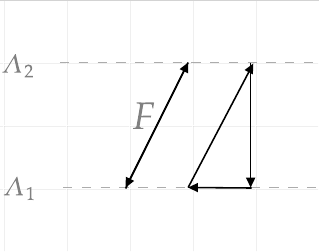}
        \caption{Example of a non face-accessible graph: the strict face $F$ not accessible.}
        \label{fig:notaccessible}
    \end{minipage}
    \hfill
    \begin{minipage}[t]{0.47\textwidth}
        \centering
        \includegraphics[width=0.55\textwidth,height=1.0\textheight,keepaspectratio, trim={7.4cm 0.75cm 3.7cm 0.75cm},clip]{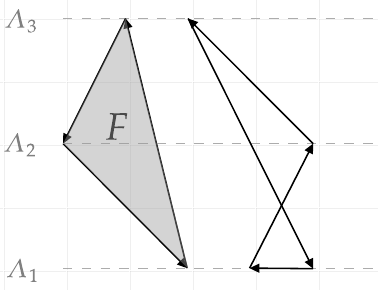}
        \caption{A non face-accessible graph: $F$ is not accessible. Note that $\Lambda_1, \Lambda_2, \Lambda_3$ are not in the same plane, despite appearing so in the figure.}
        \label{fig:notaccessible2}
    \end{minipage}
    
    \vspace{0.2cm}
    \centering
    \begin{minipage}[t]{.47\textwidth}
        \centering
        \includegraphics[width=0.55\textwidth,height=1.0\textheight,keepaspectratio, trim={7.4cm 0.05cm 4.7cm 0cm},clip]{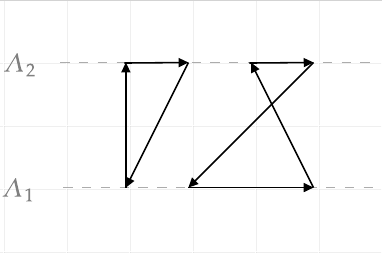}
        \caption{A face-accessible graph $\Gamma$ that is not Eulerian due to disconnectivity.}
        \label{fig:accessible}
    \end{minipage}
    \hfill
    \begin{minipage}[t]{0.47\textwidth}
        \centering
        \includegraphics[width=0.55\textwidth,height=1.0\textheight,keepaspectratio, trim={7.4cm 0.05cm 3.7cm 0cm},clip]{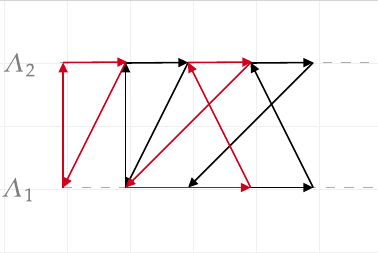}
        \caption{The union $\hG$ of two translations of $\Gamma$ drawn respectively in black and red.}
        \label{fig:trans}
    \end{minipage}
\end{figure}

Recall that a directed graph $\Gamma$ is called \emph{symmetric} if the in-degree is equal to the out-degree at every vertex.
An Eulerian graph is symmetric and face-accessible. While symmetric face-accessible graphs are not necessarily Eulerian, we show that they can be used to construct Eulerian graphs:

\begin{restatable}{theorem}{thmacctoeul}\label{thm:acctocon}
Suppose $\mA$ is trim and primitive.
Let $\Gamma$ be a full-image, symmetric and face-accessible $\mA$-graph.
Then there exist $z_1, \ldots, z_m \in \Z^n$, such that the union of translations $\hG \coloneqq \bigcup_{i = 1}^m \Gamma + (0^s, z_i) $ is an Eulerian graph.
\end{restatable}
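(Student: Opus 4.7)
Each translate $\Gamma + (0^s, z_i)$ is symmetric, and a multigraph union of symmetric directed graphs is again symmetric, so $\hG$ is automatically symmetric for any choice of translations. Since a symmetric directed multigraph is Eulerian if and only if its underlying undirected graph is connected on its non-isolated vertices, the entire task reduces to choosing $z_1, \ldots, z_m$ so that $\hG$ is connected.

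The plan is to first study the infinite graph $\Gamma^{\infty} \coloneqq \bigcup_{z \in \Z^n} (\Gamma + (0^s, z))$, on which $\Z^n$ acts by translation and permutes the connected components. The central claim is that, under the hypotheses (full-image, symmetric, and face-accessible $\Gamma$, with $\mA$ trim and primitive), the components of $\Gamma^{\infty}$ form a single $\Z^n$-orbit. Granting this, a compactness argument will produce a finite set of translations $z_1, \ldots, z_m \in \Z^n$ (e.g., all translations in a sufficiently large box of $\Z^n$) for which the finite union $\hG$ realizes enough overlaps between translates of $\Gamma$ to be connected.

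To prove the single-orbit claim, suppose for contradiction that $\Gamma^{\infty}$ decomposes as two disjoint $\Z^n$-invariant pieces $\mathcal{P}_1, \mathcal{P}_2$. By a convex-geometric argument based on the finite convex hull $C = \conv(V(\Gamma)) \subset \R^{s+n}$ and its supporting hyperplanes, we plan to locate a strict face $F$ of $C$ whose vertices lie in (say) $\mathcal{P}_1$ while $V(\Gamma) \setminus F$ meets $\mathcal{P}_2$. Face-accessibility then produces an edge $e \in E(\Gamma)$ with $\sigma(e) \in F$ and $\tau(e) \in C \setminus F$; together with primitivity of $\mA$, which allows us to translate the picture so that $\tau(e)$ lands inside $\mathcal{P}_2$, this edge witnesses a connection between $\mathcal{P}_1$ and $\mathcal{P}_2$, contradicting their disjointness.

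\textbf{Main obstacle.} The subtlest step is matching the finite convex hull face $F \subset C$ with the infinite $\Z^n$-invariant separation $\mathcal{P}_1 \sqcup \mathcal{P}_2$. Concretely, one needs a separating linear functional on $\R^{s+n}$ that is simultaneously a supporting functional for some face of $C$ (a finite property of $\Gamma$) and compatible with $\Z^n$-invariance of the $\mathcal{P}_i$ (an infinite property of $\Gamma^{\infty}$). Working out this correspondence, and using primitivity to preclude the degenerate case where the separating functional vanishes on $\{0\}^s \times \Z^n$ (so that translations fail to cross $F$), is expected to form the technical core of the proof. Once this is set up, the passage from the infinite $\Gamma^{\infty}$ to a finite Eulerian union $\hG$ is a routine compactness argument using finiteness of $V(\Gamma)$.
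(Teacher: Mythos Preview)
Your plan mislocates the difficulty. The ``single $\Z^n$-orbit'' claim is in fact trivial and uses neither face-accessibility nor primitivity: the $\Z^n$-orbits of vertices in $\Gamma^{\infty}$ are exactly the lattices $\Lambda_i$, so any $\Z^n$-invariant clopen partition $\mathcal{P}_1\sqcup\mathcal{P}_2$ is of the form $\mathcal{P}_j=\bigcup_{i\in S_j}\Lambda_i$ for a partition $\{1,\dots,s\}=S_1\sqcup S_2$. Since $\Gamma$ is full-image, every transition label appears as an edge, so the absence of edges between $\mathcal{P}_1$ and $\mathcal{P}_2$ would disconnect the state graph of $\mA$, contradicting trimness. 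Your proposed face argument for this step is also broken as written: the sets $\mathcal{P}_1,\mathcal{P}_2$ are $\Z^n$-invariant, so ``translating the picture so that $\tau(e)$ lands inside $\mathcal{P}_2$'' is impossible---translation preserves membership in each $\mathcal{P}_j$.

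The real gap is the step you call ``routine compactness.'' A single $\Z^n$-orbit of components does \emph{not} imply that any finite union of translates of $\Gamma$ is connected. Take $s=1$, $n=1$, one state, two transitions with $a_1=2$, $a_2=-2$, and $\Gamma$ the two-edge graph $0\to 2\to 0$. Then $\Gamma^{\infty}$ has exactly two components (even and odd integers) forming a single $\Z$-orbit, yet every translate $\Gamma+z$ lies entirely in one parity class, so no finite union is connected. This example violates primitivity, which is precisely the point: primitivity (together with face-accessibility) is what makes the passage from $\Gamma$ to a connected finite union possible, and that passage is the entire content of the theorem, not a compactness afterthought. Your compactness argument would go through if you knew $\Gamma^{\infty}$ were connected, but proving that is equivalent to the theorem itself.

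The paper's proof is constructive and quite different from your outline. It first forms the Minkowski sum $C=C_1+\cdots+C_s$ of the per-lattice convex hulls and shows (using face-accessibility in a delicate way, tracking paths that weave between the lattices $\Lambda_i$) that in $\Gamma^{C}\coloneqq\bigcup_{z\in C\cap\Z^n}\Gamma+(0^s,z)$ every strict face of each $\conv(V(\Gamma^C)\cap\Lambda_i)$ admits a path into its complement. This reduces the problem, lattice by lattice, to the $s=1$ situation already handled in~\cite{dong2024semigroup}, and a scaling argument (taking $\Gamma^{NC}$ for large $N$, using primitivity to build short paths realizing each basis direction of $\Z^n$) then shows that all vertices within each $\Lambda_i$ become connected; trimness finally links the different $\Lambda_i$.
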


See Figure~\ref{fig:trans} for an illustration of Theorem~\ref{thm:acctocon}.
The proof is given in Appendix~\ref{app:proofacctocon}.
When $s=1$, Theorem~\ref{thm:acctocon} degenerates into~\cite[Theorem~3.3]{dong2024semigroup}.
Proving the general statement with $s \geq 2$ is highly non-trivial.
It is crucial that $\Gamma$ is an $\mA$-graph (with vertices in $\Lambda$) instead of an arbitrary graph over $\Z^{s+n}$: the theorem is false for arbitrary graphs over $\Z^{s+n}$. Note that the face-accessibility condition is necessary: one can verify that taking a union of horizontal translations of the graphs in Figure~\ref{fig:notaccessible} or \ref{fig:notaccessible2} cannot produce a connected graph.
The primitiveness of $\mA$ is also necessary for the proof.
Lemma~\ref{lem:grptoeul} and Theorem~\ref{thm:acctocon} lead to:

\begin{restatable}{proposition}{propgtoe}\label{prop:grouptoeuler}
A trim primitive automaton $\mA$ admits an Identity Traversal if and only if there exists a full-image symmetric face-accessible $\mA$-graph that represents $0$.
\end{restatable}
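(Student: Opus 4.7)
The plan is to derive the proposition as a direct combination of Lemma~\ref{lem:grptoeul} and Theorem~\ref{thm:acctocon}, with the former providing the Eulerian characterization of Identity Traversals and the latter allowing the weakening from Eulerian to merely symmetric and face-accessible.

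For the forward direction, suppose $\mA$ admits an Identity Traversal. I would invoke Lemma~\ref{lem:grptoeul} to produce a full-image Eulerian $\mA$-graph $\Gamma$ representing $0$, and then verify that any Eulerian multigraph is automatically symmetric and face-accessible. Symmetry is standard: an Eulerian circuit enters each vertex as often as it leaves. For face-accessibility, I would argue that an Eulerian graph is connected on its non-isolated vertices, and that in any symmetric connected multigraph the number of edges crossing a nontrivial vertex cut (in each direction) is strictly positive. Applying this to the cut given by $F \cap V(\Gamma)$ versus $V(\Gamma) \setminus F$, for any strict face $F$ of the convex hull of $V(\Gamma)$, yields an edge from $F$ into $C \setminus F$, so $F$ is accessible.

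For the backward direction, given a full-image symmetric face-accessible $\mA$-graph $\Gamma$ representing $0$, I would apply Theorem~\ref{thm:acctocon} (whose hypotheses are satisfied because $\mA$ is trim and primitive) to obtain translations $z_1, \ldots, z_m \in \Z^n$ such that the multigraph union $\hG \coloneqq \bigcup_{i=1}^m \Gamma + (0^s, z_i)$ is Eulerian. Three properties of $\hG$ then need to be checked. First, $\hG$ is itself an $\mA$-graph, because translations by vectors of the form $(0^s, z)$ preserve the $\Lambda$-layering of vertices and the label-compatibility of edges. Second, $\hG$ is full-image since each translate $\Gamma + (0^s, z_i)$ already is. Third, the element represented by $\hG$ is $\sum_{i=1}^m \oX^{z_i} \cdot 0 = 0$, since the representation is defined as a sum over edges and a translation by $(0^s, z_i)$ multiplies the represented element by $\oX^{z_i}$. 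A final application of Lemma~\ref{lem:grptoeul} then extracts an Identity Traversal of $\mA$.

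The hard work is entirely absorbed into Theorem~\ref{thm:acctocon}, whose geometric content is what allows the passage from the local symmetric face-accessible condition on $\Gamma$ to the global Eulerian condition on $\hG$; once that result is accepted as a black box, the present proposition amounts to a short verification that the Eulerian characterization in Lemma~\ref{lem:grptoeul} survives intact under the relaxation.
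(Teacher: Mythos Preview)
Your proposal is correct and follows essentially the same approach as the paper's own proof: both directions combine Lemma~\ref{lem:grptoeul} with Theorem~\ref{thm:acctocon}, with the forward direction observing that an Eulerian $\mA$-graph is automatically symmetric and face-accessible, and the backward direction checking that the Eulerian union $\hG$ of translations is still full-image and represents $0$. Your write-up merely adds a bit more justification (the cut argument for face-accessibility) where the paper states the implication without elaboration.
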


\subsection{From face-accessible $\mA$-graphs to position polynomials via partial contractions}
Similar to~\cite{dong2024semigroup}, we now describe $\mA$-graphs using their \emph{position polynomials}. The difficulty here compared to~\cite{dong2024semigroup} is the characterization of face-accessibility. We overcome this by introducing the new notion of \emph{partial contractions}.

The \emph{position polynomials} of an $\mA$-graph is a tuple of polynomials $\bff = (f_1, \ldots, f_t) \in \N[\oX^{\pm}]^t$, where
\[
f_{\ell} \coloneqq \sum_{e \in E(\Gamma), \lambda(e) = \ell} \oX^{\pi_{\Z^n}(\sigma(e))}, \quad \ell = 1, \ldots, t.
\]
That is, $f_{\ell}$ is the sum of monomials $\oX^{z}$, where $z$ ranges over the $\Z^n$-coordinate of the source vertex of all edges of label $\ell$.
These polynomials have only non-negative coefficients, hence are in $\N[\oX^{\pm}]$.
For example, for the $\mA$-graph $\Gamma$ drawn in Figure~\ref{fig:Gw}, the position polynomials will be the four-tuple $(f_1, f_2, f_3, f_4)$, where $f_1 = 1, f_2 = X_1 + X_1^2, f_3 = X_1^3, f_4 = X_1^2$.

Conversely, given any tuple of polynomials $\bff = (f_1, \ldots, f_t) \in \N[\oX^{\pm}]^t$, one can construct a $\mG$-graph $\Gamma$ such that $\bff$ is exactly its tuple of position polynomials.
Indeed, for each monomial $c \oX^{z}$ appearing in $f_{\ell}$, we can draw $c \geq 1$ edges of label $\ell$ starting at vertex $(b_{\Omega(\ell)}, z)$.

In the wake of Proposition~\ref{prop:grouptoeuler}, we will characterize the following four properties of an $\mA$-graph $\Gamma$ using its position polynomials: (i) whether $\Gamma$ is full-image, (ii) whether $\Gamma$ is symmetric, (iii) whether $\Gamma$ represents $0 \in \mY$, (iv) whether $\Gamma$ is face-accessible.
Properties (i)-(iii) are easy to characterize:

\begin{restatable}{lemma}{lemthreetoeq}\label{lem:threetoeq}
Let $\Gamma$ be an $\mA$-graph with position polynomials $\bff = (f_1, \ldots, f_t) \in \N[\oX^{\pm}]^t$.
\begin{enumerate}[nosep, label ={\normalfont(\roman*)}]
    \item $\Gamma$ is full-image if and only if 
    $
        f_{\ell} \in \N[\oX^{\pm}]^* \coloneqq \N[\oX^{\pm}] \setminus \{0\}, \text{ for } \ell = 1, \ldots, t
    $.
    \item $\Gamma$ is symmetric if and only if for each $i = 1, \ldots, s$, we have
    $
        \sum_{\ell \colon \Omega(\ell) = i} f_{\ell} = \sum_{\ell \colon \Delta(\ell) = i} f_{\ell} \cdot \oX^{a_{\ell}}.
    $
    \item $\Gamma$ represents $0$ if and only if
    $
        \sum_{\ell = 1}^t f_{\ell} \cdot y_{\ell} = 0
    $.
\end{enumerate}
\end{restatable}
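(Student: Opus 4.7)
The plan is to unwind the definitions of $f_\ell$ and the three graph-theoretic properties, and reduce each equivalence to comparing coefficients of monomials. Since $f_\ell = \sum_{e \in E(\Gamma), \lambda(e)=\ell} \oX^{\pi_{\Z^n}(\sigma(e))}$ is by construction a sum of monomials with positive integer coefficients, one monomial per edge of label $\ell$, none of the three parts should require more than careful bookkeeping.

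For part (i), the observation is simply that $f_\ell$ lies in $\N[\oX^{\pm}]$ and is the zero polynomial if and only if there are no edges of label $\ell$ (no cancellations can occur among nonnegative terms). Thus $\Gamma$ is full-image if and only if $f_\ell \neq 0$ for every $\ell$.

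For part (ii), I fix a vertex $v = (b_i, z) \in \Lambda$ and tabulate the in- and out-degrees at $v$. An edge $e$ has $\sigma(e) = v$ exactly when $\lambda(e) = \ell$ with $\Omega(\ell) = i$ and $\pi_{\Z^n}(\sigma(e)) = z$, so the out-degree at $v$ equals the coefficient of $\oX^z$ in $\sum_{\ell \colon \Omega(\ell) = i} f_\ell$. Dually, $\tau(e) = v$ exactly when $\lambda(e) = \ell$ with $\Delta(\ell) = i$ and $\pi_{\Z^n}(\sigma(e)) = z - a_\ell$, so the in-degree at $v$ equals the coefficient of $\oX^z$ in $\sum_{\ell \colon \Delta(\ell) = i} f_\ell \cdot \oX^{a_\ell}$. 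Symmetry at every vertex of $\Lambda_i$ is therefore equivalent to the asserted polynomial identity (and vertices of $\Lambda_i$ not in $V(\Gamma)$ contribute zero on both sides, so the statement for all $z \in \Z^n$ is the correct condition).

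For part (iii), I expand the element of $\mY$ represented by $\Gamma$ and group by label:
\[
\sum_{e \in E(\Gamma)} \oX^{\pi_{\Z^n}(\sigma(e))} \cdot y_{\lambda(e)} \;=\; \sum_{\ell=1}^t \Bigl( \sum_{e \colon \lambda(e) = \ell} \oX^{\pi_{\Z^n}(\sigma(e))} \Bigr) \cdot y_\ell \;=\; \sum_{\ell=1}^t f_\ell \cdot y_\ell,
\]
so $\Gamma$ represents $0 \in \mY$ if and only if this sum vanishes. I do not anticipate any real obstacle: all three equivalences are purely bookkeeping. The only point that merits a touch of care is part (ii), where the $\Z^n$-shift by $a_\ell$ on the destination side must be correctly reflected in the multiplication by $\oX^{a_\ell}$, and one should note that matching coefficients for every $z \in \Z^n$ (not just for $z$ such that $(b_i, z) \in V(\Gamma)$) is exactly what the polynomial identity encodes.
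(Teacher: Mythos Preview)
Your proposal is correct and follows essentially the same approach as the paper's proof: all three parts are handled by direct bookkeeping, with (ii) reduced to matching coefficients of $\oX^z$ (out-degree versus in-degree at each vertex of $\Lambda_i$) and (iii) by grouping the defining sum by label. The paper's write-up differs only cosmetically, expanding the two sides of (ii) as generating polynomials $\sum_{e:\sigma(e)\in\Lambda_i}\oX^{\pi_{\Z^n}(\sigma(e))}$ and $\sum_{e:\tau(e)\in\Lambda_i}\oX^{\pi_{\Z^n}(\tau(e))}$ rather than fixing a single vertex, but the content is identical.
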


To characterize the face-accessibility of $\Gamma$, we will use the \emph{weighted degree} of a ``contracted'' version of position polynomials.
Let $\cdot$ denote the dot product in $\R^n$.
Given a polynomial $f = \sum_{a \in \Z^n} c_a \oX^a \in \R[\oX^{\pm}]^*$ and a vector $v \in \Rns \coloneqq \R^n \setminus \{0\}$, the \emph{weighted degree} of $f$ at direction $v$ is defined as
$
\deg_v(f) \coloneqq \max\{v \cdot  a \mid a \in \Z^n, c_a \neq 0\}
$.
Define additionally $\deg_v(0) = -\infty$.
We now introduce the notion of \emph{partial contractions} of an automaton $\mA$:

\begin{definition}[Partial contraction]
    A \emph{partial contraction} of $\mA$ is a tuple $(S, \mT, \rho)$, where
    \begin{enumerate}[nosep, label = (\roman*)]
        \item $S$ is a non-empty subset of $\{1, \ldots, s\}$.
        \item $\mT$ is a subset of $\{1, \ldots, t\}$, such that the transitions $\{\delta_{\ell} \mid \ell \in \mT\}$ form a spanning tree of the state set $\{q_i \mid i \in S\}$ in the underlying \emph{undirected} graph. In particular, $|\mT| = |S| - 1$.
        \item $\rho$ is an element of $S$; the state $q_{\rho}$ will be seen as the ``root'' of the undirected spanning tree.
    \end{enumerate}
    Then in the automaton $\mA^{\pm}$ (recall its definition before Lemma~\ref{lem:primitive}), for every index $i \in S$, there exists a unique path consisting of transitions in $\{\delta_{\ell} \mid \ell \in \mT\} \cup \{\delta_{\ell}^- \mid \ell \in \mT\}$, that connects from $q_i$ to $q_{\rho}$. See the automaton in Figure~\ref{fig:contract} for an illustration.
\end{definition}

\begin{figure}[h!]
    \centering
    \begin{minipage}[t]{.59\textwidth}
        \centering
        \includegraphics[width=1\textwidth,height=1.0\textheight,keepaspectratio, trim={3.6cm 2cm 1.5cm 0cm},clip]{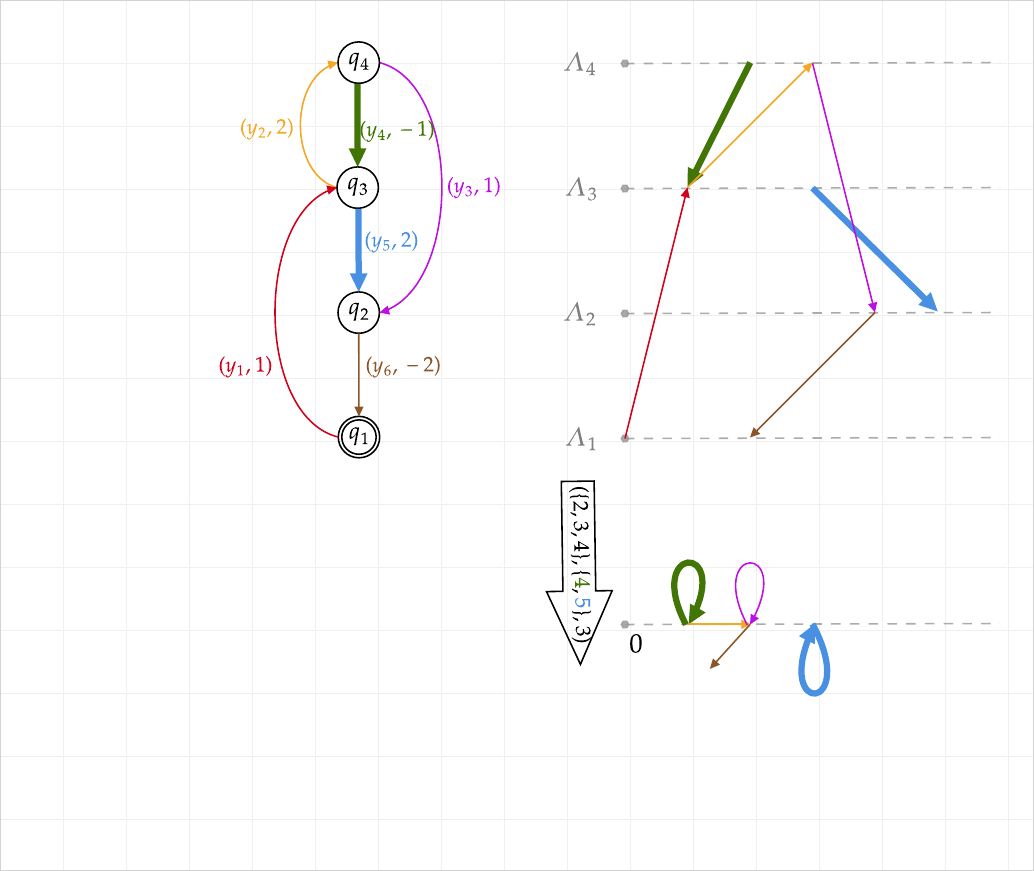}
        \caption{Partial contraction with $S = \{2, 3, 4\}$, $\mT = \{4, 5\}$, $\rho = 3$. The contracted transitions ($\delta_4$ and $\delta_5$) are marked with thick arrows, and $\Omega^{-1}(S) = \{2, 3, 4, 5, 6\}$.
        }
        \label{fig:contract}
    \end{minipage}
    \hfill
    \begin{minipage}[t]{0.36\textwidth}
        \centering
        \includegraphics[width=1\textwidth,height=1.0\textheight,keepaspectratio, trim={5.2cm 0cm 4.75cm 0cm},clip]{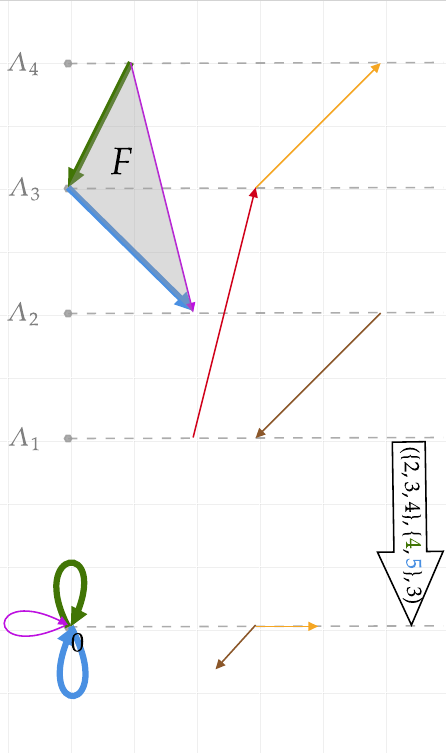}
        \caption{Non-accessible face $F$ becomes three petals after contraction.}
        \label{fig:contract2}
    \end{minipage}
\end{figure}

Let $\bff = (f_1, \ldots, f_t) \in \N[\oX^{\pm}]^t$ be the position polynomials of $\Gamma$.
For a set $S \subseteq \{1, \ldots, s\}$, denote $\Omega^{-1}(S) \coloneqq \{\ell \mid \Omega(\ell) \in S\}$.
Given a partial contraction $(S, \mT, \rho)$ and a tuple of position polynomials $\bff = (f_1, \ldots, f_t) \in \N[\oX^{\pm}]^t$, define the tuple of \emph{contracted position polynomials} 
\[
\bff^{(S, \mT, \rho)} \coloneqq \left(f^{(S, \mT, \rho)}_{\ell} \right)_{\ell \in \Omega^{-1}(S)} \in \N[\oX^{\pm}]^{|\Omega^{-1}(S)|},
\]
where the polynomial $f^{(S, \mT, \rho)}_{\ell} \in \N[\oX^{\pm}], \ell \in \Omega^{-1}(S),$ is defined as follows: there is a unique path $P_{\Omega(\ell)}$ in $\mA^{\pm}$, consisting of transitions in $\{\delta_{\ell} \mid \ell \in \mT\} \cup \{\delta_{\ell}^- \mid \ell \in \mT\}$, that connects from $q_{\Omega(\ell)}$ to $q_{\rho}$. Write its evaluation $\ev(P_{\Omega(\ell)})$ as $(y_{P_{\Omega(\ell)}}, z_{P_{\Omega(\ell)}})$.
We define $f^{(S, \mT, \rho)}_{\ell} \coloneqq f_{\ell} \cdot \oX^{z_{P_{\Omega(\ell)}}}$.

Similarly, if $\Delta(\ell) \in S$, we denote by $P_{\Delta(\ell)}$ the path in $\mA^{\pm}$ from $q_{\Delta(\ell)}$ to $q_{\rho}$, consisting of transitions in $\{\delta_{\ell} \mid \ell \in \mT\} \cup \{\delta_{\ell}^- \mid \ell \in \mT\}$.
The path $P_{\Delta(\ell)}$ evaluates to an element $(y_{P_{\Delta(\ell)}}, z_{P_{\Delta(\ell)}})$ for some $z_{P_{\Delta(\ell)}} \in \Z^n$. 
Define the \emph{contracted edge vectors} in $\Z^n$:
\begin{align*}
a^{(S, \mT, \rho)}_{\ell} \coloneqq 
\begin{cases}
    a_{\ell} + z_{P_{\Delta(\ell)}} - z_{P_{\Omega(\ell)}}, & \quad \ell \in \Omega^{-1}(S) \cap \Delta^{-1}(S), \\
    0^n & \quad\ell \in \Omega^{-1}(S) \setminus \Delta^{-1}(S).
\end{cases}
\end{align*}

\begin{example}[label=exa:cont]\label{expl:compute}
Let us compute the contracted position polynomials for the example in Figure~\ref{fig:contract}.
As shown in the figure, let $f_1 = 1$, $f_2 = X_1$, $f_3 = X_1^3$, $f_4 = X_1^2$, $f_5 = X_1^3$ and $f_6 = X_1^4$.
The partial contraction we use is $S = \{2, 3, 4\}$, $\mT = \{4, 5\}$, $\rho = 3$, so $\Omega^{-1}(S) = \{2, 3, 4, 5, 6\}$.
The path $P_4$ from $q_4$ to $q_3$ evaluates to $(y_4, -1)$, so $z_{P_4} = -1$.
We have $\Omega(2) = \Omega(3) = 4$, therefore $f^{(S, \mT, \rho)}_3 = f_3 \cdot X_1^{-1} = X_1^2$ and $f^{(S, \mT, \rho)}_4 = f_4 \cdot X_1^{-1} = X_1$.
Similarly, the path $P_2$ from $q_2$ to $q_3$ evaluates to $(y_5, 2)^{-1} = (- X_1^{-2} y_5, -2)$, so $z_{P_2} = -2$.
We have $\Omega(6) = 2$, therefore $f^{(S, \mT, \rho)}_6 = f_6 \cdot X_1^{-2} = X_1^2$.
Finally, we have $\Omega(2) = \Omega(5) = 3$, therefore $f^{(S, \mT, \rho)}_2 = f_2 = X_1$ and $f^{(S, \mT, \rho)}_5 = f_5 = X_1^3$.
The contracted position polynomials are therefore $(f^{(S, \mT, \rho)}_2, f^{(S, \mT, \rho)}_3, f^{(S, \mT, \rho)}_4, f^{(S, \mT, \rho)}_5, f^{(S, \mT, \rho)}_6) = (X_1, X_1^{2}, X_1, X_1^{3}, X_1^{2})$.
The contracted edge vectors are $(a^{(S, \mT, \rho)}_2, a^{(S, \mT, \rho)}_3, a^{(S, \mT, \rho)}_4, a^{(S, \mT, \rho)}_5, a^{(S, \mT, \rho)}_6) = (1, 0, 0, 0, 0)$.
\end{example}


The contracted position polynomials can be seen as ``position polynomials'' of a graph $\Gamma^{(S, \mT, \rho)}$ over $\Z^n$, which is obtained from $\Gamma$ by contracting the lattices $\Lambda_i, i \in S$ into a single lattice $\Z^n$, and discarding the other lattices $\Lambda_i, i \notin S$.
See Figure~\ref{fig:contract} for an illustration.
During the contraction, each lattice $\Lambda_i = \{b_i\} \times \Z^n, i \in S,$ is translated by the vector $(-b_i, z_{P_i})$, where $z_{P_i}$ is the sum of the $\Z^n$-coordinates along the unique path (in the subgraph of $\mA^{\pm}$ defined by $\mT$) from $q_i$ to $q_{\rho}$. 
In particular, if two lattices $\Lambda_i, \Lambda_j, i, j \in S,$ are connected by some edge $e$ with label $\ell \in \mT$, then the edge $e$ will become a loop after the contraction (e.g. the green and blue edges in Figure~\ref{fig:contract}).
Similarly, the contracted edge vector $a^{(S, \mT, \rho)}_{\ell}$ corresponds to the edge vector of label $\ell$ after the contraction.
The edges in the contracted graph $\Gamma^{(S, \mT, \rho)}$ have labels in $\Omega^{-1}(S)$.
Edges going to a discarded lattice $\Lambda_i, i \notin S$ (e.g. the blown edge) can be seen as ``dangling'': these are edges with labels in $\Omega^{-1}(S) \cap \Delta^{-1}(S)^{\complement}$, where $\Delta^{-1}(S)^{\complement} \coloneqq \{1, \ldots, t\} \setminus \Delta^{-1}(S)$.
Note that when $S$ is a singleton $\{\rho\}$, the set $\mT$ is empty, and the tuple $\bff^{(\{\rho\}, \emptyset, \rho)}$ is simply $(f_{\ell})_{\ell \in \Omega^{-1}(\rho)}$.

Given a partial contraction $(S, \mT, \rho)$ and a vector $v \in \Rns$, define 
\[
M_{v}((S, \mT, \rho), \bff) \coloneqq \left\{\ell \in \Omega^{-1}(S) \;\middle|\; \deg_{v}\left(f^{(S, \mT, \rho)}_{\ell}\right) = \max_{\ell' \in \Omega^{-1}(S)}\left\{\deg_{v}\left(f^{(S, \mT, \rho)}_{\ell'}\right)\right\} \right\}.
\]
This is the set of labels $\ell \in \Omega^{-1}(S)$ such that $\deg_{v}\left(f^{(S, \mT, \rho)}_{\ell}\right)$ is maximal.
In the contracted graph $\Gamma^{(S, \mT, \rho)}$, the source coordinate of edges with labels in $M_{v}((S, \mT, \rho), \bff)$ have the largest inner product with $v$.
Define additionally
\[
O_{v}(S, \mT, \rho) \coloneqq \left\{\ell \in \Omega^{-1}(S) \;\middle|\; a^{(S, \mT, \rho)}_{\ell} \not\perp v \right\}.
\]
This is the set of labels whose contracted edge vectors $a^{(S, \mT, \rho)}_{\ell}$ are not orthogonal to $v$.

\renewcommand\thmcontinues[1]{continued}
\begin{example}[continues=exa:cont]
We now compute the sets $M_{v}((S, \mT, \rho), \bff)$ and $O_{v}(S, \mT, \rho)$ for Example~\ref{expl:compute}.
Since we are working in dimension $n = 1$, the vector $v$ is a real number in $\R^*$.
If $v < 0$, then $M_{v}((S, \mT, \rho), \bff) = \{2, 4\}$, because $f^{(S, \mT, \rho)}_2$ and $f^{(S, \mT, \rho)}_4$ have the lowest degree (that is, \emph{highest} degree at direction $v = -1$) in the tuple $\bff^{(S, \mT, \rho)}$.
If $v > 0$, then $M_{v}((S, \mT, \rho), \bff) = \{5\}$, because $f^{(S, \mT, \rho)}_5$ has the highest degree in the tuple $\bff^{(S, \mT, \rho)}$.
Correspondingly in the contracted graph $\Gamma^{(S, \mT, \rho)}$, at the left extremity we have the sources of edges with label in $\{2, 4\}$; and at the right extremity we have the sources of edges with label in $\{5\}$. See Figure~\ref{fig:contract}.

As for the set $O_{v}(S, \mT, \rho)$, for all $v \in \R^*$ we have $O_{v}((S, \mT, \rho), \bff) = \{2\}$, because $a^{(S, \mT, \rho)}_2$ is the only non-zero vector among $a^{(S, \mT, \rho)}_{\ell}, \ell \in \Omega^{-1}(S)$.
Correspondingly in the contracted graph $\Gamma^{(S, \mT, \rho)}$, only the edge with label $2$ is not a loop or a dangling edge.
\end{example}

Using the contracted position polynomials and the contracted edge vectors, we can characterize face-accessibility of $\Gamma$.
The following lemma is a formal way of expressing ``$\Gamma$ is face-accessible if and only if all its partial contractions $\Gamma^{(S, \mT, \rho)}$ are face-accessible'' (in particular, faces containing sources of ``dangling'' edges are considered accessible).

\begin{restatable}{lemma}{lemacctoeq}\label{lem:acctoeq}
    Let $\Gamma$ be a symmetric $\mA$-graph with position polynomials $\bff = (f_1, \ldots, f_t) \in \N[\oX^{\pm}]^t$.
    Then $\Gamma$ is face-accessible if and only if for every partial contraction $(S, \mT, \rho)$, we have
    \begin{equation}\label{eq:acccond}
        \left( O_{v}(S, \mT, \rho) \cup \Delta^{-1}(S)^{\complement} \right) \cap M_{v}((S, \mT, \rho), \bff) \neq \emptyset \quad \text{ for every } v \in \Rns.
    \end{equation}
\end{restatable}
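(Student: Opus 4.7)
The plan is to identify strict faces of $C = \conv V(\Gamma) \subseteq \R^{s+n}$ with tuples $((S, \mT, \rho), v)$ where $v \in \Rns$, so that accessibility of the face matches condition~\eqref{eq:acccond}. A strict face $F$ of $C$ is the locus where some linear functional $(w, v) \in \R^{s+n}$ attains its maximum on $C$; set $S = \{i : F \cap \Lambda_i \neq \emptyset\}$. Then $v$ dictates the within-lattice extremum direction, while the components $(w_i)_{i \in S}$ record the relative heights of the lattices (determined modulo a common additive constant by the requirement that the functional be constant on $F$); the $w_i$ for $i \notin S$ may be taken arbitrarily negative without changing $F$.

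For the first direction, starting from a partial contraction $(S, \mT, \rho)$ and $v \in \Rns$, I would set $w_i := v \cdot z_{P_i}$ for $i \in S$ and $w_i := -M$ for $i \notin S$ (with $M$ large), and verify the following combinatorial description of the corresponding face $F$: a vertex $(b_i, z) \in V(\Gamma)$ lies on $F$ iff $i \in S$ and $v \cdot (z + z_{P_i})$ is maximal; equivalently, the source of an $\ell$-edge lies on $F$ iff $\ell \in M_v((S, \mT, \rho), \bff)$. Using the formula $a_\ell^{(S, \mT, \rho)} = a_\ell + z_{P_{\Delta(\ell)}} - z_{P_{\Omega(\ell)}}$ when $\Delta(\ell) \in S$, a short computation shows that such an $\ell$-edge leaves $F$ exactly when $\ell \in O_v(S, \mT, \rho) \cup \Delta^{-1}(S)^\complement$: either the target sits in a discarded lattice, or $a_\ell^{(S, \mT, \rho)}$ has a non-zero component along $v$ which, being forced negative by the extremality of the source, drops the target below $F$. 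Combining these observations, condition~\eqref{eq:acccond} for $(S, \mT, \rho, v)$ is equivalent to accessibility of this particular face.

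To close the equivalence, I also need that every strict face of $C$ arises from some partial contraction. Given $F$ with associated $(w, v)$: if $v = 0$ then $F$ being a strict face forces $S \subsetneq \{1, \ldots, s\}$, and accessibility is automatic because the trim automaton $\mA$ always contains a transition from $S$ to its complement. If $v \neq 0$, I would construct $\mT$ as a spanning tree of $\{q_i : i \in S\}$ whose tree transitions are labels realized by edges of $\Gamma$ with both endpoints on $F$; for any such tree edge, $v \cdot (a_\ell + z_{P_{\Delta(\ell)}} - z_{P_{\Omega(\ell)}}) = 0$ automatically holds, so the shifts $v \cdot z_{P_i}$ reproduce the prescribed heights $w_i - w_\rho$.

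The main obstacle will be this spanning-tree construction: unlike the $s = 1$ case in \cite{dong2024semigroup} where partial contractions are trivial, for $s \geq 2$ one must exhibit a spanning tree of $\{q_i : i \in S\}$ whose tree-path $\Z^n$-evaluations are dictated by $F$. Its existence reduces to showing that the subautomaton of $\mA$ whose transitions are labels carried by $F$-internal edges of $\Gamma$ is connected over $S$. This is where the symmetry of $\Gamma$ is essential: the matched in- and out-degrees at each vertex of $F$ guarantee that enough edges remain within $F$ to link up all the lattices in $S$.
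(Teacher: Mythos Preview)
Your first direction is correct and essentially coincides with the paper's argument for ``face-accessible $\Rightarrow$ \eqref{eq:acccond} holds for every $(S,\mT,\rho,v)$'': setting $w_i = v\cdot z_{P_i}$ for $i\in S$ and $w_i \ll 0$ otherwise produces a strict face $F$ whose accessibility is exactly \eqref{eq:acccond} for that tuple.

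The gap is in the other direction. You want every strict face $F$ with $v\neq 0$ to come from a partial contraction with $S=\{i:F\cap\Lambda_i\neq\emptyset\}$, and you assert that symmetry of $\Gamma$ makes the subautomaton on $F$-internal labels connected over this $S$. That is not true: symmetry only matches in/out degrees at vertices of $F$; it does not force the lattices meeting $F$ to be linked by $F$-internal edges. A non-accessible face can meet $\Lambda_1$ and $\Lambda_3$ with every $F$-internal edge lying entirely inside $\Lambda_1$ or entirely inside $\Lambda_3$; if additionally $\mA$ has no transition $q_1\leftrightarrow q_3$, then no partial contraction with $S=\{1,3\}$ exists at all, and your construction stalls.

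The paper does not try to span all of your $S$. Given a non-accessible $F$, it sets $\widetilde S=\{i:F\cap\Lambda_i\neq\emptyset\}$ and then takes $S$ to be a single \emph{connected component} of the $F$-internal-label subautomaton on $\widetilde S$; the tree $\mT$ lives inside this component. The face $F'$ that your first-direction recipe associates to $(S,\mT,\rho,v)$ is then a proper sub-face of $F$, and the missing step is to check that $F'$ is itself non-accessible: any edge with source in $F'\subseteq F$ has target in $F$ (since $F$ is non-accessible), hence carries an $F$-internal label, hence its destination state lies in the same component $S$, hence the target is in $F'$. This connected-component reduction is what replaces your symmetry heuristic.

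A minor secondary point: your $v=0$ argument (``accessibility is automatic because the trim automaton has a transition out of $S$'') tacitly assumes that this transition is realised by an edge of $\Gamma$, i.e.\ that $\Gamma$ is full-image. The lemma as stated does not assume this.
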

\begin{proof}[Sketch of proof]
    See Appendix~\ref{app:omitted} for a full proof.
    We show the contrapositive: the convex hull $C$ of $V(\Gamma)$ has a non-accessible face if and only if $\exists (S, \mT, \rho), \exists v \in \Rns,$ such that 
    \[
    \left( O_{v}(S, \mT, \rho) \cup \Delta^{-1}(S)^{\complement} \right) \cap M_{v}((S, \mT, \rho), \bff) = \emptyset.
    \]

    If $F$ is a non-accessible face of $C$, then consider all the edges whose source is in $F$ (their target will also be in $F$).
    Let $\widetilde{\mT}$ be the set of labels of these edges, and let $\widetilde{S}$ be the index set of origin and destination states of all transitions with label in $\widetilde{\mT}$.
    Consider the subautomaton $\mA'$ of $\mA$ whose states are $q_i, i \in \widetilde{S}$, and whose transitions are $\delta_t, t \in \widetilde{\mT}$ (we do not specify the initial or accepting state of $\mA'$).
    Let $S \subseteq \widetilde{S}$ be such that $\{q_i \mid i \in S\}$ is a weakly connected component of $\mA'$, let $\mT \subseteq \widetilde{\mT}$ be such that $\{\delta_t \mid t \in \mT\}$ is an undirected spanning tree of this weakly  connected component.
    Let $\rho$ be any element of $S$.
    We contract $\Gamma$ according to $(S, \mT, \rho)$ (see Figure~\ref{fig:contract2} for an illustration). Since $F$ is non-accessible, it will still be non-accessible after the contraction.
    After contraction, $F$ will be the extremal face at some direction $v \in \Rns$ (e.g.\ in Figure~\ref{fig:contract2}, $v$ would be $(-1)$, a vector pointing to the left).
    This means that among the position polynomials $f^{(S, \mT, \rho)}_{\ell}, \ell \in \Omega^{-1}(S),$ of $\Gamma^{(S, \mT, \rho)}$, the index of those with the highest $\deg_v(\cdot)$ correspond to the labels of contracted edges that are orthogonal to $v$ (e.g.\ the three loops in the leftmost position in the contracted graph of Figure~\ref{fig:contract2}).
    This is because $F$ is orthogonal to $v$ after contraction.
    In terms of description by $\deg_v(\cdot)$, the indices of these polynomials are exactly $M_{v}((S, \mT, \rho), \bff) \coloneqq \left\{\ell \in \Omega^{-1}(S) \;\middle|\; \deg_{v}\left(f^{(S, \mT, \rho)}_{\ell}\right) = \max_{\ell' \in \Omega^{-1}(S)}\left\{\deg_{v}\left(f^{(S, \mT, \rho)}_{\ell'}\right)\right\} \right\}$.
    Whereas the labels $\ell$ whose corresponding contracted edges that are \emph{not} orthogonal to $v$ are either in $O_{v}(S, \mT, \rho)$ (if the contracted edge is not dangling), or in $\Delta^{-1}(S)^{\complement}$ (if the contracted edge is dangling).
    Therefore $\left( O_{v}(S, \mT, \rho) \cup \Delta^{-1}(S)^{\complement} \right) \cap M_{v}((S, \mT, \rho), \bff) = \emptyset$.

    The other implication direction can be proved similarly.
\end{proof}

The definition of $\bff^{(S, \mT, \rho)}$ can be naturally extended to the case where $\bff \in \Z[\oX^{\pm}]^t$ instead of $\N[\oX^{\pm}]^t$: write $\bff = \bff^+ - \bff^-$ where $\bff^+, \bff^- \in \N[\oX^{\pm}]^t$, and define $\bff^{(S, \mT, \rho)} \coloneqq \left(\bff^+\right)^{(S, \mT, \rho)} - \left(\bff^-\right)^{(S, \mT, \rho)}$.

Define
\begin{equation}\label{eq:MZ}
\mM_{\Z} \coloneqq \left\{\bff = (f_1, \ldots, f_t) \in \Z[\oX^{\pm}]^t \;\middle|\; \sum_{\ell \colon \Omega(\ell) = i} f_{\ell} = \sum_{\ell \colon \Delta(\ell) = i} f_{\ell} \cdot \oX^{a_{\ell}},\; i = 1, \ldots, s; \;\sum_{\ell = 1}^K f_{\ell} \cdot y_{\ell} = 0 \right\}.
\end{equation}
This is the $\Z[\oX^{\pm}]$-module consisting of all $\bff \in \Z[\oX^{\pm}]^t$ satisfying the conditions in Lemma~\ref{lem:threetoeq}~(ii) and (iii).
A finite set of generators for $\mM_{\Z}$ can be effectively computed~\cite{schreyer1980berechnung}, for example using Gr\"{o}bner basis~\cite{eisenbud2013commutative}.

We then want to include the condition in Lemma~\ref{lem:acctoeq}. For this, we will replace the information of $\bff$ by all its contracted position polynomials.
Denote by $\mPC$ the set of all partial contractions of $\mA$, denote $K \coloneqq \sum_{(S, \mT, \rho) \in \mPC} |\Omega^{-1}(S)|$, and define the $\Z[\oX^{\pm}]$-module
\begin{equation}\label{eq:tMZ}
\tMZ \coloneqq \left\{\widetilde{\bff} \coloneqq \left(f^{(S, \mT, \rho)}_{\ell}\right)_{(S, \mT, \rho) \in \mPC, \ell \in \Omega^{-1}(S)} \in \Z[\oX^{\pm}]^K \;\middle|\; \bff \in \mM_{\Z} \right\}.
\end{equation}
Note that each entry $f^{(S, \mT, \rho)}_{\ell}$ in $\widetilde{\bff}$ is obtained by multiplying $f_{\ell}$ by a fixed monomial, independant of $\bff$.
Therefore, if $\{\bff_i \mid i \in I\}$ is a generating set of $\mM_{\Z}$, then $\{\widetilde{\bff}_i \mid i \in I\}$ is a generating set of $\tMZ$.
Note that we can recover $\bff$ as a sub-tuple of $\widetilde{\bff}$: for $\rho \in \{1, \ldots, s\}$, the sub-tuple $\bff^{(\{\rho\}, \emptyset, \rho)}$ of $\widetilde{\bff}$ is simply $(f_{\ell})_{\ell \in \Omega^{-1}(\rho)}$, and therefore $\widetilde{\bff}$ contains as a sub-tuple $(f_{\ell})_{\ell \in \bigcup_{\rho = 1}^s \Omega^{-1}(\rho)} = (f_{\ell})_{\ell \in \{1, \ldots, t\}}$.

Using Lemma~\ref{lem:acctoeq}, we now characterize the face-accessibility of $\Gamma$ by its contracted position polynomials $\widetilde{\bff}$.
For simplicity, rename the indices of $\widetilde{\bff}$ by writing $\widetilde{\bff} = (\widetilde{f}_1, \ldots, \widetilde{f}_K)$, where $\widetilde{f}_i = f^{(S_i, \mT_i, \rho_i)}_{\ell_i}, i = 1, \ldots, K$.
Similarly, define $\ta_i = a^{(S_i, \mT_i, \rho_i)}_{\ell_i}, i = 1, \ldots, K$.
For a partial contraction $(S, \mT, \rho)$, define the sets 
\begin{align*}
& I_{(S, \mT, \rho)} \coloneqq \left\{i \in \{1, \ldots, K\} \;\middle|\; (S_i, \mT_i, \rho_i) = (S, \mT, \rho) \right\}, \\
& J_{(S, \mT, \rho)} \coloneqq \left\{i \in \{1, \ldots, K\} \;\middle|\; (S_i, \mT_i, \rho_i) = (S, \mT, \rho), \ell_i \in \Delta^{-1}(S)^{\complement} \right\}.
\end{align*}
For a set $I \subseteq \{1, \ldots, K\}$, define 
\begin{equation*}
    M_{v}\left(I, \widetilde{\bff}\right) \coloneqq \left\{i \in I \;\middle|\; \deg_{v}\left(\widetilde{f}_i\right) = \max_{i' \in I}\left\{\deg_{v}\left(\widetilde{f}_{i'}\right)\right\} \right\},
\end{equation*}
and define
\[
O_{v} \coloneqq \left\{i \in \{1, \ldots, K\} \;\middle|\; \ta_i \not\perp v \right\}.
\]
One can see that $\bff \in \left(\N[\oX^{\pm}]^*\right)^t$ if and only if $\widetilde{\bff} \in \left(\N[\oX^{\pm}]^*\right)^K$, because $\widetilde{\bff}$ contains $\bff$ as a sub-tuple. Summarizing the definition of $\tMZ$ as well as Lemma~\ref{lem:threetoeq} and \ref{lem:acctoeq}, we obtain:

\begin{restatable}{proposition}{propgraphtoeq}\label{prop:graphtoeq}
    There exists a full-image symmetric face-accessible $\mA$-graph $\Gamma$, if and only if there exists $\tbf \in \tMZ \cap \left(\N[\oX^{\pm}]^*\right)^K$, satisfying
    \begin{equation}\label{eq:gen}
        \left(O_{v} \cup J_{(S, \mT, \rho)}\right) \cap M_{v}\left(I_{(S, \mT, \rho)}, \tbf\right) \neq \emptyset, \quad \text{ for every } v \in \Rns, \; (S, \mT, \rho) \in \mPC.
    \end{equation}
\end{restatable}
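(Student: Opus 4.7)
The plan is to recognize Proposition~\ref{prop:graphtoeq} as the concatenation of Lemma~\ref{lem:threetoeq} and Lemma~\ref{lem:acctoeq}, together with the correspondence between the module $\mM_{\Z}$ (encoding symmetry and representing $0$) and $\tMZ$ (its image under stacking all partial contractions). The key observation is that the map $\bff \mapsto \tbf$ is injective, since, as noted after (\ref{eq:tMZ}), taking the sub-tuples $\bff^{(\{\rho\}, \emptyset, \rho)}$ for $\rho = 1, \ldots, s$ recovers $\bff$ from $\tbf$. Moreover, each coordinate of $\tbf$ is a fixed monomial times some $f_{\ell}$, so $\bff \in (\N[\oX^{\pm}]^*)^t$ if and only if $\tbf \in (\N[\oX^{\pm}]^*)^K$.

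For the forward direction, suppose $\Gamma$ is a full-image symmetric face-accessible $\mA$-graph representing $0$, and let $\bff \in \N[\oX^{\pm}]^t$ be its position polynomials. Lemma~\ref{lem:threetoeq}(i) yields $\bff \in (\N[\oX^{\pm}]^*)^t$, while parts (ii) and (iii) together give $\bff \in \mM_{\Z}$; by (\ref{eq:tMZ}) the associated $\tbf$ lies in $\tMZ \cap (\N[\oX^{\pm}]^*)^K$. To recover (\ref{eq:gen}), apply Lemma~\ref{lem:acctoeq}: for every $(S, \mT, \rho) \in \mPC$ and $v \in \Rns$, we have $\left(O_v(S, \mT, \rho) \cup \Delta^{-1}(S)^{\complement}\right) \cap M_v((S, \mT, \rho), \bff) \neq \emptyset$. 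Under the bijective renaming $i \leftrightarrow \ell_i$ on $I_{(S, \mT, \rho)}$ this rewrites verbatim as (\ref{eq:gen}).

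For the backward direction, given $\tbf \in \tMZ \cap (\N[\oX^{\pm}]^*)^K$ satisfying (\ref{eq:gen}), extract the sub-tuple $\bff = (f_1, \ldots, f_t) \in (\N[\oX^{\pm}]^*)^t$ as above; by the definition of $\tMZ$, $\bff \in \mM_{\Z}$. Construct $\Gamma$ by drawing, for each monomial $c\oX^z$ appearing in $f_\ell$, exactly $c$ edges of label $\ell$ whose source is $(b_{\Omega(\ell)}, z)$. Lemma~\ref{lem:threetoeq} then certifies that $\Gamma$ is full-image, symmetric, and represents $0$, while Lemma~\ref{lem:acctoeq} (applied via the same renaming) certifies that $\Gamma$ is face-accessible.

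The only delicate point is the bookkeeping behind the renaming $i \leftrightarrow \ell_i$: one must verify that it simultaneously identifies $M_v((S, \mT, \rho), \bff)$ with $M_v(I_{(S, \mT, \rho)}, \tbf)$, identifies $O_v(S, \mT, \rho)$ with $O_v \cap I_{(S, \mT, \rho)}$, and identifies $\Omega^{-1}(S) \cap \Delta^{-1}(S)^{\complement}$ with $J_{(S, \mT, \rho)}$. All three identifications follow immediately by unwinding the definitions, so the proof reduces to a careful rewriting once Lemmas~\ref{lem:threetoeq} and \ref{lem:acctoeq} are in hand.
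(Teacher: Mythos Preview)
Your proof is correct and follows essentially the same approach as the paper's: both directions hinge on Lemma~\ref{lem:threetoeq} to translate full-image/symmetric/representing~$0$ into $\bff \in \mM_{\Z} \cap (\N[\oX^{\pm}]^*)^t$, and on Lemma~\ref{lem:acctoeq} to translate face-accessibility into condition~\eqref{eq:gen}. Your explicit bookkeeping for the renaming $i \leftrightarrow \ell_i$ (matching $M_v$, $O_v$, and $J_{(S,\mT,\rho)}$ to their counterparts in Lemma~\ref{lem:acctoeq}) is a useful clarification that the paper leaves implicit, and your insertion of ``representing~$0$'' in the forward hypothesis matches what the paper's own proof actually uses.
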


\subsection{Decidability for position polynomials}

To finish our proof of decidability, we will prove the following extension of~\cite[Theorem~3.9]{dong2024semigroup} and apply it with Proposition~\ref{prop:graphtoeq}.

\begin{restatable}{theorem}{thmdec}\label{thm:dec}
Denote $\A \coloneqq \R[\oX^{\pm}], \A^+ \coloneqq \Rp[\oX^{\pm}]^*$.
Fix $n \in \N$ and let $\Xi$ be a finite set of indices.
Suppose we are given as input a set of generators $\bg_1, \ldots, \bg_m \in \A^K$ with integer coefficients, the vectors $\ta_1, \ldots, \ta_K \in \Z^n$, as well as subsets $I_{\xi}, J_{\xi} \subseteq \{1, \ldots, K\}$ for each $\xi \in \Xi$.
Denote by $\mM$ be the $\A$-submodule of $\A^K$ generated by $\bg_1, \ldots, \bg_m$.
It is decidable whether there exists $\bff \in \mM \cap \ApK$ satisfying
\begin{equation}\label{eq:deccond}
        \left(O_{v} \cup J_{\xi}\right) \cap M_{v}(I_{\xi}, \bff) \neq \emptyset, \quad \text{ for every } v \in \Rns, \xi \in \Xi.
\end{equation}
Here, if $n = 0$ then $\A$ is understood as $\R$, and Property~\eqref{eq:deccond} is considered trivially true.
\end{restatable}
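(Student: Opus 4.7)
My plan is to extend the proof of \cite[Theorem~3.9]{dong2024semigroup}, whose setting differs from Theorem~\ref{thm:dec} only by the absence of the $J_\xi$-exemption. The key observation is that condition \eqref{eq:deccond}, though universally quantified over $v \in \Rns$, depends only on the combinatorial type of the Newton polytopes of the entries of $\bff$: as $v$ varies, both $O_v$ and $M_v(I_\xi, \bff)$ are piecewise constant, with discontinuities only along a finite arrangement of hyperplanes (the $\ta_i^\perp$ together with the walls of the normal fans of the $\widetilde{f}_i$). Thus $\Rns$ admits a finite polyhedral decomposition on which condition \eqref{eq:deccond} becomes a fixed combinatorial statement once the Newton polytopes of $\bff$ are prescribed, and the universal quantifier over $v$ collapses to a finite check. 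A separate degenerate case is $n=0$, where $\A = \R$ and the condition is vacuously true by hypothesis, so one only needs to decide $\mM \cap \Rpp^K \neq \emptyset$ via linear programming.

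The algorithm then proceeds in two steps. First, enumerate all candidate tuples of Newton polytopes $(P_1, \ldots, P_K)$ that can arise as the Newton polytopes of the entries of some $\bff = \sum_j c_j \bg_j \in \mM$; the supports of the $c_j$ can be bounded a priori via a Gr\"{o}bner-basis-style degree bound as in \cite{dong2024semigroup}, making the enumeration finite. Second, for each candidate tuple, check (a) the combinatorial condition \eqref{eq:deccond} on each cell of the polyhedral decomposition, treating $J_\xi$-indices as automatically witnessing the condition whenever they lie in $M_v(I_\xi, \bff)$, and (b) feasibility: does some $\bff \in \mM \cap \ApK$ actually realize these Newton polytopes? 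Step (b) is a semialgebraic feasibility question in the coefficients of the $c_j$ (each coefficient of $\bff$ at each lattice point of $P_i$ is a linear form in the unknown coefficients of the $c_j$, required to be non-negative everywhere and strictly positive at the vertices of $P_i$), which is decidable by Tarski's theorem on the first-order theory of real closed fields.

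The main obstacle I anticipate is inheriting the degree bound in the first step cleanly from \cite{dong2024semigroup}. I expect this inheritance to work because the existence of $\bff \in \mM \cap \ApK$ with a prescribed Newton polytope tuple does not depend on the $J_\xi$ data; only the combinatorial check (a) is affected by $J_\xi$, and it is \emph{relaxed} rather than strengthened, so any $\bff$ satisfying the original theorem's condition still satisfies ours. The technical content of the extension is thus confined to modifying step (a) of the enumeration to incorporate $J_\xi$-exemptions, and carefully interfacing the existing degree bound and semialgebraic feasibility machinery with this relaxed combinatorial condition. A subtlety worth flagging is that the $J_\xi$-exemption interacts with the polyhedral decomposition nontrivially: on cells where some index $i \in J_\xi$ is a priori forced to lie in $M_v(I_\xi, \bff)$ (for instance, if the Newton polytope of $\widetilde{f}_i$ strictly dominates those of $\widetilde{f}_{i'}$ for $i' \in I_\xi \setminus \{i\}$), the condition for $\xi$ on that cell becomes trivial, and one must keep track of these trivializations to avoid redundant feasibility checks.
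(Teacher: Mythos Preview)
Your proposal has a genuine gap at its load-bearing step. You write that ``the supports of the $c_j$ can be bounded a priori via a Gr\"{o}bner-basis-style degree bound as in \cite{dong2024semigroup}, making the enumeration finite.'' No such bound exists in \cite{dong2024semigroup}, and the paper's proof of the earlier theorem does not proceed by enumerating a finite set of Newton polytopes. Without that bound your Step~1 is an infinite enumeration and your algorithm does not terminate. (A minor misreading compounds this: the difference between Theorem~\ref{thm:dec} and \cite[Theorem~3.9]{dong2024semigroup} is not the presence of the $J$-exemption---the old theorem already has a single pair $I,J$---but the passage from one pair to a finite family $(I_\xi,J_\xi)_{\xi\in\Xi}$.)

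The actual argument is structurally different from yours. It is an induction on $n$, with the base case $n=0$ handled by linear programming as you note. For the inductive step one first proves a \emph{local-global principle} (Theorem~\ref{thm:locglob}): the desired $\bff$ exists if and only if two local conditions hold, (LocR) a pointwise positivity condition at every $r\in\Rpp^n$, and (LocInf) an initial-form condition at every direction $v\in\Rns$. Condition (LocR) is decided directly by Tarski. Condition (LocInf) is reformulated, via a chain of equivalences (shifted initials, coordinate changes $A\in\GL(n,\Z)$, and a cell decomposition from a super Gr\"{o}bner basis), into instances of Theorem~\ref{thm:dec} in strictly fewer variables $d<n$, to which the induction hypothesis applies. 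Crucially, these reformulations are quantified over all $A\in\GL(n,\Z)$, so one cannot simply decide (LocInf) outright; instead one dovetails two semi-decision procedures: Procedure~A enumerates the $\Z[\oX^\pm]$-module generated by the $\bg_j$ looking for a witness $\bff$, while Procedure~B enumerates pairs $(A,d)$ looking for a failure of the reformulated (LocInf). The local-global principle guarantees exactly one of the two halts. The extension from one $(I,J)$ to a family $(I_\xi,J_\xi)_{\xi\in\Xi}$ is then genuinely just inserting ``for all $\xi\in\Xi$'' throughout and intersecting the finitely many relevant cell decompositions---but this only works because the underlying architecture never relied on a degree bound in the first place.
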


When the index set $\Xi$ has cardinality one, Theorem~\ref{thm:dec} is exactly~\cite[Theorem~3.9]{dong2024semigroup}.
Our proof of Theorem~\ref{thm:dec} essentially involves adding the quantifier ``for every $\xi \in \Xi$'' in all appropriate places in the proof of~\cite[Theorem~3.9]{dong2024semigroup}, and thus does not present new conceptual difficulties.
The full proof of Theorem~\ref{thm:dec} as well as a comparison with ~\cite[Theorem~3.9]{dong2024semigroup} is provided in Appendix~\ref{app:proofdec}.

We now finish the proof of Theorem~\ref{thm:ratmeta}.
In the wake of Proposition~\ref{prop:graphtoeq}, we need to decide whether there exists $\widetilde{\bff} \in \tMZ \cap \left(\N[\oX^{\pm}]^*\right)^K$ that satisfies condition~\eqref{eq:gen}.
Our goal is to apply Theorem~\ref{thm:dec} with the index set $\Xi = \mPC$.
However, $\tMZ$ is a $\Z[\oX^{\pm}]$-module, and in order to apply Theorem~\ref{thm:dec} we need an $\R[\oX^{\pm}]$-module $\mM$.
Let $\bg_1, \ldots, \bg_m$ be the generators of the $\Z[\oX^{\pm}]$-module $\tMZ$, define
\begin{equation*}
\mM \coloneqq \left\{h_1 \cdot \bg_1 + \cdots + h_m \cdot \bg_m \;\middle|\; h_1, \ldots, h_m \in \R[\oX^{\pm}]\right\}.
\end{equation*}
\begin{restatable}{lemma}{lemM}\label{lem:M}
    There exists an element $\tbf \in \tMZ \cap \left(\N[\oX^{\pm}]^*\right)^K$ satisfying Property~\eqref{eq:gen}, if and only if there exists $\bff \in \mM \cap \left(\Rp[\oX^{\pm}]^*\right)^K$ satisfying Property~\eqref{eq:gen}.
\end{restatable}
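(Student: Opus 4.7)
The $(\Rightarrow)$ direction is immediate: $\tMZ\subseteq\mM$ because integer $\Z[\oX^\pm]$-combinations of the $\bg_i$ are a special case of real $\R[\oX^\pm]$-combinations, and $\N[\oX^\pm]^*\subseteq\Rp[\oX^\pm]^*$. Property~\eqref{eq:gen} is expressed identically in both settings, so any witness $\tbf$ on the left is also a witness on the right.

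For the converse, the central observation is that Property~\eqref{eq:gen} depends on its tuple argument $\bff=(f_1,\ldots,f_K)$ only through the coordinate-wise supports: $M_v(I,\bff)$ is defined via weighted degrees $\deg_v(f_i)$, and $\deg_v(f_i)$ depends solely on $\mathrm{supp}(f_i)\subseteq\Z^n$. It therefore suffices to produce $\tbf\in\tMZ\cap(\N[\oX^\pm]^*)^K$ whose coordinate-wise support coincides with that of $\bff$; then $\tbf$ automatically inherits Property~\eqref{eq:gen}. I obtain such $\tbf$ by a density argument carried out inside a finite-dimensional $\Q$-rational subspace.

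Write $\bff=h_1\bg_1+\cdots+h_m\bg_m$ with $h_i\in\R[\oX^\pm]$. Choose a finite set $U\subset\Z^n$ containing $\mathrm{supp}(h_i)$ for every $i$, and a finite set $U'\subset\Z^n$ containing $U+\mathrm{supp}(\bg_i)$ for every $i$. Let $V_U,V_{U'}$ denote the $\Q$-vector spaces of Laurent polynomials supported on $U,U'$, and consider the $\Q$-linear map
\[
\Phi\colon V_U^m\longrightarrow V_{U'}^K,\qquad (h_1,\ldots,h_m)\longmapsto \sum_{i=1}^m h_i\bg_i,
\]
which is $\Q$-rational because each $\bg_i$ has integer coefficients. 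Let $W\subseteq V_{U'}^K$ be the coordinate subspace consisting of all $(\phi_1,\ldots,\phi_K)$ with $\mathrm{supp}(\phi_k)\subseteq\mathrm{supp}(f_k)$ for every $k$. Both $\mathrm{Im}(\Phi)$ and $W$ are $\Q$-rational subspaces of $V_{U'}^K$, hence so is their intersection, and by construction $\bff\in\mathrm{Im}(\Phi)\cap W$.

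Rational points are therefore dense in $\mathrm{Im}(\Phi)\cap W$. The coefficients of $\bff$ at monomials in $\mathrm{supp}(f_k)$ are strictly positive, and strict positivity is an open condition, so every $\bff'\in(\mathrm{Im}(\Phi)\cap W)\cap\Q^{K|U'|}$ sufficiently close to $\bff$ has the same coordinate-wise support as $\bff$ with positive rational coefficients (the vanishing outside $\mathrm{supp}(f_k)$ being enforced by $W$). Since $\bff'\in\mathrm{Im}(\Phi)$ has rational entries and $\Phi$ is $\Q$-rational, we may write $\bff'=\sum h'_i\bg_i$ with $h'_i\in\Q[\oX^\pm]$; multiplying by a positive integer $N$ clearing all denominators yields $\tbf\coloneqq N\bff'=\sum(Nh'_i)\bg_i\in\tMZ\cap(\N[\oX^\pm]^*)^K$, with the same coordinate-wise support as $\bff$, hence satisfying Property~\eqref{eq:gen}. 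The main obstacle is ensuring that the rational approximation lies in the \emph{integer} module $\tMZ$ rather than merely in $\mM$; this is precisely why we work through the $\Q$-rational map $\Phi$, so that a rational point in its image admits a rational preimage and therefore lifts (after clearing denominators) to an integer combination of the $\bg_i$.
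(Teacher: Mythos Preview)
Your proof is correct and follows essentially the same approach as the paper's: both reduce to the observation that Property~\eqref{eq:gen} depends only on the coordinate-wise supports, and then pass from a real witness to a rational (hence integral) one via the fact that a $\Q$-rational homogeneous linear system with a real solution admits a rational solution. The paper phrases this last step directly as ``the system has integer coefficients, so a real solution with strictly positive $c_{i,b}$ can be replaced by a rational one, then clear denominators,'' whereas you phrase it as density of rational points in the $\Q$-rational subspace $\operatorname{Im}(\Phi)\cap W$ followed by lifting a rational image point through the $\Q$-linear map $\Phi$; these are the same argument in slightly different clothing.
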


Therefore, by Lemma~\ref{lem:M} we can apply Theorem~\ref{thm:dec} with $\Xi = \mPC$ to the $\R[\oX^{\pm}]$-module $\mM$ instead of the $\Z[\oX^{\pm}]$-module $\tMZ$.
Summarizing Proposition~\ref{prop:grptotraverse}, \ref{prop:grouptoeuler}, \ref{prop:graphtoeq}, Lemma~\ref{lem:M} and Theorem~\ref{thm:dec}, we obtain a proof of Theorem~\ref{thm:ratmeta}, the main goal of this section:

\thmratmeta*
\begin{proof}
    Without loss of generality suppose $\mA$ is trim.
    By Lemma~\ref{lem:primitive} we can suppose $\mA$ to be primitive.
    By the series of reductions Proposition~\ref{prop:grptotraverse}, \ref{prop:grouptoeuler} and \ref{prop:graphtoeq}, $\ev(\mA)$ is a group if and only if there exists $\tbf \in \tMZ \cap \left(\N[\oX^{\pm}]^*\right)^K$ that satisfies condition~\eqref{eq:gen}.
    By Lemma~\ref{lem:M}, this is equivalent to the existence of $\bff \in \mM \cap \left(\Rp[\oX^{\pm}]^*\right)^K$ that satisfies condition~\eqref{eq:gen}.
    Note that $\mM_{\Z}$ is defined as the solution set of a system of linear equations over $\Z[\oX^{\pm}]$, therefore its generating set (and even Gr\"{o}bner basis) can be effectively computed~\cite{schreyer1980berechnung, eisenbud2013commutative}.
    We then extend the generating set of $\mM_{\Z}$ to the generating set of $\tMZ$ and $\mM$.
    Applying Theorem~\ref{thm:dec} with $\Xi$ being the set of partial contractions $\mPC$, we conclude that it is decidable whether there exists $\bff \in \mM \cap \left(\Rp[\oX^{\pm}]^*\right)^K$ satisfying condition~\eqref{eq:gen}.
\end{proof}

It is worth noting that -- similar to Lemma~\ref{lem:subsume} -- we can define a procedure that decides whether a rational semigroup $\ev(\mA)$ contains the neutral element from any algorithm deciding whether such semigroups are groups. This may be of independent interest:

\begin{lemma}\label{lem:idrat}
    Let $G$ be a group and $\mA$ a trim automaton over $G$ (with $q_1$ as only starting and accepting state). Then $\ev(\mA)$ contains the neutral element if and only if $\mA$ admits a trim \emph{sub-automaton} $\mA'$, such that $\ev(\mA')$ is a group.
    Here, a sub-automaton of $\mA$ is defined as an automaton whose state set and transition set are subsets of the state set and transition set of $\mA$.
\end{lemma}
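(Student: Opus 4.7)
The plan is to prove this as the rational-semigroup analog of Lemma~\ref{lem:subsume}, replacing \emph{take a subset of generators} with \emph{take a sub-automaton of $\mA$}. The easy direction is immediate: if $\mA'$ is a trim sub-automaton with $\ev(\mA')$ a group, then $I \in \ev(\mA')$, and since every accepting run of $\mA'$ is an accepting run of $\mA$ (sharing the initial/accepting state $q_1$), one has $I \in \ev(\mA') \subseteq \ev(\mA)$.

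For the forward direction, suppose $I \in \ev(\mA)$, and fix an accepting run $w = \delta_{\ell_1} \cdots \delta_{\ell_p}$ with $\ev(w) = I$; replacing $w$ by $w^2$ if necessary, we may assume $p \geq 2$. Let $\mA_w$ be the sub-automaton of $\mA$ whose states are those visited by $w$ and whose transitions are $\{\delta_{\ell_k} \mid 1 \leq k \leq p\}$, with $q_1$ as the initial/accepting state. Then $\mA_w$ is trim: every visited state is the origin of some $\delta_{\ell_k}$ (including $q_1 = q_{\Delta(\ell_p)}$, which is the origin of $\delta_{\ell_1}$), and every transition appears in the accepting run $w$.

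The heart of the argument is to show that $\ev(\mA_w)$ is a group. Since $\ev(\mA_w)$ is a semigroup containing $\ev(w) = I$, it suffices to exhibit, for each accepting run $v'$ of $\mA_w$, an accepting run of $\mA_w$ evaluating to $\ev(v')^{-1}$. The key observation is that each cyclic shift
\[
r_k \;:=\; \delta_{\ell_{k+1}} \cdots \delta_{\ell_p}\, \delta_{\ell_1} \cdots \delta_{\ell_{k-1}}
\]
is a non-empty walk in $\mA_w$ from $q_{\Delta(\ell_k)}$ to $q_{\Omega(\ell_k)}$ (passing through $q_1$ in the middle), and since $\ev(w) = I$, one has $\ev(r_k) = \ev(\delta_{\ell_k})^{-1}$. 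Given $v' = \delta_{\ell'_1} \cdots \delta_{\ell'_q}$, for each $j$ choose some $k_j$ with $\ell_{k_j} = \ell'_j$; then $r_{k_q} r_{k_{q-1}} \cdots r_{k_1}$ is a valid walk in $\mA_w$, since consecutive endpoints match via $\Omega(\ell'_{j+1}) = \Delta(\ell'_j)$ inside $v'$. It starts at $q_{\Delta(\ell'_q)} = q_1$, ends at $q_{\Omega(\ell'_1)} = q_1$, and evaluates to $\ev(\delta_{\ell'_q})^{-1} \cdots \ev(\delta_{\ell'_1})^{-1} = \ev(v')^{-1}$, as required.

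The main subtlety is simply ensuring that the cyclic shifts $r_k$ are non-empty walks (which is why we replace $w$ by $w^2$ when $p=1$) and that they concatenate consistently. Once this bookkeeping is carried out, the proof mirrors the generator-level argument of Lemma~\ref{lem:subsume}; no property of $G$ beyond being a group is used, so the argument applies to any automaton over any group.
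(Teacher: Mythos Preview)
Your proof is correct and takes essentially the same approach as the paper. The paper's argument invokes Proposition~\ref{prop:grptotraverse} (an automaton has an Identity Traversal if and only if it recognizes a group) applied to the sub-automaton $\mA_w$; your cyclic-shift construction $r_k$ and the concatenation $r_{k_q}\cdots r_{k_1}$ are exactly the proof of that proposition, carried out inline.
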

\begin{proof}
    Let $e$ denote the neutral element of $G$.
    If $e\in \ev(\mA)$, then $\mA$ admits some accepting run $w$ with $\ev(w) = e$. Let $\mA'$ be the trim sub-automaton of $\mA$ whose states and transitions are exactly those used in $w$. Then $w$ is an Identity Traversal of $\mA'$, so $\ev(\mA')$ is a group by Proposition~\ref{prop:grptotraverse}.
    If $\mA$ admits a sub-automaton $\mA'$ such that $\ev(\mA')$ is a group, take any Identity Traversal $w$ of $\mA'$, then $w$ is an accepting run of $\mA$ with $\ev(w) = e$. So $\ev(\mA)$ contains the neutral element.

    
    Therefore, to decide whether $\ev(\mA)$ contains the neutral element, it suffices to enumerate all trim sub-automata $\mA'$ of $\mA$, and decide whether any of them satisfies the condition \say{$\ev(\mA')$ is a group}.
\end{proof}

Therefore, Theorem~\ref{thm:ratmeta} and Lemma~\ref{lem:idrat} show that it is decidable whether a rational semigroup $\ev(\mA)$ of $\mY \rtimes \Z^n$ contains the neutral element, by enumerating all sub-automaton of the trim automaton $\mA$.
Furthermore, recall that Section~\ref{sec:overview} showed for a given automaton $\mA$ over $\T(d, \K)$, where $\K$ is an algebraic number field, it is decidable whether $\ev(\mA)$ is a group.
Therefore, by Lemma~\ref{lem:idrat}, it is also decidable whether $\ev(\mA) \subseteq \T(d, \K)$ contains the neutral element.

\section{Nilpotent groups of finite Prüfer rank}\label{sec:nilp}

In this section we prove Theorem~\ref{thm:nilpsat}:

\thmnilp*

The idea is to extend a weaker version of the theorem due to Shafrir, and independently, due to Bodart, Ciobanu and Metcalfe:

\begin{theorem}[{\cite[Corollary~1]{shafrir2024saturation}, see also~\cite[Proposition~19]{bodart2024identity}}]\label{thm:nilpfin}
    Let $N$ be a finitely generated nilpotent group and $M$ be a subsemigroup of $N$. If $M[N, N] = N$, then $M = N$. More generally, if $M[N,N]$ is a finite-index subgroup of $N$, then $M$ is a finite-index subgroup of $N$.
\end{theorem}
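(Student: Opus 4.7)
The plan is to prove Theorem~\ref{thm:nilpfin} by induction on the nilpotency class $c$ of the finitely generated nilpotent group $N$. The base case $c = 1$ is immediate: when $N$ is abelian, $[N,N]$ is trivial and the hypothesis $M[N,N] = N$ directly gives $M = N$ as subsets, hence as subsemigroups.

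For the inductive step ($c \geq 2$), I would set $Z := \gamma_c(N)$, the last nontrivial term of the lower central series of $N$, which is central in $N$ and contained in $[N,N]$. The quotient $N/Z$ has class $c-1$, and the image $\overline{M}$ of $M$ in $N/Z$ satisfies $\overline{M} \cdot [N/Z, N/Z] = N/Z$ since $[N/Z, N/Z] = [N,N]/Z$. Applying the inductive hypothesis to $\overline{M} \leq N/Z$ yields $\overline{M} = N/Z$, equivalently $MZ = N$. It then suffices to show $Z \subseteq M$, since $N = MZ \subseteq M \cdot M \subseteq M$ follows by semigroup closure.

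To show $Z \subseteq M$, I would first use centrality of $Z$ to observe the identity $[m_1 z_1, m_2 z_2] = [m_1, m_2]$ for $z_i \in Z$ and $m_i \in N$. Applied to the standard commutator generators $[g,h]$ of $Z = \gamma_c(N)$ (with $g \in N$ and $h \in \gamma_{c-1}(N)$), and writing $g = m_g z_g$, $h = m_h z_h$ with $m_g \in M$ and $m_h \in M \cap \gamma_{c-1}(N)$ (which is possible since $MZ = N$ and $Z \subseteq \gamma_{c-1}(N)$ forces $m_h = h z_h^{-1} \in \gamma_{c-1}(N)$), this shows $Z$ is generated as a group by commutators $[m_g, m_h]$ with $m_g \in M$ and $m_h \in M \cap \gamma_{c-1}(N)$. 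The crucial semigroup trick to realize such commutators inside $M$ is: since $MZ = N$, every $m \in M$ admits a \emph{companion} $m^* \in M$ with $m m^* \in Z$ (writing $m^{-1} = m^* \zeta$ with $m^* \in M$, $\zeta \in Z$ yields $m m^* = \zeta^{-1} \in Z \cap M$). The positive word $m_g m_h m_g^* m_h^* \in M$ then lies in $Z$ (using $[m_g, m_h] \in Z$) and equals $[m_g, m_h] \cdot (m_g m_g^*)(m_h m_h^*)$, placing $[m_g, m_h]$ in $M \cap Z$ up to an explicit $Z$-correction from $M \cap Z$.

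The hard part will be upgrading this observation to the full equality $M \cap Z = Z$: one must show that as $(m_g, m_h)$ vary over $M \times (M \cap \gamma_{c-1}(N))$, together with higher-power commutators $[m_g^a, m_h^b]$ and various choices of companions, the resulting semigroup commutators (and their corrections) span all of the finitely generated abelian group $Z$, rather than a proper sub-semigroup. I would attack this via Hall--Mal'cev polynomial identities in the f.g.\ nilpotent $N$: the polynomial dependence of commutator powers on their inputs, combined with freedom in choosing companions, should let one realize a fixed finite generating set of $Z$ explicitly as positive words in $M$. Once $Z \subseteq M$ is established, the induction closes and yields $M = N$. The more general finite-index statement then follows by a similar argument after passing to the finite-index nilpotent subgroup $K := M[N,N]$ and running the equality case within $K$.
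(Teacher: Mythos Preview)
The paper does not prove Theorem~\ref{thm:nilpfin}; it is imported from \cite{shafrir2024saturation} and \cite{bodart2024identity} as a black box for the proof of Theorem~\ref{thm:nilpsat}. There is thus no in-paper argument to compare against, and I comment on your outline directly.

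Your induction on the class and the companion trick are sound, and the computation
$m_g^{\,k}m_h^{\,l}(m_g^*)^k(m_h^*)^l=[m_g,m_h]^{kl}\,u_g^{\,k}u_h^{\,l}\in M\cap Z$
(with $u_g=m_gm_g^*$, $u_h=m_hm_h^*\in M\cap Z$) is the right one. What actually closes the ``hard part'' is a point you only gesture at: for any nonzero homomorphism $\phi\colon Z\to\R$, some generating commutator $w=[m_g,m_h]$ has $\phi(w)\neq 0$, and then $\pm kl\,\phi(w)+k\phi(u_g)+l\phi(u_h)$ takes both signs for $k=l$ large, since the quadratic term dominates the linear corrections. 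Hence the cone of $M\cap Z$ in $Z\otimes\R$ is everything; together with $\langle M\cap Z\rangle_{\grp}=Z$ (each $w$ lies in that group), a short argument shows the subsemigroup $M\cap Z$ is already a subgroup, hence equals $Z$. This is presumably what you mean by ``polynomial dependence of commutator powers,'' but it should be stated; the ``freedom in choosing companions'' is not needed.

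Your reduction of the finite-index statement, however, has a genuine gap. Setting $K:=M[N,N]$ and ``running the equality case within $K$'' would require $M[K,K]=K$, but from $M[N,N]=K$ and $[K,K]\subseteq[N,N]$ you only deduce that $M[K,K]$ is a finite-index subgroup of $K$ (the image of $M$ in the abelian group $K/[K,K]$ is a subsemigroup whose product with the finite group $[N,N]/[K,K]$ is everything, hence a finite-index subgroup---not all of $K/[K,K]$). One fix is to iterate: set $K_0=K$, $K_{i+1}=M[K_i,K_i]$; each $K_i$ is a finite-index subgroup of $N$ containing $\langle M\rangle_{\grp}$, and $\langle M\rangle_{\grp}$ itself has finite index in $N$ (its image in $N/[N,N]$ does, which suffices in a finitely generated nilpotent group). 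The chain therefore stabilises at some $K_\infty$ with $M[K_\infty,K_\infty]=K_\infty$, and only then does the equality case give $M=K_\infty$.
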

This result should also be compared to the following theorem:
\begin{theorem}[Folklore, see {\cite[Theorem 2.2.3]{Khukhro+1993}}] \label{saturation}
    Let $G$ be a nilpotent group and $H$ a subgroup. Suppose that $H[G,G]=G[G,G]$, then $H=G$.
\end{theorem}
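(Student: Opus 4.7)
The plan is to induct on the nilpotency class $c$ of $G$. The base case $c \leq 1$ is immediate: when $G$ is abelian, $[G,G]$ is trivial and the hypothesis $H[G,G] = G[G,G]$ forces $H = G$.

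For the inductive step, let $G$ be nilpotent of class exactly $c \geq 2$ and assume the result for all nilpotent groups of smaller class. Set $Z \coloneqq G_{c-1}$; this is central in $G$ since $[G, Z] = G_c = \{e\}$. The quotient $\overline{G} \coloneqq G/Z$ has class at most $c-1$, and the hypothesis descends to $\overline{H}\,[\overline{G}, \overline{G}] = \overline{G}\,[\overline{G}, \overline{G}]$. By the inductive hypothesis applied to $\overline{G}$, we get $\overline{H} = \overline{G}$, which rewrites as
\[ HZ = G. \]

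To finish, it suffices to prove $Z \subseteq H$, for then $H = HZ = G$. Since $Z = [G, G_{c-2}]$ is central and abelian, it is generated by the commutators $[g_1, g_2]$ with $g_1 \in G$ and $g_2 \in G_{c-2}$, so I only need to show each such commutator lies in $H$. Using $HZ = G$, decompose $g_1 = h_1 z_1$ and $g_2 = h_2 z_2$ with $h_1, h_2 \in H$ and $z_1, z_2 \in Z$. Crucially, $g_2$ and $z_2$ both sit in $G_{c-2}$ (since $Z \subseteq G_{c-2}$), so automatically $h_2 = g_2 z_2^{-1} \in H \cap G_{c-2}$. Because $z_1, z_2$ commute with everything in $G$, a direct one-line expansion collapses to
\[ [h_1 z_1, h_2 z_2] = [h_1, h_2], \]
and this element lies in $[H, H] \subseteq H$ (it also lies in $[G, G_{c-2}] = Z$, as required). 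Hence $[g_1, g_2] \in H$, giving $Z \subseteq H$ and completing the induction.

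The only subtle point is the forced containment $h_2 \in G_{c-2}$: this is what guarantees that the surviving bracket $[h_1, h_2]$ simultaneously represents the original generator of $Z$ and sits in $H$. Everything else is routine bookkeeping with the centrality of $G_{c-1}$ inside a nilpotent group of class $c$.
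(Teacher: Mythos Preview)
Your argument is correct. The induction on nilpotency class is the standard route for this folklore result: pass to $G/G_{c-1}$ to get $HZ = G$, then show $Z = G_{c-1} \subseteq H$ by rewriting each generating commutator $[g_1,g_2]$ (with $g_2 \in G_{c-2}$) as $[h_1,h_2]$ using centrality of $Z$. The key observation that $h_2 = g_2 z_2^{-1}$ automatically lands in $G_{c-2}$ (because $Z = G_{c-1} \subseteq G_{c-2}$) is exactly what makes the induction close, and you isolated it correctly.

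The paper does not supply its own proof of this statement; it is quoted as folklore with a reference to Khukhro's book, so there is nothing to compare against beyond noting that your proof is a clean, self-contained version of the standard argument.
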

We will extend Theorem~\ref{thm:nilpfin} from finitely generated nilpotent groups to infinitely generated subgroups of $\UT(d, \K)$. It should be noted that Theorem \ref{thm:nilpfin} does not extend to general nilpotent groups contrary to Theorem \ref{saturation}.
Recall the classic notions of \emph{isolators} and \emph{Pr\"{u}fer rank}.

\begin{definition} 
The \emph{isolator} of a subset $X\subseteq G$ is the subset
\[ I(X) \coloneqq \{g\in G \mid \exists m\in\Z_{>0},\; g^m\in X \}. \]
\end{definition}
\begin{lemma}[{\cite[Theorem 2.5.8]{Khukhro+1993}}]
If $G$ is nilpotent and $H$ a subgroup, then $I(H)$ is a subgroup.
\end{lemma}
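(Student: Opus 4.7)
Closure of $I(H)$ under inversion is immediate: if $g^m\in H$, then $(g^{-1})^m=(g^m)^{-1}\in H$ because $H$ is a subgroup. The substantive task is to prove closure under multiplication: given $g_1,g_2\in G$ with $g_1^{m_1},\,g_2^{m_2}\in H$, I need to produce $M>0$ with $(g_1g_2)^M\in H$. I would carry this out by induction on the nilpotency class $c$ of $G$.

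The base case $c=1$ is trivial: in an abelian group $(g_1g_2)^{m_1m_2}=g_1^{m_1m_2}g_2^{m_1m_2}\in H$. For the inductive step I would set $Z\coloneqq \gamma_c(G)$, which is central in $G$, and note that $G/Z$ has class at most $c-1$. Applying the induction hypothesis to $G/Z$ and the subgroup $\overline H=HZ/Z$, both $\bar g_1$ and $\bar g_2$ lie in $I(\overline H)$, so their product does as well; hence there exists $N>0$ with $(g_1g_2)^N\in HZ$. Writing $(g_1g_2)^N=h_0 z$ with $h_0\in H$ and $z\in Z$, centrality of $z$ gives $(g_1g_2)^{Nk}=h_0^k z^k$ for all $k\geq 1$, so it suffices to find some $k$ with $z^k\in H$.

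The main obstacle is precisely this last step: arguing that a suitable power of the ``central leftover'' $z$ lies in $H$. My plan for this is to first pass to the finitely generated case: the subgroup $G'\coloneqq\langle g_1,g_2\rangle$ is itself nilpotent of class at most $c$, and with $H'\coloneqq H\cap G'$ one has $g_i\in I_{G'}(H')$; if $I_{G'}(H')$ is a subgroup, then $g_1g_2\in I_{G'}(H')\subseteq I_G(H)$. Inside the f.g.\ nilpotent $G'$, $Z\cap G'$ is a finitely generated abelian group, and I would use the Hall--Petresco collection formula
\[
(g_1g_2)^n=g_1^n\,g_2^n\,c_2^{\binom n2}c_3^{\binom n3}\cdots c_c^{\binom nc},\qquad c_i\in\gamma_i(\langle g_1,g_2\rangle),
\]
to re-express $z$ in terms of iterated commutators of $g_1,g_2$. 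Choosing $n$ a large common multiple of $m_1,m_2$ and of suitable denominators appearing in the binomial coefficients forces every correction factor into $H$, yielding the required power of $g_1g_2$ in $H$.

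An alternative path I would consider, should the combinatorics of Hall--Petresco prove awkward, is to reduce modulo the (finite) torsion subgroup $T(G')$ and embed the torsion-free quotient into its Mal'cev $\Q$-completion $G'_\Q$. There $I(H'_\Q)=H'_\Q$ holds formally, because $G'_\Q$ is a divisible torsion-free nilpotent group; pulling back through $G'\to G'/T(G')\hookrightarrow G'_\Q$ then recovers the subgroup property of $I_{G'}(H')$ on the nose. Either route converts the geometric intuition (``saturate $H$ by roots of its elements'') into the algebraic identity $I(H)\cdot I(H)\subseteq I(H)$ that we require.
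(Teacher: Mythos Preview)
The paper does not prove this lemma; it is quoted as \cite[Theorem 2.5.8]{Khukhro+1993} with no argument given. So there is no ``paper's proof'' to compare against, and your attempt stands on its own.

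Your inductive skeleton (reduce modulo $\gamma_c(G)$, obtain $(g_1g_2)^N=h_0z$ with $z$ central, then kill $z$) is the standard one, and the reduction to finitely generated $G'=\langle g_1,g_2\rangle$ is fine. The gap is in the Hall--Petresco step. You write that ``choosing $n$ a large common multiple of $m_1,m_2$ and of suitable denominators \ldots\ forces every correction factor into $H$'', but this is not what Hall--Petresco gives you: the elements $c_i\in\gamma_i(G')$ are \emph{fixed} words in $g_1,g_2$, independent of $n$, and you are raising them to the exponents $\binom{n}{i}$. Making $\binom{n}{i}$ divisible by anything you like does not put $c_i^{\binom{n}{i}}$ into $H$ unless you already know that some power of $c_i$ lies in $H$ --- which is precisely the statement ``$c_i\in I(H)$'' that you have not yet established. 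The honest argument here is a nested induction: each $c_i$ is an iterated commutator of $g_1,g_2$, and one shows (using that $[g_1^{m_1},g_2^{m_2}]\in H$ and commutator expansion modulo deeper $\gamma_j$) that every such commutator has a power in $H$; this is where the work is, and your sketch skips it.

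Your alternative via the Mal'cev completion is a legitimate route and, once the torsion is handled, is arguably cleaner than wrestling with Hall--Petresco combinatorics. If you pursue it, be explicit that $T(G')$ is finite (so passing to $G'/T(G')$ only changes $I(H')$ by a finite-index issue, which is harmless), and that in the radicable hull the isolator of any subgroup is its Mal'cev closure, hence a subgroup.
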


\begin{definition}[Prüfer rank] The \emph{Prüfer rank} of a group $G$, denoted $\rk(G)$, is the maximum number of generators needed to generate a finitely generated subgroup $H\le G$.
\end{definition}
For instance, every finitely generated subgroup of $(\Q,+)$ is cyclic, therefore $\rk(\Q)=1$.
\begin{lemma}[{Folklore, see~\cite[p.85, 3.]{lennox2004theory}}]\label{lem:rank} Let $G$ be a group
    \begin{enumerate}[leftmargin=8mm, label={\normalfont(\roman*)}]
        \item If $H\le G$, then $\rk(H)\le \rk(G)$.
        \item If $N\trianglelefteq G$, then $\rk(G)\le \rk(N)+\rk(G/N)$.
    \end{enumerate}
\end{lemma}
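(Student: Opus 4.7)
The plan is to work directly from the definition of Prüfer rank, namely that $\rk(G) = \sup\{d(K) \mid K \le G,\; K \text{ finitely generated}\}$, where $d(K)$ denotes the minimum number of generators of $K$.

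For part (i), the argument is essentially tautological. Any finitely generated subgroup $K \le H$ is in particular a finitely generated subgroup of $G$, so $d(K) \le \rk(G)$. Taking the supremum over all such $K$ yields $\rk(H) \le \rk(G)$.

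For part (ii), I would fix an arbitrary finitely generated subgroup $K \le G$ and aim to exhibit a generating set for $K$ of size at most $\rk(N) + \rk(G/N)$. First, consider the quotient map $\pi\colon G \to G/N$. The image $\pi(K) = KN/N$ is a finitely generated subgroup of $G/N$, so by definition of rank it admits a generating set $\{\pi(g_1), \ldots, \pi(g_r)\}$ with $r \le \rk(G/N)$, where $g_1, \ldots, g_r \in K$. A standard argument then gives
\[
K = \langle g_1, \ldots, g_r,\; K \cap N \rangle,
\]
since every element of $K$ can be written as a word in the $g_i$ times an element of $K \cap N$.

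The main subtlety is that $K \cap N$ need not be finitely generated, so one cannot directly bound its rank. The trick I would use is the following: since $K$ itself is finitely generated, pick a finite generating set $k_1, \ldots, k_m$ of $K$. Each $k_i$ can be written as a word in $g_1, \ldots, g_r$ and finitely many elements of $K \cap N$; collecting these finitely many elements across all $i$ produces a finite set $L_0 \subseteq K \cap N$ such that $K = \langle g_1, \ldots, g_r, L_0 \rangle$. Then $L \coloneqq \langle L_0 \rangle$ is a finitely generated subgroup of $N$, so by definition of $\rk(N)$ it admits a generating set $l_1, \ldots, l_s$ with $s \le \rk(N)$. Substituting back gives
\[
K = \langle g_1, \ldots, g_r, l_1, \ldots, l_s \rangle,
\]
a generating set of size $r + s \le \rk(G/N) + \rk(N)$. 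Since $K$ was arbitrary, taking the supremum yields $\rk(G) \le \rk(N) + \rk(G/N)$.

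The only step I expect to require care is the reduction from $K \cap N$ to its finitely generated subgroup $L$: one must not conflate $d(L) \le \rk(N)$ (which is immediate) with any bound on $d(K \cap N)$ (which in general does not exist). Everything else is bookkeeping around the exact sequence $1 \to K \cap N \to K \to KN/N \to 1$.
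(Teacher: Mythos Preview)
Your proof is correct and is the standard argument for this folklore result. The paper does not supply its own proof of Lemma~\ref{lem:rank}; it simply states the lemma with a reference to \cite[p.85, 3.]{lennox2004theory}, so there is nothing to compare against beyond noting that your argument is exactly the expected one.
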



\begin{theorem}[{\cite[Theorem 2.5]{finite_base}}] \label{finite_base}
    If $G$ is nilpotent and has finite Prüfer rank, then there exists a finite set $B\subset G$ such that $I(\la B\ra_\grp)=G$.
\end{theorem}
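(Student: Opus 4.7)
\medskip

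The plan is to prove the theorem by induction on the nilpotency class $c$ of $G$, reducing along the lower central series. Let $Z := \gamma_c(G)$, which is central in $G$ and abelian, and note that $G/Z$ is nilpotent of class $c-1$. By Lemma~\ref{lem:rank}(i), $\rk(Z)\le\rk(G)<\infty$. For the quotient, observe that if $H\le G/Z$ is generated by $\bar h_1,\ldots,\bar h_k$, then choosing lifts $h_i\in G$ shows that $\langle h_1,\ldots,h_k\rangle_{\grp}\le G$ surjects onto $H$, so $\rk(G/Z)\le\rk(G)<\infty$ as well. Thus both $Z$ and $G/Z$ satisfy the hypotheses.

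For the base case $c\le 1$, i.e.\ $G$ abelian, let $T\le G$ be the torsion subgroup. Every element of $T$ is automatically contained in $I(H)$ for any subgroup $H$, so it suffices to treat $G/T$. Since $G/T$ is torsion-free abelian with $\rk(G/T)\le\rk(G)=:r$, a standard argument shows $G/T$ embeds in $\Q^r$: any $r+1$ elements of $G/T$ would have to be $\Z$-linearly dependent (since a finitely generated torsion-free abelian group of Prüfer rank $\le r$ is $\Z^{s}$ with $s\le r$), so $\dim_\Q(G/T\otimes_\Z\Q)\le r$. Pick $b_1,\ldots,b_s\in G$ whose images form a $\Q$-basis of $G/T\otimes\Q$ and set $B:=\{b_1,\ldots,b_s\}$. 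For any $g\in G$, clearing denominators yields $m\ge 1$ with $g^m \equiv b_1^{n_1}\cdots b_s^{n_s}\pmod{T}$ for some $n_i\in\Z$; then $g^{mN}\in\langle B\rangle_{\grp}$ where $N$ is the order of the torsion error, so $I(\langle B\rangle_{\grp})=G$.

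For the inductive step, apply the induction hypothesis to $G/Z$ to get a finite set $\bar B_1\subset G/Z$ with $I(\langle\bar B_1\rangle_{\grp})=G/Z$, and fix a lift $B_1\subset G$. Apply the abelian base case to $Z$ to obtain a finite $B_2\subset Z$ with $I(\langle B_2\rangle_{\grp})=Z$. Set $B:=B_1\cup B_2$. Given $g\in G$, by choice of $B_1$ there exist $m\ge 1$, $h\in\langle B_1\rangle_{\grp}$, and $z\in Z$ with $g^m=hz$. By choice of $B_2$, some $n\ge 1$ satisfies $z^n\in\langle B_2\rangle_{\grp}$. Since $z$ is central in $G$, it commutes with $h$, giving
\[
g^{mn}=(hz)^n=h^n z^n\in\langle B_1\rangle_{\grp}\cdot\langle B_2\rangle_{\grp}\subseteq\langle B\rangle_{\grp},
\]
so $g\in I(\langle B\rangle_{\grp})$ and hence $I(\langle B\rangle_{\grp})=G$.

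The only real subtlety is the step $(hz)^n=h^n z^n$, which crucially uses that $Z=\gamma_c(G)$ is \emph{central} (not merely normal); this is exactly why we descend along the lower central series and choose $Z$ as its last nontrivial term rather than, say, $[G,G]$. The other point to check carefully is finiteness of Prüfer rank for the quotient $G/Z$, which does \emph{not} follow from Lemma~\ref{lem:rank}(ii) but rather from the elementary lift-of-generators argument above. Everything else is a straightforward combination of the abelian case with the centrality of $Z$.
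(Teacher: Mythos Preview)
Your proof is correct. The paper does not give its own proof of this statement; it simply cites it as \cite[Theorem 2.5]{finite_base} and uses it as a black box. Your induction on nilpotency class via the last nontrivial term $Z=\gamma_c(G)$ of the lower central series is the natural self-contained argument, and you have correctly identified the two points that need care: centrality of $Z$ (so that $(hz)^n=h^nz^n$) and the fact that $\rk(G/Z)\le\rk(G)$ follows from lifting generators rather than from Lemma~\ref{lem:rank}(ii). In the abelian base case your handling of the torsion error is also fine, since the specific element $t\in T$ has finite order even when $T$ itself is infinite.
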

We are now ready to prove Theorem~\ref{thm:nilpsat}.
We recall $[x, y]$ denotes the element $xyx^{-1} y^{-1}$, called the \emph{commutator} of $x$ and $y$.
\begin{proof}[Proof of Theorem~\ref{thm:nilpsat}] We prove the result under a slightly weaker condition: $[N,N]$ has finite Prüfer rank. Let's first observe this condition is indeed satisfied if $N\le\UT(d,\K)$. Indeed,
\[ \rk[N,N] \le \rk(N) \le \rk(\UT(d,\K)) \le \dim_\Q(\K) \cdot {d\choose 2} < \infty \]
using Lemma~\ref{lem:rank} (i) twice, then part (ii) iteratively on the lower central series of $\UT(d,\K)$.

\medbreak

\noindent From now on, we only suppose that $\rk[N,N]<\infty$. Using Theorem \ref{finite_base}, there exists a finite set $B\subset [N,N]$ such that $I(\la B\ra_\grp)=[N,N]$. Each element of $[N,N]$ (hence each element of $B$) can be written as a product of commutators, so we can in turn find a finite set $A\subset N$ such that
    \[ I\big(\!\la \left\{[a,a'] \;\big|\; a,a'\in A \right\}\ra_\grp\big) \supseteq [N,N]  \]
    As $M[N,N]=N$, we find a finite set $X\subset M$ such that, for each $a\in A$ there exists $x,y\in X$ such that $x=a$ and $y=a^{-1}\pmod{[N,N]}$. Fix $g\in M$. We take $h\in M$ such that $h=g^{-1}\pmod{[N,N]}$. Let $\tilde N=\la g,h,A,X\ra_\grp$ and $\tilde M=M\cap\tilde N$. By construction, we have
    \[ \tilde M\cdot I([\tilde N,\tilde N]) \supseteq \tilde M\cdot [N,N] \supseteq \la g,h,X\ra \cdot [N,N] = \tilde N \cdot [N,N] \supseteq \tilde N\]
    hence $\tilde M\cdot I_{\tilde N}([\tilde N,\tilde N])=\tilde N$ (where $I_{\tilde N}(X) \coloneqq \{g\in\tilde N:\exists m>0,\; x^m\in X\}=I(X)\cap\tilde N$). 

    \medbreak

    \noindent As $\tilde N$ is finitely generated, this implies that $\tilde M[\tilde N,\tilde N]$ is a finite-index subgroup of $\tilde N$. Theorem~\ref{thm:nilpfin} gives that $\tilde M$ is a subgroup of $\tilde N$. In particular $g$ admits an inverse in $\tilde M$, hence in $M$. We conclude that $M$ is a subgroup, hence $M=N$ using Theorem \ref{saturation}.
\end{proof}



\begin{remark}\label{rmk:counterexample}
    Theorem~\ref{thm:nilpsat} 
    doesn't hold for more general nilpotent groups.
    For example, it does not hold when $N$ is a subgroup of $\UT(3, \Q(X))$, where $\Q(X)$ denotes the field of rational functions over $\Q$.
    Indeed, let $N$ be the group generated by the following elements:
    \[
    A_i =
    \begin{pmatrix}
        1 & X^i & X^{2i+1} \\
        0 & 1 & 0 \\
        0 & 0 & 1 \\
    \end{pmatrix}
    \!,\,
    B_i =
    \begin{pmatrix}
        1 & -X^i & X^{2i+1} \\
        0 & 1 & 0 \\
        0 & 0 & 1 \\
    \end{pmatrix}
    \!,\,
    C_i =
    \begin{pmatrix}
        1 & 0 & X^{2i+1} \\
        0 & 1 & X^i \\
        0 & 0 & 1 \\
    \end{pmatrix}
    \!,\,
    D_i =
    \begin{pmatrix}
        1 & 0 & X^{2i+1} \\
        0 & 1 & -X^i \\
        0 & 0 & 1 \\
    \end{pmatrix}
    \]
    for $i = 0, 1, 2, \ldots$, that is, $N = \gen{\left\{A_i, B_i, C_i, D_i \mid i \in \N\right\}}_{\grp}$.
    Then
    \[
    N =
    \left\{
    \begin{pmatrix}
        1 & g & f \\
        0 & 1 & h \\
        0 & 0 & 1 \\
    \end{pmatrix}
    \,\middle|\;
    f, g, h \in \Z[X]
    \right\},
    \quad
    [N, N] =
    \left\{
    \begin{pmatrix}
        1 & 0 & f \\
        0 & 1 & 0 \\
        0 & 0 & 1 \\
    \end{pmatrix}
    \,\middle|\;
    f \in \Z[X]
    \right\}.
    \]
    The latter equality is justified by the fact that $[N, N]$ contains the elements
    \[
    A_i C_0 A_i^{-1} C_0^{-1}
    =
    \begin{pmatrix}
        1 & 0 & X^{i} \\
        0 & 1 & 0 \\
        0 & 0 & 1 \\
    \end{pmatrix},\;
    i = 0, 1, 2, \ldots.
    \]
    Let $M$ be the semigroup $\gen{\left\{A_i, B_i, C_i, D_i \mid i \in \N\right\}}$, then $M[N, N] = N$.
    However, we have $M \neq N$ because $I \notin M$.
    Indeed, let
    $
    P = \begin{pmatrix}
        1 & g & f \\
        0 & 1 & h \\
        0 & 0 & 1 \\
    \end{pmatrix}
    $
    any non-empty product of $A_i, B_i, C_i, D_i, i \in \N$, where $m \geq 0$ is the largest index used.
    Then $\deg(g) \leq m, \deg(h) \leq m$. Furthermore, $\deg(f) = 2m+1$, and the coefficient of the term $X^{2m+1}$ in $f$ is positive. 
    So $f \neq 0$, and $P \neq I$.
\end{remark}

\section{Conclusion and outlook}

In this paper we proved decidability of the Identity Problem and the Group Problem in virtually solvable subgroups of $\GL(d, \Qbar)$.
An immediate open question is whether our decidability result still holds when the field $\Qbar$ is replaced by other effectively computable fields such as $\Q(X)$ or $\F_p(X)$ (where $\F_p$ denotes the finite field of cardinality $p$).
A number of interesting solvable groups are not embeddable in $\GL(d, \Qbar)$, such as the \emph{wreath product} $\Z \wr \Z \coloneqq \Z[X^{\pm}] \rtimes \Z$ (which is embeddable in $\GL(2, \Q(X))$) and the lamplighter group $\Z_2 \wr \Z \coloneqq \F_2[X^{\pm}] \rtimes \Z$ (which is embeddable in $\GL(2, \F_2(X))$).
Many such groups still have decidable Identity Problem and Group Problem (direct consequence of Theorem~\ref{thm:ratmeta}, also~\cite{dong2024semigroup}), and the lamplighter group $\Z_2 \wr \Z$ even has decidable Semigroup Membership~\cite{lohrey2015rational}.
Therefore, one might expect the Identity Problem and the Group Problem to be decidable for virtually solvable matrix groups over some well-behaved fields other than $\Qbar$.
On the other hand, a celebrated result of Kharlampovich~\cite{kharlampovich1981finitely} shows that general solvable groups are highly intractable: there exists a 3-step solvable group (a group $G$ such that $G^{(3)}$ is trivial), where the \emph{Word Problem} is undecidable. Moreover, there exists center-by-metabelian groups with decidable Word Problem and undecidable Torsion Problem (hence undecidable Identity Problem) \cite[Proposition 3.2]{Arzhantseva}. Therefore, one might expect the Identity Problem and the Group Problem to be undecidable for solvable matrix groups over more complicated fields.

As demonstrated in Remark~\ref{rmk:counterexample}, one of our key theorems (Theorem~\ref{thm:nilpsat}) no longer holds for the group $\UT(3, \Q(X))$.
Therefore, our proof of decidability for the Group Problem does not apply to solvable subgroups of $\GL(3, \Q(X))$.

\bibliography{solvable}

\newcommand{\etalchar}[1]{$^{#1}$}
\begin{thebibliography}{COSW19}

\bibitem[ALM14]{Arzhantseva}
Goulnara Arzhantseva, Jean-François Lafont, and Ashot Minasyan.
\newblock Isomorphism versus commensurability for a class of finitely presented groups.
\newblock {\em Journal of Group Theory}, 17(2):361--378, 2014.

\bibitem[BBC{\etalchar{+}}96]{babai1996multiplicative}
L{\'a}szl{\'o} Babai, Robert Beals, Jin-yi Cai, G{\'a}bor Ivanyos, and Eugene~M. Luks.
\newblock Multiplicative equations over commuting matrices.
\newblock In {\em Proceedings of the Seventh Annual ACM-SIAM Symposium on Discrete Algorithms}, pages 498--507, 1996.

\bibitem[BCM24]{bodart2024identity}
Corentin Bodart, Laura Ciobanu, and George Metcalfe.
\newblock Ordering groups and the identity problem.
\newblock {\em arXiv preprint arXiv:2411.15639}, 2024.

\bibitem[BCMI81]{baumslag1981computable}
Gilbert Baumslag, Frank~B. Cannonito, and Charles~F. Miller~III.
\newblock Computable algebra and group embeddings.
\newblock {\em Journal of Algebra}, 69(1):186--212, 1981.

\bibitem[Bea99]{beals1999algorithms}
Robert Beals.
\newblock Algorithms for matrix groups and the {T}its alternative.
\newblock {\em Journal of computer and system sciences}, 58(2):260--279, 1999.

\bibitem[BHP17]{bell2017identity}
Paul~C. Bell, Mika Hirvensalo, and Igor Potapov.
\newblock The {I}dentity {P}roblem for matrix semigroups in ${SL}_2(\mathbb{Z})$ is {NP}-complete.
\newblock In {\em Proceedings of the Twenty-Eighth Annual ACM-SIAM Symposium on Discrete Algorithms}, pages 187--206. SIAM, 2017.

\bibitem[BP10]{bell2010undecidability}
Paul~C. Bell and Igor Potapov.
\newblock On the undecidability of the identity correspondence problem and its applications for word and matrix semigroups.
\newblock {\em International Journal of Foundations of Computer Science}, 21(06):963--978, 2010.

\bibitem[CCZ20]{DBLP:conf/icalp/CadilhacCZ20}
Micha{\"{e}}l Cadilhac, Dmitry Chistikov, and Georg Zetzsche.
\newblock Rational subsets of {B}aumslag-{S}olitar groups.
\newblock In {\em 47th International Colloquium on Automata, Languages, and Programming, {ICALP} 2020}, volume 168 of {\em LIPIcs}, pages 116:1--116:16, 2020.

\bibitem[CK05]{choffrut2005some}
Christian Choffrut and Juhani Karhum{\"a}ki.
\newblock Some decision problems on integer matrices.
\newblock {\em RAIRO-Theoretical Informatics and Applications-Informatique Th{\'e}orique et Applications}, 39(1):125--131, 2005.

\bibitem[Coh13]{cohen2013course}
Henri Cohen.
\newblock {\em A course in computational algebraic number theory}, volume 138.
\newblock Springer Science \& Business Media, 2013.

\bibitem[COSW19]{colcombet2019reachability}
Thomas Colcombet, Jo{\"{e}}l Ouaknine, Pavel Semukhin, and James Worrell.
\newblock On reachability problems for low-dimensional matrix semigroups.
\newblock In {\em 46th International Colloquium on Automata, Languages, and Programming, {ICALP} 2019}, volume 132 of {\em LIPIcs}, pages 44:1--44:15, 2019.

\bibitem[DK18]{dructu2018geometric}
Cornelia Dru{\c{t}}u and Michael Kapovich.
\newblock {\em Geometric group theory}, volume~63.
\newblock American Mathematical Soc., 2018.

\bibitem[Don23]{dong2023recent}
Ruiwen Dong.
\newblock Recent advances in algorithmic problems for semigroups.
\newblock {\em ACM SIGLOG News}, 10(4):3--23, 2023.

\bibitem[Don24]{dong2024semigroup}
Ruiwen Dong.
\newblock Semigroup algorithmic problems in metabelian groups.
\newblock In {\em Proceedings of the 56th Annual ACM Symposium on Theory of Computing}, pages 884--891, 2024.
\newblock Full version: arxiv.org/abs/2304.12893.

\bibitem[Eis13]{eisenbud2013commutative}
David Eisenbud.
\newblock {\em Commutative algebra: with a view toward algebraic geometry}, volume 150.
\newblock Springer Science \& Business Media, 2013.

\bibitem[Gil87]{gilman2016groups}
Robert~H. Gilman.
\newblock Groups with a rational cross-section.
\newblock In {\em Combinatorial group theory and topology}, volume 111, pages 175--183, 1987.

\bibitem[Kha81]{kharlampovich1981finitely}
Ol'ga~Georgievna Kharlampovich.
\newblock A finitely presented solvable group with unsolvable word problem.
\newblock {\em Izvestiya Rossiiskoi Akademii Nauk. Seriya Matematicheskaya}, 45(4):852--873, 1981.

\bibitem[Khu93]{Khukhro+1993}
Evgenii~I. Khukhro.
\newblock {\em Nilpotent Groups and their Automorphisms}.
\newblock De Gruyter, Berlin, New York, 1993.

\bibitem[KK96]{finite_base}
Pan~Soo Kim and Yangkok Kim.
\newblock Note on groups with finite base.
\newblock {\em Comm. Korean Math. Soc.}, 11(2):303--310, 1996.

\bibitem[KNP18]{ko2017identity}
Sang{-}Ki Ko, Reino Niskanen, and Igor Potapov.
\newblock On the identity problem for the special linear group and the {H}eisenberg group.
\newblock In {\em 45th International Colloquium on Automata, Languages, and Programming, {ICALP} 2018}, volume 107 of {\em LIPIcs}, pages 132:1--132:15, 2018.

\bibitem[Kop68]{kopytov1968solvability}
V.~M. Kopytov.
\newblock Solvability of the problem of occurrence in finitely generated soluble groups of matrices over the field of algebraic numbers.
\newblock {\em Algebra i Logika}, 7(6):388--393, 1968.

\bibitem[Kop71]{kopytov1971solvability}
V.~M. Kopytov.
\newblock Solvability of the occurrence problem in finitely generated solvable matrix groups over a numbered field.
\newblock {\em Algebra i Logika}, 10(2):169–182, 1971.

\bibitem[KSS07]{kambites2007rational}
Mark Kambites, Pedro~V. Silva, and Benjamin Steinberg.
\newblock On the rational subset problem for groups.
\newblock {\em Journal of Algebra}, 309(2):622--639, 2007.

\bibitem[Loh13]{lohrey2013rational}
Markus Lohrey.
\newblock The rational subset membership problem for groups: a survey.
\newblock In {\em Groups St Andrews}, volume 422, pages 368--389. Cambridge University Press, 2013.

\bibitem[LR04]{lennox2004theory}
John~C. Lennox and Derek~J.S. Robinson.
\newblock {\em The theory of infinite soluble groups}.
\newblock Clarendon press, 2004.

\bibitem[LSZ15]{lohrey2015rational}
Markus Lohrey, Benjamin Steinberg, and Georg Zetzsche.
\newblock Rational subsets and submonoids of wreath products.
\newblock {\em Information and Computation}, 243:191--204, 2015.

\bibitem[Mal51]{mal1951some}
Anatolii~Ivanovich Mal'tsev.
\newblock On some classes of infinite soluble groups.
\newblock {\em Matematicheskii Sbornik}, 70(3):567--588, 1951.

\bibitem[Mar47]{markov1947certain}
A.~Markov.
\newblock On certain insoluble problems concerning matrices.
\newblock {\em Doklady Akad. Nauk SSSR}, 57(6):539--542, 1947.

\bibitem[Mik58]{mikhailova1958occurrence}
K.~A. Mikhailova.
\newblock The occurrence problem for direct products of groups.
\newblock {\em Doklady Akad. Nauk SSSR}, 119(6):1103–1105, 1958.

\bibitem[Ost99]{ostheimer1999practical}
Gretchen Ostheimer.
\newblock Practical algorithms for polycyclic matrix groups.
\newblock {\em Journal of Symbolic Computation}, 28(3):361--379, 1999.

\bibitem[Rom22]{roman2022undecidability}
Vitaly Roman'kov.
\newblock Undecidability of the submonoid membership problem for a sufficiently large finite direct power of the {H}eisenberg group.
\newblock {\em arXiv preprint arXiv:2209.14786}, 2022.

\bibitem[Sch80]{schreyer1980berechnung}
Frank-Olaf Schreyer.
\newblock Die {B}erechnung von {S}yzygien mit dem verallgemeinerten {W}eierstra{\ss}schen {D}ivisionssatz.
\newblock Master's thesis, Fakult{\"a}t f{\"u}r Mathematik, Universit{\"a}t Hamburg, 1980.

\bibitem[Sha24]{shafrir2024saturation}
Doron Shafrir.
\newblock A saturation theorem for submonoids of nilpotent groups and the {I}dentity {P}roblem.
\newblock {\em arXiv preprint arXiv:2402.07337}, 2024.

\bibitem[Tar51]{Tarski1949}
Alfred Tarski.
\newblock {\em A Decision Method for Elementary Algebra and Geometry}.
\newblock second ed., rev., Univ. of California Press, Berkeley, 1951.

\bibitem[Tit72]{TITS1972250}
Jacques Tits.
\newblock Free subgroups in linear groups.
\newblock {\em Journal of Algebra}, 20(2):250--270, 1972.

\end{thebibliography}

\appendix
\section{Omitted proofs}\label{app:omitted}

    

\lemvirtorat*
\begin{proof}
    If $\gen{A_1, \ldots, A_m}$ is a group then obviously $\gen{A_1, \ldots, A_m} \cap T$ is a group.
    Suppose that $\gen{A_1, \ldots, A_m} \cap T$ is a group, we show that every element in $\gen{A_1, \ldots, A_m}$ is invertible.
    Take any $s \in \gen{A_1, \ldots, A_m}$, then since $|G/T| < \infty$, we have $s^{|G/T|} \in \gen{A_1, \ldots, A_m} \cap T$.
    Since $\gen{A_1, \ldots, A_m} \cap T$ is a group, we have $s^{- |G/T|} \in \gen{A_1, \ldots, A_m} \cap T$.
    Thus, $s^{-1} = \underbrace{ss \cdots s}_{|G/T|-1} \cdot s^{-|G/T|} \in \gen{A_1, \ldots, A_m}$.
    Therefore, $\gen{A_1, \ldots, A_m}$ is a group.

    Let $B_1 \coloneqq I, B_2 \ldots, B_s \in G$ be the representatives of the left quotient $T \backslash G$.
    These can be effectively computed using the following saturation procedure.
    Start with the set $\mB \coloneqq \{I, A_1, \ldots, A_m\}$. For $i = 1, \ldots, m$, we remove $A_i$ from $\mB$ if there exists $j > i$ such that $A_i A_j^{-1} \in T$.
    Note that membership in $T$ is decidable since $T$ is solvable~\cite{kopytov1968solvability}.
    We then repeat the following process: check for all pairs of elements $B_i, B_j \in \mB$, whether there exists some element $B_k \in \mB$ such that $B_i B_j B_k^{-1} \in T$.
    If there exists a pair of element $B_i, B_j \in \mB$, such that $B_i B_j B_k^{-1} \notin T$ for all $B_k \in S$, then we append the matrix $B_i B_j$ to the set $\mB$.
    Otherwise we stop the process.
    Note that at each point of the process, the set $\mB$ contains representatives for different equivalent classes in $T \backslash G$.
    By the finiteness of $T \backslash G$, the process must terminate, by which point $\mB$ contains the complete set of representatives for $T \backslash G$.

    We then construct an automaton $\mA$ over $T$ with states $q_1, \ldots, q_s$ using the method from~\cite[Lemma~3.3]{kambites2007rational}.
    For each $A_i, B_j, i \in \{1, \ldots, m\}, j \in \{1, \ldots, s\}$, there exists $\varphi(j, i) \in \{1, \ldots, s\}$ such that $B_j A_i \in T B_{\varphi(j, i)}$. Compute $T_{ji} \in T$ be such that $B_j A_i = T_{ji} B_{\varphi(j, i)}$.
    For each $i \in \{1, \ldots, m\}, j \in \{1, \ldots, s\}$, we add a transition $\delta_{mj - m + i}$ in $\mA$ from the state $q_j$ to $q_{\varphi(j, i)}$, with evaluation $T_{ji}$.
    We claim that $\ev(\mA) = \gen{A_1, \ldots, A_m} \cap T$.

    Indeed, take any product $A_{i_1} A_{i_2} \cdots A_{i_p} \in \gen{A_1, \ldots, A_m} \cap T$.
    Let $T B_{j_1}, T B_{j_2}, T B_{j_3}, \ldots, T B_{j_{p+1}},$ respectively denote the equivalent classes of $I, A_{i_1}, A_{i_1} A_{i_2}, \ldots, A_{i_1} A_{i_2} \cdots A_{i_{p}}$ in $T \backslash G$.
    In particular, for $l = 1, \ldots, p$, we have $T B_{j_l} A_{i_l} = T B_{j_{l+1}}$, so $\varphi(j_l, i_l) = j_{l+1}$.
    Consider the path $\delta_{m j_1 - m + i_1} \delta_{m j_2 - m + i_2} \cdots \delta_{m j_p - m + i_p}$ in $\mA$.
    It is indeed a path because for each $l = 1, \ldots, p$, the transition $\delta_{m j_l - m + i_l}$ originates at the state $q_{j_l}$ and ends at state $q_{\varphi(j_i, i_l)} = q_{j_{l+1}}$.
    Furthermore, the path originates at $q_{1}$ because $T B_{j_1} = T$ so $j_1 = 1$.
    It ends at $q_{1}$ because $A_{i_1} A_{i_2} \cdots A_{i_{p}} \in T$, so $T B_{j_{p+1}} = T$ and $j_{p+1} = 1$.
    Therefore, $A_{i_1} A_{i_2} \cdots A_{i_p} = \ev(\delta_{m j_1 - m + i_1} \delta_{m j_2 - m + i_2} \cdots \delta_{m j_p - m + i_p}) \in \ev(\mA)$.
    Thus, $\gen{A_1, \ldots, A_m} \cap T \subseteq \ev(\mA)$.

    To prove $\ev(\mA) \subseteq \gen{A_1, \ldots, A_m} \cap T$, take any accepting run $\delta_{m j_1 - m + i_1} \delta_{m j_2 - m + i_2} \cdots \delta_{m j_p - m + i_p}$ of $\mA$.
    Then $j_1 = \varphi(j_p, i_p) = 1$, and $\varphi(j_l, i_l) = j_{l+1}$ for $l = 1, \ldots, p-1$.
    Therefore, 
    \begin{multline*}
    \ev(\delta_{m j_1 - m + i_1} \delta_{m j_2 - m + i_2} \cdots \delta_{m j_p - m + i_p}) \\
    = T_{j_1 i_1} T_{j_2 i_2} \cdots T_{j_p i_{p}} = B_{j_1} A_{i_1} B_{\varphi(j_1, i_1)}^{-1} \cdot B_{j_2} A_{i_2} B_{\varphi(j_2, i_2)}^{-1} \cdot \cdots \cdot B_{j_p} A_{i_p} B_{\varphi(j_p, i_p)}^{-1} \\
    = A_{i_1} A_{i_2} \cdots A_{i_p} \in \gen{A_1, \ldots, A_m}.
    \end{multline*}
    Thus, $\ev(\mA) \subseteq \gen{A_1, \ldots, A_m} \cap T$.    
    We therefore conclude that $\ev(\mA) = \gen{A_1, \ldots, A_m} \cap T$.
\end{proof}

\lemprim*
\begin{proof}
    The proof uses standard techniques for automata over groups, see for example~\cite{gilman2016groups} for similar results.
    Let $\mA$ be an automaton over $T$ such that $\ev(\mA) = S$.
    Without loss of generality suppose $\mA$ to be trim.
    For each state $q_i, i=2, \ldots, s$, of $\mA$, let $w_i$ denote a path from $q_i$ to $q_1$ and write $B_i \coloneqq \ev(w_i)$.
    Define additionally $B_1 \coloneqq I$.
    Let $\mA'$ be the automaton constructed as follows.
    The states of $\mA'$ are the same as $\mA$. For each transition $\delta_{\ell}$ in $\mA$, there is a transition $\delta'_{\ell}$ in $\mA'$ with the same origin and destination states, such that $\ev(\delta'_{\ell}) = B_{\Omega(\ell)}^{-1} \ev(\delta_{\ell}) B_{\Delta(\ell)}$.
    We will show that $\mA'$ is an automaton over $\gen{\ev(\mA)}_{\grp}$ and $\ev(\mA') = \ev(\mA)$.

    First, we show $\ev(\mA') = \ev(\mA)$.
    Note that $\delta'_{\ell_1} \cdots \delta'_{\ell_m}$ is an accepting run of $\ev(\mA')$ if and only if $\delta_{\ell_1} \cdots \delta_{\ell_m}$ is an accepting run of $\ev(\mA)$.
    Furthermore, when they are accepting runs, we have
    \begin{multline*}
        \ev(\delta'_{\ell_1} \cdots \delta'_{\ell_m}) = \left(B_{\Omega(\ell_1)}^{-1} \ev(\delta_{\ell_1}) B_{\Delta(\ell_1)}\right) \cdot \left(B_{\Omega(\ell_2)}^{-1} \ev(\delta_{\ell_2}) B_{\Delta(\ell_2)} \right) \cdot \cdots \cdot \left(B_{\Omega(\ell_m)}^{-1} \ev(\delta_{\ell_m}) B_{\Delta(\ell_m)}\right) \\
        = \ev(\delta_{\ell_1}) \cdots \ev(\delta_{\ell_m}) = \ev(\delta_{\ell_1} \cdots \delta_{\ell_m}).
    \end{multline*}
    Therefore $\ev(\mA') = \ev(\mA)$.
    
    Next, we show that $\gen{\ev(\delta'_1), \ldots, \ev(\delta'_t))}_{\grp} = \gen{\ev(\mA)}_{\grp}$.
    We claim that $B_{\Omega(\ell)}^{-1} \ev(\delta_{\ell}) B_{\Delta(\ell)} \in \gen{\ev(\mA)}_{\grp}$ for all $\ell$.
    Since $\mA$ is trim, for each $i = 1, \ldots, s,$ there exists a path $v_{i}$ from $q_1$ to $q_{i}$.
    Then $\ev(v_{\Omega(\ell)}) \ev(\delta_{\ell}) B_{\Delta(\ell)} = \ev(v_{\Omega(\ell)} \delta_{\ell} w_{\Delta(\ell)}) \in \ev(\mA)$.
    Similarly, $\ev(v_{\Omega(\ell)}) B_{\Omega(\ell)} = \ev(v_{\Omega(\ell)} w_{\Omega(\ell)}) \in \ev(\mA)$.
    Therefore, 
    \[
    \ev(\delta'_{\ell}) = B_{\Omega(\ell)}^{-1} \ev(\delta_{\ell}) B_{\Delta(\ell)} = \left(\ev(v_{\Omega(\ell)}) B_{\Omega(\ell)}\right)^{-1} \left(\ev(v_{\Omega(\ell)}) \ev(\delta_{\ell}) B_{\Delta(\ell)}\right) \in \gen{\ev(\mA)}_{\grp}.
    \]

    Since every element in $\ev(\mA')$ is a product of $\ev(\delta'_1), \ldots, \ev(\delta'_t)$, we have
    \[
    \gen{\ev(\mA')}_{\grp} \leq \gen{\ev(\delta'_1), \ldots, \ev(\delta'_t))}_{\grp} \leq \gen{\ev(\mA)}_{\grp} = \gen{\ev(\mA')}_{\grp}.
    \]
    So $\gen{\ev(\mA)}_{\grp} = \gen{\ev(\delta'_1), \ldots, \ev(\delta'_t))}_{\grp}$ is finitely generated, and the transitions of $\mA'$ actually evaluates in $\gen{\ev(\mA)}_{\grp}$.
\end{proof}

\lemeffective*

\begin{proof}
    By~\cite[Lemma~2]{kopytov1971solvability}, we can compute a finite presentation of $T/[N, N]$ in the variety of metabelian groups (see for example~\cite[Chapter~9]{lennox2004theory} for the definition of finite presentation in varieties).
    Using this finite presentation, \cite[Lemma~B.3]{dong2024semigroup} shows that $T/[N, N]$ can be effectively embedded in a quotient $(\mY \rtimes \Z^n)/H$ satisfying the conditions (i) and (ii).
\end{proof}

\lemaddH*
\begin{proof}    
    Let $\mA$ be an automaton over $(\mY \rtimes \Z^n)/H$ recognizing $\varphi(\overline{S})$.
    Denote by $\delta_1, \ldots, \delta_t$ its transitions.
    Let $\mA'$ be the automaton over $\mY \rtimes \Z^n$ obtained from $\mA$ by replacing the evaluations $\ev(\delta_1) = (y_1, a_1)H, \ldots, \ev(\delta_t) = (y_t, a_t)H,$ respectively by $(y_1, a_1), \ldots, (y_t, a_t)$.
    Denote by $(0, h_1), \ldots, (0, h_m)$ the generators of $H$ as a semigroup.
    We then append $m$ transitions to $\mA'$, whose origins and destinations are the accepting state $q_1$, and whose evaluations are respectively $(0, h_1), \ldots, (0, h_m)$.
    We thus obtain an automaton over $\mY \rtimes \Z^n$ that recognizes $\varphi(S)H$, because $(0, h_1), \ldots, (0, h_m)$ commute with $(y_1, a_1), \ldots, (y_t, a_t)$.
\end{proof}

\lemprimitive*
\begin{proof}
    Denote by $\pi$ the projection $\mY \rtimes \Z^n \rightarrow \Z^n$.
    Define $L \coloneqq \pi(\ev(\mA^{\pm}))$.
    Fix $i \in \{2, \ldots, s\}$. Let $w_i$ be a path in $\mA^{\pm}$ from $q_1$ to $q_i$, and define $z_i \coloneqq \pi(\ev(w_i))$.
    Then every path $w$ from $q_1$ to $q_i$ satisfies $\pi(\ev(w)) \in z_i + L$, because one can concatenate $w$ with the ``inverse'' $w_i^{-}$ to obtain an accepting run of $\mA^{\pm}$, so $\pi(\ev(w)) - z_i = \pi(\ev(w w_i^{-})) \in L$.
    Therefore, we have $a_{\ell} \in z_{\Delta(\ell)} - z_{\Omega(\ell)} + L$, for all $\ell$.

    Next, consider the automaton $\mA'$ obtained from $\mA$ as follows: the states of $\mA'$ are the same as $\mA$, and the transitions of $\mA'$ are $\delta'_1, \ldots, \delta'_t$, where $\delta'_{\ell}, \ell = 1, \ldots, t,$ has the same origin and target as $\delta_{\ell}$, but $\ev(\delta'_{\ell}) = (0, - z_{\Omega(\ell)}) \cdot \ev(\delta_{\ell}) \cdot (0, z_{\Delta(\ell)})$.
    We claim that $\ev(\mA') = \ev(\mA)$, by the same argument as in the proof of Lemma~\ref{lem:prim}. Indeed, take any accepting run $w = \delta_{\ell_1} \delta_{\ell_2} \cdots \delta_{\ell_p}$ of $\mA$, we have $1 = \Omega(\ell_1), \Delta(\ell_1) = \Omega(\ell_2), \ldots, \Delta(\ell_{p-1}) = \Omega(\ell_p), \Delta(\ell_p) = 1$.
    Then the run $w' = \delta'_{\ell_1} \delta'_{\ell_2} \cdots \delta'_{\ell_p}$ of $\mA'$ is accepting, and 
    \begin{multline*}
        \ev(w') = \\
        (0, - z_{\Omega(\ell_1)}) \cdot \ev(\delta_{\ell_1}) \cdot (0, z_{\Delta(\ell_1)}) \cdot (0, - z_{\Omega(\ell_2)}) \cdot \ev(\delta_{\ell_2}) \cdot (0, z_{\Delta(\ell_2)}) \cdots (0, - z_{\Omega(\ell_p)}) \cdot \ev(\delta_{\ell_p}) \cdot (0, z_{\Delta(\ell_p)}) \\
        = \ev(\delta_{\ell_1}) \ev(\delta_{\ell_2}) \cdots \ev(\delta_{\ell_p}) = \ev(w).
    \end{multline*}
    Similarly, if $w' = \delta'_{\ell_1} \delta'_{\ell_2} \cdots \delta'_{\ell_p}$ is an accepting run of $\mA'$, then $w = \delta_{\ell_1} \delta_{\ell_2} \cdots \delta_{\ell_p}$ is an accepting run of $\mA$ such that $\ev(w) = \ev(w')$.
    Therefore $\ev(\mA') = \ev(\mA)$.
    The same argument shows $\ev({\mA'}^{\pm}) = \ev(\mA^{\pm})$, and hence $\pi(\ev({\mA'}^{\pm})) = \pi(\ev(\mA^{\pm})) = L$.
    Note that for $\ell = 1, \ldots, t$, we have $\pi(\ev(\delta'_\ell)) = -z_{\Omega(\ell)} + a_{\ell} + z_{\Delta(\ell)} \in L$.
    Therefore, we can without loss of generality replace $\mA$ with $\mA'$, and suppose $a_{\ell} = \pi(\ev(\delta_\ell)) \in L$ for all $\ell$.

    Finally, let $\beta_1, \ldots, \beta_{\widetilde{n}} \in \Z^n$ denote a basis of lattice $L$, and define the new variables $\widetilde{X_i} \coloneqq \oX^{\beta_i}, i = 1, \ldots, \widetilde{n}$.
    Let $\widetilde{\mY}$ be the $\Z[\widetilde{X_1}^{\pm}, \ldots, \widetilde{X_{\tn}}^{\pm}]$-module generated by $y_1, \ldots, y_t$, then a finite presentation of $\widetilde{\mY}$ can be effectively computed~\cite[Theorem~2.14]{baumslag1981computable}.
    Thus, each element $(y_{\ell}, a_{\ell})$ can now be represented as an element in $\widetilde{\mY} \rtimes \Z^{\tn}$.
    The automaton $\mA$ is thus considered as an automaton $\widetilde{\mA}$ over $\widetilde{\mY} \rtimes \Z^{\tn}$.
    Since $\mA$ and $\widetilde{\mA}$ recognize the same elements under different presentations, $\ev(\mA)$ is a group if and only if $\ev(\widetilde{\mA})$ is a group.
    Furthermore, by the definition $L$, we have $\pi(\ev(\widetilde{\mA}^{\pm})) = \Z^{\tn}$; so $\widetilde{\mA}$ is primitive.
\end{proof}

\proptraverse*
\begin{proof}
    Suppose $\ev(\mA)$ is a group. Let $w $ be any accepting traversal, then $\ev(w) \in \ev(\mA)$. Since $\ev(\mA)$ is a group, we have $\ev(w)^{-1} \in \ev(\mA)$. Let $v$ be an accepting run that evaluates to $\ev(w)^{-1}$. Then the concatenation $wv$ is an accepting traversal and $\ev(wv) = \ev(w) \ev(v) = (0, 0^n)$.

    Suppose there exists an Identity Traversal $w$.
    Each transition $\delta_{\ell}, \ell = 1, \ldots, t$, appears at least once in $w$. For each $\ell = 1, \ldots, t$, write $w = u_{\ell} \delta_{\ell} v_{\ell}$, and define $w_{\ell}$ to be the concatenation $v_{\ell} u_{\ell}$: it is a path that starts at the destination of $\delta_{\ell}$ and ends at the origin of $\delta_{\ell}$, such that $\ev(w_{\ell}) = \ev(\delta_{\ell})^{-1}$.
    In order to show that $\ev(\mA)$ is a group, let $w = \delta_{\ell_1} \delta_{\ell_2} \cdots \delta_{\ell_p}$ be any accepting run and we prove $\ev(w)^{-1} \in M$.
    Consider the concatenation $w' \coloneqq w_{\ell_p} \cdots w_{\ell_2} w_{\ell_1}$, then $w'$ is an accepting run and $\ev(w') = \ev(w_{\ell_p}) \cdots \ev(w_{\ell_2}) \ev(w_{\ell_1}) = \ev(\delta_{\ell_m})^{-1} \cdots \ev(\delta_{\ell_2})^{-1} \ev(\delta_{\ell_1})^{-1} = \ev(w)^{-1}$.
    Therefore $\ev(w)^{-1} = \ev(w') \in \ev(\mA)$.
\end{proof}

\lemgrp*
\begin{proof}
    Let $w$ be an Identity Traversal, then $\ev(w) = (0, 0^n)$. This shows that the graph $\Gamma(w)$ represents the element $0 \in \mY$. Furthermore, $\Gamma(w)$ is a path starting at the vertex $(b_1, 0^n)$ and ends in $(b_1, 0^n + 0^n)$, hence it is Eulerian.
    Since $w$ contains transitions of every label, $\Gamma(w)$ is full-image.
    
    Let $\Gamma$ be a full-image Eulerian $\mA$-graph that represents $0$. By translating $\Gamma$ we can suppose $\Gamma$ to contain the vertex $(b_1, 0^n)$, this does not change the fact that $\Gamma$ represents $0$.
    Let $P$ be an Eulerian circuit of $\Gamma$ that starts and ends in $(b_1, 0^n)$. We trace the labels $\ell_1, \ldots, \ell_p,$ of edges in $P$ to obtain a run $w(P) = \delta_{\ell_1} \delta_{\ell_2} \cdots \delta_{\ell_p}$ in $\mA$. The run $w(P)$ is accepting because the $P$ starts and ends in the lattice $\Lambda_1$. Let $(y, z) \in \mY \rtimes \Z^n$ denote the evaluation $\ev(w(P))$. We have $y = 0$ because $\Gamma$ represents $0 \in \mY$. We have $z = \sum_{e \in E(\Gamma)} a_{\lambda(e)} = 0^n$ because the $P$ is an Eulerian circuit.
    Therefore $P$ is an accepting run that evaluates to the neutral element. Furthermore, $P$ uses every transition at least once because $\Gamma$ is full-image, so $P$ is an Identity Traversal.
\end{proof}

\propgtoe*
\begin{proof}
    If the automaton $\mA$ admits an Identity Traversal, then Lemma~\ref{lem:grptoeul} shows there exists a full-image Eulerian $\mA$-graph representing $0$: this Eulerian graph is symmetric and face-accessible.

    If there is a full-image symmetric face-accessible Eulerian $\mA$-graph $\Gamma$ representing $0$, then by Theorem~\ref{thm:acctocon}, some a union of translations $\hG \coloneqq \bigcup_{i = 1}^m \Gamma + (0^s, z_i)$ is an Eulerian graph.
    The graph $\hG$ represents the element $\sum_{i = 1}^m \oX^{z_i} \cdot 0 = 0$, and it is full-image because it contains the full-image graph $\Gamma + (0^s, z_1)$ as a subgraph.
    Therefore, by Lemma~\ref{lem:grptoeul}, the automaton $\mA$ admits an Identity Traversal.
\end{proof}

\lemthreetoeq*
\begin{proof}
    (i) $\Gamma$ is full-image if and only if each label appears at least once, meaning $f_{\ell} \neq 0$ for all $\ell$.

    (ii) For $i \in \{1, \ldots, s\}$, we have 
    \begin{equation*}
        \sum_{\ell \colon \Omega(\ell) = i} f_{\ell} = \sum_{\ell \colon \Omega(\ell) = i,} \sum_{e \in E(\Gamma), \lambda(e) = \ell} \oX^{\pi_{\Z^n}(\sigma(e))}
        = \sum_{e \in E(\Gamma), \sigma(e) \in \Lambda_i} \oX^{\pi_{\Z^n}(\sigma(e))},
    \end{equation*}
    and
    \begin{multline*}
        \sum_{\ell \colon \Delta(\ell) = i} f_{\ell} \cdot \oX^{a_{\ell}} = \sum_{\ell \colon \Delta(\ell) = i,} \sum_{e \in E(\Gamma), \lambda(e) = \ell} \oX^{\pi_{\Z^n}(\sigma(e))} \cdot \oX^{a_{\ell}} \\
        = \sum_{\ell \colon \Delta(\ell) = i,} \sum_{e \in E(\Gamma), \lambda(e) = \ell} \oX^{\pi_{\Z^n}(\tau(e))}
        = \sum_{e \in E(\Gamma), \tau(e) \in \Lambda_i} \oX^{\pi_{\Z^n}(\tau(e))}.
    \end{multline*}
    These two sums are equal if and only if the in-degree equals the out-degree at every vertex in $\Lambda_i$.
    Therefore, $\Gamma$ is symmetric if and only if $\sum_{\ell \colon \Omega(\ell) = i} f_{\ell} = \sum_{\ell \colon \Delta(\ell) = i} f_{\ell} \cdot \oX^{a_{\ell}}$ holds for all $i \in \{1, \ldots, s\}$.

    (iii) $\Gamma$ represents the element 
    \[
    \sum_{e \in E(\Gamma)} \oX^{\pi_{\Z^n}(\sigma(e))} \cdot y_{\lambda(e)} = \sum_{\ell = 1}^t \sum_{e \in E(\Gamma), \lambda(e) = \ell} \oX^{\pi_{\Z^n}(\sigma(e))} \cdot y_{\ell} = \sum_{\ell = 1}^t f_{\ell} \cdot y_{\ell},
    \]
    which is $0$ if and only if $\sum_{\ell = 1}^t f_{\ell} \cdot y_{\ell} = 0$.
\end{proof}

\lemacctoeq*
\begin{proof}
    We show the contrapositive: the convex hull $C$ of $V(\Gamma)$ has a non-accessible face if and only if $\exists (S, \mT, \rho), \exists v \in \Rns,$ such that 
    \[
    \left( O_{v}(S, \mT, \rho) \cup \Delta^{-1}(S)^{\complement} \right) \cap M_{v}((S, \mT, \rho), \bff) = \emptyset.
    \]

    \begin{enumerate}[nosep, label = \arabic*.]
        \item 
    Suppose $C$ has a non-accessible face $F$. Then every edge of $\Gamma$ that starts inside $F$ and ends inside $F$.
    We will show that there exists a partial contraction $(S, \mT, \rho)$ and some vector $v \in \Rns$, such that
    $\left( O_{v}(S, \mT, \rho) \cup \Delta^{-1}(S)^{\complement} \right) \cap M_{v}((S, \mT, \rho), \bff) = \emptyset$.
    
    Let $(b, v) \in \Rsns$ be such that $F$ is the extremal face of $C$ at direction $(b, v)$.
    That is, $F = \{x \in C \mid \forall x' \in C, (b, v) \cdot x \geq (b, v) \cdot x'\}$.
    Let $\widetilde{S}$ denote the set of indices $i$ such that $V(\Gamma) \cap F \cap \Lambda_i \neq \emptyset$.
    Let $E_{(b, v)}$ denote the set of edges in $\Gamma$ whose source and target are both in $F$, and let $\widetilde{\mT}_{(b, v)}$ be the set of labels appearing in $E_{(b, v)}$. Consider the subautomaton $\widetilde{\mA}$ of $\mA$ whose set of states is $\{q_i \mid i \in \widetilde{S}\}$ and whose set of transitions is $\{\delta_{\ell} \mid \ell \in \widetilde{\mT}_{(b, v)}\}$.
    Choose any $S \subseteq \widetilde{S}$ such that $\{q_i \mid i \in S\}$ is a connected component of $\widetilde{\mA}$, and choose $\mT \subseteq \widetilde{\mT}_{(b, v)}$ such that $\{\delta_{\ell} \mid \ell \in \mT\}$ is an undirected spanning tree of this connected component.
    Let $\rho$ be any element of $S$, we will show that
    \[
    \left( O_{v}(S, \mT, \rho) \cup \Delta^{-1}(S)^{\complement} \right) \cap M_{v}((S, \mT, \rho), \bff) = \emptyset.
    \]
    Take any $\ell \in M_{v}((S, \mT, \rho), \bff) \subseteq \Omega^{-1}(S)$, we will show that $\ell \not\in \Delta^{-1}(S)^{\complement}$ and $\ell \not\in O_{v}(S, \mT, \rho)$. 
    By the definition of $M_{v}((S, \mT, \rho), \bff)$, there is a monomial $c \oX^{z}$ appearing in $f^{(S, \mT, \rho)}_{\ell}$, such that $v \cdot z$ is maximal among all monomials appearing in $f_{(S, \mT, \rho), \ell'}, \ell' \in \Omega^{-1}(S)$.
    This yields an edge $e$ with label $\ell$, starting at $(b_{\Omega(\ell)}, z) \in F \cap \Lambda_{\Omega(\ell)}$ with $\Omega(\ell) \in S$.

    \begin{enumerate}[nosep, label = (\roman*)]
        \item First we show $\ell \not\in \Delta^{-1}(S)^{\complement}$, which is equivalent to $\Delta(\ell) \in S$.
    
    Since $\Omega(\ell) \in S$, all edges starting in $V(\Gamma) \cap (F \cap \Lambda_{\Omega(\ell)})$ end in $F$.
    Since $\{q_i \mid i \in S\}$ is a connected component of $\widetilde{\mA}$, all all edges starting in $V(\Gamma) \cap (F \cap \Lambda_{\Omega(\ell)})$ actually ends in $\bigcup_{i \in S} (F \cap \Lambda_{i})$. Therefore $\Delta(\ell) \in S$, so $\ell \not\in \Delta^{-1}(S)^{\complement}$.

    \item Next we show $\ell \not\in O_{v}(S, \mT, \rho)$, which is equivalent to $a^{(S, \mT, \rho)}_{\ell} \perp v$. 
    If $\Delta(\ell) \notin S$ then $a^{(S, \mT, \rho)}_{\ell} = 0^n \perp v$, so consider the case where $\Delta(\ell) \in S$.
    By the definition of $\mT \subseteq \widetilde{\mT}_{(b, v)}$, every label $\ell' \in \mT$ satisfy $(b_{\Delta(\ell')} - b_{\Omega(\ell')}, a_{\ell'}) = \tau(e) - \sigma(e) \perp (b, v)$, where $e$ is an edge contained within $F$ with label $\ell'$.
    Let $P_{\Omega(\ell)}$ be the path consisting of transitions in $\{\delta_{l} \mid l \in \mT\} \cup \{\delta_{l}^- \mid l \in \mT\}$ that connects from $q_{\Omega(\ell)}$ to $q_{\rho}$, and write $\ev({P_{\Omega(\ell)}})$ as $(y_{P_{\Omega(\ell)}}, z_{P_{\Omega(\ell)}})$.
    Then
    \begin{equation}\label{eq:acc1}
        (b_{\rho} - b_{\Omega(\ell)}, z_{P_{\Omega(\ell)}}) = \sum_{\delta_{l}^{\pm} \in {P_{\Omega(\ell)}}} \pm (b_{\Delta(l)} - b_{\Omega(l)}, a_{l}) \perp (b, v).
    \end{equation}
    Similarly, let $P_{\Delta(\ell)}$ be the path consisting of transitions in $\{\delta_{l} \mid l \in \mT\} \cup \{\delta_{l}^- \mid l \in \mT\}$ that connects from $q_{\Delta(\ell)}$ to $q_{\rho}$, and write $\ev(P_{\Delta(\ell)})$ as $(y_{P_{\Delta(\ell)}}, z_{P_{\Delta(\ell)}})$. Then we have
    \begin{equation}\label{eq:acc2}
        (b_{\rho} - b_{\Delta(\ell)}, z_{P_{\Delta(\ell)}}) = \sum_{\delta_{l}^{\pm} \in {P_{\Delta(\ell)}}} \pm (b_{\Delta(l)} - b_{\Omega(l)}, a_{l}) \perp (b, v).
    \end{equation}
    Furthermore, since both the source and target of the edge $e$ (with label $\ell$) are in $F$, we have
    \begin{equation}\label{eq:acc3}
        (b_{\Delta(\ell)} - b_{\Omega(\ell)}, a_{\ell}) \perp (b, v).
    \end{equation}
    Taking \eqref{eq:acc2}$+$\eqref{eq:acc3}$-$\eqref{eq:acc1} yields $(0^s, z_{P_{\Delta(\ell)}} + a_{\ell} - z_{P_{\Omega(\ell)}}) \perp (b, v)$. 
    Therefore $a^{(S, \mT, \rho)}_{\ell} = (a_{\ell} + z_{P_{\Delta(\ell)}} - z_{P_{\Omega(\ell)}}) \perp v$.
    \end{enumerate}

    \item Suppose there exists a partial contraction $(S, \mT, \rho)$ and some vector $v \in \Rns$, such that
    \[
    \left( O_{v}(S, \mT, \rho) \cup \Delta^{-1}(S)^{\complement} \right) \cap M_{v}((S, \mT, \rho), \bff) = \emptyset.
    \]
    We will show that some strict face of $C$ is not accessible.
    For a set $X$ we denote by $\conv(X)$ its convex hull.

    For $i = 1, \ldots, s$, define $C_i \coloneqq \conv(V(\Gamma) \cap \Lambda_i)$, and let $F_i$ be the extremal face of $C_i$ at direction $(0^s, v)$.
    Let $b = (\beta_1, \ldots, \beta_s) \in \R^s$ be such that 
    \begin{enumerate}[nosep, label = (\roman*)]
        \item $(b, v) \perp (b_{\Delta(\ell)} - b_{\Omega(\ell)}, a_{\ell})$ for all $\ell \in \mT$.
        \item the extremal face $F$ of $\conv(V(\Gamma))$ at direction $(b, v)$ satisfies $V(\Gamma) \cap F = \bigcup_{i \in S} (V(\Gamma) \cap F_i)$.
        In other words, $F$ is so that its intersection with $\conv(V(\Gamma))$ contains and only contains vertices in the lattices $\Lambda_i, i \in S$.
    \end{enumerate}
    Such $b$ can always be found. Indeed, since transitions with label $\mT$ form a tree with states of index $S$, we have $|\mT| = |S| - 1$.
    So condition (i) can be satisfied by choosing the coordinates $\beta_i, i \in S$.
    Then, condition (ii) can be satisfied by choosing the coordinates $\beta_i, i \notin S$ to be sufficiently small compared to $\beta_i, i \in S$.
    
    We will show that $F$ is not accessible.
    Let $e$ be an edge with label $\ell$ starting in $(b_{\Omega(\ell)}, z) \in V(\Gamma) \cap F$.
    \begin{enumerate}[nosep, label = (\roman*)]
        \item First, we show $\ell \in M_{v}((S, \mT, \rho), \bff)$.
        Since $\sigma(e) \in \Lambda_i$ with $i \in S$, we have $\ell \in \Omega^{-1}(S)$.
        Take any label $\ell' \in \Omega^{-1}(S)$ and let $c' \oX^{z'}$ be the monomial with largest $\deg_v$ in $f_{\ell'}$. Then there is an edge $e'$ starting in $(b_{\Omega(\ell')}, z')$.
        Since $V(\Gamma) \cap F = \bigcup_{i \in S} (V(\Gamma) \cap F_i) \supseteq V(\Gamma) \cap F_{\Omega(\ell')}$, there is an edge $e''$ with $\sigma(e'') = (b_{\Omega(\ell')}, z'') \in F_{\Omega(\ell')}$.
        Therefore $(\beta, v) \cdot (b_{\Omega(\ell')}, z'') \geq (\beta, v) \cdot (b_{\Omega(\ell')}, z')$, yielding
        \begin{equation}
            v \cdot z'' \geq v \cdot z'.
        \end{equation}
        Let $P_{\Omega(\ell)}$ be the path consisting of transitions in $\{\delta_{l} \mid l \in \mT\} \cup \{\delta_{l}^- \mid l \in \mT\}$ that connects from $q_{\Omega(\ell)}$ to $q_{\rho}$, and write $\ev({P_{\Omega(\ell)}})$ as $(y_{P_{\Omega(\ell)}}, z_{P_{\Omega(\ell)}})$.
        Then
        \begin{equation}\label{eq:acc5}
            (b_{\rho} - b_{\Omega(\ell)}, z_{P_{\Omega(\ell)}}) = \sum_{\delta_{l}^{\pm} \in {P_{\Omega(\ell)}}} \pm (b_{\Delta(l)} - b_{\Omega(l)}, a_{l}) \perp (b, v).
        \end{equation}
        Similarly, let $P_{\Omega(\ell')}$ be the path consisting of transitions in $\{\delta_{l} \mid l \in \mT\} \cup \{\delta_{l}^- \mid l \in \mT\}$ that connects from $q_{\Omega(\ell')}$ to $q_{\rho}$, and write $\ev({P_{\Omega(\ell')}})$ as $(y_{P_{\Omega(\ell')}}, z_{P_{\Omega(\ell')}})$.
        Then
        \begin{equation}\label{eq:acc6}
            (b_{\rho} - b_{\Omega(\ell')}, z_{P_{\Omega(\ell')}}) = \sum_{\delta_{l}^{\pm} \in {P_{\Omega(\ell')}}} \pm (b_{\Delta(l)} - b_{\Omega(l)}, a_{l}) \perp (b, v).
        \end{equation}
        Furthermore, since both $(b_{\Omega(\ell)}, z)$ and $(b_{\Omega(\ell')}, z'')$ are in $F$, we have
        \begin{equation}\label{eq:acc7}
            (b_{\Omega(\ell')} - b_{\Omega(\ell)}, z'' - z) \perp (b, v).
        \end{equation}
        Computing \eqref{eq:acc6}$+$\eqref{eq:acc7}$-$\eqref{eq:acc5} yields $(0^s, z_{P_{\Omega(\ell')}} + z'' - z - z_{P_{\Omega(\ell)}}) \perp (b, v)$.
        Therefore,
        \begin{equation*}
            \deg_v(f^{(S, \mT, \rho)}_{\ell}) = v \cdot (z + z_{P_{\Omega(\ell)}}) = v \cdot (z'' + z_{P_{\Omega(\ell')}}) \geq v \cdot (z' + z_{P_{\Omega(\ell')}}) = \deg_v(f^{(S, \mT, \rho)}_{\ell'}).
        \end{equation*}
        So we indeed have $\ell \in M_{v}((S, \mT, \rho), \bff)$.

        \item Next, we show that the target $\tau(e)$ of $e$ must be in $F$.
        Indeed, since $\ell \in M_{v}((S, \mT, \rho), \bff)$ and $\left( O_{v}(S, \mT, \rho) \cup \Delta^{-1}(S)^{\complement} \right) \cap M_{v}((S, \mT, \rho), \bff) = \emptyset$, we have $\ell \notin O_{v}(S, \mT, \rho)$ and $\ell \notin \Delta^{-1}(S)^{\complement}$.
        The condition $\ell \notin \Delta^{-1}(S)^{\complement}$ shows $\Delta(\ell) \in S$, so $\tau(e) \in \bigcup_{i \in S} (V(\Gamma) \cap \Lambda_i)$.
        The $\ell \notin O_{v}(S, \mT, \rho)$ shows $a_{\ell} + z_{P_{\Delta(\ell)}} - z_{P_{\Omega(\ell)}} = a^{(S, \mT, \rho)}_{\ell} \perp v$.
        Therefore,
        \begin{multline*}
            \tau(e) - \sigma(e) = (b_{\Delta(\ell)} - b_{\Omega(\ell)}, a_{\ell}) = (b_{\Delta(\ell)} - b_{\Omega(\ell)}, z_{P_\Omega(\ell)} - z_{P_\Delta(\ell)} + a^{(S, \mT, \rho)}_{\ell}) \\
            = (b_{\rho} - b_{\Omega(\ell)}, z_{P_{\Omega(\ell)}}) - (b_{\rho} - b_{\Delta(\ell)}, z_{P_{\Delta(\ell)}}) + (0^s, a^{(S, \mT, \rho)}_{\ell}) \perp (b, v)
        \end{multline*}
        by Equations~\eqref{eq:acc5} and \eqref{eq:acc6}.
        Since $\sigma(e) \in F$ and $F$ is the extremal face at direction $(b, v)$, we conclude that $\tau(e) \in F$.
    \end{enumerate}
    We have thus shown that every edge $e$ starting in $V(\Gamma) \cap F$ ends in $F$. Therefore $F$ is not accessible.
    \end{enumerate}
\end{proof}

\propgraphtoeq*
\begin{proof}
    Suppose there exists a full-image symmetric face-accessible $\mA$-graph $\Gamma$.
    Take the tuple of position polynomials $\bff$ of $\Gamma$.
    Then by Lemma~\ref{lem:threetoeq}, we have $\bff \in \left(\N[\oX^{\pm}]^*\right)^t$ and $\bff \in \mM_{\Z}$.
    Therefore $\tbf \in \tMZ \cap \left(\N[\oX^{\pm}]^*\right)^K$.
    Since $\Gamma$ is face-accessible, the tuple $\bff$ satisfies the condition~\eqref{eq:acccond} in Lemma~\ref{lem:acctoeq} for every partial contraction $(S, \mT, \rho) \in \mPC$.
    This is equivalent to $\tbf$ satisfying condition~\eqref{eq:gen}.

    Suppose there exists $\tbf \in \tMZ \cap \left(\N[\oX^{\pm}]^*\right)^K$ that satisfies condition~\eqref{eq:gen}.
    We recover $\bff \in \mM_{\Z} \cap \left(\N[\oX^{\pm}]^*\right)^t$ as a sub-tuple of $\tbf$.
    There exists an $\mA$-graph $\Gamma$ whose position polynomials are $\bff$.
    Since $\bff \in \mM_{\Z} \cap \left(\N[\oX^{\pm}]^*\right)^t$, it satisfies the conditions (i), (ii) and (iii) in Lemma~\ref{lem:threetoeq}.
    Therefore $\Gamma$ is full-image symmetric and represents $0$.
    Finally, since $\tbf$ satisfies condition~\eqref{eq:gen}, the tuple $\bff$ satisfies the condition~\eqref{eq:acccond} in Lemma~\ref{lem:acctoeq} for every partial contraction $(S, \mT, \rho) \in \mPC$.
    Therefore, $\Gamma$ is face-accessible.
\end{proof}

\lemM*
\begin{proof}
    An element $\tbf \in \tMZ \cap \left(\N[\oX^{\pm}]^*\right)^K$ satisfying Property~\eqref{eq:gen} is obviously an element in $\mM \cap \left(\Rp[\oX^{\pm}]^*\right)^K$.
    Therefore it suffices to prove the ``if'' implication.

    Suppose we have an element $\bff \in \mM \cap \left(\Rp[\oX^{\pm}]^*\right)^K$ satisfying Property~\eqref{eq:gen}.
    Write $\bff = (f_1, \ldots, f_K)$ where for $i = 1, \ldots, K$,
    \[
    f_i = \sum_{b \in B_i} c_{i, b} \oX^{b}.
    \]
    Here, the \emph{support} $B_i$ is a non-empty finite subset of $\Z^n$, and $c_{i, b} \in \Rpp$ for all $b \in B_i$.
    Since Property~\eqref{eq:gen} depends only on the supports $B_1, \ldots, B_K$, it suffices to show that there exists $\tbf = (\tf_1, \ldots, \tf_K) \in \tMZ \cap \left(\N[\oX^{\pm}]^*\right)^K$ where
    \[
    \tf_i = \sum_{b \in B_i} \tc_{i, b} \oX^{b}
    \]
    and $\tc_{i, b} \in \Zpp$ for all $b \in B_i$.
    
    Since $\bff \in \mM$, we have $\bff = \sum_{j = 1}^m h_j \cdot \bg_j$ for some $h_1, \ldots, h_m \in \R[\oX^{\pm}]$.
    For each $j \in \{1, \ldots, m\}$, write $h_j = \sum_{b \in H_j} h_{j, b} \oX^{b}$, where $H_j$ is a finite subset of $\Z^n$.
    Then the equation $\bff = \sum_{j = 1}^m h_j \cdot \bg_j$ can be rewritten as a finite system of linear equations over $\R$, where the left hand sides are $0$ or $c_{i, b}, b \in B_i, i = 1, \ldots, K$, and the right hand sides are $\Z$-linear combinations of the variables $h_{j, b}, j \in \{1, \ldots, m\}, b \in H_j$ (because the coefficients of $\bg_j$ are integers for all $j$).

    Since this system of linear equations is homogeneous and the coefficients are all in $\Z$, it has a solution $h_{j, b} \in \R, j \in \{1, \ldots, m\}, b \in H_j$ and $c_{i, b} \in \Rpp, b \in B_i, i = 1, \ldots, K,$ if and only if it has a solution with $h_{j, b} \in \Q, c_{i, b} \in \Qpp$ for all $i, j, b$.
    By multiplying all $h_{j, b}, c_{i, b}$ with their common denominator, we obtain a solution $\widetilde{h}_{j, b} \in \Z, \tc_{i, b} \in \Zpp$ for all $i, j, b$.
    Then, $\tf_i \coloneqq \sum_{b \in B_i} \tc_{i, b} \oX^{b}, i = 1, \ldots, K$ and $\widetilde{h}_j = \sum_{b \in H_j} \widetilde{h}_{j, b} \oX^{b}, j = 1, \ldots, m,$ satisfy $\tbf = \sum_{j = 1}^m \widetilde{h}_j \cdot \bg_j$.
    Hence, $\tbf = (\tf_1, \ldots, \tf_K) \in \tMZ \cap \left(\N[\oX^{\pm}]^*\right)^K$.
    The element $\tbf$ satisfies Property~\eqref{eq:gen} since the condition depends only on the supports $B_1, \ldots, B_K$.
\end{proof}

\section{Proof of Theorem~\ref{thm:acctocon}}\label{app:proofacctocon}

In this appendix we prove Theorem~\ref{thm:acctocon}. Figure~\ref{fig:connect} illustrates the main steps of the proof.

{\renewcommand\footnote[1]{}\thmacctoeul*}

\begin{figure}[ht!]
    \centering
        \includegraphics[width=\textwidth,height=0.9\textheight,keepaspectratio, trim={1.5cm 3.5cm 3cm 1.5cm},clip]{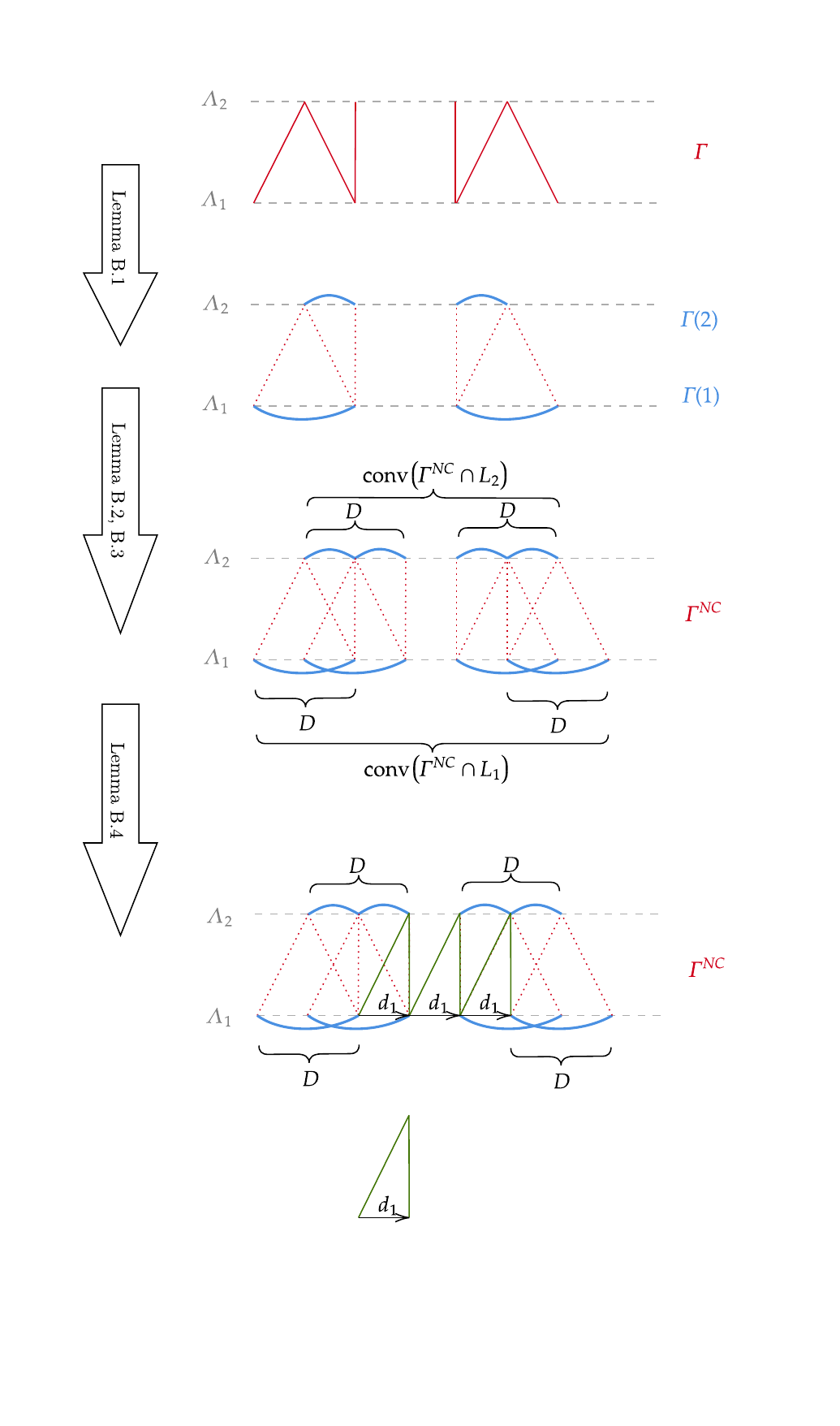}
        \caption{Proof of Theorem~\ref{thm:acctocon}}
        \label{fig:connect}
\end{figure}

Our main strategy is to reduce to the case $s = 1$, which has already been proved in~\cite[Theorem~3.3]{dong2024semigroup}.
Let $\Gamma$ be a full-image, symmetric and face-accessible $\mA$-graph.
Recall that a directed graph is Eulerian if and only if it is symmetric and connected (for symmetric graphs, strong and weak connectivity are equivalent).
Since $\Gamma$ is full-image and symmetric, any union of translations $\hG = \bigcup_{i = 1}^m \Gamma + (0^s, z_i)$ is also full-image and symmetric when $m \geq 1$.
Therefore it suffices to find $z_1, \ldots, z_m \in \Z^n$ such that $\hG = \bigcup_{i = 1}^m \Gamma + (0^s, z_i)$ is connected. 

For an edge $e$ from $\sigma(e)$ to $\tau(e)$, we denote by $e^-$ its \emph{inverse}, that is, an edge from $\tau(e)$ to $\sigma(e)$.
Since strong and weak connectivity are equivalent for symmetric graphs, for each edge $e$ in $\Gamma$ we add its inverse $e^-$ into $\Gamma$, this will not change the connectivity of $\Gamma$ or the eventually constructed $\hG$.
Hence, we can suppose without loss of generality that for each $e \in E(\Gamma)$ we have added $e^-$ in $E(\Gamma)$, and $\Gamma$ is actually an $\mA^{\pm}$-graph.

For a sequence sequence of polytopes $P_1, P_2, \ldots, P_m \subseteq \R^n$, we define their \emph{Minkowski sum}
\[
P_1 + P_2 + \cdots + P_m \coloneqq \{p_1 + p_2 + \cdots + p_m \mid p_i \in P_i, i = 1, \ldots, m\}.
\]
For simplicity we denote by $mP$ the sum $\underbrace{P + P + \cdots + P}_{m \text{ times}}$. Since $P$ is convex, $mP$ is also equal to the set $\{m \cdot p \mid p \in P\}$. 

For each $i \in \{1, \ldots, s\}$, consider the set of vertices $V(\Gamma) \cap \Lambda_i$: these are the vertices of $\Gamma$ appearing in the lattice $\Lambda_i$.
The convex hull of $V(\Gamma) \cap \Lambda_i$ is a polytope of the form $\{b_i\} \times C_i$, where $C_i$ is a polytope in $\R^n$.

Consider the Minkowski sum $C \coloneqq C_1 + \cdots + C_s$. Then $C$ is of dimension $n$ since $\mA$ is primitive and $\Gamma$ is full-image.
Indeed, if $C$ is of dimension less than $n$, then $C_1, \ldots, C_s$ are all contained in the some hyperplane $H \subsetneq \R^n$.
The vertices in $\Lambda_1$ reachable by any concatenation of edges will be contained in $H$, which contradicts the definition of primitiveness of $\mA$.

Consider the $\mA$-graph 
\[
\Gamma^{C} \coloneqq \bigcup_{z \in C \cap \Z^n}  \Gamma + (0^s, z).
\]
Then for each $i \in \{1, \ldots, s\}$, the convex hull $\conv(\Gamma^{C} \cap \Lambda_i)$ is equal to $\{b_i\} \times (C + C_i)$.

\begin{lemma}\label{lem:acctoacc}
    Fix $i \in \{1, \ldots, s\}$. Let $F$ be a strict face of $C + C_i \subsetneq \R^n$. Then there exists a path in $\Gamma^{C}$ starting in $\{b_i\} \times F$ and ending in $\{b_i\} \times ((C + C_i) \setminus F)$.
\end{lemma}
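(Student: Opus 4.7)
The plan is to apply face-accessibility of $\Gamma$ to a suitably chosen strict face of $C' \coloneqq \conv(V(\Gamma))$, and then translate the resulting edge into $\Gamma^C$ via an integer vector selected so that the source lies in $\{b_i\} \times F$. First, I would fix a direction $v \in \Rns$ such that $F$ is the extremal face of $C + C_i$ at direction $v$; by the standard description of faces of Minkowski sums, $F = F_1 + \cdots + F_s + F_i$, where $F_j$ denotes the extremal face of $C_j$ at direction $v$. In the ambient space $\R^{s+n}$, the supporting direction $(\beta, v)$ with $\beta_i$ sufficiently large and $\beta_j = 0$ for $j \neq i$ has extremal face $\{b_i\} \times F_i$ in $C'$, which is a strict face of $C'$ (since $F$ being a strict face forces either $s \geq 2$ or $F_i \subsetneq C_i$). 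Face-accessibility of $\Gamma$ then supplies an edge $e \in E(\Gamma)$ from some $(b_i, z_0) \in \{b_i\} \times F_i$ to $(b_k, z_1) \notin \{b_i\} \times F_i$.

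Next, I would pick integer points $y_j \in F_j \cap \Z^n$ for each $j \in \{1, \ldots, s\}$ (non-empty because the vertices of $F_j$ are vertices of $C_j$, hence lie in $V(\Gamma) \subseteq \{b_j\} \times \Z^n$) and set $z' \coloneqq \sum_{j=1}^{s} y_j \in (F_1 + \cdots + F_s) \cap \Z^n \subseteq C \cap \Z^n$, so that the translated edge $e + (0^s, z')$ lives in $\Gamma^C$ and starts at $(b_i, z_0 + z') \in \{b_i\} \times F$. In the easy subcase $k = i$ we have $z_1 \in C_i \setminus F_i$, and the weighted-degree estimate $v \cdot (z_1 + z') = v \cdot z_1 + \sum_j v \cdot y_j < \max_{z \in C_i} v \cdot z + \sum_j \max_{z \in C_j} v \cdot z = \max_{u \in C + C_i} v \cdot u$ yields $(b_i, z_1 + z') \in \{b_i\} \times ((C + C_i) \setminus F)$, producing a length-one path.

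The main obstacle is the case $k \neq i$, which is forced whenever $F_i = C_i$. Here I would extend the path from the exit vertex $(b_k, z_1 + z')$ back to $\Lambda_i$ by exploiting that $\Gamma^C$ contains many translates of $\Gamma$: the exit vertex lies simultaneously in copies $\Gamma + (0^s, z'')$ indexed by $z'' = z_1 + z' - z_0^*$ with $z_0^* \in (V(\Gamma) \cap \Lambda_k)_{\Z^n}$ and $z'' \in C \cap \Z^n$. Choosing $z_0^* = y_k \in F_k \cap \Z^n$ gives $z'' = z_1 + (z' - y_k) \in C$, and the estimate $v \cdot z'' = v \cdot z_1 - v \cdot y_k + \sum_j v \cdot y_j < \max_{z \in C} v \cdot z$ holds as soon as $v \cdot z_1 < \max_{z \in C_k} v \cdot z$, i.e.\ $z_1 \notin F_k$; then following a path (present when $\Gamma$ is connected) from $(b_k, z_1 + z')$ to a $\Lambda_i$-vertex inside the copy $\Gamma + (0^s, z'')$ ends at a vertex whose $\Z^n$-coordinate lies in $C_i + z''$, which is outside $F$. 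The delicate remaining subcase $z_1 \in F_k$ appears to require either iterating face-accessibility on another strict face of $C'$---such as the one supported in direction $(\beta', v)$ with $\beta'_i + \max_{z \in C_i} v \cdot z = \beta'_k + \max_{z \in C_k} v \cdot z$, which jointly exposes $\{b_i\} \times F_i$ and $\{b_k\} \times F_k$---to procure a better exit edge, or a global contradiction argument using the symmetry of $\Gamma^C$ (the reachable set from $\{b_i\} \times F \cap V(\Gamma^C)$ is a union of connected components, hence a union of complete translated copies of $\Gamma$ when $\Gamma$ is connected) together with primitiveness of $\mA$ to derive inconsistency with the assumption $R \cap \Lambda_i \subseteq \{b_i\} \times F$ on this reachable set $R$.
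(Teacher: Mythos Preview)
Your setup is sound and matches the paper: pick the direction $v$ exposing $F$, decompose $F$ via extremal faces $F_1,\dots,F_s$, translate by a vector $z'=\sum_j y_j$ with $y_j$ a lattice vertex of $F_j$. Your treatment of the case $k=i$ is correct.

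There are two genuine gaps. First, in the case $k\neq i$ you appeal to ``following a path (present when $\Gamma$ is connected)'' from $(b_k,y_k)$ back to $\Lambda_i$ inside a single translate of $\Gamma$. But $\Gamma$ is \emph{not} assumed connected here: only full-image, symmetric, and face-accessible (with inverse edges added). So even when $z_1\notin F_k$, the return leg of your path is unjustified. Second, and more fundamentally, the subcase $k\neq i$ with $z_1\in F_k$ is not handled: your proposal explicitly leaves it open, suggesting either an iteration or a global contradiction argument without actually carrying either out. This subcase is not a corner case; when $F_i=C_i$ (which is generic for $s\geq 2$) every exit edge from $\{b_i\}\times F_i$ lands in some other $\Lambda_k$, and there is no reason for it to avoid $F_k$.

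The paper resolves both issues simultaneously with a single mechanism. It defines an equivalence relation $\sim$ on $\{1,\dots,s\}$ where $i\sim j$ if $\{b_i\}\times F_i$ and $\{b_j\}\times F_j$ are connected by edges of $\Gamma$ staying within the $F_\ell$'s. Face-accessibility is then applied not to the single face $\{b_i\}\times F_i$ but to the whole $\sim$-class $\bigcup_{\ell\in I_1}\{b_\ell\}\times F_\ell$; the exit edge now necessarily lands in some $\{b_j\}\times(C_j\setminus F_j)$, because landing in $F_j$ would force $j\in I_1$. If $j$ is in a new class $I_2$, one iterates. This produces a finite sequence of edges $e_1,\dots,e_M$ that traverses distinct lattices, contains at least one edge that strictly drops the $v$-height, and eventually returns to a lattice already visited. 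Concatenating these with carefully chosen translations (each $z_\ell\in C_\ell$) gives a path inside $\Gamma^C$ from $\{b_i\}\times F$ that escapes $F$ and returns to $\Lambda_i$---without ever invoking global connectivity of $\Gamma$. Your intuition that ``iterating face-accessibility on another strict face'' is the right move is correct; what you are missing is the organizing device of the $\sim$-classes and the explicit construction of the concatenated path.
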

\begin{proof}
    See Figure~\ref{fig:concat} and \ref{fig:concat2} for an illustration of this proof.    
    Without loss of generality suppose $i = 1$. 
    Let $v \in \Rns$ be such that $F$ is the extremal face of $C + C_1$ at direction $v$.
    That is, $F = \{x \in C + C_1 \mid \forall x' \in C + C_1, v \cdot x \geq v \cdot x'\}$.
    Let $F_1, F_2, \ldots, F_s$ be respectively the extremal face of $C_1, C_2, \ldots, C_s$ at direction $v$, then $F_1 + F_2 + \cdots + F_s + F_1 \subseteq F$.
    Indeed, take $x_1 \in F_1, \ldots, x_s \in F_s, x_{s+1} \in F_1$ and any $x' \in C + C_1 = C_1 + C_2 + \cdots + C_s + C_1$.
    Write $x' = x'_1 + \cdots + x'_s + x'_{s+1}$ with $x'_1 \in C_1, \ldots, x'_s \in C_1, x'_{s+1} \in C_1$, then we have $v \cdot x_1 \geq v \cdot x'_1, \ldots, v \cdot x_{s+1} \geq v \cdot x'_{s+1}$.
    Therefore $v \cdot (x_1 + \cdots + x_{s+1}) \geq v \cdot x'$ for every $x' \in C + C_1$, which yields $x_1 + \cdots + x_{s+1} \in F$.
    This shows $F_1 + F_2 + \cdots + F_s + F_1 \subseteq F$.
    
    We claim that we can find a sequence of edges $e_1, \ldots, e_M$ in $E(\Gamma)$ satisfying the following conditions. (See Figure~\ref{fig:concat} for an illustration.)
    \begin{enumerate}[nosep, label = (\roman*)]
        \item $\sigma(e_1) \in \{b_1\} \times F_1$.
        \item For each $j = 2, \ldots, M$, there exists an index $i_j \in \{2, \ldots, s\}$ such that the source vertex $\sigma(e_j)$ is in $\{b_{i_j}\} \times F_{i_j}$.
        \item The indices $i_1 \coloneqq 1, i_2, i_3, \ldots, i_M$ are distinct.
        \item For $j = 2, \ldots, M$, the target vertex $\tau(e_{j-1})$ is in the lattice $\Lambda_{i_j}$.
        \item there exists $j \in \{1, \ldots, M\}$ such that $\tau(e_{j-1})$ is in $\{b_{i_j}\} \times \left(C_{i_j} \setminus F_{i_j} \right)$.
        \item $\tau(e_M) \in \Lambda_{i_m}$ for some $1 \leq m \leq M$.
    \end{enumerate}

    \begin{figure}[ht!]
        \centering
        \begin{minipage}[t]{.47\textwidth}
            \centering
            \includegraphics[width=\textwidth,height=1.0\textheight,keepaspectratio, trim={5.8cm 0.8cm 2cm 0cm},clip]{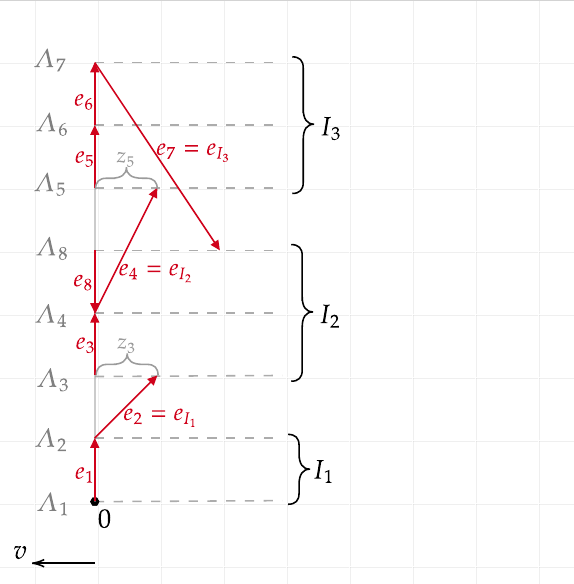}
            \caption{edges $e_i$ in the graph $\Gamma$}
            \label{fig:concat}
        \end{minipage}
        \hfill
        \begin{minipage}[t]{0.47\textwidth}
            \centering
            \includegraphics[width=\textwidth,height=1.0\textheight,keepaspectratio, trim={5.8cm 0.8cm 2cm 0cm},clip]{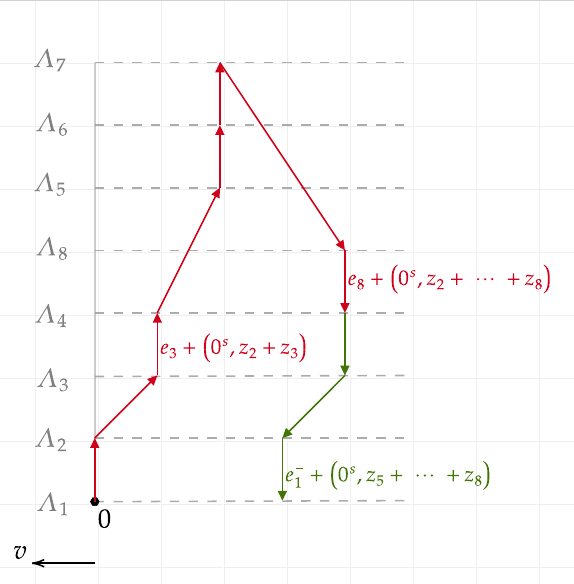}
            \caption{the concatenated path $P$ in $\Gamma^{C}$}
            \label{fig:concat2}
        \end{minipage}
    \end{figure}
    
    For $i, j \in \{1, \ldots, s\}$, we write $i \leftrightarrow j$ if there is an edge in $E(\Gamma)$ between $\{b_i\} \times F_i$ and $\{b_j\} \times F_j$. Let $\sim$ denote the transitive closure of the relation $\leftrightarrow$.
    That is, $i \sim j$ if and only if there exists a sequence of indices $l_1 = i, l_2, l_3, \ldots, l_p = j$ such that for $k = 1, \ldots, p-1$, some edge $e_k \in E(\Gamma)$ starts in $\{b_{l_k}\} \times F_{l_k}$ and ends in $\{b_{l_{k+1}}\} \times F_{l_{k+1}}$.
    
    We perform the following procedure. Let $I_1$ be the equivalent class of $\{1, \ldots, s\}/ \sim$ containing $1$. Take a closed halfspace $\mH$ of $\R^{s+n}$ such that $\mH \cap \conv(\Gamma) \cap \Lambda = \bigcup_{i \in I_1} \{b_i\} \times F_i$.
    Since the strict face $\mH \cap \conv(\Gamma)$ is accessible, there exists an edge $e_{I_1}$ in $\Gamma$ from $\bigcup_{i \in I_1} \left(\{b_i\} \times F_i \right)$ to $(\conv(\Gamma) \cap \Lambda) \setminus \bigcup_{i \in I_1} \left(\{b_i\} \times F_i \right)$. Since $\sim$ is the transitive closure of $\leftrightarrow$, the target of $e_{I_1}$ must be in $\{b_j\} \times (C_j \setminus F_j)$ for some $j$. Otherwise, $\tau(e_{I_1}) \in \{b_j\} \times F_j$ for some $j$, so $j \in I_1$ by the definition of $I_1$, contradicting $\tau(e_{I_1}) \notin \bigcup_{i \in I_1} \left(\{b_i\} \times F_i \right)$.

    If $j \in I_1$ then we stop the procedure, otherwise we denote by $I_2$ the $\sim$-equivalence class of $j$ and repeat the above procedure. We stop repeating the procedure when we have found equivalence classes $I_1, I_2, \ldots, I_T$ and an edge $e_{I_T}$, such that the target of $e_{I_T}$ is in $\{b_j\} \times (C_j \setminus F_j)$ for some $j \in I_t$ with $1 \leq t \leq T$.
    
    For $k = 1, \ldots, T-1,$ there exists an edge $e_{I_k} \in E(\Gamma)$ from $\bigcup_{i \in I_k} F_i$ to $\bigcup_{i \in I_{k+1}} (C_i \setminus F_i)$.
    Also, there exists an edge $e_{I_T} \in E(\Gamma)$ from $\bigcup_{i \in I_T} F_i$ to $\bigcup_{i \in I_{t}} (C_i \setminus F_i)$.
    The vertices $\tau(e_{I_{k-1}})$ and $\sigma(e_{I_{k}})$ are both in $\bigcup_{i \in I_k} \left( \{b_i\} \times F_i \right)$, so there exists a distinct sequence of indices $i_{k, 1}, \ldots, i_{k, {l_k}} \in I_k$, such that $\tau(e_{I_{k-1}}) \in \{b_{i_{k, 1}}\} \times F_{i_{k, 1}}, \sigma(e_{I_{k}}) \in \{b_{i_{k, l_k}}\} \times F_{i_{k, {l_k}}}$, and for $j = 1, \ldots, l_k - 1,$ there exists an edge $e_{i_{k,j}}$ from $\{b_{i_{k, j}}\} \times F_{i_{k,j}}$ to $\{b_{i_{k, j+1}}\} \times F_{i_{k, {j+1}}}$.

    Thus we get a sequence of edges 
    \begin{equation}\label{eq:sequence}
    e_{i_{1, 1}}, \ldots, e_{i_{1, l_1}}, e_{I_1}, e_{i_{2, 1}}, \ldots, e_{i_{2, l_2}}, e_{I_2}, \;\; \ldots \;\;, e_{i_{T, 1}}, \ldots, e_{i_{T, l_T}}, e_{I_T},
    \end{equation}
    from the above procedure.
    We extend this sequence in the following way.
    Recall that $\tau(e_{I_T}) \in \{b_j\} \times (C_j \setminus F_j)$ for some $j \in I_t$ with $1 \leq t \leq T$.
    If $j$ already appears among the indices $\{i_{t, 1}, \ldots, i_{t, l_t}\}$, then we do not extend the sequence~\eqref{eq:sequence}.
    Otherwise, since $j, i_{t, l_t} \in I_t$, there exists a sequence of indices $i_{T+1, 1}, \ldots, i_{T+1, l_{T+1}} \in I_t \setminus \{i_{t, 1}, \ldots, i_{t, {l_t}}\}$, such that $\tau(e_{I_{T}}) \in F_{i_{T+1, 1}}$, and for $j = 1, \ldots, l_{T+1} - 1,$ there exists an edge $e_{i_{T+1,j}}$ from $F_{i_{T+1,j}}$ to $F_{i_{T+1, {j+1}}}$.
    Furthermore, there exists an edge $e_{i_{T+1,l_{T+1}}}$ from $F_{i_{T+1,l_{T}}}$ to $F_{j}$ with $j \in \{i_{t, 1}, \ldots, i_{t, l_t}\}$.
    We then extend the sequence~\eqref{eq:sequence} with $e_{i_{T+1,1}}, \ldots, e_{i_{T+1,l_{T+1}}}$ and obtain a sequence
    \begin{equation}\label{eq:sequenceext}
    e_{i_{1, 1}}, \ldots, e_{i_{1, l_1}}, e_{I_1}, e_{i_{2, 1}}, \ldots, e_{i_{2, l_2}}, e_{I_2}, \;\; \ldots \;\;, e_{i_{T, 1}}, \ldots, e_{i_{T, l_T}}, e_{I_T}, e_{i_{T+1,1}}, \ldots, e_{i_{T+1,l_{T+1}}}.
    \end{equation}
    
    We rename this sequence of edges as $e_1, \ldots, e_m$, and verify it satisfies the conditions (i)-(vi) listed at the beginning of this proof.  The conditions (i), (ii), (iii), (iv) directly follow from the construction of the sequence~\eqref{eq:sequenceext}. For condition (v) it suffices to note that the edge $e_{I_1}$ defined in the construction has target in $(\conv(\Gamma) \cap \Lambda) \setminus \bigcup_{i \in I_1} \left(\{b_i\} \times F_i \right)$.
    For (vi) it suffices to note that the last edge $e_{i_{T+1,l_{T+1}}}$ in the sequence has target in $F_{j}$ with $j \in \{i_{t, 1}, \ldots, i_{t, l_t}\}$: in other words the index $j$ has appeared earlier in the sequence.
    
    By renaming the lattices $\Lambda_1, \ldots, \Lambda_s,$ we can suppose without loss of generality that $\sigma(e_i) \in \Lambda_i$ for $i = 1, 2, \ldots, M$, and $\tau(e_m) \in \Lambda_m$. In other words, the indices $i_1, \ldots, i_M$ in conditions (i)-(vi) can respectively be assumed to be exactly $1, \ldots, m$.
    
    Notice that by applying a suitable linear transformation in $\SL(s+n, \Z)$ to $\Gamma$ and $\Z^{s+n}$, we can translate $\Lambda_i$ by any vector in $\{0\}^s \times \Z^n$, without moving $\Lambda_j, j \neq i$.
    This translates the set $C_i$ by the same vector, and therefore also translates $C$ by a vector in $\{0\}^s \times \Z^n$.
    Therefore, the effect of this transformation on $\Gamma^C$ is an \emph{affine} transformation (a linear transformation plus a translation) that stabilizes $\Lambda$, and does not change the properties we are interested in for $\Gamma^C$.
    Therefore we can without loss of generality suppose that $\sigma(e_i) = b_i \times 0^n, i = 1, \ldots, M$, and $b_i \times 0^n \in F_i$ for $i = M+1, \ldots, s$.
    In particular, $0^n \in C_i$ for all $1 \leq i \leq s$.

    For each $i = 2, \ldots, M$, let $z_i \in \Z^n$ be such that $\tau(e_{i-1}) = (0^s, z_i) + \sigma(e_i)$.
    Since $\sigma(e_i) = b_i \times 0^n$ and $\tau(e_{i-1}) \in \{b_i\} \times C_{i}$, we have $z_i \in C_i$.
    Recall that for an edge $e$ we denote by $e^-$ its inverse.
    Denote by $P$ the concatenation of edges
    \begin{multline*}
        e_1, e_2 + (0^s, z_2), e_3 + (0^s, z_2 + z_3), \ldots, e_M + (0^s, z_2 + z_3 + \cdots + z_M), \\
        e_{m-1}^- + (0^s, z_2 + \cdots + z_{m-2} + z_m + \cdots + z_M), \ldots, e_1^- + (0^s, z_m + \cdots + z_M).
    \end{multline*}
    We claim that $P$ is a path in $\Gamma^{C}$, and furthermore it starts in $\{b_1\} \times F$ and ends in $\{b_1\} \times ((C + C_1) \setminus F)$.

    Recall that $\sigma(e_1) \in \{b_1\} \times F_1$ by (i).
    Since $F_1 = F_1 + \{0^n\} + \cdots + \{0^n\} \subseteq F_1 + F_2 + \cdots + F_s + F_1 = F$, we indeed have $\sigma(e_1) \in \{b_1\} \times F$.
    Since $z_i \in C_i$ for each $i = 2, \ldots, m$ and $0^n \in C_1$, we have $z_2 + \cdots + z_{i} \in C$ for all $2 \leq i \leq M$ and $z_2 + \cdots + z_{i} + z_m + \cdots + z_M \in C$ for all $1 \leq i \leq m-1$.
    Therefore, each edge $e_i + (0^s, z_2 + \cdots + z_{i}) \in e_i + \{0^s\} \times C$ and each edge $e_i^{-} + (0^s, z_2 + \cdots + z_{i} + z_m + \cdots + z_M) \in e_i^{-} + \{0^s\} \times C,$ is in $\Gamma^{C} = \sum_{z \in C \cap \Z^n} \Gamma + (0^s, z)$.
    Consequently, every edge in $P$ belongs to $\Gamma^{C}$.
    We now show it ends in $\{b_i\} \times ((C + C_i) \setminus F)$.
    
    It suffices to show that $z_m + \cdots + z_M$ is not orthogonal to $v$.
    Indeed, each edge $e_i, i = 1, \ldots, M$ starts in $F_i$, meaning $v \cdot z_i = (0^s, v) \cdot (\sigma(e_i) - \tau(e_{i-1})) \geq 0$.
    Furthermore, condition (v) shows there exists $j \in \{1, \ldots, M\}$ such that $\tau(e_{j-1}) \in \{b_{i_j}\} \times \left(C_{i_j} \setminus F_{i_j} \right)$, meaning $v \cdot z_j$ is strictly positive.
    We therefore conclude that $P$ is a path in $\Gamma^{C}$ starting in $\{b_1\} \times F$ and ending in $\{b_1\} \times ((C + C_1) \setminus F)$.
\end{proof}

We now without loss of generality replace $\Gamma$ with $\Gamma^{C}$, this does not change the fact that $\Gamma$ is full-image and symmetric. Thus by Lemma~\ref{lem:acctoacc}, the new $\Gamma$ satisfies the following property: for each $i \in \{1, \ldots, s\}$, and each strict face $\{b_i\} \times F$ of the convex hull $\conv(\Gamma \cap \Lambda_i)$, there exists a path in $\Gamma$ starting in $\{b_i\} \times F$ and ending in $\conv(\Gamma \cap \Lambda_i) \setminus F$.
Furthermore, each $\conv(\Gamma \cap \Lambda_i)$ is of dimension $n$.

Now, the convex hull of $V(\Gamma) \cap \Lambda_i$ is a polytope of the form $\{b_i\} \times C_i$, where $C_i$ is a polytope in $\R^n$.
This time, each $C_i$ is of dimension $n$.
Consider the Minkowski sum $C \coloneqq C_1 + \cdots + C_s$.

For each $N \in \N$, define 
\[
\Gamma^{NC} \coloneqq \sum_{z \in NC \cap \Z^n} \Gamma + (0^s, z). 
\]
We will show that there exists $N$ such that $\Gamma^{NC}$ is connected.

Fix $i \in \{1, \ldots, s\}$, consider the (undirected) graph $\Gamma(i)$ over $\Z^n$, defined as follows.
The set of vertices of $\Gamma(i)$ is $\{v \in \Z^n \mid (b_i, v) \in V(\Gamma) \cap \Lambda_i\}$.
Two vertices $v, v'$ in $\Gamma(i)$ are connected by an edge if $v$ and $v'$ are connected by a path in $\Gamma$.
Then by Lemma~\ref{lem:acctoacc}, the graph $\Gamma(i)$ satisfies the following property: for each strict face $F$ of $\conv(\Gamma(i))$ there exists an edge with source in $F$ and target outside $F$. This is exactly the definition of face-accessibility as in~\cite[Section~3.1]{dong2024semigroup}.

Define
\[
\Gamma(i)^{(N-1)C_i} \coloneqq \bigcup_{z \in (N-1) C_i} \Gamma(i) + z,
\]
then $\conv(\Gamma(i)^{(N-1)C_i}) = (N-1)C_i + C_i = NC_i$.
For $c \in \R^n, R \in (0, 1), S \subseteq \R^n$, define 
\[
scale(S, c, R) \coloneqq \{c + R \cdot (x - c) \mid x \in S\}.
\]
Intuitively, $scale(S, c, R)$ is the scaling of the set $S$ with proportion $R$, and $c$ is the invariant point of the scaling.

\begin{lemma}[{\cite[Lemma~4.6]{dong2024semigroup}}]\label{lem:coverPv}
    Let $c_0 \in \Q^n$ be an interior point of $C_i$.
    There exists $N_i \in \N, R \in (0, 1)$, such that if $N > 0$ is divisible by $N_i$, then every vertex in the graph $\Gamma(i)^{(N-1)C_i}$ is connected by a path to some vertex in $scale(NC_i, N c_0, R)$.\footnote{Note that by taking $\Gamma$ in~\cite[Section~4]{dong2024semigroup} to be the graph $\Gamma(i)$ defined here and by taking $C$ to be $C_i$, the graph $\Gamma_N$ defined in~\cite[Section~4]{dong2024semigroup} corresponds to the graph $\Gamma(i)^{(N-1)C_i}$ defined above. This is because the set $S_N \coloneqq \{z \in \Z^n \mid z + C \subset NC\}$ defined in the beginning of~\cite[Section~4]{dong2024semigroup} corresponds exactly to $\Z^n \cap (N-1)C_i$. Indeed, using the notation of~\cite[Section~4]{dong2024semigroup}, we have $(N-1)C + C = NC$, so $(N-1)C \subseteq \{z \in \R^n \mid z + C \subset NC\}$. On the other hand, take $v \notin (N-1)C$, taking a linear transformation we can suppose $v = 0^n$, then $v+C = C \not\subset NC$ because the distance from $NC$ to $0^n$ is strictly larger than the distance from $C$ to $0^n$. Therefore $v \not\in \{z \in \R^n \mid z + C \subset NC\}$. We conclude that $\Z^n \cap (N-1)C = \Z^n \cap \{z \in \R^n \mid z + C \subset NC\} = S_N$.}
\end{lemma}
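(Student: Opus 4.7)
The strategy is to combine face-accessibility of $\Gamma(i)$ with a density argument for the lattice translates in $(N-1)C_i \cap \Z^n$ to show that the connected component of any starting vertex in $\Gamma(i)^{(N-1)C_i}$ has a convex hull comprising nearly all of $NC_i$, hence must meet the scaled region $scale(NC_i, Nc_0, R)$.

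First, I would fix any $v \in V(\Gamma(i)^{(N-1)C_i})$, let $S(v)$ be its reachable set (symmetric because edge inverses have been added), and set $P_v \coloneqq \conv S(v) \subseteq NC_i$. I would argue by contradiction: assume $P_v \cap scale(NC_i, Nc_0, R) = \emptyset$. Then there is a separating direction $w \in \Rns$ with
\[
\max_{p \in P_v} w\cdot p \;<\; Nw\cdot c_0 + R\left(\max_{q \in NC_i} w\cdot q - Nw\cdot c_0\right).
\]
Let $p^* \in S(v)$ be a $w$-maximizer. The aim is to exhibit a translate $\Gamma(i)+z$ with $z \in (N-1)C_i \cap \Z^n$ whose own $w$-extremal face lies entirely inside $S(v)$; face-accessibility of $\Gamma(i)$ then produces an edge inside this translate going from that face to a vertex with strictly larger $w$-value, which would belong to $S(v)$ and contradict the maximality of $p^*$.

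The key step — and the main obstacle — is producing such a $z$: one must ensure its $w$-extremal face consists entirely of $S(v)$-vertices. The idea is that the $w$-extreme vertices of $\Gamma(i)+z$ are automatically $w$-extreme in $P_v$ (hence vertices of $P_v$, and so in $S(v)$) provided $z$ has near-maximal $w\cdot z$ among $(N-1)C_i \cap \Z^n$; density of the lattice inside $(N-1)C_i$ for large $N$ guarantees such $z$ exist with $w\cdot z$ arbitrarily close to $\max_{(N-1)C_i} w\cdot x$. Finally, $N_i$ is chosen as a common multiple of the denominators of $c_0$ and of the vertices of $C_i$ (so that $Nc_0 \in \Z^n$ and $NC_i$ is a lattice polytope), and $R$ is picked close enough to $1$ that $1-R$ exceeds a fixed constant times $\diam(\Gamma(i))/\diam(NC_i)$, ensuring that a single face-access step crosses the separation gap established above.
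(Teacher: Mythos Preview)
This lemma is quoted verbatim from \cite[Lemma~4.6]{dong2024semigroup} and is not proved in the present paper, so there is no proof here to compare against; I can only assess your sketch on its own terms.

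The decisive step --- ``exhibit a translate $\Gamma(i)+z$ with $z\in (N-1)C_i\cap\Z^n$ whose $w$-extremal face lies entirely inside $S(v)$'' --- is not established, and your proposed justification is circular. You write that ``the $w$-extreme vertices of $\Gamma(i)+z$ are automatically $w$-extreme in $P_v$ (hence vertices of $P_v$, and so in $S(v)$) provided $z$ has near-maximal $w\cdot z$''. But for a point to be $w$-extreme \emph{in} $P_v=\conv S(v)$ it must already lie in $P_v$; that is precisely what is in question. Choosing $z$ with large $w\cdot z$ forces the $w$-extreme vertices of $\Gamma(i)+z$ to have large $w$-value in $NC_i$, but says nothing about whether those vertices are reachable from $v$. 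A translate pushed far out in the $w$-direction may sit in a completely different connected component of $\Gamma(i)^{(N-1)C_i}$. There is also a direction mismatch: if the face you single out is the $w$-\emph{maximal} face of the translate, the escaping edge supplied by face-accessibility goes to \emph{smaller} $w$-value, which does not contradict maximality of $p^*$. What you would actually need is a translate containing a vertex of $S(v)$ on its $w$-\emph{minimal} face, and moreover that this vertex is (or is connected within $S(v)$ to) the specific source of the escaping edge. Your sketch does not arrange either of these. (The separating inequality is also misstated --- the right-hand side should involve $\min_{q\in NC_i}$ rather than $\max$ --- but that is cosmetic.)

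The missing mechanism is a way to re-express a vertex $p\in S(v)$ as $u+z$ with $u$ lying on a \emph{prescribed} face of $C_i$ and $z\in (N-1)C_i\cap\Z^n$; this is what lets one apply face-accessibility at $p$ itself rather than at an unrelated translate. That step uses the specific geometry of $p$ relative to $NC_i$ (namely that $p$ is near the corresponding face of $NC_i$, which follows from the separation), together with the fact that $(N-1)C_i$ is large. Once this is in place, a single step increases $w\cdot p$ by at least a fixed positive amount $\epsilon$ depending only on $\Gamma(i)$, and iterating $O(N)$ times reaches $scale(NC_i,Nc_0,R)$ for $R$ depending only on $\epsilon$ and $C_i$ (in particular, $R$ is fixed before $N$, contrary to what your final sentence suggests).
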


See Figure~\ref{fig:coverPv} for an illustration of Lemma~\ref{lem:coverPv}.

\begin{figure}[ht!]
    \centering
    \begin{minipage}[t]{.30\textwidth}
        \centering
        \includegraphics[width=\textwidth,height=1.0\textheight,keepaspectratio, trim={8cm 0cm 4.5cm 5cm},clip]{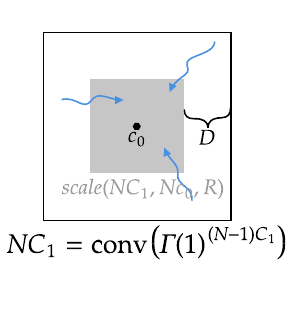}
        \caption{Illustration of Lemma~\ref{lem:coverPv} with $n = 2$, $i = 1$.}
        \label{fig:coverPv}
    \end{minipage}
    \hfill
    \begin{minipage}[t]{0.65\textwidth}
        \centering
        \includegraphics[width=\textwidth,height=1.0\textheight,keepaspectratio, trim={1.9cm 1cm 3cm 0.5cm},clip]{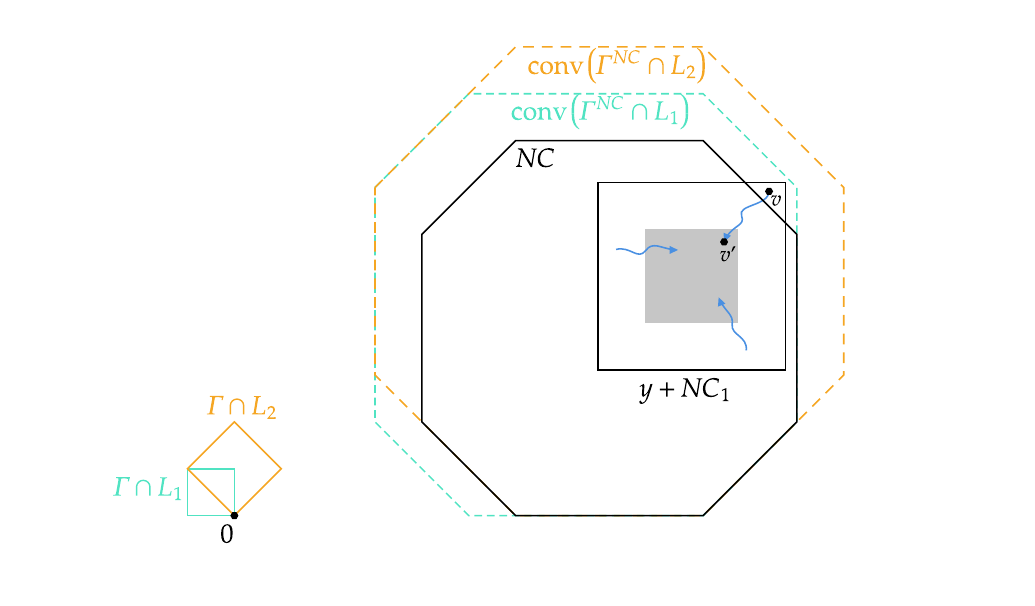}
        \caption{Illustration for Lemma~\ref{lem:getin}, projected on $\Z^n$. Here, $n = 2, s = 2, i = 1$.}
        \label{fig:getin}
    \end{minipage}
\end{figure}

Denote by $d_j \in \Z^n, j = 1, \ldots, n$ the natural basis of $\Z^n$, that is, $d_j$ is the vector with $1$ on the $j$-th coordinate and $0$ elsewhere.
Since $\mA$ is primitive and $\Gamma$ is full-image, for each $d_j, j = 1, \ldots, n,$ there exists a concatenation $Q_{1j}$ of $(\{0\}^s \times \Z^n)$-translations of edges in $\Gamma$ that goes from $(b_1, 0^n)$ to $(b_1, d_j)$.
(Recall that for each $e \in E(\Gamma)$ we have added $e^- \in E(\Gamma)$.)
For $i = 1, \ldots, s$, by additionally appending paths to and from $\Lambda_i$, we obtain a concatenation $Q_{ij}$ of $(\{0\}^s \times \Z^n)$-translations of edges in $\Gamma$ that goes from $(b_i, 0^n)$ to $(b_i, d_j)$.

More precisely, for every $i \in \{1, \ldots, s\}, j \in \{1, \ldots, n\}$, there exist edges $e_{1}, \ldots, e_{m}$ in $\Gamma$, satisfying the following properties.
\begin{enumerate}[nosep, label = (\roman*)]
    \item $\sigma(e_1), \tau(e_m) \in \Lambda_i$.
    \item For $k = 2, \ldots, m$, both $\tau(e_{k-1})$ and $\sigma(e_k)$ are in some same lattice $\Lambda_{l_k}$. In particular, both $\tau(e_{k-1})$ and $\sigma(e_k)$ belong to $\{b_{l_k}\} \times C_{l_k}$. Write $\tau(e_{k-1}) - \sigma(e_k) = (0^s, z_k)$ with $z_k \in \Z^n$.
    \item The concatenation of $e_1, e_2 + (0^s, z_2), e_3 + (0^s, z_2 + z_3), \ldots, e_m + (0^s, z_2 + \cdots + z_m)$ is a path from $\sigma(e_1)$ to $\sigma(e_1) + (0^s, d_j)$. We denote by $Q_{ij}$ this concatenation.
    \item We additionally define the value $D_{ij} \coloneqq \max\{\| z_2 \|, \| z_2 + z_3 \|, \ldots, \| z_2 + \cdots + z_m \|\} \in \R_{\geq 0}$.
\end{enumerate}

For two sets $S, S' \subseteq \R^n$, define their \emph{distance} as $\dist(S, S') \coloneqq \inf_{x \in S, x' \in S'}\|x - x'\|$.
For a set $S \subseteq \R^n$, define its \emph{diameter} as $\diam(S) \coloneqq \sup_{x, x' \in S}\|x - x'\|$.
Denote 
\[
D \coloneqq \max\{D_{ij} \mid 1 \leq i \leq s, 1 \leq j \leq n \} + \sqrt{n} + \max\{\diam(C_i) \mid 1 \leq i \leq s\}.
\]

\begin{lemma}\label{lem:getin}
    There exists an integer $N \in \N$ such that in the graph $\Gamma^{NC}$, for every $i \in \{1, \ldots, s\}$, every vertex $(b_i, v) \in V(\Gamma^{NC} \cap \Lambda_i)$ is connected to some other vertex $(b_i, v') \in V(\Gamma^{NC} \cap \Lambda_i)$ that is of distance at least $D$ from the boundary of $\conv(\Gamma^{NC} \cap \Lambda_i)$.
\end{lemma}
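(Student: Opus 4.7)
The strategy is to invoke Lemma~\ref{lem:coverPv} for each face-accessible graph $\Gamma(i)$ and to lift its paths back to $\Gamma^{NC}$. For each $i \in \{1, \ldots, s\}$, I would first pick an interior point $c_0^{(i)} \in \Q^n$ of $C_i$ (which exists because the earlier replacement of $\Gamma$ by $\Gamma^C$ forces $C_i$ to be of dimension $n$), and apply Lemma~\ref{lem:coverPv} to $\Gamma(i)$ to obtain an integer $N_i \in \N$ and a constant $R_i \in (0,1)$ with the stated covering property. I would then take $N$ to be a common multiple of $N_1, \ldots, N_s$, chosen additionally large enough that
\[
\dist\bigl(scale(NC_i, N c_0^{(i)}, R_i),\; \partial(NC + C_i)\bigr) \;\geq\; D \qquad \text{for every } i \in \{1, \ldots, s\}.
\]
This is achievable because $scale(NC_i, N c_0^{(i)}, R_i) = N\bigl(R_i C_i + (1-R_i) c_0^{(i)}\bigr)$ is a scaled copy of $C_i$ sitting in the interior of $NC_i \subseteq NC + C_i$ at distance at least $(1-R_i)\, N\, \dist(c_0^{(i)}, \partial C_i)$ from $\partial(NC_i)$; any boundary point of $NC + C_i$ is either outside $NC_i$ (hence separated by at least this distance) or already on $\partial(NC_i)$, so this linear-in-$N$ bound transfers to $\partial(NC + C_i)$.

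Each edge of $\Gamma(i)^{(N-1)C_i}$ is, by definition, a path in $\Gamma$ shifted by some $z \in (N-1)C_i \cap \Z^n \subseteq NC \cap \Z^n$, and therefore it lives inside the translate $\Gamma + (0^s, z) \subseteq \Gamma^{NC}$. Consequently every path in $\Gamma(i)^{(N-1)C_i}$ lifts to a path in $\Gamma^{NC}$, and in particular the covering paths supplied by Lemma~\ref{lem:coverPv} can be followed inside $\Gamma^{NC}$. So it suffices to show that every vertex $(b_i, v) \in V(\Gamma^{NC}) \cap \Lambda_i$ is connected in $\Gamma^{NC}$ to some vertex of $\Gamma(i)^{(N-1)C_i}$.

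Writing $v = u + z$ with $(b_i, u) \in V(\Gamma) \cap \Lambda_i$ and $z \in NC \cap \Z^n$, there is nothing to prove when $z \in (N-1)C_i$, so the crux of the argument is the case $z \in NC \setminus (N-1)C_i$. Since the shell $NC \setminus (N-1)C_i$ has thickness near $\partial NC$ bounded independently of $N$ (of order $\max_j \diam(C_j)$), the plan is to incrementally shift the translate index of the current vertex from $NC \setminus (N-1)C_i$ into $(N-1)C_i$ by chaining translated copies of the primitiveness paths $Q_{ij}$ and their inverses (available because $\Gamma$ was augmented with inverse edges). Each such $Q_{ij}$-shift lies in $\Gamma^{NC}$ as soon as its whole translated trace belongs to $NC \cap \Z^n$, which by property~(iv) of the $Q_{ij}$ requires only that the basepoint be at distance at least $D$ from $\partial NC$. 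I expect the hardest step of the proof to be making this migration rigorous: one must show that at every intermediate stage either a valid $Q_{ij}$ or a suitable combination of single-edge moves within overlapping translates strictly decreases the ``distance to $(N-1)C_i$'' while remaining in $\Gamma^{NC}$. This essentially relies on the full $n$-dimensionality of each $C_i$ (so that $(N-1)C_i$ is a genuinely $n$-dimensional subregion of $NC$ rather than a lower-dimensional slice) together with the primitiveness of $\mA$, which guarantees the $Q_{ij}$ collectively realize the standard basis of $\Z^n$.
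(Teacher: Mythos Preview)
Your overall setup --- applying Lemma~\ref{lem:coverPv} to each $\Gamma(i)$ and picking $N$ as a large common multiple of the $N_i$ --- matches the paper, but you miss the one observation that makes the proof short, and your attempted workaround has genuine gaps.

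The paper's proof never tries to drag a vertex into the \emph{fixed} copy $\Gamma(i)^{(N-1)C_i}$. Instead it notes that, since $C_i$ is a Minkowski summand of $C$, we have $NC = (N-1)C_i + \bigl(C_i + \sum_{j\neq i} NC_j\bigr)$. Hence for any translate index $z\in NC$ there is a $y$ with $z\in y+(N-1)C_i\subseteq NC$. The vertex $(b_i,v)$ then already lies in the \emph{translated} copy $y+\Gamma(i)^{(N-1)C_i}\subseteq\Gamma^{NC}$, and Lemma~\ref{lem:coverPv} applied to that translate finishes the argument immediately. No migration step is needed.

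Your route breaks down at two places. First, the inclusion $(N-1)C_i\cap\Z^n\subseteq NC\cap\Z^n$ that you use to lift $\Gamma(i)^{(N-1)C_i}$ into $\Gamma^{NC}$ is simply false in general (at this point in the argument the $C_j$ are not normalized to contain $0$; take $n=1$, $C_1=[0,1]$, $C_2=[10,11]$, so $NC=[10N,12N]$ while $(N-1)C_1=[0,N-1]$). Second, the $Q_{ij}$-migration is circular: you yourself observe that a $Q_{ij}$-shift from basepoint $z$ stays in $\Gamma^{NC}$ only when $z$ is at distance at least $D_{ij}$ (a fortiori some margin) from $\partial(NC)$, but the vertices you need to move are precisely those whose translate index $z$ sits on or near $\partial(NC)$, so the very first step of your ``incremental shift'' need not be available. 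This is not a minor technicality --- it is exactly the boundary phenomenon that Lemma~\ref{lem:getin} is meant to overcome, and your plan assumes what has to be proved. The translated-copy trick above bypasses the issue entirely.
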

\begin{proof}
    See Figure~\ref{fig:getin} for an illustration of the proof.
    Let $N_i, i = 1, \ldots, s,$ be as defined in Lemma~\ref{lem:coverPv}.
    Let $N$ be a large enough multiple of $N_1 N_2 \cdots N_s$, such that the distance between $scale(NC_i, N c_0, R)$ and the boundary of $C_i$ is at least $D$.
    Let $(b_i, v)$ be a vertex of $\Gamma^{NC} \cap \Lambda_i$, it appears in some translated graph $\Gamma + (0^s, z)$ with $z \in NC$.
    Then, $(b_i, v - z) \in V(\Gamma)$.
    Since $z \in NC$, there exists some translation $y + (N-1)C_i$ of $(N-1)C_i$ such that $z \in y + (N-1)C_i \subseteq NC$.
    By Lemma~\ref{lem:coverPv}, $(b_i, v)$ is connected in $y + \Gamma(i)^{(N-1)C_i}$, and hence also in $\Gamma^{NC}$, to some vertex $(b_i, v')$ in $y + scale(NC_i, N c_0, R)$. Since $v'$ is of distance at least $D$ from the boundary of $(b_i, y) + NC_i$, it is also of distance at least $D$ from the boundary of $\conv(\Gamma^{NC} \cap \Lambda_i) = NC + (b_i, 0^n) + C_i \supseteq (b_i, y) + (N-1)C_i + C_i = (b_i, y) + NC_i$.
\end{proof}

\begin{lemma}[{Generalization of~\cite[Lemma~4.7]{dong2024semigroup}}]\label{lem:mix}
    Two vertices $(b_i, v_1), (b_i, v_2) \in V(\Gamma^{NC} \cap \Lambda_i)$ of distance at least $D$ from the boundary of $\conv(\Gamma^{NC} \cap \Lambda_i)$ are connected in $\Gamma^{NC}$.
\end{lemma}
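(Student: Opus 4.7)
The plan is to build an explicit path in $\Gamma^{NC}$ from $(b_i,v_1)$ to $(b_i,v_2)$ as a concatenation of appropriately translated copies of the pre-built paths $Q_{ij}$ and their reverses. The reverse of each $Q_{ij}$ lies in $\Gamma$ because at the outset of the proof of Theorem~\ref{thm:acctocon} we closed $\Gamma$ under edge reversal. Set $D_0 \coloneqq \max_{i,j} D_{ij}$ and $d_{\max} \coloneqq \max_i \diam(C_i)$, so that $D = D_0 + \sqrt{n} + d_{\max}$.

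First I would pick a lattice path $u_1 = v_1, u_2, \ldots, u_M = v_2$ in $\Z^n$ with consecutive differences $u_{k+1} - u_k \in \{\pm d_1, \ldots, \pm d_n\}$ and every $u_k$ within distance $\sqrt{n}$ of the segment $[v_1, v_2]$ (a standard multidimensional Bresenham-type construction). Since the function $x \mapsto \dist(x, \partial(NC + C_i))$ is concave on the convex set $NC + C_i$ and is $\geq D$ at the two endpoints $v_1, v_2$, it is $\geq D$ on the whole segment, so every $u_k$ lies at distance at least $D - \sqrt{n} = D_0 + d_{\max}$ from the boundary of $NC + C_i = \conv(\Gamma^{NC} \cap \Lambda_i)$.

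Next, for each $k$, write $u_{k+1} - u_k = \varepsilon d_j$ with $\varepsilon \in \{+1,-1\}$ and let $(b_i, y_0)$ (so $y_0 \in C_i \cap \Z^n$) be the starting vertex of $Q_{ij}$. If $\varepsilon = +1$, translate $Q_{ij}$ by $(0^s, u_k - y_0)$; if $\varepsilon = -1$, translate the reverse of $Q_{ij}$ by $(0^s, u_{k+1} - y_0)$. In both cases the resulting directed path runs from $(b_i, u_k)$ to $(b_i, u_{k+1})$, and each of its edges has the form $e + (0^s, w + z)$ with $e \in E(\Gamma)$, $w \in \{u_k - y_0,\, u_{k+1} - y_0\}$ and $\|z\| \leq D_{ij} \leq D_0$ (the vector $z$ is a partial sum of the $z_k$'s from the definition of $Q_{ij}$, so its norm is controlled by $D_{ij}$). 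Such an edge belongs to $\Gamma^{NC}$ precisely when $w + z \in NC \cap \Z^n$.

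The whole argument therefore reduces to the following geometric claim, which I expect to be the only non-routine step: \emph{if $u \in NC + C_i$ is at distance at least $D_0 + d_{\max}$ from $\partial(NC + C_i)$, then $u - y + z \in NC$ for every $y \in C_i$ and every $z \in \R^n$ with $\|z\| \leq D_0$.} I would prove it by hyperplane separation. Assume $u - y + z \notin NC$ and pick $v \in \R^n \setminus \{0\}$ with $c \coloneqq \max_{x \in NC} v \cdot x$, so that $v \cdot (u - y + z) > c$. Setting $M \coloneqq \max_{y' \in C_i} v \cdot y'$, the halfspace $\{x : v \cdot x \leq c + M\}$ supports $NC + C_i$, hence $v \cdot u \leq c + M - (D_0 + d_{\max}) \|v\|$. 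Combined with $v \cdot u > c + v \cdot y - v \cdot z$ and the trivial bounds $M - v \cdot y \leq d_{\max} \|v\|$ and $v \cdot z \leq D_0 \|v\|$, this yields
\[
(D_0 + d_{\max}) \|v\| < M - v \cdot y + v \cdot z \leq (D_0 + d_{\max}) \|v\|,
\]
a contradiction. Applying the claim with $u \in \{u_k, u_{k+1}\}$ certifies that every edge produced in the previous step lies in $\Gamma^{NC}$, and concatenating the unit-step paths over $k = 1, \ldots, M-1$ produces the desired path in $\Gamma^{NC}$ from $(b_i, v_1)$ to $(b_i, v_2)$. The constants in the definition of $D$ are tuned exactly so that the separation inequality closes; every other step is routine.
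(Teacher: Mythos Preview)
Your proof is correct and follows essentially the same approach as the paper's: build a Bresenham-type lattice path within $\sqrt{n}$ of the segment $[v_1,v_2]$, replace each unit step by a suitably translated copy of $Q_{ij}$ (or its reverse), and verify that each resulting edge lies in some $\Gamma+(0^s,z)$ with $z\in NC\cap\Z^n$. The only cosmetic difference is that the paper first applies an $\SL(s+n,\Z)$ transformation to arrange $0^n\in C_i$ for all $i$, which lets it avoid tracking the basepoint $y_0$; you instead keep $y_0\in C_i$ explicit and prove the containment $u-y+z\in NC$ directly via a clean supporting-hyperplane argument, which is arguably tidier than the paper's two-step bound through $\partial NC$.
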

\begin{proof}
    Upon applying a transformation in $\SL(s+n, \Z)$ to $\Gamma \subseteq \Z^{s+n}$, we now without loss of generality suppose $0^n \in C_i$ for all $i$.

    For two points $x, x' \in \R^n$, define the \emph{segment} $seg(x, x') \coloneqq \{rx + (1 - r)x' \mid r \in [0, 1]\} \subseteq \R^n$.
    Since $(b_i, v_1), (b_i, v_2) \in \Lambda_i,$ are of distance at least $D$ from the boundary of $\conv(\Gamma^{NC} \cap \Lambda_i) \subseteq \{b_i\} \times (NC + C_i)$, they are of distance at least $D - \diam(C_i) \geq \max_{i,j}\{D_{ij}\} + \sqrt{n}$ from the boundary of $\{b_i\} \times NC$.
    Hence, every point in the segment $seg(v_1, v_2)$ is of distance at least $\max_{i,j}\{D_{ij}\} + \sqrt{n}$ from the boundary of $NC$.

    There exists a path $P_{\Z^n}(v_1, v_2)$ in the lattice $\Lambda_i$ from $(b_i, v_1)$ to $(b_i, v_2)$, consisting of translations of the segments $\{b_i\} \times seg(0^n, d_k), k = 1, \ldots, n$, such that each point in $P_{\Z^n}(v_1, v_2)$ is of distance at most $\sqrt{n}$ from the segment $\{b_i\} \times seg(v_1, v_2)$.
    For $k = 1, \ldots, n$, replacing each segment $\{b_i\} \times (z + seg(0^n, d_k))$ in $P_{\Z^n}(v_1, v_2)$ by the translation $Q_{ik} + (0^n, z)$ of the path $Q_{ik}$, we obtain a path $P_{\Gamma}(v_1, v_2)$.
    We now show that each edge of $P_{\Gamma}(v_1, v_2)$ is in $E(\Gamma^{NC})$.

    By definition of the concatenation $Q_{ik}$, Each edge in $Q_{ik} + (0^n, z)$ appears in a translation $\Gamma + (0^n, z + z')$ of the graph $\Gamma$ satisfying $\|z'\| \leq D_{ik}$.
    Since $(b_i, z)$ is of distance at most $\sqrt{n}$ from the segment $\{b_i\} \times seg(v_1, v_2)$, and every point in the segment $\{b_i\} \times seg(v_1, v_2)$ is of distance at least $\max_{i, j}\{D_{ij}\} + \sqrt{n}$ from the boundary of $\{b_i\} \times NC$, we conclude that $(b_i, z)$ is of distance at least $\max_{i, j}\{D_{ij}\}$ from the boundary of $\{b_i\} \times NC$.
    Therefore, $(b_i, z + z')$ is within the boundary of $\{b_i\} \times NC$ because $\|z'\| \leq \max\{D_{ij}\}$.
    In other words, $z + z' \in NC$, the translation $\Gamma + (0^n, z + z')$ appears as a subgraph of $\Gamma^{NC}$.
    This shows that the path $P_{\Gamma}(v_1, v_2)$ connecting $v_1$ and $v_2$ is a subgraph of $\Gamma^{NC}$.
\end{proof}

\begin{proof}[Proof of Theorem~\ref{thm:acctocon}]
    See Figure~\ref{fig:connect} for an illustration of the proof.
    Let $N$ be the integer defined in Lemma~\ref{lem:getin}, we show that $\hG \coloneqq \Gamma^{NC} = \sum_{z \in NC \cap \Z^n} \Gamma + (0^s, z)$ is connected.
    First we show that for any $i \in \{1, \ldots, s\}$, every two vertices $(b_i, v), (b_i, w)$ in $\Gamma^{NC} \cap \Lambda_i$ are connected. Indeed, by Lemma~\ref{lem:getin}, $(b_i, v)$ and $(b_i, w)$ are respectively connected to vertices $(b_i, v'), (b_i, w')$, which are of distance at least $D$ from the boundary of $\conv(\Gamma^{NC} \cap \Lambda_i)$.
    By Lemma~\ref{lem:mix}, $(b_i, v')$ and $(b_i, w')$ are connected in $\Gamma^{NC}$. Therefore, $(b_i, v)$ and $(b_i, w)$ are connected in $\Gamma^{NC}$.

    This shows that for any $i \in \{1, \ldots, s\}$, all vertices in $\Gamma^{NC} \cap \Lambda_i$ lie in the same connected component of $\Gamma^{NC}$.
    Since the $\mA$-graph $\Gamma^{NC}$ is full-image and the automaton $\mA$ is trim, the different lattices $\Gamma^{NC} \cap \Lambda_i$ connected to each other by edges in $\Gamma^{NC}$ form a single connected component.
    Therefore $\Gamma^{NC}$ is connected. Since $\Gamma^{NC}$ is also symmetric, it is Eulerian.
\end{proof}

\section{Proof of Theorem~\ref{thm:dec}}\label{app:proofdec}
In this section of the appendix we prove Theorem~\ref{thm:dec}:

\thmdec*

Our proof strictly follows the proof of~\cite[Theorem~3.9]{dong2024semigroup} provided in~\cite[Sections~5-6]{dong2024semigroup}.
We recall~\cite[Theorem~3.9]{dong2024semigroup} for comparison:

\begin{theorem}[{\cite[Theorem~3.9]{dong2024semigroup}}]\label{thm:olddec}
Denote $\A \coloneqq \R[\oX^{\pm}], \A^+ \coloneqq \Rp[\oX^{\pm}]^*$. Fix $n \in \N$.
Suppose we are given as input a set of generators $\bg_1, \ldots, \bg_m \in \A^K$ with integer coefficients, as well as the vectors $\ta_1, \ldots, \ta_K \in \Z^n$ and two subsets $I, J$ of $\{1, \ldots, K\}$.
Denote by $\mM$ be the $\A$-submodule of $\A^K$ generated by $\bg_1, \ldots, \bg_m$.
It is decidable whether there exists $\bff \in \mM \cap \ApK$ satisfying
\begin{equation}\label{eq:olddeccond}
        \left(O_{v} \cup J\right) \cap M_{v}(I, \bff) \neq \emptyset, \quad \text{ for every } v \in \Rns.
\end{equation}
Here, if $n = 0$ then $\A$ is understood as $\R$, and Property~\eqref{eq:olddeccond} is considered trivially true.
\end{theorem}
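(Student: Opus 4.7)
The plan is to proceed by induction on $n$. In the base case $n=0$, one has $\A=\R$ and $\mM$ is the $\R$-linear subspace of $\R^K$ spanned by the integer vectors $\bg_1,\dots,\bg_m$. Property~\eqref{eq:olddeccond} is vacuous, so the task reduces to deciding whether $\mM \cap \Rpp^K \neq \emptyset$, a standard linear-programming feasibility question. For the inductive step with $n\ge 1$, the first move is to replace the infinite universal quantifier over $v\in\Rns$ by a finite combinatorial one: the hyperplanes $\{\ta_i^{\perp}\}_{i=1}^K$ partition $\Rns$ into finitely many relatively open polyhedral chambers, and on each such chamber $R$ the set $O_v$ is constant, equal to some $O_R$. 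Property~\eqref{eq:olddeccond} then reads: on every chamber $R$, and for every $v\in R$, the maximum of $\deg_v(f_i)$ over $i\in I$ is attained by some index in $O_R \cup J$.

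Once the chamber decomposition is fixed, the condition depends on $\bff$ only through its supports $B_i = \mathrm{supp}(f_i) \subseteq \Z^n$. The homogeneous linear system encoding $\bff \in \mM$ with prescribed supports has integer coefficients, so it admits a solution with strictly positive real coefficients if and only if it admits one with strictly positive integer coefficients, by a standard rationality-plus-scaling argument. The problem therefore splits into three subtasks: (i) enumerating candidate support tuples $(B_i)_{i=1}^K$; (ii) verifying the tropical face condition on each tuple, combinatorially in terms of the vertices of $\mathrm{conv}(B_i)$ and the chambers $R$; and (iii) deciding the linear feasibility of realizing the prescribed supports inside $\mM$ with strictly positive coefficients. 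Subtasks (ii) and (iii) are clearly effective, so the entire argument hinges on reducing (i) to a finite enumeration.

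The main obstacle is therefore to establish an effective a priori bound on the supports of a minimal witness $\bff$. My approach would be to exploit the $\Z[\oX^{\pm}]$-module structure of $\mM$ via a computed Gr\"obner basis, together with an inductive tropical reduction: fixing a generic rational direction $v_0$, one passes to the $v_0$-graded (initial) pieces of $\mM$ and of the condition, which produces a problem in fewer effective variables amenable to the inductive hypothesis. This yields bounds on how far the supports of $\bff$ can extend in each chamber direction; iterating over all chambers and combining with a convexity argument — the $v$-degrees of the $f_i$ for $v$ ranging over a chamber confine the supports, up to a common translation, to a computable polytope — should produce the desired effective enumeration. This bounding step is the principal technical difficulty; the delicate part is maintaining \emph{effectivity} of the bounds throughout the induction, rather than invoking a non-constructive compactness argument to reconcile the algebraic structure of $\mM$ with the semi-algebraic structure of the tropical face condition.
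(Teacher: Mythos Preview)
Your induction framework and the reduction to supports-plus-linear-feasibility are sound and do appear in the paper's argument (the rationality step is Lemma~\ref{lem:M}, and the chamber decomposition by the hyperplanes $\ta_i^{\perp}$ is the partition $\mL_O$). But the core of your plan --- an effective a priori bound on the supports of a minimal witness --- is never actually established, and you flag it yourself as the principal difficulty. Your sketch (fix a generic $v_0$, pass to initials, invoke the inductive hypothesis, iterate over chambers, combine by convexity) does not explain how a lower-dimensional \emph{yes/no} answer produces a numerical \emph{bound} in the original problem, nor how bounds coming from different directions are reconciled into a single polytope guaranteed to contain a witness. Without this, your subtask (i) remains an infinite enumeration and the procedure does not terminate on negative instances.

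The paper (following \cite{dong2024semigroup}) takes a different route that avoids the bounding problem entirely. The key device is a local-global principle (Theorem~\ref{thm:locglob}): a global witness $\bff\in\mM\cap\ApK$ satisfying \eqref{eq:olddeccond} exists if and only if two families of local conditions hold --- (LocR) at every point $r\in\Rpp^n$, decidable by Tarski, and (LocInf) at every direction $v\in\Rns$, which after a coordinate change in $\GL(n,\Z)$ and passage to shifted initials over a computed super Gr\"obner basis becomes an instance of the same problem with strictly smaller $n$. This is where the induction is actually spent: not to bound a witness, but to \emph{verify} the local condition at each direction. The final algorithm runs two semi-decision procedures in parallel: one enumerates candidate integer witnesses (your (i)--(iii) without the bound), the other enumerates pairs $(A,d)\in\GL(n,\Z)\times\{0,\dots,n-1\}$ looking for a cell where (LocInf) fails. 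The local-global principle guarantees exactly one of the two halts. That dovetailing is the substitute for the effective bound you were reaching for.
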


We need to add the quantifier ``for all $\xi \in \Xi$'' in appropriate places of the proof and modify a few definitions and lemmas accordingly.
\subsection{Local-global principle}
Given $f \in \A$ and $v \in \Rns$, the \emph{initial polynomial} of $f$ is defined as the sum of all monomials in $f$ having the maximal degree $\deg_v(\cdot)$:
\[
\init_v(f) \coloneqq \sum\nolimits_{\deg_v(\oX^b) = \deg_v(f)} c_b \oX^b, \quad \text{ where } f = \sum c_b \oX^b.
\]
For $\bff = (f_1, \ldots, f_K) \in \A^K$, we naturally denote $\init_v(\bff) \coloneqq (\init_v(f_1), \ldots, \init_v(f_K)) \in \A^K$.

In this subsection we prove the following local-global principle:

\begin{restatable}[{Generalization of~\cite[Theorem~3.8]{dong2024semigroup}}]{theorem}{thmlocglob}\label{thm:locglob}
Let $\mM$ be an $\A$-submodule of $\A^K$ and $I_{\xi}, J_{\xi}, \xi \in \Xi$ be subsets of $\{1, \ldots, K\}$.
There exists $\bff \in \mM \cap \ApK$ satisfying 
\begin{equation}\label{eq:globv}
        \left(O_{v} \cup J_{\xi}\right) \cap M_{v}(I_{\xi}, \bff) \neq \emptyset, \quad \text{ for every } v \in \Rns, \xi \in \Xi
\end{equation}
if and only if the two following conditions are satisfied:
\begin{enumerate}[noitemsep, label = \arabic*.]
    \item \label{item:locr} \emph{\textbf{(LocR):}} For every $r \in \Rpp^n$, there exists $\bff_{r} \in \mM$ such that $\bff_{r}(r) \in \Rpp^K$.
    \item \label{item:locinf} \emph{\textbf{(LocInf):}} For every $v \in \Rns$, there exists $\bff_{v} \in \mM$,
    \begin{enumerate}[noitemsep]
        \item $\init_{v}\left(\bff_{v}\right) \in \ApK$.
        \item Denote $I'_{\xi} \coloneqq M_{v}(I_{\xi}, \bff_{v}), J'_{\xi} \coloneqq O_{v} \cup J_{\xi}$.
        We have 
        \begin{equation}\label{eq:locv}
        \left(O_{w} \cup J'_{\xi}\right) \cap M_{w}(I'_{\xi}, \init_{v}(\bff_{v})) \neq \emptyset \quad \text{ for every $w \in \Rns, \xi \in \Xi$}.
        \end{equation}
    \end{enumerate}
\end{enumerate}
\end{restatable}


Define the quotient 
$
D_n \coloneqq \Rns / \Rpp
$.
That is, elements of $D_n$ are of the form $v \Rpp, v \in \Rns$, where $v \Rpp = v' \Rpp$ if and only if $v = r \cdot v'$ for some $r \in \Rpp$.
The quotient $D_n$ can be identified with the unit sphere of dimension $n$ since every $v \Rpp$ is equal to exactly one $v' \Rpp$ with $||v'|| = 1$.
We equip $D_n$ with the standard topology of the unit sphere.
Note that $\init_v(\cdot), M_v(\cdot)$ and $O_v$ are invariant when scaling $v$ by any positive real number.

\begin{restatable}[{\cite[Theorem~5.1]{dong2024semigroup}}]{lemma}{lemtweak}\label{lem:tweakinit}
    Fix $v \in \Rns$, a set $I_{\xi} \subseteq \{1, \ldots, K\}$ and $\bff \in \A^K$. 
    There exists an open neighbourhood $U \subseteq D_n$ of $v \Rpp$, such that for every $w \in \Rns$ with $(v + w)\Rpp \in U$, we have
    \begin{equation}\label{eq:vw}
    \init_{v + w}\left(\bff\right) = \init_{w}\left(\init_{v}(\bff)\right), \quad M_{v + w}(I_{\xi}, \bff) = M_{w}(M_{v}(I_{\xi}, \bff), \init_{v}(\bff)), \quad \text{and} \quad O_{v + w} = O_{v} \cup O_{w}.
    \end{equation}
\end{restatable}

With Lemma~\ref{lem:tweakinit} we can prove the ``only if'' part of Theorem~\ref{thm:locglob}:

\begin{proof}[Proof of ``only if'' part of Theorem~\ref{thm:locglob}]
    Suppose $\bff \in \mM \cap \ApK$ satisfies Property~\eqref{eq:globv}.
    To show \hyperref[item:locr]{(LocR)}, simply take $\bff_{r} \coloneqq \bff$ for all $r \in \Rpp^n$, then $\bff(r) \in \Rpp^K$.
    As for \hyperref[item:locinf]{(LocInf)}, for every $v \in \Rns$ we show that $\bff_{v} \coloneqq \bff$ satisfies Properties \hyperref[item:locinf]{(LocInf)}(a) and (b). 
    Property \hyperref[item:locinf]{(LocInf)}(a) is satisfied by the definition of $\bff$.
    We now show Property \hyperref[item:locinf]{(LocInf)}(b). Take any $w \in \Rns$ and $\xi \in \Xi$.
    
    When $w \in v \Rpp$, we have $O_{w} \cup J'_{\xi} = O_{w} \cup O_{v} \cup J_{\xi} = O_{v} \cup J_{\xi}$ and 
    \[
    M_{w}(I'_{\xi}, \init_{v}(\bff)) = M_{w}(M_{v}(I_{\xi}, \bff), \init_{v}(\bff)) = M_{v}(M_{v}(I_{\xi}, \bff), \init_{v}(\bff)) = M_{v}(I_{\xi}, \bff),
    \]
    so Property~\hyperref[item:locinf]{(LocInf)}(b) is equivalent to $\left(O_{v} \cup J_{\xi}\right) \cap M_{v}(I_{\xi}, \bff) \neq \emptyset$.
    This is exactly the Property~\eqref{eq:globv} satisfied by $\bff$.
    
    When $w \not\in v \Rpp$, let $U \subseteq D_n$ be the open neighbourhood of $v \Rpp$ defined in Lemma~\ref{lem:tweakinit}.
    Scaling $w$ by a small enough positive real we can suppose $(v + w)\Rpp \in U$.
    We have
    \[
    \init_{v + w}\left(\bff\right) = \init_{w}\left(\init_{v}(\bff)\right), \quad M_{v + w}(I_{\xi}, \bff) = M_{w}(I'_{\xi}, \init_{v}(\bff)), \quad \text{and} \quad O_{v + w} = O_{v} \cup O_{w},
    \]
    where $I'_{\xi} = M_{v}(I_{\xi}, \bff)$.
    Therefore
    $
    \left(O_{v + w} \cup J_{\xi}\right) \cap M_{v + w}(I_{\xi}, \bff) = \left(O_{w} \cup O_{v} \cup J_{\xi}\right) \cap M_{w}(I'_{\xi}, \init_{v}(\bff))
     = \left(O_{w} \cup J'_{\xi}\right) \cap M_{w}(I'_{\xi}, \init_{v}(\bff)).
    $
    Since $\bff$ satisfies Property~\eqref{eq:globv}, we have $\left(O_{v + w} \cup J_{\xi} \right) \cap M_{v + w}(I_{\xi}, \bff) \neq \emptyset$.
    Therefore we also have $\left(O_{w} \cup J'_{\xi}\right) \cap M_{w}(I'_{\xi}, \init_{v}(\bff)) \neq \emptyset$ for all $w \not\in v \Rpp$.
\end{proof}

We now start working towards proving the ``if'' part of Theorem~\ref{thm:locglob}.
The main idea is a ``gluing'' procedure.
The following lemma is the foundation of this gluing argument.
It shows the ``continuity'' of the property~\hyperref[item:locinf]{(LocInf)} when changing the direction $v$ by a small amount.

\begin{lemma}[{Generalization of~\cite[Lemma~5.2]{dong2024semigroup}}]\label{lem:Uv}
    Suppose $v \in \Rns$ and $\bff_{v} \in \mM$ satisfy properties \hyperref[item:locinf]{(LocInf)}(a) and (b) of Theorem~\ref{thm:locglob}.
    Then there exists an open neighbourhood $U_{v} \subseteq D_n$ of $v \Rpp$ such that for every $v'\Rpp \in U_v, \xi \in \Xi$,
    \begin{enumerate}[noitemsep, label = (\roman*)]
        \item $\init_{v'}\left(\bff_{v}\right) \in \ApK$.
        \item $(O_{v'} \cup J_{\xi}) \cap M_{v'}(I_{\xi}, \bff_{v}) \neq \emptyset$.
    \end{enumerate}
\end{lemma}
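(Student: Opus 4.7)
The plan is to reduce the statement to Lemma~\ref{lem:tweakinit} applied separately for each $\xi \in \Xi$, using the finiteness of $\Xi$ to intersect the resulting neighborhoods. Specifically, for each $\xi \in \Xi$ I would apply Lemma~\ref{lem:tweakinit} with the polynomial $\bff_v$ and the index set $I_\xi$, obtaining an open neighborhood $U_\xi \subseteq D_n$ of $v\Rpp$. Setting $U_v \coloneqq \bigcap_{\xi \in \Xi} U_\xi$ gives an open neighborhood of $v\Rpp$ because $\Xi$ is finite.

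Now fix $v'\Rpp \in U_v$. By scale-invariance of $\init_{\cdot}$, $M_{\cdot}$ and $O_{\cdot}$, I may without loss of generality rescale so that $v' = v + w$ with $(v+w)\Rpp \in U_\xi$ for every $\xi$. Lemma~\ref{lem:tweakinit} then yields, simultaneously for all $\xi$,
\[
\init_{v'}(\bff_v) = \init_w\!\left(\init_v(\bff_v)\right), \quad M_{v'}(I_\xi, \bff_v) = M_w(I'_\xi, \init_v(\bff_v)), \quad O_{v'} = O_v \cup O_w,
\]
where $I'_\xi = M_v(I_\xi, \bff_v)$. For claim (i), the operation $\init_w$ merely selects the subset of monomials of maximal $w$-degree, so applying it to a tuple in $\ApK$ (a tuple of nonzero polynomials with nonnegative real coefficients) returns a tuple of the same kind. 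Since $\init_v(\bff_v) \in \ApK$ by hypothesis (a), we obtain $\init_{v'}(\bff_v) \in \ApK$.

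For claim (ii), using the three identities above together with $J'_\xi = O_v \cup J_\xi$, I compute
\[
\left(O_{v'} \cup J_\xi\right) \cap M_{v'}(I_\xi, \bff_v) = \left(O_v \cup O_w \cup J_\xi\right) \cap M_w(I'_\xi, \init_v(\bff_v)) = \left(O_w \cup J'_\xi\right) \cap M_w(I'_\xi, \init_v(\bff_v)),
\]
which is nonempty by hypothesis (b) applied with the direction $w$. This holds for every $\xi \in \Xi$, completing the proof. There is no real obstacle here beyond keeping track of the $\xi$-dependence: the argument is a direct adaptation of the proof of~\cite[Lemma~5.2]{dong2024semigroup}, where the only essential new ingredient is the finiteness of $\Xi$ ensuring the intersection of neighborhoods remains open.
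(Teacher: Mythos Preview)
Your proof is correct and follows essentially the same approach as the paper: apply Lemma~\ref{lem:tweakinit} for each $\xi \in \Xi$, intersect the resulting neighbourhoods (using finiteness of $\Xi$), and then verify (i) and (ii) via the identities $\init_{v+w}(\bff_v) = \init_w(\init_v(\bff_v))$, $M_{v+w}(I_\xi,\bff_v) = M_w(I'_\xi,\init_v(\bff_v))$, and $O_{v+w} = O_v \cup O_w$. The paper's argument is identical in substance, down to the same computation for~(ii).
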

\begin{proof}
    We use Lemma~\ref{lem:tweakinit} on $v, I_{\xi}$ and $\bff_{v}$ to obtain an open neighbourhood $U_{v, \xi} \subseteq D_n$ of $v \Rpp$,
    where for all $(v + w)\Rpp \in U_{v, \xi}$ we have
    \[
    \init_{v + w}\left(\bff_{v}\right) = \init_{w}\left(\init_{v}(\bff_{v})\right), \; M_{v + w}(I_{\xi}, \bff_{v}) = M_{w}(M_{v}(I_{\xi}, \bff_{v}), \init_{v}(\bff_{v})), \; \text{and} \; O_{v + w} = O_{v} \cup O_{w}.
    \] 
    Note that $\init_{v}(\bff_{v}) \in \ApK$ by Property~\hyperref[item:locinf]{(LocInf)}(a) of $\bff_{v}$.
    Take $U_{v} \coloneqq \bigcap_{\xi \in \Xi} U_{v, \xi}$.
    Since taking the initial polynomial of any polynomial in $\A^+$ yields an element of $\A^+$, we have $\init_{v + w}\left(\bff_{v}\right) = \init_{w}\left(\init_{v}(\bff_{v})\right) \in \ApK$.
     Furthermore, $(O_{v + w} \cup J_{\xi}) \cap M_{v + w}(I_{\xi}, \bff_{v}) = \left(O_{w} \cup J'_{\xi}\right) \cap M_{w}(I'_{\xi}, \init_{v}(\bff_{v}))$, which is non-empty by Property~\hyperref[item:locinf]{(LocInf)}(b) of $\bff_{v}$.
    Therefore, both (i) and (ii) are satisfied for $v'\Rpp \in U_{v}$.
\end{proof}

The following lemma shows that one can ``glue'' all different $\bff_v, v \in \Rns$ together to obtain a single $\bff$ that has positive initial polynomial at every direction $v \in \Rns$.
\begin{lemma}[{Generalization of~\cite[Lemma~5.3]{dong2024semigroup}}]\label{lem:finf}
    Suppose Condition \hyperref[item:locinf]{(LocInf)} of Theorem~\ref{thm:locglob} is satisfied.
    Then there exists $\bff \in \mM$ that satisfies 
    \begin{enumerate}[noitemsep, label = (\roman*)]
        \item $\init_{v}\left(\bff\right) \in \ApK$ for all $v \in \Rns$.
        \item $(O_{v} \cup J_{\xi}) \cap M_{v}(I_{\xi}, \bff) \neq \emptyset$ for all $v \in \Rns, \xi \in \Xi$.
    \end{enumerate}
\end{lemma}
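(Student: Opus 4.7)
The plan is to use compactness of $D_n$ to extract finitely many witnesses from \hyperref[item:locinf]{(LocInf)} and combine them into a single element of $\mM$ via monomial shifts. Concretely, by Lemma~\ref{lem:Uv}, each $v \in \Rns$ yields an open neighbourhood $U_v \subseteq D_n$ of $v\Rpp$ on which $\bff_v$ satisfies the analogues of (i) and (ii). Compactness of $D_n$ together with the Lebesgue number lemma give some $r > 0$ such that every $r$-ball in $D_n$ is contained in some $U_v$. Pick a sufficiently dense finite net $\{v_1', \ldots, v_q'\} \subset \Rns$ of rational points (rescaled so $R v_j' \in \Z^n$ for a common integer $R$), so that the Voronoi cell of each $v_j'\Rpp$ within the net lies in the $r$-ball around $v_j'\Rpp$; for each $j$, select an index $i(j)$ with that $r$-ball contained in $U_{v_{i(j)}}$. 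Define
\[
\bff \coloneqq \sum_{j=1}^q \oX^{R v_j'}\,\bff_{v_{i(j)}} \;\in\; \mM,
\]
for $R$ sufficiently large.

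The verification is a tropical argument. For every $w \in \Rns$ and every coordinate $\ell$,
\[
\deg_w\bigl((\oX^{R v_j'}\bff_{v_{i(j)}})_\ell\bigr) \;=\; R(w \cdot v_j') \;+\; \deg_w\bigl((\bff_{v_{i(j)}})_\ell\bigr).
\]
The second term is uniformly bounded on the compact $D_n$ (over the finitely many pairs $(i(j), \ell)$), so for $R$ large the first term alone decides the maximum over $j$. Let $J_w$ denote the set of $j$ maximising $w \cdot v_j'$; the Voronoi property of the net forces $w\Rpp \in U_{v_{i(j)}}$ for every $j \in J_w$. Consequently $\init_w(\bff_\ell)$ is a sum of terms $\oX^{R v_j'}\init_w\bigl((\bff_{v_{i(j)}})_\ell\bigr)$ indexed by $j \in J_w$, each with non-negative, non-zero coefficients by Lemma~\ref{lem:Uv}(i); no cancellation occurs, giving $\init_w(\bff) \in \ApK$, which is (i). For (ii), fix $w \in \Rns$ and $\xi \in \Xi$, and choose $j^* \in J_w$ maximising $\max_{\ell \in I_\xi}\deg_w\bigl((\bff_{v_{i(j^*)}})_\ell\bigr)$. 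A direct comparison of $v$-degrees using the strict domination by $J_w$ yields $M_w(I_\xi, \bff) \supseteq M_w(I_\xi, \bff_{v_{i(j^*)}})$; since $w\Rpp \in U_{v_{i(j^*)}}$, Lemma~\ref{lem:Uv}(ii) gives $(O_w \cup J_\xi) \cap M_w(I_\xi, \bff_{v_{i(j^*)}}) \neq \emptyset$, whence $(O_w \cup J_\xi) \cap M_w(I_\xi, \bff) \neq \emptyset$, which is (ii).

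The main obstacle is the Voronoi domination property: one must ensure $R$ can be chosen large enough \emph{uniformly} in $w$ and $\ell$ so that the dominating summands at every direction are exactly the Voronoi winners and hence confined to $U_{v_{i(j)}}$. This relies on compactness of $D_n$ and on uniform boundedness of the finitely many continuous functions $w \mapsto \deg_w((\bff_{v_{i(j)}})_\ell)$, and it essentially reproduces the single-index-set argument of~\cite[Lemma~5.3]{dong2024semigroup}. The extension from one set $I$ to the family $\{I_\xi\}_{\xi \in \Xi}$ introduces no new technical difficulty, because the construction of $\bff$ itself is $\xi$-independent and the required intersection property is verified pointwise in $(w, \xi)$ by the same uniform witness.
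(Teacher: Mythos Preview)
Your approach---compactness of $D_n$, a finite net, and the monomial-weighted sum $\bff=\sum_j \oX^{Rv_j'}\bff_{v_{i(j)}}$---is exactly the gluing argument the paper defers to in \cite[Lemma~5.3]{dong2024semigroup}, with Lemma~\ref{lem:Uv} replacing its single-$\xi$ predecessor. The extension to a finite family $\{I_\xi\}_{\xi\in\Xi}$ is, as you note, pointwise and costless.

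There is one imprecision worth correcting. The assertion that ``for $R$ large the first term alone decides the maximum over $j$'', i.e.\ that the contributing summands at direction $w$ are indexed \emph{exactly} by the Voronoi winners $J_w$, cannot hold uniformly in $w$: whenever $w$ lies near a Voronoi boundary, the bounded second term $\deg_w((\bff_{v_{i(j)}})_\ell)$ can tip the balance toward a neighbouring $j\notin J_w$, no matter how large $R$ is. What is true---and sufficient---is that every contributing $j$ is a \emph{near}-winner in the sense $w\cdot v_j'\ge\max_k w\cdot v_k'-2C/R$. For the fixed finite net, each closed Voronoi cell lies strictly inside its open $r$-ball, so by compactness there is $\epsilon_0>0$ such that the thickened cells $\{w:w\cdot v_j'\ge\max_k w\cdot v_k'-\epsilon_0\}$ still lie in the $r$-balls, hence in $U_{v_{i(j)}}$. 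Taking $R$ with $2C/R<\epsilon_0$ then gives $w\Rpp\in U_{v_{i(j)}}$ for every contributing $j$, and parts (i) and (ii) go through unchanged. Correspondingly, in your verification of (ii) you should pick $j^*$ as any index achieving $\max_{\ell\in I_\xi}\deg_w(f_\ell)$ (automatically a near-winner) rather than restricting to $J_w$; the inclusion $M_w(I_\xi,\bff)\supseteq M_w(I_\xi,\bff_{v_{i(j^*)}})$ then follows exactly as you wrote.
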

\begin{proof}
    The exact same proof as~\cite[Lemma~5.3]{dong2024semigroup} works, we only need to replace the usage of~\cite[Lemma~5.2]{dong2024semigroup} by the usage of Lemma~\ref{lem:Uv}.
\end{proof}

Denote by $\bff_{\infty}$ the element $\bff \in \mM$ obtained in Lemma~\ref{lem:finf}.
Since $\init_{v}\left(\bff_{\infty}\right) \in \ApK$ for all $v \in \Rns$, there exists $c > 1$ such that $\bff_{\infty}(x) \in \Rpp^K$ for all $x \in \Rpp^n \setminus [1/c, c]^n$.
Define the compact set 
\[
C \coloneqq [1/(4nc), 4nc]^n \supseteq [1/c, c]^n.
\]

\begin{lemma}[{\cite[Lemma~5.4]{dong2024semigroup}}]\label{lem:fC}
    Let $\mM$ be an $\A$-submodule of $\A^K$ and $C \subset \Rpp^n$ be a compact set.
    Suppose for all $r \in C$ there exists $\bff_{r} \in \mM$ with $\bff_{r}(r) \in \Rpp^K$.
    Then there exists $\bff \in \mM$ such that 
    $
        \bff(x) \in \Rpp^{n}
    $
    for all $x \in C$.
\end{lemma}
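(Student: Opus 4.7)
The plan is a local-to-global gluing argument combining continuity, compactness, a partition of unity, and the Stone--Weierstrass theorem. Since each $\bff_r$ is a Laurent polynomial, it defines a continuous map $\Rpp^n \to \R^K$; as $\bff_r(r) \in \Rpp^K$, there is an open neighborhood $U_r \subseteq \Rpp^n$ of $r$ on which $\bff_r$ takes values in $\Rpp^K$. The family $\{U_r\}_{r \in C}$ is an open cover of the compact set $C$, so we can extract a finite subcover $U_{r_1}, \ldots, U_{r_m}$ with $r_1, \ldots, r_m \in C$.

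Next, let $\phi_1, \ldots, \phi_m$ be a continuous partition of unity (in the purely topological sense) subordinate to this finite cover, and set $\psi \coloneqq \sum_{i=1}^m \phi_i \bff_{r_i}$, viewed as a continuous function on $C$. For every $x \in C$ and every index $i$ with $\phi_i(x) > 0$ we have $x \in U_{r_i}$, hence $\bff_{r_i}(x) \in \Rpp^K$; since $\sum_i \phi_i(x) = 1$ at least one $\phi_i(x)$ is strictly positive, and therefore $\psi(x) \in \Rpp^K$. By continuity and compactness of $C$, there exists $\delta > 0$ such that every coordinate of $\psi(x)$ is at least $2\delta$ for all $x \in C$. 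Note that $\psi$ itself need not be a Laurent polynomial, so it does not directly give us an element of $\mM$.

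The final step is to replace the merely continuous $\phi_i$ by Laurent polynomials. The Laurent polynomials $\A = \R[\oX^{\pm}]$ form a unital subalgebra of $C(C, \R)$ that separates points of $\Rpp^n$ (the coordinate functions $X_1, \ldots, X_n$ already suffice), so by Stone--Weierstrass, for any $\epsilon > 0$ there exist $p_1, \ldots, p_m \in \A$ with $\sup_{x \in C} |p_i(x) - \phi_i(x)| < \epsilon$. Set $B \coloneqq \max_i \sup_{x \in C} \|\bff_{r_i}(x)\|_{\infty}$ and choose $\epsilon < \delta/(mB)$. Then $\bff \coloneqq \sum_{i=1}^m p_i \bff_{r_i}$ lies in $\mM$ (as $\mM$ is an $\A$-module) and for every $x \in C$ and every coordinate $k$ satisfies $|\bff(x)_k - \psi(x)_k| \leq \sum_i |p_i(x) - \phi_i(x)| \cdot |\bff_{r_i}(x)_k| \leq \delta$, hence $\bff(x)_k \geq 2\delta - \delta = \delta > 0$. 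I expect no serious obstacles: the construction is routine, and the only point that warrants a moment's thought is the applicability of Stone--Weierstrass to Laurent polynomials on an arbitrary compact subset of $\Rpp^n$, which is immediate.
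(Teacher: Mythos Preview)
Your proof is correct. The paper does not supply its own argument for this lemma; it is quoted verbatim as \cite[Lemma~5.4]{dong2024semigroup} and used as a black box, so there is no in-paper proof to compare against. Your compactness--partition-of-unity--Stone--Weierstrass strategy is a clean and standard way to establish the result: the finite subcover and subordinate partition of unity produce a continuous $\psi$ that is coordinatewise bounded below on $C$, and density of $\A$ in $C(C,\R)$ (immediate from Stone--Weierstrass since the coordinate functions separate points of $\Rpp^n$ and $1\in\A$) lets you replace the continuous coefficients $\phi_i$ by Laurent polynomials $p_i$ without destroying positivity. The resulting $\bff=\sum_i p_i\,\bff_{r_i}$ lies in $\mM$ because $\mM$ is an $\A$-module, which is exactly the structural feature the lemma is designed to exploit. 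One cosmetic remark: the conclusion in the lemma statement should read $\bff(x)\in\Rpp^{K}$ rather than $\Rpp^{n}$, and your proof correctly targets $\Rpp^K$.
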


Denote by $\bff_{C}$ the element $\bff \in \mM$ obtained in Lemma~\ref{lem:fC}.

\begin{lemma}[{\cite[Corollary~5.6]{dong2024semigroup}}]\label{cor:Handelman}
    Let $\bff \in \A^{K}$.
    There exists $g \in \A^+$ such that $g \bff \in \left(\A^+\right)^{K}$ if and only if the two following conditions are satisfied:
    \begin{enumerate}[noitemsep, label = (\roman*)]
        \item For all $r \in \Rpp^n$, we have $\bff(r) \in \Rpp^{K}$.
        \item For all $v \in \Rns$ and $r \in \Rpp^n$, we have $\init_{v}(\bff)(r) \in \Rpp^{K}$.
    \end{enumerate}
\end{lemma}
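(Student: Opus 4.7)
The forward direction will rely on two elementary facts. First, every non-zero element of $\A^+$, being a non-zero Laurent polynomial with non-negative real coefficients, is strictly positive on $\Rpp^n$ (since every monomial $\oX^b$ is positive there). Second, the initial-form operator $\init_v(\cdot)$ is multiplicative (because $\deg_v$ is additive on products), so extended coordinate-wise to $\A^K$ it preserves $\A^+$ and $(\A^+)^K$. Given $g \in \A^+$ with $g\bff \in (\A^+)^K$ and $r \in \Rpp^n$, we get $g(r) > 0$ and $(g\bff)(r) \in \Rpp^K$, whence $\bff(r) = g(r)^{-1}(g\bff)(r) \in \Rpp^K$, proving~(i). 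Similarly $\init_v(g) \in \A^+$ and $\init_v(g\bff) = \init_v(g) \cdot \init_v(\bff) \in (\A^+)^K$, so evaluation at $r$ yields $\init_v(\bff)(r) \in \Rpp^K$, proving~(ii).

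For the backward direction, I would first reduce to the scalar case $K = 1$. Hypotheses (i) and (ii) pass to each coordinate: $\bff(r) \in \Rpp^K$ is equivalent to $f_i(r) > 0$ for all $i$, and similarly for face polynomials. If each $f_i$ admits $g_i \in \A^+$ with $g_i f_i \in \A^+$, then $g \coloneqq g_1 \cdots g_K \in \A^+$ satisfies $g f_i = (g_i f_i) \cdot \prod_{j \neq i} g_j \in \A^+$ for all $i$, so $g\bff \in (\A^+)^K$. The scalar statement is a Handelman-type Positivstellensatz for Laurent polynomials: a Laurent polynomial $f$ admits a multiplier $g \in \A^+$ with $gf \in \A^+$ iff $f$ is strictly positive on $\Rpp^n$ and every face polynomial $\init_v(f)$ of its Newton polytope is strictly positive on $\Rpp^n$. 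A natural proof proceeds by induction on $n$: the classical Handelman representation theorem (positive polynomials on a polytope lie in the preorder generated by the polytope's defining halfspaces) is applied to the Newton polytope of $f$, with the face polynomials---which satisfy the same hypotheses in strictly lower dimension after a coordinate transformation to the affine hull of the face---handled by the inductive hypothesis.

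The main obstacle will be the scalar backward direction, specifically demonstrating that face-positivity~(ii) kills every asymptotic obstruction to a non-negative multiplier. Global positivity~(i) alone is insufficient, because $f$ may fail to have a non-negative multiplier purely due to cancellation on some face of its Newton polytope, and (ii) precisely forbids this for every $v \in \Rns$. The gluing step, which combines local multipliers near each direction $v$ (obtained from the inductive face analysis) into a single global $g \in \A^+$, will require a compactness argument on the sphere of directions $D_n = \Rns/\Rpp$ together with a partition-style combinatorial construction over a finite refinement of the resulting open cover, analogous in spirit to the gluing used in the proof of Theorem~\ref{thm:locglob}.
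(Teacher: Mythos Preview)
The paper does not prove this lemma: it is quoted verbatim as \cite[Corollary~5.6]{dong2024semigroup} and used as a black box, so there is no in-paper argument to compare against. Your forward direction is correct and complete, and your reduction of the backward direction to the scalar case $K=1$ is clean and correct.

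For the scalar backward direction you correctly identify the statement as a Handelman-type Positivstellensatz and outline a plausible strategy (induction on $n$ via faces, combined with a compactness/gluing step over directions). This is indeed the shape of the argument in the cited source, but what you have written is a sketch rather than a proof: the passage from Handelman's classical theorem on polytopes to the Laurent-polynomial statement requires care (you must control the multiplier to lie in $\A^+$, not merely in the preorder of some fixed polytope), and the ``gluing of local multipliers over a finite cover of $D_n$'' step is asserted rather than carried out. None of this is wrong, but if you were asked to supply a self-contained proof you would still need to fill in the inductive step and the gluing construction explicitly; as written it is an accurate roadmap, not a proof.
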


We are now ready to prove the ``if'' part of Theorem~\ref{thm:locglob} by ``gluing'' together the elements $\bff_{\infty}, \bff_C \in \mM$ obtained respectively in Lemma~\ref{lem:finf} and \ref{lem:fC}.
\begin{proof}[Proof of ``if'' part of Theorem~\ref{thm:locglob}]
    Let $\bff_{\infty}, \bff_C \in \mM$ be the elements obtained respectively in Lemma~\ref{lem:finf} and \ref{lem:fC}.
    Define the  polynomial
    \[
    q \coloneqq \frac{1}{2nc}\sum_{i = 1}^n(X_i + X_i^{-1}) \in \A^+.
    \]
    It is easy to see that we have $\deg_w(q) > 0$ for all $w \in \Rns$.
    By the compactness of the unit sphere, the value $\min_{||w|| = 1} \deg_w(q)$ exists and is a positive number.
    
    Let $\epsilon > 0$ be such that 
    \begin{equation}\label{eq:ineqepapp}
        \epsilon \cdot \bff_{\infty}(x) + \bff_{C}(x) \in \Rpp^{n}
    \end{equation}
    for all $x \in C$.
    Such an $\epsilon$ exists by the compactness of $C$.
    We claim that there exists $N \in \N$ such that the vector $\bff \coloneqq \epsilon q^N \cdot \bff_{\infty} + \bff_{C}$ satisfies Conditions~(i) and (ii) in Corollary~\ref{cor:Handelman} simultaneously.

    Let $M \in \N$ be such that $\deg_{v}(f_{\infty, i}) + M \cdot \min_{||w|| = 1} \deg_w(q) > \deg_{v}(f_{C, i})$ for all $v \in \Rns, ||v|| = 1$ and $i = 1, \ldots, K$.
    Such an $M$ exists by the compactness of the unit sphere and because $\min_{||w|| = 1} \deg_w(q) > 0$.
    Let $\bg \coloneqq \epsilon q^M \cdot \bff_{\infty} + \bff_{C}$.
    Then for all $v \in \Rns, i = 1, \ldots, K,$ we have $\deg_v(\epsilon q^M \cdot f_{\infty, i}) = M \cdot \deg_v(q) + \deg_v(f_{\infty, i}) > \deg_v(f_{C, i})$.
    Therefore $\init_{v}(\bg) = \init_{v}(\epsilon q^M \cdot \bff_{\infty}) \in \ApK$ for all $v \in \Rns$.
    Therefore, there exists another compact set $[1/d, d]^n \supset C$ such that $\bg(x) \in \Rpp^{K}$ for all $x \in \Rpp^n \setminus [1/d, d]^n$.
    Since $[1/d, d]^n \supset C = [1/(4nc), 4nc]^n$, we have $d \geq 4nc$.
    Since the set $[1/d, d]^n \setminus (1/(4nc), 4nc)^n$ is compact and $\bff_{\infty}(x) \in \Rpp^K$ for all $x \in [1/d, d]^n \setminus (1/(4nc), 4nc)^n$, there exists $N > M$ such that
    \begin{equation}\label{eq:ineq2Napp}
    \epsilon f_{\infty, i}(x) \cdot 2^{N} + f_{C, i}(x) > 0
    \end{equation}
    for all $x \in [1/d, d]^n \setminus (1/(4nc), 4nc)^n$ and all $i = 1, \ldots, K$.
    We prove that for this $N$, the element $\bff \coloneqq \epsilon q^N \cdot \bff_{\infty} + \bff_{C}$ satisfies Conditions~(i) and (ii) in Corollary~\ref{cor:Handelman}  simultaneously.

    Fix any $i \in \{1, \ldots, K\}$.
    For every $x \in \Rpp^n \setminus [1/d, d]^n$, we have $q(x) > \frac{d}{2nc} \geq 1$ and $f_{\infty, i}(x) > 0$, so
    \[
    f_{i}(x) = \epsilon q(x)^N \cdot f_{\infty, i}(x) + f_{C, i}(x) \geq \epsilon q(x)^M \cdot f_{\infty, i}(x) + f_{C, i}(x) = g_{i}(x) > 0.
    \]
    
    For every $x \in [1/d, d]^n \setminus C = [1/d, d]^n \setminus [1/(4nc), 4nc]^n$, we have $x_{i'} \geq 4nc$ for at least one $i' \in \{1, \ldots, K\}$.
    Since $f_{\infty, i}(x) > 0$ by the definition of $C$, we have
    \[
    f_{i}(x) = \epsilon f_{\infty, i}(x) \cdot \left(\sum_{j = 1}^n\frac{x_{j} + x_{j}^{-1}}{2nc}\right)^N + f_{C, i}(x) \geq \epsilon f_{\infty, i}(x) \cdot 2^N + f_{C, i}(x) > 0
    \]
    by $\sum_{j = 1}^n(x_j + x_j^{-1}) > x_{i'} \geq 4nc$ and Inequality~\eqref{eq:ineq2Napp}.
    
    For every $x \in C \setminus [1/c, c]^n$,
    \[
    f_{i}(x) = \epsilon q(x)^N \cdot f_{\infty, i}(x) + f_{C, i}(x) > 0
    \]
    since $f_{\infty, i}(x) > 0$ for all $x \not\in [1/c, c]^n$ and $f_{C, i}(x) > 0$ for all $x \in C$.
    
    For every $x \in [1/c, c]^n$,
    \[
    f_{i}(x) = \epsilon f_{\infty, i}(x) \cdot \left(\sum_{j = 1}^n\frac{x_j + x_j^{-1}}{2nc}\right)^N + f_{C, i}(x) \geq \min\{\epsilon f_{\infty, i}(x), 0\} + f_{C, i}(x) > 0.
    \]
    The last inequality is due to $f_{C, i}(x) > 0$ and Inequality~\eqref{eq:ineqepapp}.
    The second to last inequality can be justified as follows.
    If $f_{\infty, i}(x) \geq 0$ then $f_{\infty, i}(x) \cdot \left(\sum_{i = 1}^n\frac{x_i + x_i^{-1}}{2nc}\right)^N \geq 0$, otherwise $\left(\sum_{i = 1}^n\frac{x_i + x_i^{-1}}{2nc}\right)^N \leq \left(\sum_{i = 1}^n\frac{2c}{2nc}\right)^N = 1$ so $f_{\infty, i}(x) \cdot \left(\sum_{i = 1}^n\frac{x_i + x_i^{-1}}{2nc}\right)^N \geq f_{\infty, i}(x)$.
    
    Therefore, for every $x \in \Rpp^n$, we have $f_i(x) > 0$.
    In other words, $\bff$ satisfies Conditions~(i) in Corollary~\ref{cor:Handelman}.
    Furthermore, since $N > M$ we have $\deg_{v}(q^N \cdot f_{\infty, i}) > \deg_{v}(f_{C, i})$ for $i = 1, \ldots, K, v \in \Rns$.
    Hence $\init_{v}(\bff) = \init_{v}(\epsilon q^N \cdot \bff_{\infty}) \in \ApK$ and $M_{v}(I_{\xi}, \bff) = M_{v}(I_{\xi}, \bff_{\infty})$ for all $v \in \Rns, \xi \in \Xi$.
    Therefore, $\bff$ satisfies Conditions~(ii) in Corollary~\ref{cor:Handelman}.
    
    Therefore, by Lemma~\ref{cor:Handelman}, we can find $g \in \A^+$ such that $g \bff \in \ApK$.
    We have at the same time $g \bff \in \mM$ as well as $\left(O_{v} \cup J_{\xi}\right) \cap M_{v}(I_{\xi}, g \bff) = \left(O_{v} \cup J_{\xi}\right) \cap M_{v}(I_{\xi}, \bff) = \left(O_{v} \cup J_{\xi}\right) \cap M_{v}(I_{\xi}, \bff_{\infty}) \neq \emptyset$ for all $v \in \Rns, \xi \in \Xi$.
    We have thus found the required element $g \bff \in \mM \cap \ApK$ satisfying Property~\eqref{eq:globv}.
\end{proof}

\subsection{Decidability of local conditions}\label{subapp:dec}
This subsection is dedicated to the proof of Theorem~\ref{thm:dec}.
By the local-global principle (Theorem~\ref{thm:locglob}), this amounts to showing decidability of the two ``local'' Conditions~\hyperref[item:locr]{(LocR)} and \hyperref[item:locinf]{(LocInf)}.

Decidability of the Condition~\hyperref[item:locr]{(LocR)} is the same as in~\cite{dong2024semigroup}, which uses Tarski's theorem~\cite{Tarski1949}.

\begin{proposition}[{\cite[Proposition~6.2]{dong2024semigroup}}]\label{prop:decr}
    Given the generators $\bg_1, \ldots, \bg_m$ for $\mM$, it is decidable whether Condition~\hyperref[item:locr]{(LocR)} of Theorem~\ref{thm:locglob} is satisfied.
\end{proposition}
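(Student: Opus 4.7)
The plan is to express Condition~\hyperref[item:locr]{(LocR)} as a first-order sentence in the language of ordered fields with integer constants, and then to invoke Tarski's theorem on the decidability of the theory of real closed fields.

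First I would observe that at any fixed point $r \in \Rpp^n$, the evaluation map $\mM \to \R^K$ sending $\bff \mapsto \bff(r)$ has image equal to the $\R$-linear span of $\bg_1(r), \ldots, \bg_m(r)$. Indeed, a general element of $\mM$ has the form $\sum_{i=1}^m h_i \cdot \bg_i$ with $h_i \in \A$; evaluating at $r$ yields $\sum_{i=1}^m h_i(r) \bg_i(r)$, and the coefficients $h_i(r) \in \R$ can be made to equal any prescribed real numbers by choosing $h_i$ to be a constant polynomial. Consequently, Condition~\hyperref[item:locr]{(LocR)} is equivalent to
\[
\forall r_1 > 0, \ldots, \forall r_n > 0,\; \exists \alpha_1, \ldots, \alpha_m \in \R,\; \bigwedge_{j=1}^K \sum_{i=1}^m \alpha_i\, g_{i,j}(r) > 0,
\]
where $g_{i,j}$ denotes the $j$-th coordinate of $\bg_i$.

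Next I would clear Laurent denominators: each $g_{i,j}$ is a Laurent polynomial with integer coefficients in $r_1, \ldots, r_n$, and multiplying by a common monomial $\prod_k r_k^{d_k}$ with the $d_k$ sufficiently large converts every $g_{i,j}(r)$ into a standard polynomial with integer coefficients. Since each $r_k > 0$, this multiplication does not alter the signs of the inequalities $\sum_i \alpha_i g_{i,j}(r) > 0$. The resulting assertion is then a first-order sentence over the real closed field $\R$ with constants in $\Z$, built from polynomial inequalities in $r$ and $\alpha$ with all quantifiers restricted to $\R$ (positivity of $r_k$ itself being first-order definable).

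By Tarski's theorem~\cite{Tarski1949}, the first-order theory of real closed fields admits effective quantifier elimination, so the truth value of such a sentence is algorithmically decidable. I do not anticipate a significant obstacle here: the only non-routine observation is the pointwise reduction from quantifying over polynomials $h_i \in \A$ to quantifying over scalars $\alpha_i \in \R$, which is immediate because Condition~\hyperref[item:locr]{(LocR)} is checked separately at each $r$. Once this reduction is carried out, decidability follows directly from Tarski.
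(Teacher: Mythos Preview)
Your proposal is correct and follows essentially the same approach as the paper: the paper simply cites \cite[Proposition~6.2]{dong2024semigroup} and notes that it ``uses Tarski's theorem~\cite{Tarski1949}'', which is precisely the reduction you carry out. The pointwise replacement of polynomial coefficients $h_i\in\A$ by real scalars $\alpha_i\in\R$ is the key observation, and you have stated it cleanly.
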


We now focus on deciding the Condition~\hyperref[item:locinf]{(LocInf)}.

\paragraph*{From \hyperref[item:locinf]{(LocInf)} to shifted initials \hyperref[item:locshift]{(LocInfShift)}.}
First, we introduce the \emph{shifted initials}, in order to replace Condition~\hyperref[item:locinf]{(LocInf)} of Theorem~\ref{thm:locglob} with a new Condition~\hyperref[item:locshift]{(LocInfShift)}.

Suppose we are given $\bff \in \A^K$, $v \in \Rns$ and $\balpha = (\alpha_1, \ldots, \alpha_{K}) \in \R^{K}$.
Then the \emph{shifted initials}
$\init_{v, \alpha}(\bff) = (\init_{v, \alpha}(\bff)_1, \ldots, \init_{v, \alpha}(\bff)_K)$ is defined as
\begin{align*}
\init_{v, \alpha}(\bff)_i \coloneqq 
\begin{cases}
\init_{v}(f_i) & \text{ if } \deg_v(f_i) + \alpha_i = \max_{1 \leq i' \leq K} \{\deg_v(f_{i'}) + \alpha_{i'}\},\\
0 & \text{ if } \deg_v(f_i) + \alpha_i < \max_{1 \leq i' \leq K} \{\deg_v(f_{i'}) + \alpha_{i'}\}. \\
\end{cases}
\end{align*}

\begin{lemma}[{\cite[Lemma~6.3]{dong2024semigroup}}]\label{lem:initpos}
    Let $\bff \in \A^K$ and $v \in \Rns$.
    We have $\init_v(\bff) \in \ApK$ if and only if there exists $\balpha \in \R^{K}$ such that $\init_{v, \balpha}(\bff) \in \ApK$.
    Furthermore, in this case, $\init_v(\bff) = \init_{v, \balpha}(\bff)$ and $\alpha_1 + \deg_v(f_1) = \cdots = \alpha_K + \deg_v(f_K)$.
\end{lemma}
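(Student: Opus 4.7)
The plan is to unpack both directions directly from the definition of $\init_{v,\balpha}$, using the crucial observation that belonging to $\ApK$ forbids any zero coordinate.

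For the forward direction, suppose $\init_v(\bff) \in \ApK$, so each $\init_v(f_i)$ is a nonzero polynomial with nonnegative coefficients. I would simply set $\alpha_i \coloneqq -\deg_v(f_i)$ for each $i$. Then $\deg_v(f_i) + \alpha_i = 0$ for every $i$, so the maximum over $i'$ of $\deg_v(f_{i'}) + \alpha_{i'}$ equals $0$ and is attained by every index. Plugging this into the definition of shifted initials gives $\init_{v,\balpha}(\bff)_i = \init_v(f_i)$ for all $i$, hence $\init_{v,\balpha}(\bff) = \init_v(\bff) \in \ApK$.

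For the reverse direction, suppose $\balpha \in \R^K$ satisfies $\init_{v,\balpha}(\bff) \in \ApK$. Since $\ApK = (\A^+)^K$ contains only tuples with every coordinate nonzero, we must have $\init_{v,\balpha}(\bff)_i \neq 0$ for every $i$. By the definition of $\init_{v,\balpha}$, this can only happen when $\deg_v(f_i) + \alpha_i$ attains the maximum; since this must hold simultaneously for every $i$, we conclude
\[
\alpha_1 + \deg_v(f_1) = \alpha_2 + \deg_v(f_2) = \cdots = \alpha_K + \deg_v(f_K).
\]
Consequently, $\init_{v,\balpha}(\bff)_i = \init_v(f_i)$ for every $i$, so $\init_v(\bff) = \init_{v,\balpha}(\bff) \in \ApK$, establishing simultaneously the reverse implication and the two \emph{furthermore} claims.

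There is no real obstacle here: the lemma is a definitional unpacking, whose sole content is that membership in $\ApK$ rules out any coordinate being set to $0$ by the shifted-initial truncation, which in turn forces the weighted degrees $\deg_v(f_i)+\alpha_i$ to be equalized across all indices. I would keep the proof to a short paragraph in the paper.
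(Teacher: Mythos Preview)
Your proof is correct. The paper does not supply its own proof of this lemma (it is quoted verbatim from \cite[Lemma~6.3]{dong2024semigroup}), but your argument is exactly the natural unpacking: in the forward direction you pick $\alpha_i=-\deg_v(f_i)$ to equalize the shifted degrees, and in the reverse direction you use that membership in $\ApK$ forbids a zero coordinate, forcing every index to attain the maximum and hence $\alpha_i+\deg_v(f_i)$ to be constant. There is nothing to add.
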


Given $v = (v_1, \ldots, v_n) \in \Rns$, define the following set of real numbers:
\[
\sum_{k = 1}^n \Z v_k \coloneqq \left\{\sum_{k = 1}^n z_k v_k \;\middle|\; z_1, \ldots, z_n \in \Z \right\}.
\]
Then for every $f \in \A$, we have $\deg_{v}(f) \in \sum_{k = 1}^n \Z v_k$.

\begin{proposition}[{Generalization fof~\cite[Proposition~6.4]{dong2024semigroup}}]\label{prop:inftoshift}
Condition~\emph{\hyperref[item:locinf]{(LocInf)}} of Theorem~\ref{thm:locglob} is equivalent to the following:

\begin{enumerate}[noitemsep]
    \item[2.] \label{item:locshift} \emph{\textbf{(LocInfShift):}}  For every $v \in \Rns$, there exists $\bff \in \mM$ as well as $\balpha \in \left(\sum_{k = 1}^n \Z v_k\right)^K$ satisfying the following properties: 
    \begin{enumerate}[noitemsep]
        \item $\init_{v, \balpha}\left(\bff\right) \in \ApK$.
        \item For every $\xi \in \Xi$, denote $I'_{\xi} \coloneqq \{i \in I_{\xi} \mid \alpha_{i} = \min_{i' \in I_{\xi}} \alpha_{i'}\}, J'_{\xi} \coloneqq O_{v} \cup J_{\xi}$.
        We have 
        \[
        \left(O_{w} \cup J'_{\xi}\right) \cap M_{w}(I'_{\xi}, \init_{v, \balpha}(\bff)) \neq \emptyset \quad \text{ for every $w \in \Rns, \xi \in \Xi$}.
        \]
    \end{enumerate}
\end{enumerate}
\end{proposition}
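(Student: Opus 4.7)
}
The plan is to use Lemma~\ref{lem:initpos} as a dictionary between ``ordinary initials'' (in \hyperref[item:locinf]{(LocInf)}) and ``shifted initials'' (in \hyperref[item:locshift]{(LocInfShift)}). That lemma says: $\init_v(\bff) \in \ApK$ if and only if there is some $\balpha \in \R^K$ with $\init_{v,\balpha}(\bff) \in \ApK$, and in this case the two initials are literally equal and the values $\alpha_i + \deg_v(f_i)$ are a common constant $C$, independent of $i$. The whole proof consists of chasing this equality through the two conditions. Note that $J'_\xi := O_v \cup J_\xi$ is defined identically in both conditions, so the only real work is matching (a) and matching the sets $I'_\xi$.

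For the direction \hyperref[item:locinf]{(LocInf)} $\Rightarrow$ \hyperref[item:locshift]{(LocInfShift)}, fix $v \in \Rns$ and take the element $\bff := \bff_v$ produced by \hyperref[item:locinf]{(LocInf)}. Since $\init_v(\bff) \in \ApK$, every $f_i$ is a non-zero polynomial and $\deg_v(f_i) \in \sum_{k=1}^n \Z v_k$ is finite for each $i$. Define $\alpha_i := -\deg_v(f_i)$, so $\balpha \in \bigl(\sum_{k=1}^n \Z v_k\bigr)^K$ as required. All sums $\alpha_i + \deg_v(f_i)$ equal $0$, hence the maximum, so $\init_{v,\balpha}(\bff)_i = \init_v(f_i)$ for every $i$, giving $\init_{v,\balpha}(\bff) = \init_v(\bff) \in \ApK$. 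This verifies (a). For (b), observe that minimizing $\alpha_i$ over $I_\xi$ is equivalent to maximizing $\deg_v(f_i)$ over $I_\xi$, so
\[
\{ i \in I_\xi \mid \alpha_i = \min_{i' \in I_\xi} \alpha_{i'}\} = \{i \in I_\xi \mid \deg_v(f_i) = \max_{i' \in I_\xi} \deg_v(f_{i'}) \} = M_v(I_\xi, \bff_v),
\]
so the $I'_\xi$ of \hyperref[item:locshift]{(LocInfShift)} coincides with the $I'_\xi$ of \hyperref[item:locinf]{(LocInf)}. The resulting condition in (b) is then a word-for-word restatement of \hyperref[item:locinf]{(LocInf)}(b).

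For the converse direction, take $\bff_v := \bff$ and apply Lemma~\ref{lem:initpos} in the opposite way: from $\init_{v,\balpha}(\bff) \in \ApK$ we obtain $\init_v(\bff) = \init_{v,\balpha}(\bff) \in \ApK$, which gives \hyperref[item:locinf]{(LocInf)}(a); and we obtain a constant $C$ with $\alpha_i = C - \deg_v(f_i)$, which as above makes the two descriptions of $I'_\xi$ coincide, so \hyperref[item:locinf]{(LocInf)}(b) follows from \hyperref[item:locshift]{(LocInfShift)}(b).

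I do not expect a real obstacle: the statement is a bookkeeping reformulation of Lemma~\ref{lem:initpos}, and the only point that requires any care is ensuring that in the forward direction the constructed $\balpha$ lies in $\bigl(\sum_{k=1}^n \Z v_k\bigr)^K$ rather than merely in $\R^K$. This is handled automatically by the specific choice $\alpha_i = -\deg_v(f_i)$, since the degrees of non-zero Laurent polynomials at direction $v$ are $\Z$-linear combinations of the coordinates of $v$.
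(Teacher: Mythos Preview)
Your proposal is correct and follows essentially the same approach as the paper: both directions hinge on Lemma~\ref{lem:initpos}, with the forward direction using the explicit choice $\alpha_i := -\deg_v(f_i)$ (which lands in $\bigl(\sum_k \Z v_k\bigr)^K$), and the backward direction using the constant-sum consequence $\alpha_i + \deg_v(f_i) = C$ to identify the two versions of $I'_\xi$. Your write-up is in fact slightly more explicit than the paper's on why the shifted and ordinary initials agree.
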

\begin{proof}
    \textbf{\hyperref[item:locinf]{(LocInf)}}$\implies$\textbf{\hyperref[item:locshift]{(LocInfShift)}}.
    Suppose Condition~\hyperref[item:locinf]{(LocInf)} of Theorem~\ref{thm:locglob} is true.
    Fix a vector $v \in \Rns$.
    Then there exists $\bff \in \mM$, such that $\init_{v}\left(\bff\right) \in \ApK$ satisfies Property~\hyperref[item:locinf]{(LocInf)}(b).
    As in Lemma~\ref{lem:initpos}, we can let $\alpha_i \coloneqq - \deg_v(f_{i})$ for $i = 1, \ldots, K$, then $\init_{v, \balpha}\left(\bff\right) = \init_{v}\left(\bff\right) \in \ApK$, satisfying \hyperref[item:locshift]{(LocInfShift)}(a).
    Furthermore, we have $\balpha \in \left(\sum_{k = 1}^n \Z v_k\right)^K$ by the definition of $\alpha_i = - \deg_v(f_{i})$.
    Finally, for every $\xi \in \Xi$, we have $\{i \in I_{\xi} \mid \alpha_{i} = \min_{i' \in I_{\xi}} \alpha_{i'}\} = \{i \in I_{\xi} \mid \deg_v(f_{i}) = \max_{i' \in I_{\xi}} \deg_v(f_{i'})\} = M_v(I_{\xi}, \bff)$, so \hyperref[item:locinf]{(LocInf)}(b) implies \hyperref[item:locshift]{(LocInfShift)}(b).

    \textbf{\hyperref[item:locshift]{(LocInfShift)}}$\implies$\textbf{\hyperref[item:locinf]{(LocInf)}}.
    Suppose Condition~\hyperref[item:locshift]{(LocInfShift)} is true.
    Fix a vector $v \in \Rns$.
    Then there exists $\bff \in \mM$ as well as $\balpha \in \Rns$, such that $\init_{v, \balpha}\left(\bff\right) \in \ApK$ satisfies Property~\hyperref[item:locshift]{(LocInfShift)}(b).
    By Lemma~\ref{lem:initpos}, we have $\init_{v}\left(\bff\right) = \init_{v, \balpha}\left(\bff\right) \in \ApK$, and $\alpha_1 + \deg_v(f_{1}) = \cdots = \alpha_K + \deg_v(f_{K})$.
    Therefore for every $\xi \in \Xi$, we have $\{i \in I_{\xi} \mid \alpha_{i} = \min_{i' \in I_{\xi}} \alpha_{i'}\} = \{i \in I_{\xi} \mid \deg_v(f_{i}) = \min_{i' \in I_{\xi}} \deg_v(f_{i'})\} = M_v(I_{\xi}, \bff)$, so \hyperref[item:locshift]{(LocInfShift)}(b) implies \hyperref[item:locinf]{(LocInf)}(b).
\end{proof}

\paragraph*{Dimension reduction: a special case.}
We will further reduce Condition~\hyperref[item:locshift]{(LocInfShift)} to a Condition~\hyperref[item:locd]{(LocInfD)} (which will be defined in Proposition~\ref{prop:shifttod}).
We first consider the special case where the vector $v \in \Rns$ in Condition~\hyperref[item:locshift]{(LocInfShift)} is of the form $(0, \ldots, 0, v_{d+1}, \ldots, v_n)$, where $v_{d+1}, \ldots, v_n \in \R$ are $\Q$-linearly independent.

As in~\cite[Definition~6.5]{dong2024semigroup}, we now define the \emph{super Gr\"{o}bner basis} of an $\A$-module $\mM$.
Let $v \in \Rns, \balpha \in \R^K$.
Define $\init_{v, \alpha}(\mM)$ to be the $\A$-module generated by the elements $\init_{v, \alpha}(\bff), \bff \in \mM$:
\[
\init_{v, \alpha}(\mM) \coloneqq \sum_{\bff \in \mM} \A \cdot \init_{v, \alpha}(\bff) = \left\{\sum_{j = 1}^q p_j \cdot \init_{v, \alpha}(\bff_j) \;\middle|\; q \in \N, p_1, \ldots p_q \in \A, \bff_1, \ldots, \bff_q \in \mM\right\}.
\]

\begin{definition}[{Super Gr\"{o}bner basis~\cite[Definition~6.5]{dong2024semigroup}}]\label{def:grob}
    A set of generators $\bg_1, \ldots, \bg_m$ for $\mM$ is called a \emph{super Gr\"{o}bner basis} if for all $v \in \Rns, \balpha \in \R^K$, the set $\{\init_{v, \alpha}(\bg_1), \ldots, \init_{v, \alpha}(\bg_m)\}$ generates $\init_{v, \alpha}(\mM)$ as an $\A$-module.
\end{definition}

\begin{lemma}[{\cite[Lemma~6.6]{dong2024semigroup}}]\label{lem:grob}
    Suppose we are given an arbitrary set of generators for a module $\mM$. Then a super Gr\"{o}bner basis of $\mM$ is effectively computable.
\end{lemma}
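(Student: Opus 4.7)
The plan is to exploit the theory of Gröbner fans: although the parameters $(v,\balpha) \in \Rns \times \R^K$ range over an uncountable set, the map $(v,\balpha) \mapsto \init_{v,\balpha}(\mM)$ takes only finitely many distinct values. Indeed, for $\bff \in \mM$, the initial $\init_{v,\balpha}(\bff)$ depends only on which monomials $\oX^b e_i$ appearing in $\bff$ achieve the maximum of the linear functional $(b,i) \mapsto v \cdot b + \alpha_i$; the locus in $\R^n \times \R^K$ where this combinatorial pattern is constant is the relative interior of a polyhedral cone, and the resulting Gröbner fan of $\mM$ is a finite polyhedral complex.

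The first reduction is to pass from the Laurent setting to the ordinary polynomial setting: multiplying each given generator of $\mM$ by a suitable monomial $\oX^{b}$ clears negative exponents without changing the $\A$-module span, so we may view $\mM$ as extended from a submodule of $\R[\oX]^K$. Second, enumerate the maximal cones of the Gröbner fan (for instance by the Collart--Kalkbrener--Mall sweep algorithm, or by running Buchberger's algorithm with respect to representatives of all sufficiently generic weighted term orders on the free module $\A^K$, where component $i$ is shifted by $\alpha_i$). For each maximal cone $C$, pick a rational point $(v_C,\balpha_C)$ in its interior and compute a Gröbner basis $G_C$ of $\mM$ with respect to the weighted term order induced by $(v_C,\balpha_C)$; by definition of such a Gröbner basis, $\{\init_{v_C,\balpha_C}(\bg) \mid \bg \in G_C\}$ generates $\init_{v_C,\balpha_C}(\mM)$. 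Take $G \coloneqq \bigcup_C G_C$ over all (finitely many) cones $C$. Since the initial module is constant on each cone, for any $(v,\balpha)$ lying in the interior of some $C$ the set $\{\init_{v,\balpha}(\bg) \mid \bg \in G\}$ generates $\init_{v,\balpha}(\mM)$. For $(v,\balpha)$ on a boundary cone, include Gröbner bases for lower-dimensional cones as well; this still yields a finite computation by finiteness of the fan.

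The main obstacle is ensuring uniform validity across all \emph{real}-valued parameters $(v,\balpha)$, not merely rational ones, since the classical Buchberger machinery is naturally phrased with rational weights. This is resolved by the observation that the combinatorial type of $(v,\balpha)$, i.e.\ the cone in which it sits, determines $\init_{v,\balpha}(\mM)$, so it suffices to handle one rational representative per cone; the real-valued parameters inherit the generating property via this combinatorial stratification. A secondary subtlety is that the shift vector $\balpha$ adds $K$ extra dimensions of weight space on top of the usual $n$ weights on the variables, but this is handled by simply treating $(v,\balpha)$ as a single weight vector in $\R^{n+K}$ and running the universal Gröbner basis algorithm on the corresponding lift, which remains effective.
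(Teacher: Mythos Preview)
The paper does not prove this lemma; it simply quotes it from \cite[Lemma~6.6]{dong2024semigroup}. Your sketch via the Gr\"{o}bner fan and universal Gr\"{o}bner bases is the standard route and is essentially correct: the map $(v,\balpha)\mapsto \init_{v,\balpha}(\mM)$ is constant on the relatively open cones of a finite polyhedral fan in $\R^{n+K}$, and the union of (finitely many) reduced Gr\"{o}bner bases, one per term order, serves as a super Gr\"{o}bner basis. This is almost certainly the argument behind the cited lemma as well.

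Two small remarks on your write-up. First, a weight $(v,\balpha)$ alone is only a pre-order, not a term order, so when you say ``compute a Gr\"{o}bner basis with respect to the weighted term order induced by $(v_C,\balpha_C)$'' you should make explicit that you refine by an arbitrary fixed term order $\prec$ to obtain a genuine term order $\prec_{(v,\balpha)}$. The relevant fact is then that a Gr\"{o}bner basis for $\prec_{(v,\balpha)}$ has initial forms (with respect to the weight alone) generating $\init_{v,\balpha}(\mM)$. Second, your separate treatment of boundary cones is unnecessary: once $G$ contains a Gr\"{o}bner basis for \emph{every} term order, then for any real weight $(v,\balpha)$ whatsoever one refines by some $\prec$, finds the corresponding Gr\"{o}bner basis already inside $G$, and concludes that $\{\init_{v,\balpha}(\bg)\mid \bg\in G\}$ generates $\init_{v,\balpha}(\mM)$. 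So the finiteness of term orders (up to reduced Gr\"{o}bner basis) already handles all real parameters uniformly, with no extra work on lower-dimensional cones.
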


Furthermore, if the given generators for $\mM$ contain only polynomials with integer coefficients, then Lemma~\ref{lem:grob} computes a super Gr\"{o}bner basis containing only polynomials with integer coefficients.

Let $0 \leq d \leq n-1$ be an integer.
From now on we denote 
\[
\A_d \coloneqq \R[X_1^{\pm}, \ldots, X_d^{\pm}], \quad \A_d^+ \coloneqq \Rp[X_1^{\pm}, \ldots, X_d^{\pm}]^*.
\]
In particular, $\A_0 = \R, \A_0^+ = \Rpp$.

We now consider the vectors $v \in \Rns$ with the special form $(0, \ldots, 0, v_{d+1}, \ldots, v_n)$ where $v_{d+1}, \ldots, v_n$ are $\Q$-linearly independent.

\begin{lemma}[{\cite[Lemma~6.7]{dong2024semigroup}}]\label{lem:ing}
    Let $\bg_1, \ldots, \bg_m$ be a super Gr\"{o}bner basis of $\mM$.
    
    Let $v = (0, \ldots, 0, v_{d+1}, \ldots, v_n) \in \Rns$ be such that $0 \leq d \leq n-1$ and $v_{d+1}, \ldots, v_n$ are $\Q$-linearly independent.
    Let $\balpha \in \R^K$.
    Then there exists $b_i \in \{0\}^d \times \Z^{n-d}$ and $c_j \in \{0\}^d \times \Z^{n-d}$ such that $\oX^{b_i} \oX^{c_j} \init_{v, \balpha}(\bg_j)_i \in \A_d$ for $i = 1, \ldots, K$ and $j = 1, \ldots, m$.
    See~\cite[Figure~27]{dong2024semigroup} for an illustration.
\end{lemma}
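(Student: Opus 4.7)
My plan is to exploit the $\Q$-linear independence of $v_{d+1}, \ldots, v_n$ to force each nonzero shifted initial $\init_{v, \balpha}(\bg_j)_i$ to be supported on a single ``$a''$-slice'' of $\Z^n$, and then to solve a bipartite consistency problem for the exponents $b_i$ and $c_j$.

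Write $v'' \coloneqq (v_{d+1}, \ldots, v_n)$ and split every $a \in \Z^n$ as $a = (a', a'')$ with $a' \in \Z^d$ and $a'' \in \Z^{n-d}$, so that $v \cdot a = v'' \cdot a''$. By $\Q$-linear independence, the map $a'' \mapsto v'' \cdot a''$ is injective on $\Z^{n-d}$. Consequently, whenever $\init_v(\bg_{j,i}) \neq 0$ all its monomials share a common $a''$-part $a''_{ij} \in \Z^{n-d}$, and $\init_v(\bg_{j,i}) = \oX^{(0^d, a''_{ij})} h_{ij}$ for some $h_{ij} \in \A_d$. Since $\init_{v, \balpha}(\bg_j)_i$ equals either $0$ or $\init_v(\bg_{j,i})$, the conclusion reduces to finding $b_i'', c_j'' \in \Z^{n-d}$ with $b_i'' + c_j'' = -a''_{ij}$ for every \emph{active} pair $(i,j)$ (those with $\init_{v, \balpha}(\bg_j)_i \neq 0$); then taking $b_i$ to be the vector in $\{0\}^d \times \Z^{n-d}$ with second block $b_i''$, and analogously for $c_j$, yields the required exponents.

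The main obstacle is the consistency of this linear system on the bipartite graph $\mathcal{B}$ whose vertex classes are $\{1, \ldots, K\}$ and $\{1, \ldots, m\}$, with an edge for every active pair; consistency is equivalent to the vanishing, in $\Z^{n-d}$, of the alternating sum of $a''_{ij}$ around every cycle of $\mathcal{B}$. Setting $D_j \coloneqq \max_{i'}\{\deg_v(\bg_{j,i'}) + \alpha_{i'}\}$, the definition of $\init_{v, \balpha}$ forces $v'' \cdot a''_{ij} = \deg_v(\bg_{j,i}) = D_j - \alpha_i$ along every active edge. Around any closed walk $i_1, j_1, i_2, j_2, \ldots, i_k, j_k, i_1$ in $\mathcal{B}$, applying $v''$ to the alternating sum of $a''_{ij}$'s cancels the $D_j$ terms in pairs and telescopes the $\alpha_i$ terms to zero; injectivity of $a'' \mapsto v'' \cdot a''$ then lifts this to vanishing of the alternating sum in $\Z^{n-d}$ itself. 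Given consistency, a solution is built in the standard way: within each connected component of $\mathcal{B}$, fix a root, assign it $0$, and propagate $b_i'' + c_j'' = -a''_{ij}$ along a spanning tree; isolated vertices receive $0$. (The super Gr\"obner basis hypothesis is not used here; it is needed only downstream.)
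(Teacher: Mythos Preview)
Your proof is correct. The paper does not reprove this lemma; it quotes it verbatim from \cite[Lemma~6.7]{dong2024semigroup} and uses it as a black box, so there is no in-paper argument to compare against. Your mechanism---using the $\Q$-linear independence of $v_{d+1},\ldots,v_n$ to pin each nonzero $\init_v(\bg_{j,i})$ to a single $a''$-slice, then reducing to the bipartite system $b_i''+c_j''=-a''_{ij}$ and verifying its cycle conditions via the active-edge identity $v''\cdot a''_{ij}=D_j-\alpha_i$---is exactly the natural one, and the telescoping check is carried out correctly. Your closing remark that the super Gr\"obner hypothesis is not actually used in this lemma is also accurate: the statement holds for arbitrary $\bg_1,\ldots,\bg_m\in\A^K$, and the Gr\"obner property only matters later when one needs $\init^d_{v,\balpha}(\bg_1),\ldots,\init^d_{v,\balpha}(\bg_m)$ to generate $\init^d_{v,\balpha}(\mM)$.
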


Suppose $v \in \Rns$ is such that $v = (0, \ldots, 0, v_{d+1}, \ldots, v_n)$ with $v_{d+1}, \ldots, v_n$ being $\Q$-linearly independent.
Let $\balpha \in \R^K$.
For each $j = 1, \ldots, m$, define $\init_{v, \balpha}^{d}(\bg_j) = (\init_{v, \balpha}^{d}(\bg_j)_1, \ldots, \init_{v, \balpha}^{d}(\bg_j)_K)$ where
\[
\init_{v, \balpha}^{d}(\bg_j)_i \coloneqq \oX^{b_i} \oX^{c_j} \init_{v, \balpha}(\bg_j)_i \in \A_d, \quad i = 1, \ldots, K.
\]
Here, $b_i$ and $c_j$ are defined as in Lemma~\ref{lem:ing}.
Note that the vectors $b_i, c_j \in \{0\}^d \times \Z^{n-d}$ are not necessarily uniquely determined. However, when $d, v, \balpha$ are fixed, the polynomials $\init_{v, \balpha}^{d}(\bg_j)_i$ are uniquely determined by $\bg_1, \ldots, \bg_m$.
In fact, by Lemma~\ref{lem:ing} each $\init_{v, \balpha}(\bg_j)_i$ can be uniquely written as $\oX^{s} \cdot p$ for some $\oX^{s} \in \R[X_{d+1}^{\pm}, \ldots, X_{n}^{\pm}]$ and $p \in \R[X_{1}^{\pm}, \ldots, X_{d}^{\pm}]$.
Therefore $\init_{v, \balpha}^{d}(\bg_j)_i$ is uniquely determined as the polynomial $p$ in the decomposition.



Define by $\init_{v, \balpha}^{d}(\mM)$ the $\A_d$-module generated by $\init_{v, \balpha}^{d}(\bg_1), \ldots, \init_{v, \balpha}^{d}(\bg_m) \in \A_d^K$:
\[
\init_{v, \balpha}^{d}(\mM) \coloneqq \sum_{j = 1}^m \A_d \cdot \init_{v, \balpha}^{d}(\bg_j) =\left\{\sum_{j = 1}^m p_j \cdot \init_{v, \balpha}^{d}(\bg_j) \;\middle|\; p_1, \ldots p_m \in \A_d \right\}.
\]

\begin{lemma}[{\cite[Lemma~6.9]{dong2024semigroup}}]\label{lem:dimred}
    Let $\bg_1, \ldots, \bg_m$ be a super Gr\"{o}bner basis of $\mM$.
    
    Let  $v = (0, \ldots, 0, v_{d+1}, \ldots, v_n)$ be such that $0 \leq d \leq n-1$ and $v_{d+1}, \ldots, v_n$ are $\Q$-linearly independent.
    Let $\balpha \in \left(\sum_{k = d+1}^n \Z v_k\right)^K$.
    Fix $\xi \in \Xi$.
    Denote $I'_{\xi} \coloneqq \{i \in I_{\xi} \mid \alpha_{i} = \min_{i' \in I_{\xi}} \alpha_{i'}\}, J'_{\xi} \coloneqq O_{v} \cup J$.
    Denote by $\pi_d \colon \Z^n \rightarrow \Z^d$ the projection onto the first $d$ coordinates.
    For every $u \in (\R^{d})^*$, define $O'_u \coloneqq \{i \in \{1, \ldots, K\} \mid \pi_d(a_i) \not\perp u\}$.
    Then the two following conditions are equivalent:
        \begin{enumerate}[label = (\roman*)]
            \item (Condition in \emph{\hyperref[item:locshift]{(LocInfShift)}}): There exists $\bff \in \mM$ such that $\init_{v, \balpha}\left(\bff\right) \in \ApK$ and
            \begin{equation}\label{eq:condirrshift}
            \left(O_{w} \cup J'_{\xi}\right) \cap M_{w}(I'_{\xi}, \init_{v, \balpha}(\bff)) \neq \emptyset \quad \text{ for every $w \in \Rns$}.
            \end{equation}
            \item We have $J'_{\xi} \cap I'_{\xi} \neq \emptyset$, and there exists $\bff^d \in \init_{v, \balpha}^{d}(\mM) \cap \left(\A_d^+\right)^K$, such that
            \begin{equation}\label{eq:condirr2}
            \left(O'_{u} \cup J'_{\xi}\right) \cap M_{u}(I'_{\xi}, \bff^d) \neq \emptyset \quad \text{ for every $u \in (\R^{d})^*$}.
            \end{equation}
        \end{enumerate}
    When $d = 0$, Property~\eqref{eq:condirr2} is considered trivially true.
\end{lemma}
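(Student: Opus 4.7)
The proof rests on a structural observation about $\init_{v,\balpha}(\bff)$ that exploits the $\Q$-linear independence of $v_{d+1}, \ldots, v_n$. By the super Gröbner basis expansion $\init_{v,\balpha}(\bff) = \sum_j \tilde{q}_j \init_{v,\balpha}(\bg_j)$ combined with Lemma~\ref{lem:ing}, each coordinate factors uniquely as $\init_{v,\balpha}(\bff)_i = \oX^{s_i} p_i$ with $s_i \in \{0\}^d \times \Z^{n-d}$ and $p_i \in \A_d$. The alignment built into $\init_{v,\balpha}$ forces $\alpha_i + v \cdot s_i$ to be constant in $i$; on $I'_\xi$, the value $\alpha_i = \min_{i' \in I_\xi} \alpha_{i'}$ is constant, so $v \cdot s_i$ is constant on $I'_\xi$. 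Since $v_{d+1}, \ldots, v_n$ are $\Q$-linearly independent, this pins down a single vector $s^* \in \{0\}^d \times \Z^{n-d}$ with $s_i = s^*$ for all $i \in I'_\xi$. This observation is the workhorse for all subsequent computations.

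\textbf{Direction (i) $\Rightarrow$ (ii).} Evaluating~\eqref{eq:condirrshift} at $w = v$: we have $O_v \subseteq J'_\xi$, and by the structural observation $\deg_v(\init_{v,\balpha}(\bff)_i) = v \cdot s^*$ is constant across $I'_\xi$, whence $M_v(I'_\xi, \init_{v,\balpha}(\bff)) = I'_\xi$; the condition then yields $J'_\xi \cap I'_\xi \neq \emptyset$. To produce $\bff^d$, expand $\init_{v,\balpha}(\bff) = \sum_j \tilde{q}_j \init_{v,\balpha}(\bg_j)$ via the super Gröbner basis, and multiply each coordinate by the monomials $\oX^{b_i + c_j}$ from Lemma~\ref{lem:ing}; this lands in $\init_{v,\balpha}^d(\mM) \cap (\A_d^+)^K$ (positivity is preserved since only monomial shifts are applied). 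For~\eqref{eq:condirr2}, apply~\eqref{eq:condirrshift} at $w = v + \epsilon(u, 0^{n-d})$ for small $\epsilon > 0$ and $u \in (\R^d)^*$: using $s_i = s^*$ on $I'_\xi$ and $p_i \in \A_d$, a direct calculation gives $M_w(I'_\xi, \init_{v,\balpha}(\bff)) = M_u(I'_\xi, \bff^d)$, while $O_w = O_v \cup O'_u$ so $O_w \cup J'_\xi = O'_u \cup J'_\xi$, and Property~\eqref{eq:condirr2} follows.

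\textbf{Direction (ii) $\Rightarrow$ (i).} Writing $\bff^d = \sum_j q_j^d \init_{v,\balpha}^d(\bg_j)$ with $q_j^d \in \A_d$, define the lift $\bff := \sum_j q_j^d \oX^{-c_j} \bg_j \in \mM$; the monomial shifts $\oX^{-c_j}$ are chosen precisely so that the dominant part of $\bff$ at direction $v$ (after weighting by $\balpha$) inherits from $\bff^d$ the form $\init_{v,\balpha}(\bff)_i = \oX^{s^*} \cdot (\oX^{-b_i} \bff^d_i)$, in particular $\init_{v,\balpha}(\bff) \in \ApK$. To verify~\eqref{eq:condirrshift} for arbitrary $w \in \Rns$, split on $u := \pi_d(w)$. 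If $u = 0$, then $w \cdot s^* = t \cdot (s^*)_{\mathrm{tail}}$ is constant across $I'_\xi$ and $\deg_w(p_i) = 0$ for $p_i \in \A_d$, whence $M_w(I'_\xi, \init_{v,\balpha}(\bff)) = I'_\xi$ and the condition reduces to $J'_\xi \cap I'_\xi \neq \emptyset$, which is part of hypothesis (ii). If $u \neq 0$, the same computation gives $M_w(I'_\xi, \init_{v,\balpha}(\bff)) = M_u(I'_\xi, \bff^d)$, and~\eqref{eq:condirr2} provides a witness $i^* \in (O'_u \cup J'_\xi) \cap M_u(I'_\xi, \bff^d)$; this $i^*$ lies in $J'_\xi \cup O_w$ whenever $w = (u,t)$ is generic (since $O_w \supseteq O'_u$ for generic $t$), yielding~\eqref{eq:condirrshift}.

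\textbf{Main obstacle.} The delicate point is Case $u \neq 0$ of the $(ii) \Rightarrow (i)$ verification at \emph{non-generic} tail directions $t$: the witness $i^* \in O'_u \cap M_u(I'_\xi, \bff^d)$ from~\eqref{eq:condirr2} may satisfy $w \cdot a_{i^*} = u \cdot \pi_d(a_{i^*}) + t \cdot \mathrm{tail}(a_{i^*}) = 0$ for the specific $t$, so $i^* \notin O_w$. Closing this gap requires either strengthening the lift (e.g.\ multiplying $\bff^d$ by a suitable element of $\A_d^+$ to broaden the Newton polytope of the $I'_\xi$-coordinates so that some witness in $J'_\xi \cap I'_\xi$ also lies in $M_u(I'_\xi, \bff^d)$ for all $u$), or a perturbation argument that transfers the condition from generic $t$ to all $t$ by exploiting the semi-continuity of $O_w$ and $M_w$. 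The hypothesis $J'_\xi \cap I'_\xi \neq \emptyset$ from (ii) is precisely what is needed to close this gap, mirroring how the same condition rescues Case $u = 0$.
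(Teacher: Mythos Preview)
Your overall strategy is right and matches the intended proof: exploit the $\Q$-linear independence of $v_{d+1},\dots,v_n$ to factor each coordinate of $\init_{v,\balpha}(\bff)$ as a monomial in $X_{d+1},\dots,X_n$ times an element of $\A_d$, and observe that on $I'_\xi$ the monomial part is constant. But two points need attention.

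First, the ``main obstacle'' you flag is not one. Suppose the witness $i^*$ from~\eqref{eq:condirr2} lies in $O'_u$, so $u\cdot\pi_d(\ta_{i^*})\neq 0$. Split on the last $n-d$ coordinates of $\ta_{i^*}$. If they are all zero, then for \emph{every} $w$ with $\pi_d(w)=u$ one has $w\cdot\ta_{i^*}=u\cdot\pi_d(\ta_{i^*})\neq 0$, hence $i^*\in O_w$. If some tail coordinate is nonzero, then $v\cdot\ta_{i^*}=\sum_{k>d}v_k(\ta_{i^*})_k\neq 0$ by the $\Q$-linear independence of $v_{d+1},\dots,v_n$, hence $i^*\in O_v\subseteq J'_\xi$. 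Either way $i^*\in O_w\cup J'_\xi$, for every such $w$ --- no genericity, no perturbation, no strengthening needed. This is precisely the place where the hypothesis on $v$ does its work, and you have overlooked it.

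Second, the parts you treat as routine are where the care is actually needed. In the lift for (ii)$\Rightarrow$(i) the correct definition is $\bff:=\sum_j q_j^d\,\oX^{+c_j}\bg_j$ (your sign is reversed): from $\init_{v,\balpha}^d(\bg_j)_i=\oX^{b_i+c_j}\init_{v,\balpha}(\bg_j)_i$ one checks that the shifted degrees $\deg_v(q_j^d\oX^{c_j}g_{j,i})+\alpha_i$ are bounded by a constant independent of $i,j$, with equality exactly when $\init_{v,\balpha}(\bg_j)_i\neq 0$; summing and using $\bff^d_i\in\A_d^+$ (so the leading terms cannot cancel) gives $\init_{v,\balpha}(\bff)_i=\oX^{-b_i}\bff^d_i\in\A^+$. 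In (i)$\Rightarrow$(ii), ``multiply each coordinate by the monomials $\oX^{b_i+c_j}$'' is not a module operation; instead expand $\init_{v,\balpha}(\bff)=\sum_j r_j\,\init_{v,\balpha}(\bg_j)$, decompose each $r_j\in\A$ into its $\A_d$-components indexed by tail exponents in $\{0\}^d\times\Z^{n-d}$, and pick out the single component (determined by the constant $s_i+b_i$) that yields an element $\bff^d\in\init_{v,\balpha}^d(\mM)$ with $\bff^d_i=p_i$.
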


\paragraph*{Dimension reduction: the general case.}
Having considered the special case where the vector $v \in \Rns$ in Condition~\hyperref[item:locshift]{(LocInfShift)} is of the form $(0, \ldots, 0, v_{d+1}, \ldots, v_n)$, we now consider the general case.
The key idea when dealing with the general case of $v \in \Rns$ is the following \emph{coordinate change}.

Given a matrix $A = (a_{ij})_{1 \leq i, j \leq n} \in \GL(n, \Z)$, define the new variables $X'_1, \ldots, X'_n$ where $X'_i \coloneqq X_1^{a_{i1}} X_2^{a_{i2}} \cdots X_n^{a_{in}}$.
Then 
\[
\R[X_1, \ldots, X_n] = \R[X'_1, \ldots, X'_n].
\]
In other words, we can define the ring automorphism
\[
\varphi_A \colon \A \rightarrow \A, \quad X_i \mapsto X_1^{a_{i1}} X_2^{a_{i2}} \cdots X_n^{a_{in}},
\]
such that $\varphi_A(\oX^{b}) = \oX^{b A}$.
The automorphism $\varphi_A$ extends entry-wise to $\A^K \rightarrow \A^K$.

For each $A \in \GL(n, \Z)$, denote by $A^{- \top}$ the inverse of its transpose.
Then $(v A^{-\top}) \cdot (b A) = v \cdot b$ for all $v \in \Rns, b \in \Z^n$.
Hence, for any $f \in \A,$ we have $\init_{v A^{-\top}}(\varphi_A(f)) = \varphi_A(\init_{v}(f))$, and for any $\bff \in \A^K, \xi \in \Xi,$ we have $M_v(I_{\xi}, \bff) = M_{v A^{-\top}}(I_{\xi}, \varphi_A(\bff))$.
Furthermore, if we replace the vectors $\ta_1, \ldots, \ta_K \in \Z^n$ by the vectors $\ta_1 A, \ldots, \ta_K A \in \Z^n$, then the set $O_v$ becomes $O_{v A^{-\top}}$.
It is easy to verify that if $\bg_1, \ldots, \bg_m$ is a super Gr\"{o}bner basis for $\mM$, then $\varphi_A(\bg_1), \ldots, \varphi_A(\bg_m)$ is still a super Gr\"{o}bner basis for $\varphi_A(\mM) \coloneqq \{\varphi_A(\bff) \mid \bff \in \mM\}$. 

Let $v \in \Rns$ and let $A \in \GL(n, \Z)$ be such that $v A^{-\top} = (0, \ldots, 0, v_{d+1}, \ldots, v_n)$ where $v_{d+1}, \ldots, v_n$ are $\Q$-linearly independent.
Then as in the previous section we define the module $\init_{v A^{-\top}, \balpha}^{d}(\varphi_A(\mM))$ to be the module generated by $\init_{v A^{-\top}, \balpha}^{d}(\varphi_A(\bg_1)), \ldots, \init_{v A^{-\top}, \balpha}^{d}(\varphi_A(\bg_m))$.

The above observation shows the following.
Fix $v \in \Rns$ in \hyperref[item:locshift]{(LocInfShift)} of Theorem~\ref{thm:locglob}.
Given any change of coordinates $A \in \GL(n, \Z)$, we can simultaneously (right-)multiply $A^{-\top}$ to $v$ and $A$ to all $a_1, \ldots, a_K$, while applying $\varphi_A$ to the super Gr\"{o}bner basis $\bg_1, \ldots, \bg_m$ of $\mM$.
Then the original properties \hyperref[item:locshift]{(LocInfShift)}(a)(b) are satisfied by $\bff$ if and only if they are satisfied by $\varphi_A(\bff)$ after the change of coordinates.
We will use this observation to reduce the general case for $v$ to the special case considered in the previous subsection.

\begin{fact}[{\cite[Fact~6.10]{dong2024semigroup}}]\label{fct:A}
    For every $v \in \Rns$, there exists $A \in \GL(n, \Z)$ such that $v A^{-\top} = (0, \ldots, 0, v_{d+1}, \ldots, v_n)$ with $v_{d+1}, \ldots, v_n$ being $\Q$-linearly independent.
\end{fact}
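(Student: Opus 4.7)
The plan is to exploit the $\Q$-linear relations among the entries of $v$. Writing $v = (v_1, \ldots, v_n) \in \R^n \setminus \{0\}$, I would set $W \coloneqq \operatorname{span}_{\Q}(v_1, \ldots, v_n) \subseteq \R$, let $k \coloneqq \dim_{\Q} W \geq 1$, and put $d \coloneqq n - k$. The $\Q$-linear map $\phi \colon \Q^n \to W$ sending $e_i \mapsto v_i$ is then surjective, and $K \coloneqq \ker \phi$ is a $\Q$-subspace of $\Q^n$ of dimension exactly $d$.

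Next I would observe that $K \cap \Z^n$ is a saturated sublattice of $\Z^n$ of rank $d$: rank at least $d$ comes from clearing denominators in any $\Q$-basis of $K$, while saturation is immediate because $K$ is a $\Q$-subspace of $\Q^n$ (if $mx \in K$ with $x \in \Z^n$ and $m \neq 0$, then $x = m^{-1}(mx) \in K$). Since $\Z^n / (K \cap \Z^n)$ is therefore torsion-free, the elementary divisor theorem produces a $\Z$-basis $c_1, \ldots, c_d$ of $K \cap \Z^n$ which extends to a $\Z$-basis $c_1, \ldots, c_n$ of all of $\Z^n$.

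Assembling these vectors as the columns of a matrix $B \in \GL(n, \Z)$ and setting $A \coloneqq B^{-\top} \in \GL(n, \Z)$, I would compute $(v A^{-\top})_j = (vB)_j = v \cdot c_j$ (viewing $v$ as a row vector). This vanishes for $j \le d$ by construction. For $j > d$, any $\Q$-linear relation $\sum_{j > d} q_j\, (v \cdot c_j) = 0$ forces $\sum_{j > d} q_j c_j \in K = \operatorname{span}_{\Q}(c_1, \ldots, c_d)$, and the $\Q$-linear independence of the full basis $c_1, \ldots, c_n$ then forces every $q_j = 0$. Hence $v \cdot c_{d+1}, \ldots, v \cdot c_n$ are $\Q$-linearly independent, as required.

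There is no substantive obstacle: this is pure lattice theory whose essential content is the classical fact that a saturated sublattice of $\Z^n$ can be completed to a $\Z$-basis of $\Z^n$. The only minor care required is to track matrix conventions so that right-multiplication of the row vector $v$ by $A^{-\top}$ indeed produces the tuple of dot products of $v$ with the columns of $B$.
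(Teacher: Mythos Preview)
Your argument is correct. The paper does not prove this statement at all: it is merely quoted as \cite[Fact~6.10]{dong2024semigroup} and used without justification, so there is nothing to compare against. Your proof is the standard one --- identify the lattice of integer relations among the $v_i$ as a saturated sublattice of $\Z^n$, complete a $\Z$-basis of it to a $\Z$-basis of $\Z^n$, and take the resulting change-of-basis matrix --- and the matrix bookkeeping (setting $A = B^{-\top}$ so that $A^{-\top} = B$ and $(vA^{-\top})_j = v\cdot c_j$) is handled correctly.
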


\begin{proposition}[{Generalization of~\cite[Proposition~6.11]{dong2024semigroup}}]\label{prop:shifttod}
    Condition~\emph{\hyperref[item:locshift]{(LocInfShift)}} of Proposition~\ref{prop:inftoshift} is equivalent to the following:
    \begin{enumerate}
        \item[2.] \label{item:locd} \emph{\textbf{(LocInfD):}}
        For every $v \in \Rns, A \in \GL(n, \Z),$ such that $v A^{-\top} = (0, \ldots, 0, v_{d+1}, \ldots, v_n)$, $0 \leq d \leq n-1$ with $v_{d+1}, \ldots, v_n$ being $\Q$-linearly independent,
        there exist $\balpha \in \left(\sum_{k = d+1}^n \Z v_k\right)^K$ and $\bff^d \in \init_{v A^{-\top}, \balpha}^{d}(\varphi_A(\mM))$ satisfying the following properties:
        \begin{enumerate}
            \item[(a)] $\bff^d \in \left(\A_{d}^+\right)^K$.
            \item[(b1)] For each $\xi \in \Xi$, denote $I'_{\xi} \coloneqq \{i \in I_{\xi} \mid \alpha_{i} = \min_{i' \in I_{\xi}} \alpha_{i'}\}, J'_{\xi} \coloneqq \left(O_{v} \cup J_{\xi}\right)$. We have
            \begin{equation}\label{eq:condirr}
                J'_{\xi} \cap I'_{\xi} \neq \emptyset \quad \text{ for all } \xi \in \Xi.
            \end{equation}
            (Note that this property depends only on $v$ and $\balpha$, but not on $\bff^d$.)
            \item[(b2)] Denote by $\pi_d \coloneqq \Z^n \rightarrow \Z^d$ the projection onto the first $d$ coordinates.
            For $u \in (\R^{d})^*$, define $O'_u \coloneqq \{i \in \{1, \ldots, K\} \mid \pi_d(\ta_i A) \not\perp u\}$, we have
            \begin{equation}\label{eq:condirrD}
            \left(O'_{u} \cup J'_{\xi}\right) \cap M_{u}(I'_{\xi}, \bff^d) \neq \emptyset \quad \text{ for every $u \in (\R^{d})^*, \xi \in \Xi$}.
            \end{equation}
        \end{enumerate}
    \end{enumerate}
    As in Lemma~\ref{lem:dimred}, Property~\eqref{eq:condirrD} is considered trivially true when $d = 0$.
\end{proposition}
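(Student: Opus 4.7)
The plan is to chain two ingredients already set up earlier in the section: the $\GL(n,\Z)$-invariance of Condition~\hyperref[item:locshift]{(LocInfShift)} under the coordinate change $\varphi_A$, and the dimension-reduction Lemma~\ref{lem:dimred}. First I would record that for every $A \in \GL(n,\Z)$, the data $(v, \ta_1, \ldots, \ta_K, \mM)$ and $(vA^{-\top}, \ta_1 A, \ldots, \ta_K A, \varphi_A(\mM))$ give rise to equivalent instances of \hyperref[item:locshift]{(LocInfShift)}. This uses the identities $\init_{vA^{-\top}}(\varphi_A(f)) = \varphi_A(\init_v(f))$ and $M_v(I_{\xi}, \bff) = M_{vA^{-\top}}(I_{\xi}, \varphi_A(\bff))$ already recorded just above Fact~\ref{fct:A}, together with the observation that $O_v$ computed with $\ta_1,\ldots,\ta_K$ equals $O_{vA^{-\top}}$ computed with $\ta_1A,\ldots,\ta_KA$, and that $\varphi_A$ sends a super Gröbner basis to a super Gröbner basis and preserves the lattice $\sum_{k=1}^n \Z v_k$. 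So $\bff \in \mM$ witnesses the condition for $v$ if and only if $\varphi_A(\bff) \in \varphi_A(\mM)$ witnesses it for $vA^{-\top}$ with the same $\balpha$.

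Next, given an arbitrary $v \in \Rns$, I would invoke Fact~\ref{fct:A} to pick $A \in \GL(n,\Z)$ so that $v' \coloneqq vA^{-\top} = (0,\ldots,0,v_{d+1},\ldots,v_n)$ with $v_{d+1},\ldots,v_n$ being $\Q$-linearly independent. Under this special form, $\sum_{k=1}^n \Z v'_k = \sum_{k=d+1}^n \Z v'_k$, Lemma~\ref{lem:ing} ensures $\init_{v',\balpha}^d$ is well defined on $\varphi_A(\mM)$ for $\balpha$ in this lattice, and the projection $\pi_d$ together with the transformed vectors $\ta_i A$ produces exactly the sets $O'_u$ appearing in \hyperref[item:locd]{(LocInfD)}(b2). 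At this point the problem matches the hypotheses of Lemma~\ref{lem:dimred}. Applying that lemma to each $\xi \in \Xi$ separately translates, for each fixed $\xi$, the existence of $\bff \in \varphi_A(\mM)$ with $\init_{v',\balpha}(\bff) \in \ApK$ satisfying the single-$\xi$ version of \hyperref[item:locshift]{(LocInfShift)}(b) into the conjunction $J'_\xi \cap I'_\xi \neq \emptyset$ together with the existence of $\bff^d \in \init^d_{v',\balpha}(\varphi_A(\mM)) \cap (\A_d^+)^K$ satisfying the single-$\xi$ version of \hyperref[item:locd]{(LocInfD)}(b2).

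Since Lemma~\ref{lem:dimred} also requires one to match the same $\balpha$ on both sides, combining the coordinate change with the lemma gives the desired equivalence between \hyperref[item:locshift]{(LocInfShift)} and \hyperref[item:locd]{(LocInfD)}. The degenerate case $d = 0$ is handled by the convention in both statements that the respective face-accessibility conditions are vacuous, while \hyperref[item:locd]{(LocInfD)}(b1) becomes the only nontrivial requirement.

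The main obstacle I anticipate is a bookkeeping subtlety rather than a genuine difficulty: Lemma~\ref{lem:dimred} is formulated with a single pair $(I_\xi, J_\xi)$, but Proposition~\ref{prop:shifttod} demands a \emph{single} $\bff$ (resp.\ $\bff^d$) that simultaneously satisfies the condition for \emph{every} $\xi \in \Xi$. I would need to check that the correspondence between $\bff$ and $\bff^d$ produced by Lemma~\ref{lem:dimred}---multiplication by the monomials $\oX^{b_i}\oX^{c_j}$ supplied by Lemma~\ref{lem:ing}---depends only on $(v',\balpha)$ and on $\varphi_A(\mM)$, and is therefore uniform across $\xi$. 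Once this uniformity is confirmed, the ``for every $\xi$'' quantifiers on the two sides of the equivalence match up automatically, completing the proof.
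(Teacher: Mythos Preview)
Your proposal is correct and follows essentially the same route as the paper: reduce to the special coordinate form via the $\varphi_A$-invariance and Fact~\ref{fct:A}, then invoke Lemma~\ref{lem:dimred}. The paper's proof is a terse three-line application of exactly these two ingredients; your version adds useful detail, in particular the observation that the $\bff \leftrightarrow \bff^d$ correspondence (via the monomial shifts $\oX^{b_i}\oX^{c_j}$ from Lemma~\ref{lem:ing}) depends only on $(v',\balpha)$ and the super Gr\"obner basis, so that the ``for all $\xi$'' quantifier passes through uniformly---a point the paper leaves implicit.
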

\begin{proof}
    Fix a $v = (v_1, \ldots, v_n) \in \Rns$.
    Take any $A \in \GL(n, \Z)$ with $v A^{-\top} = (0, \ldots, 0, v'_{d+1}, \ldots, v'_n)$ such that $v'_{d+1}, \ldots, v'_n$ are $\Q$-linearly independent.
    Note that $\sum_{k = 1}^n \Z v_k = \sum_{k = d+1}^n \Z v'_k$ because $A \in \GL(n, \Z)$.
    Therefore, we can apply Lemma~\ref{lem:dimred} to the super Gr\"{o}bner basis $\varphi_A(\bg_1), \ldots, \varphi_A(\bg_m)$, the vector $v A^{-\top} = (0, \ldots, 0, v'_{d+1}, \ldots, v'_n)$ and the vectors $\ta_1 A, \ldots, \ta_K A \in \Z^n$.
    Lemma~\ref{lem:dimred} shows that there exist $\balpha \in \left(\sum_{k = d+1}^n \Z v'_k\right)^K$ and $\bff^d \in \init_{v A^{-\top}, \balpha}^{d}(\varphi_A(\mM))$ satisfying \hyperref[item:locd]{(LocInfD)}(a)(b1)(b2) if and only if there exists $\balpha \in \left(\sum_{k = 1}^n \Z v_k\right)^K$ and $\bff \in \mM$ satisfying \hyperref[item:locshift]{(LocInfShift)}(a)(b).
\end{proof}

\paragraph*{Computing cells \hyperref[item:loccell]{(LocInfCell)}.}
We further reduce the Condition~\hyperref[item:locd]{(LocInfD)} to a Condition~\hyperref[item:loccell]{(LocInfCell)} which consists of verifying a \emph{finite} number of $v \in \Rns$ for each coordinate-change matrix $A \in \GL(n, \Z)$.

Let $v \in \Rns, \balpha \in \R^K$.
Denote by $e_1, \ldots, e_K$ the canonical basis of the $\A$-module $\A^K$.
We introduce the new variables $T_1, \ldots, T_K$ and define an $\A$-module homomorphism
\[
\phi: \A^K \rightarrow \R[X_1^{\pm}, \ldots, X_n^{\pm}, T_1^{\pm}, \ldots, T_K^{\pm}], \quad \oX^{u} e_i \mapsto \oX^{u} T_i.
\]
We have $\phi(\init_{v, \balpha}(\bff)) = \init_{(v, \balpha)}(\phi(\bff))$ for every $\bff \in \A^K$.

As in the previous subsections let $\bg_1, \ldots, \bg_m$ be a super Gr\"{o}bner basis of $\mM$.
Since $\phi(\bg_i)$ is a polynomial in $\R[X_1^{\pm}, \ldots, X_n^{\pm}, T_1^{\pm}, \ldots, T_K^{\pm}]$, there exists a partition of $\Rns \times \R^K$ such that for any two directions in the same partition element the initial parts of $\phi(\bg_i)$ are the same.
Let $\mL_{\mM}$ be the common refinement of the partitions associated to the polynomials $\phi(\bg_1), \ldots, \phi(\bg_m)$.

From now on we use the term ``\emph{cell}'' to call elements of a given partition.
Fix $I_{\xi} \subseteq \{1, \ldots, K\}$.
There exists a partition $\mL_{I_{\xi}}$ of $\R^K$ such that for any two vectors $(\alpha_1, \ldots, \alpha_K)$, $(\alpha'_1, \ldots, \alpha'_K)$ in the same cell, we have $\alpha_i > \alpha_j \iff \alpha'_i > \alpha'_j$ and $\alpha_i < \alpha_j \iff \alpha'_i < \alpha'_j$ for all $i, j \in I_{\xi}$.
Define the partition $\mL'_{I_{\xi}} \coloneqq \Rns \times \mL_{I_{\xi}}$ of $\Rns \times \R^K$ where each cell is of the form $\Rns \times P, P \in \mL_{I_{\xi}}$.

There exists a partition $\mL_O$ of $\Rns$ such that any two vectors $v, v'$ in the same cell satisfy $v \perp \ta_i \iff v' \perp \ta_i$ for all $i \in \{1, \ldots, K\}$.
By subdividing $\mL_O$ we can suppose that each cell is a convex polyhedron.
Similar to the definition of $\mL'_{I_{\xi}}$, we define the partition $\mL'_O \coloneqq \mL_O \times \R^K$ of $\Rns \times \R^K$.

For any two partition $\mB, \mB'$ of the same set $S$, define $\mB \vee \mB'$ to be the partition of $S$ whose elements are of the form $B \cap B', B \in \mB, B' \in \mB'$.
Consider the partition $\mL$ of $\Rns \times \R^K$ defined by
\[
\mL \coloneqq \mL_{\mM} \vee \mL'_O \vee \left( \bigvee_{\xi \in \Xi} \mL'_{I_{\xi}}\right).
\]
We point out that the cells of $\mL$ are invariant under scaling by a positive real, meaning $x \in Q \iff r \cdot x \in Q$ for all cells $Q \in \mL$ and $r \in \Rpp$.

Let $\pi \colon \Rns \times \R^K \rightarrow \Rns, (v, \balpha) \mapsto v$ be the canonical projection.
For each $Q \in \mL$, define the two-element partition $\{\pi(Q), \Rns \setminus \pi(Q)\}$ of $\Rns$, and define
\[
\mP \coloneqq \bigvee_{Q \in \mL} \{\pi(Q), \Rns \setminus \pi(Q)\}.
\]
By this definition, take any $P \in \mP$ and $Q \in \mL$ with $\pi^{-1}(P) \cap Q \neq \emptyset$; then for $v, v' \in P$, there exists $\balpha \in \R^K$ with $(v, \balpha) \in Q$ if and only if there exists $\balpha' \in \R^K$ with $(v', \balpha') \in Q$.
See~\cite[Figure~28]{dong2024semigroup} for an illustration.

It is important to note that the partitions $\mL_{\mM}, \mL_{O}, \mL_{I_{\xi}}, \xi \in \Xi,$ are all defined using equalities and inequalities with \emph{rational} coefficients.
Also, each inequality is strict, so every cell $Q \in \mL$ and $P \in \mP$ is relatively open (a polyhedron is called relative open if it is open in the smallest linear space containing it).
In other words, each cell is defined by a combination of equalities and \emph{strict} inequalities.
We also point out that, like the cells of $\mL$, the cell of $\mP$ are invariant under scaling by a positive real, meaning $x \in P \iff r \cdot x \in P$ for all cells $P \in \mP$ and $r \in \Rpp$.
By the definition of $\mL$, we immediately obtain the following.

\begin{lemma}[{\cite[Lemma~6.12]{dong2024semigroup}}]\label{lem:compop1}
    Fix a change of coordinates $A \in \GL(n, \Z)$, two sets $I_{\xi}, J_{\xi} \subseteq \{1, \ldots, K\}$, and a cell $Q \in \mL A^{- \top}$. 
    Then the sets $I'_{\xi}, J'_{\xi}$ defined in \emph{\hyperref[item:locd]{(LocInfD)}}(b1) are effectively computable and do not depend on $v, \balpha$ as long as $(v A^{-\top}, \balpha) \in Q$.
    In particular, the Property \emph{\hyperref[item:locd]{(LocInfD)}}(b1) is either always true or always false for $v, \balpha, (v A^{-\top}, \balpha) \in Q$.
\end{lemma}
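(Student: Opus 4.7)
The plan is to unpack the construction of the partition $\mL$ and then observe that the two ingredients in $I'_\xi$ and $J'_\xi$ are exactly the data that the building-block partitions $\mL'_{I_\xi}$ and $\mL'_O$ are designed to keep constant. Concretely, I would fix the cell $Q \in \mL A^{-\top}$ and consider its preimage $Q'' \in \mL$ under the bijection $(v, \balpha) \mapsto (v A^{-\top}, \balpha)$, so that $(v A^{-\top}, \balpha) \in Q$ if and only if $(v, \balpha) \in Q''$. Since $A \in \GL(n, \Z)$ is given and the cells of $\mL$ are defined by an explicit finite system of rational equalities and strict inequalities in $(v, \balpha)$, the cell $Q''$ and hence $Q$ is effectively computable.

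Next I would analyze the two sets separately. For $I'_\xi = \{i \in I_\xi \mid \alpha_i = \min_{i' \in I_\xi} \alpha_{i'}\}$, this depends only on the $\balpha$-coordinate, and the coordinate change $A^{-\top}$ acts trivially on $\balpha$. By the definition of $\mL'_{I_\xi} = \Rns \times \mL_{I_\xi}$ and because $\mL$ refines $\mL'_{I_\xi}$, any two $\balpha, \balpha'$ appearing as second coordinates of points of $Q''$ have the same relative ordering on the indices of $I_\xi$, and thus the same subset of minimizers. So $I'_\xi$ is constant on $Q''$ and can be read off directly from the defining inequalities of any one of the cells $P \in \mL_{I_\xi}$ containing the $\balpha$-projection of $Q''$.

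For $J'_\xi = O_v \cup J_\xi$, the only moving piece is $O_v = \{i \mid \ta_i \not\perp v\}$. Since $\mL$ refines $\mL'_O = \mL_O \times \R^K$, the $v$-projection of $Q''$ is contained in a single cell of $\mL_O$, on which the set of orthogonality relations $\{i \mid \ta_i \perp v\}$ is constant by construction. Hence $O_v$, and therefore $J'_\xi$, is constant on $Q''$ and is again effectively computable from the defining (in)equalities of that cell. Combining the two points, both $I'_\xi$ and $J'_\xi$ are determined by $Q$, and Property~\hyperref[item:locd]{(LocInfD)}(b1) — which is the set-theoretic condition $J'_\xi \cap I'_\xi \neq \emptyset$ — has a truth value depending only on $Q$.

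The only mild obstacle I anticipate is bookkeeping around the coordinate change: one must be careful that $v$ in (LocInfD)(b1) refers to the \emph{original} direction and $O_v$ is computed with respect to the original $\ta_i$, not the transformed $\ta_i A$, so it is the preimage $Q''$ (under the $A^{-\top}$-action on the $v$-coordinate) — not $Q$ itself — whose defining cells of $\mL_O$ and $\mL_{I_\xi}$ we read. Once this is fixed, the argument is essentially a refinement-of-partitions verification with no further technicalities.
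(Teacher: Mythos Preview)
Your proposal is correct and is exactly the argument the paper has in mind: the lemma is stated immediately after the definition of $\mL$ with the remark ``By the definition of $\mL$, we immediately obtain the following,'' and your unpacking---that $\mL$ refines both $\mL'_{I_\xi}$ (fixing the relative order of the $\alpha_i$, hence $I'_\xi$) and $\mL'_O$ (fixing the orthogonality pattern, hence $O_v$ and $J'_\xi$)---is precisely that definition spelled out. Your bookkeeping on the coordinate change is also correct; note that since the paper defines $\mL A^{-\top}$ using the transformed vectors $\ta_i A$, and $(vA^{-\top})\cdot(\ta_i A) = v\cdot \ta_i$, reading $O_v$ off the $\mL_O$-component of $Q$ directly or off that of its preimage $Q''\in\mL$ gives the same answer.
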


Let $Q \in \mL$. For $(v, \balpha), (v', \balpha') \in Q$, we have
\[
    \init_{v, \balpha}(\bg_j) = \init_{v', \balpha'}(\bg_j)
\]
for all $j = 1, \ldots, m$.
Thus, if $v = (0, \ldots, 0, v_{d+1}, \ldots, v_n)$ is such that $v_{d+1}, \ldots, v_n$ are $\Q$-linearly independent, then $\init^{d}_{v, \balpha}(\mM)$ depends only on the cell $Q \in \mL$ containing $(v, \balpha)$.
Hence, we can denote
\[
\init^{d}_{Q}(\bg_j) \coloneqq \init^{d}_{v, \balpha}(\bg_j), \quad j = 1, \ldots, m, \quad \init^{d}_{Q}(\mM) \coloneqq \init^{d}_{v, \balpha}(\mM), \quad \text{ where } (v, \balpha) \in Q.
\]

For any coordinate change $A \in \GL(n, \Z)$, we similarly define the partitions $\mL A^{-\top}$ and $\mP A^{-\top}$ based on the super Gr\"{o}bner basis $\varphi_A(\bg_1), \ldots, \varphi_A(\bg_m)$ and the vectors $\ta_1 A, \ldots, \ta_K A$.
In particular, each cell of $\mL A^{-\top}$ is of the form $Q \cdot diag(A^{-\top}, I_K), Q \in \mL$, and each cell of $\mP A^{-\top}$ is of the form $P \cdot A^{-\top}, P \in \mP$.
If $v A^{- \top} = (0, \ldots, 0, v_{d+1}, \ldots, v_n)$ is such that $v_{d+1}, \ldots, v_n$ are $\Q$-linearly independent, then $\init^{d}_{v A^{- \top}, \balpha}(\varphi_A(\mM))$ depends only on the cell $Q \in \mL A^{- \top}$ containing $(v A^{- \top}, \balpha)$.
Similarly, for $j = 1, \ldots, m$, we can denote
\[
\init^{d}_{Q}(\varphi_A(\bg_j)) \coloneqq \init^{d}_{v A^{- \top}, \balpha}(\varphi_A(\bg_j)), \; \init^{d}_{Q}(\varphi_A(\mM)) \coloneqq \init^{d}_{v A^{- \top}, \balpha}(\varphi_A(\mM)), \; \text{ where } (v A^{- \top}, \balpha) \in Q.
\]

The inputs in Theorem~\ref{thm:dec} are generators for modules $\mM$ over $\A = \R[X_1^{\pm}, \ldots, X_n^{\pm}]$, vectors $\ta_1, \ldots, \ta_K$ in $\Z^n$ and pairs of sets $I_{\xi}, J_{\xi}, \xi \in \Xi$.
Our strategy is to use induction on $n$ to prove Theorem~\ref{thm:dec}.
The base case $n = 0$ reduces to linear programming.
Indeed, when $n = 0$, $\A = \R, \A^+ = \Rpp$, the Property~\eqref{eq:deccond} is trivially true; and the problem becomes the following: given an $\R$-submodule $\mM$ of $\R^K$, decide whether $\mM \cap \Rpp^K$ contains an element.
Since the given generators of $\mM$ all have integer coefficients, this is decidable using linear programming.

The following lemma shows that a decision procedure for Theorem~\ref{thm:dec} with smaller $n$ can help us decide for a given cell $Q \in \mL$ if the module $\init_{Q}^{d}(\varphi_A(\mM))$ contains an $\bff^d$ satisfying the Properties \hyperref[item:locd]{(LocInfD)}(a) and (b2).

\begin{lemma}[{Generalization of~\cite[Lemma~6.13]{dong2024semigroup}}]\label{lem:compop}
    Fix a change of coordinates $A \in \GL(n, \Z)$, sets $I_{\xi}, J_{\xi} \subseteq \{1, \ldots, K\}$ for each $\xi \in \Xi$, and a number $0 \leq d \leq n-1$.  
    Suppose Theorem~\ref{thm:dec} is true for all $n_0$, $0 \leq n_0 \leq n - 1$.
    Fix a cell $Q \in \mL A^{- \top}$, let $I'_{\xi}, J'_{\xi}, \xi \in \Xi,$ be the sets defined in \emph{\hyperref[item:locd]{(LocInfD)}}(b1).
    We can decide whether the module $\init_{Q}^{d}(\varphi_A(\mM))$ contains an element $\bff^d$ satisfying the Properties \emph{\hyperref[item:locd]{(LocInfD)}}(a) and (b2).
\end{lemma}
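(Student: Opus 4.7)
The plan is to recognize the decision problem in the statement as an instance of Theorem~\ref{thm:dec} in $d$ variables, so that the inductive hypothesis on $n$ applies.

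First I would explicitly compute a finite set of generators for the $\A_d$-module $\init_Q^d(\varphi_A(\mM))$ with integer coefficients. By definition this module is generated over $\A_d$ by the $m$ elements $\init_Q^d(\varphi_A(\bg_j))$, $j = 1, \ldots, m$. The super Gr\"{o}bner basis $\bg_1, \ldots, \bg_m$ produced by Lemma~\ref{lem:grob} has integer coefficients; the automorphism $\varphi_A$ sends $\oX^b$ to $\oX^{bA}$, so it only permutes monomials and preserves integer coefficients; and after picking any rational representative $(v A^{-\top}, \balpha) \in Q$ with $v A^{-\top} = (0, \ldots, 0, v_{d+1}, \ldots, v_n)$ whose last $n-d$ coordinates are $\Q$-linearly independent (such a representative exists in any cell of $\mL A^{-\top}$ since the cells are relatively open rational polyhedra invariant under positive scaling), the operator $\init_Q^d$ merely selects certain monomials from each entry and multiplies entry-wise by fixed monomials $\oX^{b_i + c_j}$ supplied by Lemma~\ref{lem:ing}, so integer coefficients are again preserved. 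The sets $I'_\xi, J'_\xi$ are computed from $Q$ via Lemma~\ref{lem:compop1}, and the integer vectors $\pi_d(\ta_i A) \in \Z^d$ are immediate.

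Second, with these data in hand, the question to be decided -- whether there exists $\bff^d \in \init_Q^d(\varphi_A(\mM)) \cap (\A_d^+)^K$ satisfying
\begin{equation*}
(O'_u \cup J'_\xi) \cap M_u(I'_\xi, \bff^d) \neq \emptyset \quad \text{for every } u \in (\R^d)^*,\ \xi \in \Xi,
\end{equation*}
where $O'_u = \{i \mid \pi_d(\ta_i A) \not\perp u\}$ -- is literally the decision problem of Theorem~\ref{thm:dec} in $d$ variables, with the module $\init_Q^d(\varphi_A(\mM))$ playing the role of $\mM$, the vectors $\pi_d(\ta_i A) \in \Z^d$ playing the role of the $\ta_i$, the index set $\Xi$ unchanged, and the pairs $(I'_\xi, J'_\xi)$ playing the roles of $(I_\xi, J_\xi)$. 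Since $d \leq n - 1$, the inductive hypothesis supplies a decision procedure. In the base case $d = 0$, Property~(b2) is vacuous and what remains is to decide whether the $\R$-module $\init_Q^0(\varphi_A(\mM)) \subseteq \R^K$ meets $\Rpp^K$; since the generators have integer coefficients, this is a rational linear programming feasibility test.

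The only genuine obstacle is the bookkeeping verification that the generators of the reduced module are effectively computable and have integer coefficients; but this is immediate from the integrality in Lemma~\ref{lem:grob}, the monomial nature of $\varphi_A$, and the explicit description of $\init_Q^d$ in Lemma~\ref{lem:ing}. All remaining steps are a direct rewriting of the definitions, and no new ideas beyond the inductive appeal to Theorem~\ref{thm:dec} are required.
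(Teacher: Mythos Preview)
Your proposal is correct and follows the paper's approach exactly: recognize the question as an instance of Theorem~\ref{thm:dec} in $d \le n-1$ variables---with module $\init_Q^d(\varphi_A(\mM))$ generated by $\init_Q^d(\varphi_A(\bg_j))$, vectors $\pi_d(\ta_i A)$, and index sets $I'_\xi, J'_\xi$---and invoke the inductive hypothesis. One small slip in your write-up: a ``rational representative'' whose last $n-d$ coordinates are $\Q$-linearly independent cannot exist when $n-d \ge 2$ (rationals are always $\Q$-dependent), but this is harmless since $\init_{v,\balpha}(\varphi_A(\bg_j))$, and hence $\init_Q^d(\varphi_A(\bg_j))$, is determined by the rational polyhedral cell $Q$ itself and can be computed directly from its defining constraints without selecting any particular point.
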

\begin{proof}
    Suppose Theorem~\ref{thm:dec} is true for all $0 \leq n_0 \leq n - 1$. In particular it is true for $d \leq n - 1$.
    Fix a cell $Q \in \mL A^{- \top}$.
    

    In \hyperref[item:locd]{(LocInfD)}, the $\A_d$-submodule $\init_{v A^{- \top}, \balpha}^{d}(\varphi_A(\mM)) = \init_{Q}^{d}(\varphi_A(\mM))$ of $\A_d^K$ is generated by the elements $\init^{d}_{Q}(\varphi_A(\bg_j)), j = 1, \ldots, m$.
    Recall that $\pi_d \coloneqq \Z^n \rightarrow \Z^d$ denotes the projection onto the first $d$ coordinates.
    
    We then apply Theorem~\ref{thm:dec} the following way: replace $n$ by $d$; replace the elements $\bg_1, \ldots, \bg_m \in \A^K$ by the elements $\init^{d}_{Q}(\varphi_A(\bg_1)), \ldots, \init^{d}_{Q}(\varphi_A(\bg_m)) \in \A_d^K$; replace the vectors $\ta_1, \ldots, \ta_K \in \Z^n$ by the vectors $\pi_d(\ta_1 A), \ldots, \pi_d(\ta_K A) \in \Z^d$; and replace the sets $I_{\xi}, J_{\xi}, \xi \in \Xi,$ by the sets $I'_{\xi}, J'_{\xi}, \xi \in \Xi$. 
    Then Theorem~\ref{thm:dec} shows we can decide whether $\init_{Q}^{d}(\varphi_A(\mM))$ contains an element $\bff^d$ satisfying $\bff^d \in \left(\A_d^+\right)^K$ and 
    \begin{equation*}
            \left(O'_{u} \cup J'_{\xi}\right) \cap M_{u}(I'_{\xi}, \bff^d) \neq \emptyset, \quad \text{ for every } u \in \left(\R^d\right)^*, \xi \in \Xi.
    \end{equation*}
    These are exactly the Properties~\hyperref[item:locd]{(LocInfD)}(a) and (b2).
\end{proof}

Denote by $Op(A, d)$ the union of all cells $Q \in \mL A^{- \top}$ such that 
the Property \emph{\hyperref[item:locd]{(LocInfD)}}(b1) is true for $(v A^{-\top}, \balpha) \in Q$, and such that $\init_{Q}^{d}(\varphi_A(\mM))$ contains an element $\bff^d$ satisfying the Properties~\hyperref[item:locd]{(LocInfD)}(a)(b2).
By Lemma~\ref{lem:compop1} and \ref{lem:compop}, the set $Op(A, d)$ is effectively computable as a finite union of polyhedra defined over rational coefficients (supposing Theorem~\ref{thm:dec} is true for all $0 \leq n_0 \leq n - 1$).
See~\cite[Figure~29]{dong2024semigroup} for an illustration of $Op(A, d)$.

\begin{proposition}[{Generalization of~\cite[Proposition~6.14]{dong2024semigroup}}]\label{prop:dtocell}
    Condition~\emph{\hyperref[item:locd]{(LocInfD)}} of Proposition~\ref{prop:shifttod} is equivalent to the following:
    \begin{enumerate}
        \item[2.] \label{item:loccell} \emph{\textbf{(LocInfCell):}}
        For every $A \in \GL(n, \Z)$ and every number $0 \leq d \leq n-1$, the following is true:
        \begin{enumerate}
            \item[(a)] For every $v = (0, \ldots, 0, v_{d+1}, \ldots, v_n) \in \{0\}^d \times (\R^{n-d})^*$ with $v_{d+1}, \ldots, v_n$ being $\Q$-linearly independent, there exists $\balpha \in \left(\sum_{k = d+1}^n \Z v_k\right)^K$ with $(v, \balpha) \in Op(A, d)$.
        \end{enumerate}
    \end{enumerate}
\end{proposition}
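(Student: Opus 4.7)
The plan is to prove the equivalence by unpacking the definition of $Op(A, d)$ and performing the change of variables $w := v A^{-\top}$. The crucial observation is that $Op(A, d)$ was engineered precisely so that membership encodes the existence required by (LocInfD): for any $A \in \GL(n, \Z)$, $0 \leq d \leq n-1$, any $w = (0, \ldots, 0, w_{d+1}, \ldots, w_n)$ with $w_{d+1}, \ldots, w_n$ being $\Q$-linearly independent, and any $\balpha \in \R^K$, letting $Q$ be the cell of $\mL A^{-\top}$ containing $(w, \balpha)$, we have that $(w, \balpha) \in Op(A, d)$ if and only if (LocInfD)(b1) holds at this point \emph{and} $\init_{w, \balpha}^d(\varphi_A(\mM)) = \init_Q^d(\varphi_A(\mM))$ contains some $\bff^d$ satisfying (LocInfD)(a) and (b2). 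This equivalence follows directly from the definition of $Op(A, d)$, using Lemma~\ref{lem:compop1} to ensure (b1) is cell-invariant, and the construction of the partition $\mL$ to ensure that $\init_{w,\balpha}^d(\varphi_A(\mM))$ depends only on the cell $Q$.

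Given this, both implications are essentially immediate. For (LocInfD) $\Rightarrow$ (LocInfCell), I would fix $A$, $d$, and $v$ in standard form; set $v^\sharp := v A^\top \in \Rns$ so that $v^\sharp A^{-\top} = v$, and apply (LocInfD) to the pair $(v^\sharp, A)$, yielding $\balpha \in (\sum_{k=d+1}^n \Z v_k)^K$ and $\bff^d$ satisfying (a)-(b2). By the key observation, $(v, \balpha) \in Op(A, d)$, which is exactly what (LocInfCell) requires.

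For (LocInfCell) $\Rightarrow$ (LocInfD), I would fix $v, A$ as in the hypothesis of (LocInfD) and set $w := v A^{-\top}$, which has the required standard form by assumption. Applying (LocInfCell) with the triple $(A, d, w)$ furnishes $\balpha \in (\sum_{k=d+1}^n \Z w_k)^K$ with $(w, \balpha) \in Op(A, d)$; the key observation then supplies the required $\bff^d$. One should note that the lattice $\sum_{k=d+1}^n \Z w_k$ used here coincides with the lattice $\sum_{k=d+1}^n \Z v_k$ in (LocInfD), since the symbol $v_k$ in the statement of (LocInfD) denotes the $k$-th coordinate of $v A^{-\top} = w$. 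No substantive obstacle is expected: the proposition is a pure bookkeeping reformulation, repackaging the existential quantifier on $\bff^d$ (and on $\balpha$ at fixed $v$) into the precomputable geometric object $Op(A, d)$, with Lemmas~\ref{lem:compop1} and \ref{lem:compop} doing all the heavy lifting.
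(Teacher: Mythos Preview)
Your proposal is correct and takes essentially the same approach as the paper, which simply states that the equivalence ``follows directly from the definition of $Op(A, d)$.'' You have faithfully unpacked what that one-line proof means, including the change of variables between the $v$ of (LocInfD) and the $v$ of (LocInfCell) and the observation that the lattices $\sum_k \Z v_k$ match up; nothing beyond this bookkeeping is needed.
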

\begin{proof}
This follows directly from the definition of $Op(A, d)$. 
\end{proof}

\begin{lemma}[{\cite[Lemma~6.15]{dong2024semigroup}}]\label{lem:checkcell}
    Given $A \in \GL(n, \Z), d \in \N$ and given $Op(A, d)$ as a finite union of polyhedra defined over rational coefficients, it is decidable whether the statement \emph{\hyperref[item:loccell]{(LocInfCell)}(a)} is true.
\end{lemma}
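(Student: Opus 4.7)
The plan is to describe, for each cell in the finite decomposition of $Op(A,d)$, the set of $v$'s for which a valid $\balpha \in L_v^K := \bigl(\sum_k \Z v_k\bigr)^K$ exists, and then decide whether these sets cover the set $V$ of admissible $v$'s.

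Set $\tilde v := (v_{d+1}, \ldots, v_n)$. By $\Q$-linear independence of the entries of $\tilde v$, every $\balpha \in L_v^K$ has a unique representation $\balpha = Z\tilde v$ with $Z \in \Z^{K \times (n-d)}$. Write $Op(A,d) = \bigsqcup_{i=1}^r Q_i$ as a finite disjoint union of relatively-open rational polyhedra. Substituting $v_1 = \cdots = v_d = 0$ and $\balpha = Z\tilde v$ into the defining (in)equalities of $Q_i$, each constraint becomes $\sum_k \rho_k(Z)\, v_k \;\square\; 0$ with $\square \in \{=,>\}$ and $\rho_k(Z) \in \Q$ affine-linear in the entries of $Z$. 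Using the $\Q$-linear independence of $\tilde v$, equalities collapse to $\rho_k(Z) = 0$ for every $k$, giving a system of linear Diophantine equations on $Z$ \emph{independent of $\tilde v$}; its integer solution set $\Lambda_i$ is either empty (discard the cell) or a coset of an integer sublattice, effectively computable via Smith normal form. Parameterizing $\Lambda_i$ by integers $t_1, \ldots, t_{s_i}$, each residual strict inequality becomes
\[
u_0^{(h)}(\tilde v) + \sum_j t_j\, u_j^{(h)}(\tilde v) > 0
\]
with rational linear forms $u_j^{(h)}$ in $\tilde v$.

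The key step is to show that $W_i := \{\tilde v \in (\R^{n-d})^* : \exists (t_j) \in \Z^{s_i}\text{ satisfying all $h$-inequalities}\}$ is effectively semi-linear over $\Q$. The feasibility of this integer linear program depends only on (i) the $\Q$-linear dependencies among $\{u_j^{(h)}\}$, which are $\tilde v$-independent, and (ii) the signs of finitely many rational linear forms in $\tilde v$ derived from the $u_j^{(h)}$'s. Indeed, whenever some $u_j^{(h)}(\tilde v) \neq 0$, the integer $t_j$ can be chosen to make the corresponding LHS arbitrarily large and positive, so the ILP collapses to a rational LP whose feasibility is determined by the $\Q$-linear dependencies among coefficients together with the sign pattern of finitely many rational forms in $\tilde v$. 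Carrying out this case analysis yields $W_i$ as a finite union of rational open polyhedral cones, computable by Fourier--Motzkin elimination.

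Finally, (LocInfCell)(a) is equivalent to $V \subseteq \bigcup_i W_i$. Since $V$ equals $\{0\}^d \times (\R^{n-d})^*$ minus the countable union of proper rational hyperplanes (the locus of $\Q$-linear dependence), and each $W_i$ is open and semi-linear over $\Q$, this inclusion is equivalent to the complement $(\R^{n-d})^* \setminus \bigcup_i W_i$ being contained in a finite union of \emph{proper} rational hyperplanes --- a decidable condition, since the complement can be computed as a finite union of rational polyhedra by Fourier--Motzkin and one then checks that each resulting piece is contained in some proper rational affine subspace (equivalently, has nonzero codimension with rational defining form). The main obstacle is the passage from the integer existential quantifier on the $t_j$'s to a rational one; this is resolved by the observation above that any nonzero coefficient form $u_j^{(h)}(\tilde v)$ yields a ``free'' integer direction satisfying the corresponding strict inequality, so ILP feasibility collapses to LP feasibility once the ($\tilde v$-independent) $\Q$-linear dependencies among coefficients have been accounted for.
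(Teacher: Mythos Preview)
The paper does not give its own proof of this lemma; it simply cites \cite[Lemma~6.15]{dong2024semigroup}. So there is no in-paper argument to compare against, and I will assess your proposal on its own.

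Your reduction up to and including the parametrisation $Z=Z_0+\sum_j t_jB_j$ is correct: the homogeneous equalities do collapse, via the $\Q$-linear independence of $\tilde v$, to integer linear conditions on $Z$ independent of $\tilde v$, and the residual strict inequalities take the shape $u_0^{(h)}(\tilde v)+\sum_j t_j\,u_j^{(h)}(\tilde v)>0$ with rational linear forms $u^{(h)}_j$.

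The gap is the step you yourself flag as ``the main obstacle'': the claim that integer feasibility of this system coincides with real feasibility once the sign pattern of finitely many forms in $\tilde v$ is fixed. Your justification (``any nonzero coefficient form yields a free integer direction'') is only valid for a \emph{single} inequality. For two inequalities sharing the same variable it already fails: with $n-d=1$, $s=1$, and integer-coefficient constraints one can get $0<t<1$ (e.g.\ $t\,v_n>0$ and $(1-t)\,v_n>0$ for $v_n>0$), which is LP-feasible but has no integer solution. Nothing in your ``$\Q$-linear dependencies among coefficients'' clause excludes this. Consequently the description of $W_i$ as an effectively computable semi-linear set is not established, and the final coverage test inherits the gap.

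A route that does work exploits two pieces of structure you are not using. First, all constraints produced by the partitions $\mL_{\mM},\mL_{I_\xi},\mL_O$ involve $\balpha$ only through \emph{differences} $\alpha_i-\alpha_j$, with constant term lying in $L_v$; hence the affine hull of each fibre meets $L_v^K$, and $L_v^K$ is dense in that affine hull whenever $n-d\ge 2$ (because $L_v$ is then dense in $\R$). This reduces ``$\exists\,\balpha\in L_v^K$'' to ``$\exists\,\balpha\in\R^K$'' for $n-d\ge 2$, which is a finite check over the cells of $\mP$. Second, for $n-d=1$ scaling invariance reduces everything to the two cases $v_n=\pm 1$, each a single decidable integer programme. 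If you want to keep your framework, you should replace the ILP-to-LP collapse by this density/difference-structure argument for $n-d\ge2$ and treat $n-d=1$ separately.
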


\paragraph*{Proving Theorem~\ref{thm:dec}: induction and a double procedure.}
We now give the full proof of Theorem~\ref{thm:dec}.
As in~\cite{dong2024semigroup}, the overall strategy is to use induction on $n$, while deciding the Conditions \hyperref[item:locr]{(LocR)} and \hyperref[item:locinf]{(LocInf)} from the local-global principle (Theorem~\ref{thm:locglob}).

\thmdec*
\begin{proof}
    We use induction on $n$.
    As remarked earlier, the base case $n = 0$ degenerates into linear programming (given an $\R$-submodule $\mM$ of $\R^K$, decide whether $\mM \cap \Rpp^K$ contains an element).
    Suppose we have a decision procedure for all $n_0 < n$, we now construct a procedure for $n$.
    
    By Theorem~\ref{thm:locglob} it suffices to decide whether the two conditions \hyperref[item:locr]{(LocR)} and \hyperref[item:locinf]{(LocInf)} are both satisfied.
    First we check if \hyperref[item:locr]{(LocR)} is true using Proposition~\ref{prop:decr}.
    If \hyperref[item:locr]{(LocR)} is false then we return False and conclude there is no $\bff \in \mM \cap \ApK$ satisfying \eqref{eq:deccond}.
    If \hyperref[item:locr]{(LocR)} is true then we proceed.

    We now run the two following procedures \emph{in parallel}:

    \begin{enumerate}
        \item \textbf{Procedure A:} We recursively enumerate all elements of the $\Z[X_1^{\pm}, \ldots, X_n^{\pm}]$-module:
        \[
        \tMZ \coloneqq \left\{\sum_{j = 1}^m h_j \cdot \bg_j \;\middle|\; h_1, \ldots, h_m \in \Z[X_1^{\pm}, \ldots, X_n^{\pm}]\right\}.
        \]
        For each element $\bff \in \tMZ$, check if $\bff$ is in $\ApK$ and if it satisfies Property~\eqref{eq:deccond}.
        This can be done in the following way:
        since the entries of $\bff$ contain finitely many monomials, it suffices to check Property~\eqref{eq:deccond} for a finite number of $v$.
        Indeed, since each of $f_1, \ldots, f_K$ has only finitely many monomials, there exists a partition $\mL_{\bff}$ of $\Rns$ such that for each cell $L \in \mL_{\bff}$ and for each $\xi \in \Xi$, the set $M_v(I_{\xi}, \bff)$ is the same for all $v \in L$.
        Furthermore, for each cell $L \in \mL_O$, the set $O_v$ is the same for all $v \in L$.
        Therefore, it suffices to check Property~\eqref{eq:deccond} for one vector $v$ in each cell of the partition $\mL_{\bff} \vee \mL_O$.
        This can be done in finite time for any given $\bff$.
        If some element $\bff \in \tMZ$ is in $\ApK$ and satisfies Property~\eqref{eq:deccond}, we stop the procedure and return True.
        
        \item \textbf{Procedure B:} We recursively enumerate all $A \in \GL(n, \Z)$ and $d \in \{0, 1, \ldots, n-1\}$.
        For each $A$ and $d$, compute $Op(A, d)$ using Lemma~\ref{lem:compop} and the induction hypothesis on $n$.
        Using Lemma~\ref{lem:checkcell}, we check if the statement~\hyperref[item:loccell]{(LocInfCell)}(a) from Proposition~\ref{prop:dtocell} is false.
        If for some $A, d$, the statement~\hyperref[item:loccell]{(LocInfCell)}(a) is false, then we stop the procedure and return False.
    \end{enumerate}
    We claim that one of the two above procedures must stop.
    
    Indeed, if $\mM$ contains an element of $\ApK$ satisfying Property~\eqref{eq:deccond}, then there exists an element $\bff \in \tMZ \cap \ApK$ that satisfies Property~\eqref{eq:deccond} (see Lemma~\ref{lem:M}).
    In this case, Procedure A terminates by finding an element $\bff$ of $\tMZ \cap \ApK$ that satisfies Property~\eqref{eq:deccond}.
    
    If $\mM$ does not contain an element of $\ApK$ satisfying Property~\eqref{eq:deccond}, then by Theorem~\ref{thm:locglob}, Condition~\hyperref[item:locinf]{(LocInf)} must be false (since we have already checked \hyperref[item:locr]{(LocR)} to be true).
    By the chain of Propositions~\ref{prop:inftoshift}, \ref{prop:shifttod} and \ref{prop:dtocell}, the statement~\hyperref[item:loccell]{(LocInfCell)}(a) must be false for some $A \in \GL(n, \Z)$ and $d \in \{0, 1, \ldots, n-1\}$.
    In this case, Procedure B terminates by finding $A \in \GL(n, \Z)$ and $d \in \{0, 1, \ldots, n-1\}$ where the statement~\hyperref[item:loccell]{(LocInfCell)}(a) is false.
    
    Therefore, by running Procedure A and Procedure B in parallel, we obtain an algorithm that always terminates for $n$.
\end{proof}


\end{document}